\newtheorem{The}{Theorem}[section]
\newtheorem{Lem}[The]{Lemma}
\newtheorem{Prop}[The]{Proposition}
\newtheorem{Cor}[The]{Corollary}
\theoremstyle{definition}
\theoremstyle{remark}
\newtheorem{Rk}[The]{Remark}
\numberwithin{equation}{section}
\title{
\normalsize
\textbf{{
SHARP THRESHOLD FOR THE BALLISTICITY OF THE RANDOM WALK ON THE EXCLUSION PROCESS
}}}
\author{}
\date{}
\newcommand{\cE}{\mathcal{E}}
\newcommand{\Q}{\mathbb{Q}}
\newcommand{\R}{\mathbb{R}}
\newcommand{\Z}{\mathbb{Z}}
\newcommand{\N}{\mathbb{N}}
\newcommand{\bL}{\mathbb{L}}
\renewcommand{\P}{\mathbb{P}}
\newcommand{\bP}{\mathbf{P}}
\newcommand{\1}{\mathds{1}}
\renewcommand{\phi}{\varphi}
\renewcommand{\tilde}{\widetilde}
\renewcommand{\hat}{\widehat}
\titleformat{\subsection}[runin]{\normalfont\bfseries}{\thesubsection.}{.5em}{}[.]\titlespacing{\subsection}{0pt}{2ex plus .1ex minus .2ex}{.8em}
\titleformat{\subsubsection}[runin]{\normalfont\bfseries}{\thesubsubsection.}{.5em}{}[.]
\titlespacing{\subsubsection}{0pt}{2ex plus .1ex minus .2ex}{.8em}
\definecolor{Red}{rgb}{1,0,0}
\definecolor{Blue}{rgb}{0,0,1}
\definecolor{Olive}{rgb}{0.41,0.55,0.13}
\definecolor{Yarok}{rgb}{0,0.5,0}
\definecolor{Green}{rgb}{0,1,0}
\definecolor{MGreen}{rgb}{0,0.8,0}
\definecolor{DGreen}{rgb}{0,0.55,0}
\definecolor{Yellow}{rgb}{1,1,0}
\definecolor{Cyan}{rgb}{0,1,1}
\definecolor{Magenta}{rgb}{1,0,1}
\definecolor{Orange}{rgb}{1,.5,0}
\definecolor{Violet}{rgb}{.5,0,.5}
\definecolor{Purple}{rgb}{.75,0,.25}
\definecolor{Brown}{rgb}{.75,.5,.25}
\definecolor{Grey}{rgb}{.7,.7,.7}
\definecolor{Black}{rgb}{0,0,0}
\def\red{\color{Red}}
\def\black{\color{Black}}
\begin{document}
\thispagestyle{empty}
\maketitle
\vspace{0.1cm}
\begin{center}
\vspace{-1.9cm}
Guillaume Conchon-Kerjan$^1$, Daniel Kious$^2$ and Pierre-Fran\c cois Rodriguez$^3$

\end{center}
\vspace{0.1cm}
\begin{abstract}
We study a non-reversible random walk advected by the symmetric simple exclusion process, so that the walk has a local drift of opposite sign when sitting atop an occupied or an empty site. We prove that the back-tracking probability of the walk exhibits a sharp transition as the density $\rho$ of particles in the underlying exclusion process varies across a critical density $\rho_c$. Our results imply that the speed $v=v(\rho)$ of the walk is a strictly monotone function and that the zero-speed regime is either absent or collapses to a single point, $\rho_c$, thus solving a conjecture of~\cite{HKT20}.  The proof proceeds by exhibiting a quantitative monotonicity result for the speed of a truncated model, in which the environment is renewed after a finite time horizon $L$. The truncation parameter $L$ is subsequently pitted against the density $\rho$ to carry estimates over to the full model. Our strategy is somewhat reminiscent of certain techniques recently used to prove sharpness results in percolation problems. A key instrument is a combination of renormalisation arguments with refined couplings of environments at slightly different densities, which we develop in this article. Our results hold in fact in greater generality and apply to a class of environments with possibly egregious features, outside perturbative regimes. 
\end{abstract}

\vspace{5cm}
\begin{flushleft}

\noindent\rule{5cm}{0.4pt} \hfill September 2024 \\
\bigskip
\begin{multicols}{2}

$^1$King's College London\\
Department of Mathematics\\
London WC2R 2LS\\
United Kingdom\\
\url{guillaume.conchon-kerjan@kcl.ac.uk}\\

\bigskip

$^2$University of Bath\\
Department of Mathematical Sciences \\
Bath BA2 7AY \\
United Kingdom\\
\url{daniel.kious@bath.ac.uk}\\

\columnbreak
\thispagestyle{empty}
\bigskip
\medskip
\hfill$^3$Imperial College London\\
\hfill Department of Mathematics\\
\hfill London SW7 2AZ \\
\hfill United Kingdom\\
\hfill \url{p.rodriguez@imperial.ac.uk} 
\end{multicols}
\end{flushleft}

\newpage

\section{Introduction}\label{sec:intro}

Transport in random media has been an active field of research for over fifty years but many basic questions remain mathematically very challenging unless the medium satisfies specific (and often rather restrictive) structural assumptions; see for instance \cite{zbMATH06028501, SznitmanConj,zbMATH02070282} and references therein. In this article we consider the problem of ballistic behavior in a benchmark setting that is both i) \emph{non-reversible}, and ii) \emph{non-perturbative} (in the parameters of the model). To get a sense of the difficulty these combined features entail, general results in case i) are already hard to come by in perturbative regimes, see, e.g.,~\cite{zbMATH04215135,zbMATH05033664} regarding the problem of diffusive behavior.

Our focus is on a certain random walk in dynamic random environment, which lies outside of the well-studied `classes' and has attracted increasing attention in the last decade, see~for instance~\cite{HKT20,HS15,JaraMenezes} and references~below. The interest in this model stems in no small part from the nature of the environment, which is driven by a particle system (e.g.~the exclusion process)  that is typically conservative and exhibits slow mixing. Even in the (1+1)-dimensional case, unless one restricts its parameters to perturbative regimes, the features of the model preclude the use of virtually all 
classical techniques: owing to the dynamics of the environment, the model is genuinely \emph{non-reversible}, and its properties make the search for an invariant measure of the environment as seen by the walker (in the spirit of \cite{zbMATH03951720,zbMATH06458602,zbMATH03944984}) inaccessible by current methods; see Section~\ref{sec:discussion} for a more thorough discussion of these and related matters.  

The model we study depends on a parameter $\rho$ governing the density of particles in the environment, which in turn affects the walk, whose transition probabilities depend on whether the walk sits on top of a particle (advection occurs) or not. This may (or not) induce ballistic behaviour. Our aim in this article is to prove that ballisticity is a property of the walk undergoing a `sharp transition' as $\rho$ varies, which is an inherently \emph{non-perturbative} result. Our results answer a number of conjectures of the past fifteen years, and stand in stark contrast with (i.i.d.) static environments~\cite{KKS, Sinai}, where the zero-speed regime is typically extended. The sharpness terminology is borrowed from critical phenomena, and the analogy runs deep, as will become apparent. Drawing inspiration from it is one of the cornerstones of the present work.

\subsection{Main results}
\label{sec:mainresults}

We present our main results with minimal formalism in the model case where the environment is the (simple) symmetric exclusion process, abbreviated (S)SEP in the sequel, and refer to Sections~\ref{sec:prelims} and~\ref{sec:SEP} for full definitions. We will in fact prove more general versions of these results, see Theorem~\ref{T:generic}, which hold under rather broad assumptions on the environment, satisfied for instance by the SEP. Another environment of interest that fulfils these conditions is discussed in  Appendix~\ref{sec:PCRW}, and is built using a Poisson system of particles performing independent simple random walks (PCRW, short for Poisson Cloud of Random Walks).

The SEP is the continuous-time Markov process $\eta=(\eta_t)_{t\geq 0}$ taking values in $\{0,1\}^\Z$ and describing the simultaneous evolution of continuous-time simple random walks subject to the exclusion rule. That is, for a given realization of $\eta_0$, one puts a particle at those sites $x$ such that $\eta_0(x)=1$. Each particle then attempts to move at a given rate $\nu >0$, independently of its previous moves and of the other particles, to one of its two neighbours chosen with equal probability. The move happens if and only if the target site is currently unoccupied; see Section~\ref{subsec:SEP-basic} for precise definitions. For the purposes of this article, one could simply set $\nu=1$. We have kept the dependence on $\nu$ explicit in anticipation of future applications, for which one may wish to slow down/speed up the environment over time.

On top of the SEP $\eta$, a random walk $X=(X_n)_{n \geq 0}$ starting~from $X_0=0$ moves randomly as follows. Fixing two parameters $p_{\bullet},p_\circ\in (0,1)$ such that $p_\bullet >p_\circ$, the process $X$ moves at integer times to a neighbouring site, the right site being chosen with probability $p_\bullet$, resp.~$p_\circ$, depending on whether $X$ is currently located on an occupied or empty site. Formally, given a realization $\eta$ of the SEP, and for all $n\geq 1$, 
\begin{equation}
\label{eq:rwinformal}
X_{n+1}-X_n= \begin{cases}
+1, & \text{ with prob.~$p_{\bullet}\mathbf{1}_{\eta_n(X_n)=1}+p_{\circ}\mathbf{1}_{\eta_n(X_n)=0}$}\\
-1, & \text{ with prob.~$(1-p_{\bullet})\mathbf{1}_{\eta_n(X_n)=1}+(1-p_{\circ})\mathbf{1}_{\eta_n(X_n)=1}$}
\end{cases}
\end{equation}
(notice that the transition probabilities depend on $\eta$ since $p_{\circ} \neq p_{\bullet}$ by assumption). For $\rho \in (0,1)$ we denote by $\P^\rho$ the \emph{annealed} (i.e.~averaged over all sources of randomness; see Section~\ref{subsec:rw} for details) law of $(\eta,X)$
whereby $\eta_0$ is sampled according to a product Bernoulli measure $((1-\rho)\delta_0 + \rho \delta_1)^{\otimes \Z}$, with $\delta_x$ a Dirac measure at $x$, which is an invariant measure for the SEP, see Lemma~\ref{L:SEP-P}. Incidentally, one can check that except for the trivial cases $\rho\in \{0,1\}$, none of the invariant measures for the SEP (cf.~\cite[Chap.~III, Cor.~1.11]{MR0776231}) is invariant for the environment viewed from the walker (to see this, one can for instance consider the probability that the origin and one of its neighbours are both occupied). \color{black}  In writing $\P^\rho$, we leave the dependence on $\nu,p_\bullet$ and $p_\circ$ implicit, which are regarded as fixed, and we will focus on the dependence of quantities on $\rho$, the main parameter of interest. For a mental picture, the reader is invited to think of $X$ as evolving in a half-plane, with space being horizontal and time running upwards. All figures below will follow this convention.

We now describe our main results, using a language that will make analogies to critical phenomena apparent. Our first main result is simplest to formulate in terms of the \emph{fast-tracking} probability, defined for $n\ge0$ and $\rho \in (0,1)$ as
\begin{equation}\label{eq:Bn}
\theta_n(\rho)= \P^{\rho}( H_n < H_{-1} ),
\end{equation}
where $H_k= \inf\{ n \geq 0: X_n =k \}$. In words $\theta_n(\rho)$ is the probability that $X$ visits $n$ before $-1$. Analogous results can be proved for a corresponding \emph{back-tracking} probability, involving the event $\{H_{-n}< H_1 \}$ instead; see the end of Section~\ref{sec:mainresults} for more on this. The events in \eqref{eq:Bn} being decreasing in $n$, the following limit
\begin{equation}\label{eq:order-parameter}
\theta(\rho)\stackrel{\text{def.}}{=} \lim_n \theta_n(\rho)
\end{equation}
 is well-defined and constitutes an \emph{order parameter} (in the parlance of statistical physics) for the
model. Indeed, it is not difficult to see from \eqref{eq:rwinformal}
and monotonicity properties of the environment (see  \eqref{pe:monotonicity} in Section~\ref{subsec:re} and Lemma~\ref{L:monotone}), that the functions $\theta_n(\cdot)$, $n \geq 0$, and hence $\theta(\cdot)$ are non-decreasing, i.e.~that $\theta(\rho) \leq \theta(\rho')$ whenever $\rho \leq \rho'$. One thus naturally associates to the function $\theta(\cdot)$ a corresponding critical threshold
\begin{equation}\label{eq:rho_c}
\rho_c \stackrel{\text{def.}}{=} \sup\{ \rho \in [0,1]: \theta(\rho)=0 \}.
\end{equation}
The transition from the subcritical ($\rho < \rho_c$) to the supercritical ($\rho > \rho_c$) regime corresponds to the onset of a phase where the walk has a positive chance to escape to the right. In fact it will do so at linear speed, as will follow from our second main result. To begin with, in view of \eqref{eq:rho_c} one naturally aims to quantify the behavior of $\theta_n(\rho)$ for large $n$. Our first theorem exhibits a sharp transition for its decay.

\begin{The}\label{sharpnesscor}
For all $\rho\in(0,1)$, there exist constants $c_1,c_2 \in (0,\infty)$ depending on $\rho$ such that, with $\rho_c$ as in \eqref{eq:rho_c} and for all $n\ge1$,
\begin{enumerate}
\item[(i)] $\theta_n(\rho) \le c_1 \exp(-(\log n)^{3/2})$, if $\rho<\rho_c$;
\item[(ii)] $\theta_n(\rho) \ge c_2$, if $\rho>\rho_c$.
\end{enumerate}
\end{The}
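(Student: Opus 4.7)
The plan is to handle the two parts separately. Part (ii) is essentially an immediate consequence of the definitions: since the events $\{H_n < H_{-1}\}$ are decreasing in $n$, the sequence $\theta_n(\rho)$ is non-increasing, hence converges to $\theta(\rho)$. For $\rho > \rho_c$ one has $\theta(\rho) > 0$ by definition of $\rho_c$ in \eqref{eq:rho_c}, so $\theta_n(\rho) \ge \theta(\rho) =: c_2 > 0$ for all $n$. The substantive content is part (i), and I would follow the strategy indicated by the abstract, in the spirit of modern sharpness proofs in percolation.

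The first ingredient is a \emph{truncated model}: a version of the joint process $(\eta,X)$ in which the environment $\eta$ is resampled from the product Bernoulli measure at every multiple of a finite time horizon $L$, so that the walk interacts with independent pieces of environment of length $L$. In this setting one has a natural proxy $v_L(\rho)$ for the speed (or for a suitable escape probability), and one can try to establish a \emph{quantitative monotonicity} statement of the form
\[
 v_L(\rho') - v_L(\rho) \;\ge\; \Phi(L)\,(\rho' - \rho)
\]
for $\rho' > \rho$ in a suitable range, with $\Phi(L)$ growing in $L$. The route to such a bound is a differential inequality in $\rho$ obtained through a careful coupling of the SEP at densities $\rho$ and $\rho'$: one realises the two environments on the same probability space, inserting the extra particles in a controlled way, and uses the drift asymmetry $p_\bullet > p_\circ$ to quantify the gain in rightward displacement induced by each additional particle encountered by the walker.

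With this quantitative monotonicity in hand, the second step is a renormalisation/bootstrap argument. Fix $\rho < \rho_c$ and pick $\rho < \rho' < \rho_c$. If $\theta_n(\rho)$ were to decay only slowly, one could transfer estimates from the truncated model back to the full model by matching the truncation parameter $L$ against $\rho'-\rho$, so that the truncated and untruncated environments can be coupled with high probability on the relevant spatial scale; this would then produce a non-trivial escape probability at density $\rho'$, which combined with the quantitative monotonicity above would force a strictly positive limiting speed at $\rho'$, contradicting $\rho' < \rho_c$. Iterating this mechanism across dyadic scales $n_k \sim 2^k$, with a density gap that can be allowed to shrink like $(\log n_k)^{-1/2}$ at each step while still closing the induction, produces the stretched-exponential bound $\exp(-(\log n)^{3/2})$ (the exponent $3/2$ being the typical output of balancing a logarithmic number of scales against a logarithmically small density increment).

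The step I expect to be hardest, and the one explicitly flagged in the abstract, is the construction of the refined couplings between SEP environments at slightly different densities, compatible with the dynamics of the walker. Standard monotone couplings of Bernoulli product measures are straightforward, but here the coupling has to persist as the two SEPs evolve and, crucially, interact correctly with the non-reversible feedback between walk and environment produced by \eqref{eq:rwinformal}. Quantifying how much extra drift the added particles actually deliver to the walker, uniformly enough in the renormalisation scheme to yield the super-polynomial decay, is the crux of the proof; everything else — the monotonicity observation for part (ii), the definition of the truncated model, and the dyadic iteration — is comparatively structural.
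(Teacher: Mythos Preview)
Your treatment of part~(ii) is correct and matches the paper exactly.

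For part~(i), however, your outline is not a proof of this theorem; it is a rough sketch of the program used in the paper to prove \emph{Theorem~\ref{T:main1}} (the strict monotonicity of $v(\cdot)$, and in particular $\rho_-=\rho_c$). The paper's proof of Theorem~\ref{sharpnesscor}(i) is in fact very short and completely different in structure: once Theorem~\ref{T:main1} is established, one picks $\varepsilon=(\rho_c-\rho)/4$ and uses $\rho_c=\rho_-$ to get $v(\rho_c-\varepsilon)<0$; then for large $n$ one has the inclusion $\{H_n<H_{-1}\}\subset\{X_n\ge 0\}\subset\{X_n\ge v(\rho_c-\varepsilon)n\}$, and the stretched-exponential bound $\exp(-(\log n)^{3/2})$ is read off directly from an existing large-deviation-type estimate, namely \cite[Proposition~3.6 and display~(3.27)]{HKT20}. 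The exponent $3/2$ comes from that external result, not from any balancing of density increments against dyadic scales as you suggest.

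Your sketch also contains a concrete inaccuracy about the truncated model: the quantitative monotonicity in the paper is $v_L(\rho+\epsilon)-v_L(\rho)\ge 3\Cr{C:approx}(\log L)^{100}/L$, which \emph{decreases} in $L$; the point is only that it beats the approximation error $|v-v_L|$ in Proposition~\ref{Prop:vLapprox}, not that it grows. And there is no contradiction argument of the type you describe (``slow decay of $\theta_n(\rho)$ forces positive speed at $\rho'$''); the logic runs the other way, via the speed. So while you have absorbed the themes of the paper, what you have written is neither the paper's proof of this statement nor an independent derivation of the decay rate.
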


The proof of Theorem~\ref{sharpnesscor} appears at the end of Section~\ref{subsec:main}.
Since $\theta = \inf_n \theta_n$, item \textit{(ii)} is an immediate consequence of \eqref{eq:rho_c}. The crux of Theorem~\ref{sharpnesscor} is therefore to exhibit the rapid decay of item \textit{(i)} in the \emph{full} subcritical regime, and not just perturbatively in $\rho \ll 1$, which is the status quo, cf.~Section~\ref{sec:discussion}. In the context of critical phenomena, this is the famed question of (subcritical) sharpness, see for instance \cite{Me86,AB87,DumTas15, Va23} for sample results of this kind in the context of Ising and Bernoulli percolation models. Plausibly, the true order of decay in item~\textit{(i)} is in fact exponential in $n$ (for instance, a more intricate renormalisation following \cite{HKT20} should already provide a stretched-exponential bound in $n$). We will not delve further into this question in the present article.

Our approach to proving Theorem~\ref{sharpnesscor} is loosely inspired by the recent sharpness results \cite{SharpnessGFF} and \cite{RI-I, RI-II,RI-III} concerning percolation of the Gaussian free field and the vacant set of random interlacements, respectively, which both exhibit long-range dependence (somewhat akin to the SEP). Our model is nonetheless very different, and our proof strategy, outlined below in Section~\ref{sec:proofsketch}, vastly differs from these works. In particular, it does not rely on differential formulas (in $\rho$), nor does it involve the OSSS inequality or sharp threshold techniques, which have all proved useful in the context of percolation, see e.g.~\cite{DumRaoTas17b, zbMATH06406371, BefDum12, zbMATH05064645}. One similarity with \cite{RI-I}, and to some extent also with \cite{Va23} (in the present context though, stochastic domination results may well be too much to ask for), is our extensive use of couplings. Developing these couplings represents one of the most challenging technical aspects of our work; we return to this in Section~\ref{sec:proofsketch}. It is also interesting to note that, as with statistical physics models, the regime $\rho \approx \rho_c$ near and at the critical density encompasses a host of very natural (and mostly open) questions that seem difficult to answer and point towards interesting phenomena; see Section \ref{sec:discussion} and Corollary~\ref{Cor:cont} for more on this.

\bigskip

Our second main result concerns the asymptotic speed of the random walk, and addresses an open problem of \cite{HKT20} which inspired our work. In \cite{HKT20}, the authors prove the existence of a deterministic non-decreasing function $v:(0,1) \to \R$ such that, for all $\rho \in (0,1)\setminus \{\rho_-, \rho_+ \}$,
 \begin{equation}
 \label{eq:intro-LLN}
\P^{\rho}\text{-a.s.,~}\frac{X_n}{n} \to v(\rho) \text{ as } n \to \infty,
 \end{equation}
where 
\begin{equation}
\label{eq:rho-pm}
\begin{split}
&\rho_{-}\stackrel{\text{def.}}{=} \sup \{ \rho : v(\rho) < 0 \},\\
&\rho_{+}\stackrel{\text{def.}}{=} \inf \{ \rho : v(\rho) > 0 \},
\end{split}
\end{equation}
leaving open whether $\rho_+>\rho_-$ or not, and with it the possible existence of an extended (critical) zero-speed regime. Our second main result yields the strict monotonicity of $v(\cdot)$ and provides the answer to this question. Recall that our results hold for any choice of the parameters $0<p_\circ <p_\bullet <1$ and $\nu>0$.

\begin{The} \label{T:main1} 
With $\rho_c$ as defined in \eqref{eq:rho_c}, one has
\begin{equation}
\label{eq:equality}
\rho_- = \rho_+ = \rho_c.
\end{equation}
Moreover, for every $\rho,\rho'\in (0,1)$ such that $\rho >\rho'$, one has
\begin{equation} \label{eq:monotonic-intro}
v(\rho)>v(\rho').
\end{equation}
\end{The}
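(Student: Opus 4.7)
The plan is to establish the strict monotonicity \eqref{eq:monotonic-intro} first (the technical heart), then deduce the equality \eqref{eq:equality} from it together with Theorem~\ref{sharpnesscor} and its back-tracking counterpart.

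\emph{Strict monotonicity via a truncated model.}
Following the strategy announced in the abstract, I would introduce a truncated walk $X^{(L)}$ in which the SEP environment is independently re-sampled from Bernoulli$(\rho)$ every $L$ time units, so the dynamics factorises into i.i.d.~blocks of duration $L$; its speed $v_L(\rho)$ then exists by renewal theory. The quantitative target is
\begin{equation*}
v_L(\rho) - v_L(\rho') \;\ge\; c_L\,(\rho - \rho'),\qquad \rho>\rho',
\end{equation*}
with an explicit (possibly $L$-dependent) constant $c_L>0$. To establish it, I would construct a monotone coupling of two SEPs at densities $\rho$ and $\rho'$ via the graphical/stirring construction from a coupled initial pair $\eta_0\ge\eta_0'$, so that $\eta_t\ge\eta_t'$ persists throughout $[0,L]$, and run both walkers with shared jump randomness. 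The extra particles in the denser environment tilt its walker more strongly to the right; a renormalisation over the spatial range accessible in time $L$, combined with a lower bound on the number of ``witness'' sites (where $\eta>\eta'$) visited by the walker, would turn this into the desired drift gap.

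\emph{Passage to the full model and deduction of the equalities.}
Theorem~\ref{sharpnesscor} will supply the decorrelation needed to compare $v$ and $v_L$: the influence of the environment at temporal scale $\gg L$ on the walker decays quantitatively in $L$, giving $|v(\rho)-v_L(\rho)|\to 0$ as $L\to\infty$ at a rate depending on $\rho$ through the sharpness constants. Given $\rho>\rho'$, one chooses $L=L(\rho,\rho')$ large enough that $|v(\rho)-v_L(\rho)|+|v(\rho')-v_L(\rho')|<c_L(\rho-\rho')/2$---the ``pitting of $L$ against $\rho$'' of the abstract---to conclude $v(\rho)-v(\rho')>0$, proving \eqref{eq:monotonic-intro}. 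For \eqref{eq:equality}: when $\rho>\rho_c$, Theorem~\ref{sharpnesscor}(ii) gives $\theta(\rho)\ge c_2>0$, so $\P^\rho(X_n\ge 0 \text{ for all } n)>0$ and the LLN \eqref{eq:intro-LLN} forces $v(\rho)\ge 0$; strict monotonicity upgrades this to $v(\rho)>0$, hence $\rho_+\le\rho_c$. The reverse $\rho_-\ge\rho_c$ follows symmetrically from the back-tracking counterpart of Theorem~\ref{sharpnesscor} alluded to at the end of Section~\ref{sec:mainresults} (established with the same critical density $\rho_c$): for $\rho<\rho_c$ the walker has positive probability to drift to $-\infty$ without visiting $+1$, giving $v(\rho)\le 0$ and then, via \eqref{eq:monotonic-intro}, $v(\rho)<0$. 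Since $\rho_-\le\rho_+$ always, this yields $\rho_-=\rho_+=\rho_c$.

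\emph{Main obstacle.}
The principal difficulty is the quantitative strict monotonicity of $v_L$ in the first step. Because the SEP is conservative and mixes slowly, the excess particles distinguishing $\eta$ from $\eta'$ take a long time to be felt by the walker within a block of length $L$, and isolating an $(\rho-\rho')$-proportional drift gap demands a refined monotone coupling together with a carefully calibrated renormalisation over the walker's space-time trajectory. A secondary subtlety is the balance of $L$ against $\rho-\rho'$ in the transfer step, since $c_L$ may deteriorate with $L$ while $|v-v_L|$ only decays at a rate dictated by the sharpness estimates.
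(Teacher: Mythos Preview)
Your high-level architecture—introduce a finite-range model with speed $v_L$, prove quantitative monotonicity of $v_L$, then transfer to $v$—is exactly the paper's route (this is Theorem~\ref{T:generic}, proved via Propositions~\ref{Prop:vLapprox} and~\ref{Prop:initialspeed}). But two of your key steps are miswired.

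\textbf{Circularity in the transfer step.} You propose that Theorem~\ref{sharpnesscor} supplies the decorrelation to show $|v(\rho)-v_L(\rho)|\to 0$. This cannot work: in the paper, Theorem~\ref{sharpnesscor}(i) is \emph{deduced from} Theorem~\ref{T:main1} (see the proof at the end of Section~\ref{subsec:main}, which explicitly invokes~\eqref{eq:equality}). Moreover, Theorem~\ref{sharpnesscor} is a statement about the fast-tracking probability $\theta_n$, not about environment decorrelation, and it is unclear how it would yield $|v-v_L|\to 0$ at all. The paper's actual comparison (Proposition~\ref{Prop:vLapprox}) proceeds by a dyadic renormalisation with \emph{sprinkling}: one does not compare $v(\rho)$ to $v_L(\rho)$ but rather sandwiches it as
\[
v_L\Big(\rho-\tfrac{1}{\log L}\Big)-\tfrac{\Cr{C:approx}(\log L)^{100}}{L}\ \le\ v(\rho)\ \le\ v_L\Big(\rho+\tfrac{1}{\log L}\Big)+\tfrac{\Cr{C:approx}(\log L)^{100}}{L}.
\]
The density shift is essential—each passage from range $2^kL$ to $2^{k+1}L$ costs a small increment in $\rho$ (via the couplings~\ref{pe:couplings},~\ref{pe:drift}), and these are summed. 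Your proposal omits sprinkling entirely, so the transfer step as written has no mechanism.

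\textbf{Shape of the monotonicity bound.} The paper does not prove $v_L(\rho)-v_L(\rho')\ge c_L(\rho-\rho')$; rather, for any fixed $\epsilon>0$ it shows $v_L(\rho+\epsilon)-v_L(\rho)\ge 3\Cr{C:approx}(\log L)^{100}/L$ once $L\ge L_1(\rho,\epsilon)$. The point is that this gap exactly beats the error term in the sprinkled sandwich above, so one concludes $v(\rho+\epsilon)>v(\rho)$ by chaining \eqref{eq:vLapprox}, \eqref{eq:vLmonotonic} and \eqref{eq:vLincrease}. Your linear-in-$(\rho-\rho')$ form is neither what is proved nor what is needed.

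\textbf{Deduction of the equalities.} Once strict monotonicity is in hand, the paper argues more directly than you do: $\rho_-=\rho_+$ because $v$ would be identically zero on $(\rho_-,\rho_+)$, contradicting strict monotonicity unless that interval is empty. Then $\rho_-\le\rho_c$ and $\rho_+\ge\rho_c$ follow from elementary inclusions relating $\{v(\rho)<0\}$ (resp.~$>0$) to $\{\theta(\rho)=0\}$ (resp.~$>0$) via the LLN and uniform ellipticity—Theorem~\ref{sharpnesscor} is not used here either.
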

The proof of Theorem~\ref{T:main1} is given in Section~\ref{subsec:main} below. It will follow from a more general result, Theorem~\ref{T:generic}, which applies to a class of environments $\eta$ satisfying certain natural conditions. These will be shown to hold when $\eta$ is the SEP. 

Let us now briefly relate Theorems~\ref{sharpnesscor} and~\ref{T:main1}. Loosely speaking, Theorem~\ref{sharpnesscor} indicates that $v(\rho) >0$ whenever $\rho> \rho_c$, whence $\rho_+ \leq \rho_c$ in view of \eqref{eq:rho-pm}, which is morally half of Theorem~\ref{T:main1}. One can in fact derive a result akin to Theorem~\ref{sharpnesscor}, but concerning the order parameter $\hat{\theta}(\rho)= \lim_n \P^{\rho}( H_{-n} < H_{1} )$ associated to the \emph{back-tracking} probability instead. Defining $\hat{\rho}_c $ in the same way as \eqref{eq:rho_c} but with $\hat{\theta}$ in place of $\theta$, our results imply that $\hat{\rho}_c=\rho_c$ and that the transition for the back-tracking probability has similar sharpness features as in Theorem~\ref{sharpnesscor}, but in opposite directions --
 the sub-critical regime is now for $\rho>\hat{\rho}_c(=\rho_c)$. Intuitively, this corresponds to the other inequality $\rho_- \geq \rho_c$, which together with $\rho_+ \leq \rho_c$, yields \eqref{eq:equality}. 
 
Finally, Theorem~\ref{T:main1} implies that $v(\rho)\neq 0$ whenever $\rho\neq \rho_c$. Determining whether a law of large number holds at $\rho=\rho_c$ holds or not, let alone whether the limiting speed $v(\rho_c)$ vanishes when $0< \rho_c< 1$, is in general a difficult question, to which we hope to return elsewhere; we discuss this and related matters in more detail below in Section~\ref{sec:discussion}. 

One noticeable exception occurs in the presence of additional symmetry, as we now explain. In \cite{HKT20} the authors could prove that, at the `self-dual' point $p_{\bullet}=1-p_\circ$ (for any given value of $p_{\bullet} \in (1/2,1)$), one has $v(1/2)=0$ and the law of large numbers holds with limit speed $0$, but they could not prove that $v$ was non-zero for $\rho\neq 1/2$, or equivalently that $\rho_+=\rho_-=1/2$. Notice that the value $p_{\bullet}=1-p_\circ$ is special, since in this case (cf.~\eqref{eq:rwinformal}) $X$ has the same law under $\P^{\rho}$ as $-X$ under $\P^{1-\rho}$, and in particular $X\stackrel{\text{law}}{=}-X$ when $\rho=\frac12$. In the result below, which is an easy consequence of Theorem \ref{T:main1} and the results of \cite{HKT20}, we summarize the situation in the symmetric case. We include the (short) proof here. The following statement is of course reminiscent of a celebrated result of Kesten concerning percolation on the square lattice \cite{zbMATH03688370}; see also \cite{zbMATH05064645,BefDum12,10.1214/19-AIHP1006} in related contexts.

\begin{Cor}\label{Cor:cont}
If $p_\circ=1-p_\bullet$, then 
\begin{equation}\label{eq:cor-rho12}
\rho_c=\tfrac12 \, \text{ and } \, \theta(\tfrac12)=0.
\end{equation}
Moreover, under $\P^{1/2}$, $X$ is recurrent, and the law of large numbers \eqref{eq:intro-LLN} holds with vanishing limiting speed $v(1/2)=0$.
\end{Cor}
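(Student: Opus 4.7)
My plan is to deduce each of the four assertions in the corollary from an input already available. Three of them will follow almost immediately from Theorem~\ref{T:main1} and the results of \cite{HKT20}; the only non-routine step is establishing recurrence of $X$ under $\P^{1/2}$.

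First, for $\rho_c = 1/2$, I would combine the result of \cite{HKT20} that at the self-dual point (with $p_\circ = 1 - p_\bullet$) the LLN \eqref{eq:intro-LLN} holds at $\rho = 1/2$ with $v(1/2) = 0$, together with the strict monotonicity of $v$ established in Theorem~\ref{T:main1}. Strict monotonicity upgrades $v(1/2) = 0$ to $v(\rho) < 0$ for $\rho < 1/2$ and $v(\rho) > 0$ for $\rho > 1/2$, so by the definitions in \eqref{eq:rho-pm} one has $\rho_- = \rho_+ = 1/2$, and then $\rho_c = 1/2$ by \eqref{eq:equality}.

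Next, to establish recurrence I would exploit the full self-duality of the model through the involution sending $(\eta, X)$ to $(\eta', X')$ defined by $\eta'_n(x) = 1 - \eta_n(-x)$ and $X'_n = -X_n$. The Bernoulli$(1/2)$ starting law is invariant under the particle-hole flip composed with spatial reflection, and the SEP dynamics is invariant for the same reason; one checks that when $p_\circ = 1 - p_\bullet$ the transition rule \eqref{eq:rwinformal} is preserved by this transformation. Therefore $X \stackrel{\text{law}}{=} -X$ under $\P^{1/2}$, which in particular gives $\P^{1/2}(X_n \to +\infty) = \P^{1/2}(X_n \to -\infty)$. I would then invoke a $0$-$1$ law for the event $\{X_n \to +\infty\}$, which in this setting should follow from the ergodicity of the joint Markov chain formed by the walker together with the environment seen from it (the same sort of ingredient used in \cite{HKT20} to obtain their LLN). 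The $0$-$1$ law forces both probabilities above to lie in $\{0, 1\}$; since the two events are disjoint, both must equal $0$. Combined with the irreducibility of $X$ (guaranteed by $p_\bullet, p_\circ \in (0, 1)$), this forces $X$ to visit every integer infinitely often $\P^{1/2}$-a.s., i.e., $X$ is recurrent.

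The remaining statements are then immediate: recurrence gives $H_{-1} < \infty$ $\P^{1/2}$-a.s., hence $\theta(1/2) = 0$; and the LLN with $v(1/2) = 0$ is exactly the content of \cite{HKT20}. The main obstacle will be securing the $0$-$1$ law for $\{X_n \to \pm\infty\}$ in this dynamic, non-reversible environment. For static 1D RWRE such $0$-$1$ laws are classical (Solomon, Zeitouni); here one needs to lean on the ergodicity of the environment-as-seen-from-the-walker, which is of the same flavor as the input already used by \cite{HKT20} to prove their LLN, so should be available essentially as a black box.
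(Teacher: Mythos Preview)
Your approach is essentially the same as the paper's: use \cite{HKT20} for $v(1/2)=0$ and the LLN at $\rho=1/2$, then strict monotonicity from Theorem~\ref{T:main1} to pin down $\rho_c=1/2$, then recurrence to conclude $\theta(1/2)=0$. The paper carries out the recurrence step by directly citing \cite{OrenshteindosSantos} (their Corollary~2.2 and Theorem~3.2), which provides exactly the zero--one law and the resulting trichotomy (transient to $+\infty$, transient to $-\infty$, or recurrent) for one-dimensional random walks in dynamic random environments.

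Two points worth correcting in your sketch. First, the zero--one law does \emph{not} come from ergodicity of the environment seen from the walker: as the paper stresses, no invariant measure for that process is known here, and \cite{HKT20} proves the LLN by entirely different (renormalisation) means. What \cite{OrenshteindosSantos} exploits is the space-time ergodicity of the environment $\eta$ itself under $\mathbf{P}^{1/2}$, which is available. Second, your final step ``$\P(X_n\to\pm\infty)=0$ plus irreducibility implies recurrence'' is not quite a proof: $X$ is not a Markov chain on $\mathbb{Z}$, so irreducibility alone does not give the usual dichotomy. What you actually need is the full trichotomy, i.e.~that $\{-\infty<\liminf X_n\le\limsup X_n<+\infty\}$ forces every site to be visited infinitely often; this is precisely the content of the ergodic argument in \cite{OrenshteindosSantos}. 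Once you cite that reference, your proof and the paper's coincide.
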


\begin{proof}
From the proof of \cite[Theorem 2.2]{HKT20} (and display (3.26) therein), one knows that $v(1/2)=0$ and that the law of large numbers holds at $\rho=\frac12$. By Theorem~\ref{T:main1}, see \eqref{eq:rho-pm} and \eqref{eq:equality}, $v$ is negative on $(0,\rho_c)$ and positive on $(\rho_c,1)$, which implies that $\rho_c=1/2$. As for the recurrence of $X$ under $\P^{1/2}$, it follows from the ergodic argument of~\cite{OrenshteindosSantos} (Corollary 2.2, see also Theorem~3.2 therein; these results are stated for a random walk jumping at exponential times but are easily adapted to our case). Since $\theta(\rho) \leq \P^{\rho}(\liminf X_n \geq 0)$ in view of \eqref{eq:order-parameter}, recurrence implies that $\theta(1/2)=0$, and \eqref{eq:cor-rho12} follows. 
\end{proof}

\subsection{Discussion}\label{sec:discussion}

We now place the above results in broader context and contrast our findings with existing results. We then discuss a few open questions in relation with Theorems~\ref{sharpnesscor} and~\ref{T:main1}.

\subsubsection{Related works}

Random walks in dynamic random environments have attracted increasing attention in the past two decades, both in statistical physics - as a way to model a particle advected by a fluid~\cite{PhysiqueKardar, PhysiqueBarmaetc, PhysiqueBasuMaes,PhysiqueHuveneers}, and in probability theory - where they provide a counterpart to the more classically studied random walks in static environments~\cite{AvenaHollanderRedigLLNcone,BlondelHilarioTeixeira,HKS-RWhighdensity,HKT20,JaraMenezes,RedigVollering}.

On $\mathbb{Z}$, the static setting already features a remarkably rich phenomenology, such as transience with zero speed~\cite{Solomon}, owing to traps that delay the random walk, and anomalous fluctuations~\cite{KKS} - which can be as small as polylogarithmic in the recurrent case~\cite{Sinai}. On $\mathbb{Z}^d$ for $d\geq 2$, some fundamental questions are still open in spite of decades of efforts, for instance the conjectures around Sznitman's Condition~(T) and related effective ballisticity criteria~\cite{SznitmanConj,zbMATH06370210}, and the possibility that, for a uniformly elliptic environment, directional transience is equivalent to ballisticity. If true, these conjectures would have profound structural consequences, in essentially ascertaining that the above trapping phenomena can only be witnessed in dimension one. As with the model studied in this article, a circumstance that seriously hinders progress is the truly non-reversible character of these problems. This severely limits the tools available to tackle them.

New challenges arise when the environment is dynamic, requiring new techniques to handle the fact that correlations between transition probabilities are affected by time. In particular, the trapping mechanisms identified in the static case do not hold anymore and it is difficult to understand if they simply dissolve or if they are replaced by different, possibly more complex, trapping mechanisms. As of today, how the static and dynamic worlds relate is still far from  understood. Under some specific conditions however, laws of large numbers (and sometimes central limit theorems) have been proved in dynamic contexts: when the environment has sufficiently good mixing properties~\cite{AvenaHollanderRedigLLNcone, BlondelHilarioTeixeira, RedigVollering}, a spectral gap~\cite{AvenaBlondelFaggionato}, or when one can show the existence of an invariant measure for the environment as seen by the walk (\cite{BethuelsenVollering}, again under some specific mixing conditions). One notable instance of such an environment is the supercritical contact process (see \cite{HollanderSantos, MountfordVares}, or~\cite{Allasia} for a recent generalization).
We also refer to \cite{zbMATH06865124, zbMATH06507481, zbMATH06824408, zbMATH07334604,deuschel2023gradientestimatesheatkernel} for recent results in the time-dependent reversible case, for which the assumptions on the environment can be substantially weakened.

The environments we consider are archetypal examples that do not satisfy any of the above conditions. In the model case of the environment driven by the SEP, the mixing time over a closed segment or a circle is  super-linear (even slightly super-quadratic~\cite{MorrisSSEPmixing}), creating a number of difficulties, e.g.~barring the option of directly building a renewal structure without having to make further assumptions (see~\cite{AvenadosSantosVollering} in the simpler non-nestling case). The fact that the systems may be conservative (as is the case for the SEP) further hampers their mixing properties. As such, they have been the subject of much attention in the past decade, see~\cite{Avenaal3TransientSameBias,Avenaal1HydroLim, Avenaal2LDP,BlondelHilarioTeixeira, dosSantosBoundsSpeed,HKT20,HHSST15,HS15,JaraMenezes, MenezesPetersonXie} among many others. All of these results are of two types: either they require particular assumptions, or they apply to some perturbative regime of parameters, e.g.~high density of particles~\cite{RWonRWshighdim, HKS-RWhighdensity}, strong drift, or high/low activity rate of the environment~\cite{HS15,SalviSimenhaus}.

Recently, building on this series of work, a relatively comprehensive result on the Law of Large Numbers (LLN) in dimension 1+1 has been proved in~\cite{HKT20}, see \eqref{eq:intro-LLN} above, opening an avenue to some fundamental questions that had previously remained out of reach, such as the possible existence of transient regime with zero speed. Indeed, cf.~\eqref{eq:intro-LLN} and \eqref{eq:rho-pm}, the results of \cite{HKT20} leave open the possibility of having an extended `critical' interval of $\rho$'s for which $v(\rho)=0$, which is now precluded as part of our main results, see~\eqref{eq:equality}. We seek the opportunity to stress that our strategy, outlined below in Section~\ref{sec:proofsketch}, is completely new and  rather robust, and we believe a similar approach will lead to progress on related questions for other models.

\subsubsection{Open questions}$\quad$

\bigskip

\textbf{1) Existence and value of $v(\rho_c)$.} Returning to Theorems~\ref{sharpnesscor} and~\ref{T:main1}, let us start by mentioning that $\rho_c$, defined by \eqref{eq:rho_c} and equivalently characterized via \eqref{eq:rho-pm}-\eqref{eq:equality}, may in fact be degenerate (i.e.~equal to $0$ or $1$) if $v(\cdot)$ stays of constant sign. It is plausible that $\rho_c\in(0,1)$ if and only if
$p_\circ<1/2<p_\bullet$, which corresponds to the (more challenging) \emph{nestling} case, in which 
the walk has a drift of opposite signs depending on whether it sits on top of a particle or an empty site. Moreover, the function $v(\cdot)$, as given by \cite[Theorem 3.4]{HKT20}, is well-defined for all values of $\rho\in(0,1)$ (this includes a candidate velocity at $\rho_c$), but whether a LLN holds at $\rho_c$ with speed $v(\rho_c)$ or the value of the latter is unknown in general, except in cases where one knows that $v(\rho_c)=0$ by other means (e.g.~symmetry), in which case a LLN can be proved, see \cite[Theorem 3.5]{HKT20}. When $\rho_c$ is non-degenerate, given Theorem~\ref{T:main1}, it is natural to expect that $v(\rho_c)=0$, but this is not obvious as we do not presently know if $v$ is continuous at $\rho_c$. It is relatively easy to believe that continuity on $(0,1)\setminus\{\rho_c\}$, when the speed is non-zero, can be obtained through an adaptation of the regeneration structure defined in \cite{HS15}, but continuity at $\rho_c$ (even in the symmetric case) seems to be more challenging.

Suppose now that one can prove that a law of large numbers holds with vanishing speed at $\rho=\rho_c$, then one can ask whether $X$ is recurrent or transient at $\rho_c$. A case in point where one knows the answer is the `self-dual' point $p_\circ=1-p_\bullet$ where the critical density equals $1/2$ and the walk is recurrent. In particular, this result and Theorem \ref{T:main1} imply that $v(\rho)$ is zero if and only if $\rho=1/2$, and thus there exists no transient regime with zero-speed in the symmetric case. This also answers positively the conjecture in~\cite{AvenaHollanderRedig} (end of Section 1.4), which states that in the symmetric case, the only density with zero speed is in fact recurrent. \\

\textbf{2) Regularity of $v(\cdot)$ near $\rho_c$ and fluctuations at $\rho_c$.}
In cases where $v(\rho_c)=0$ is proved, one may further wonder about the regularity of $\rho \mapsto v(\rho)$ around $\rho_c$ (and similarly of $\theta(\rho)$ as $\rho \downarrow \rho_c$). Is it continuous, and if yes, is it H\"{o}lder-continous, or even differentiable? This could be linked to the fluctuations of the random walk when $\rho=\rho_c$, in the spirit of Einstein's relation, see for instance \cite{zbMATH06786088} in the context of reversible dynamics. Whether the fluctuations of $X$ under $\P^{\rho_c}$ are actually diffusive, super-diffusive or sub-diffusive is a particularly difficult question. It has so far been the subject of various conjectures, both for the RWdRE (e.g.~Conjecture 3.5 in~\cite{AvenaThomann}) and very closely related models in statistical physics (e.g.~\cite{PhysiqueHuveneers}, \cite{Gopalakrishnan}, \cite{PhysiqueNagarAl}). One aspect making predictions especially difficult is that the answer may well depend on all parameters involved, i.e.~$\rho$, $p_\circ$ and $p_\bullet$. At present, any rigorous upper or lower bound on the fluctuations at $\rho=\rho_c$ would be a significant advance.\\

\textbf{3) Comparison with the static setting.}
In the past decade, there have been many questions as to which features of a static environment (when $\nu=0$, so that the particles do not move) are common to the dynamic environment, and which are different. In particular, it is well-known that in the static case, there exists a non-trivial interval of densities for which the random walker has zero speed, due to mesoscopic traps that the random walker has to cross on its way to infinity (see for instance~\cite[Example 1]{Peterson}, and above references). It was conjectured that if $\nu>0$ is small enough, there could be such an interval in the dynamic set-up, see~\cite[(3.8)]{AvenaThomann}. Theorem~\ref{T:main1} thus disproves this conjecture. 

Combined with the CLT from~\cite[Theorem 2.1]{HKT20}, which is valid outside of $(\rho_-,\rho_+)$, hence for all $\rho\neq \rho_c$ by Theorem~\ref{T:main1}, this also rules out the possibility of non-diffusive fluctuations for any $\rho\neq\rho_c$, which were conjectured in~\cite[Section 3.5]{AvenaThomann}.  In light of this, one may naturally wonder how much one has to slow down the environment with time (one would have to choose $\nu=\nu_t$ a suitably decreasing function of the time $t$) in order to start seeing effects from the static world. We plan to investigate this in future work.

\subsection{Overview of the proof} \label{sec:proofsketch}

We give here a relatively thorough overview of the proof, intended to help the reader navigate the upcoming sections. For concreteness, we focus on Theorem~\ref{T:main1} (see below its statement as to how Theorem~\ref{sharpnesscor} relates to it), and specifically on the equalities \eqref{eq:equality}, in the case of SEP (although our results are more general; see Section~\ref{sec:proofshort}). The assertion \eqref{eq:equality} is somewhat reminiscent of the chain of equalities $\bar{u}=u_*=u_{**}$ associated to the phase transition of random interlacements that was recently proved in \cite{RI-II,RI-III, RI-I}, and draws loose inspiration from the interpolation technique of~\cite{AG91}, see also \cite{PhasetransGFF}. Similarly to \cite{RI-III} in the context of interlacements, and as is seemingly often the case in situations lacking key structural features (in the present case, a notion of reversibility or more generally self-adjointness, cf.~\cite{zbMATH04215135,zbMATH05033664}), our methods rely extensively on the use of \emph{couplings}. 

\bigskip

The proof essentially consists of two parts, which we detail individually below. In the first part, we compare our model with a \textit{finite-range} version, in which we fully re-sample the environment at times multiple of some large integer parameter $L$ (see e.g.~\cite{RI-III} for a similar truncation to tame the long-range dependence). One obtains easily that for any $L$ and any density $\rho \in (0,1)$, the random walk in this environment satisfies a strong Law of Large Numbers with some speed $v_L(\rho)$ (Lemma~\ref{Prop:vLexists}), and we show that in fact, 
\begin{equation}\label{eq:introvLtov}
v_L(\rho-\varepsilon_L)-\delta_L\le v(\rho)\le v_L(\rho+\varepsilon_L) +\delta_L,
\end{equation}
 for some $\delta_L, \varepsilon_L $ that are quantitative and satisfy $\delta_L, \varepsilon_L  =o_L(1)$ as $L \to \infty$ (see Proposition~\ref{Prop:vLapprox}).

In the second part, we show that for any fixed $\rho,\rho+\varepsilon \in (0,1)$, we have 
\begin{equation}\label{eq:introvLquant}
v_L(\rho+\varepsilon) > v_L(\rho)+3\delta_L,
\end{equation}
 for $L$ large enough (Proposition~\ref{Prop:initialspeed}). Together, \eqref{eq:introvLtov} and \eqref{eq:introvLquant} readily imply that $v(\rho+\varepsilon)>v(\rho)$, and \eqref{eq:equality} follows. Let us now give more details. 
\\
\\
\textbf{First part: from infinite to finite range.} For a density $\rho\in (0,1)$ the environment $\eta$ of the range-$L$ model is the SEP starting from $\eta_0\sim \text{Ber}(\rho)^{\otimes \Z}$ during the time interval $[0,L)$. Then for every integer $k \geq1$, at time $kL$ we sample $\eta_{kL}$ again as $\text{Ber}(\rho)^{\otimes \Z}$, independently from the past, and let it evolve as the SEP during the interval $[kL,(k+1)L)$. The random walk $X$ on this environment is still defined formally as in \eqref{eq:rwinformal}, and we denote $\P^{\rho,L}$ the associated annealed probability measure. Clearly, the increments $(X_{kL}-X_{(k-1)L})_{k\geq 1}$ are i.i.d.~(and bounded), so that by the strong LLN, $X_n/n\rightarrow v_L(\rho) :=\mathbb{E}^\rho[X_L/L]$, a.s.~as $n\rightarrow \infty$. 

Now, to relate this to our original 'infinite-range' ($L=\infty$) model, the main idea is to compare the range-$L$ with the range-$2L$, and more generally the range-$2^kL$ with the range-$2^{k+1}L$ model for all $k \geq 0$ via successive couplings. The key is to prove a chain of inequalities of the kind
\begin{equation}\label{eq:chainsprinkling}
v_{L}(\rho)\leq v_{2L}(\rho+\varepsilon_{1,L})+\delta_{1,L} \leq \ldots \leq  v_{2^kL}(\rho+\varepsilon_{k,L})+\delta_{k,L}\leq \ldots,
\end{equation}
where the sequences $(\varepsilon_{k,L})_{k\geq 1}$ of and $(\delta_{k,L})_{k\geq 1}$ are increasing, with $\varepsilon_L:=\lim_{k\rightarrow \infty}\varepsilon_{k,L}=o_L(1)$ and $\delta_L:=\lim_{k\rightarrow \infty}\delta_{k,L}=o_L(1)$. In other words, we manage to pass from a scale $2^kL$ to a larger scale $2^{k+1}L$, at the expense of losing a bit of speed $(\delta_{k+1,L}-\delta_{k,L})$, and using a little sprinkling in the density $(\varepsilon_{k,L}-\varepsilon_{k-1,L})$, with $\delta_{0,L}= \varepsilon_{0,L}=0$. From~\eqref{eq:chainsprinkling} (and a converse inequality proved in a similar way), standard arguments allow us to deduce~\eqref{eq:introvLtov}. 
Let us mention that such a renormalization scheme, which trades scaling against sprinkling in one or several parameters, is by now a standard tool in percolation theory, see for instance~\cite{Sznitmaninterlac, MR3053773, PhasetransGFF}.

We now describe how we couple the range-$L$ and the range-$2L$ models in order to obtain the first inequality, the other couplings being identical up to a scaling factor. We do this in detail in Lemma~\ref{Lem:L2L}, where for technical purposes, we need in fact more refined estimates than only the first moment, but they follow from this same coupling. The coupling works roughly as follows (cf.~Fig.~\ref{f:L2L_intro}). We consider two random walks $X^{(1)}\stackrel{\text{law}}{=} \P^{\rho,L}$ and $X^{(2)}\stackrel{\text{law}}{=}  \P^{\rho+\varepsilon_L,2L}$ coupled via their respective environments $\eta^{(1)}$ and $\eta^{(2)}$ such that we retain good control (in a sense explained below) on the relative positions of their respective particles during the time interval $[0,2L]$, except during a short time interval after time $L$ after renewing the particles of $\eta^{(1)}$. In some sense, we want to dominate $\eta^{(1)}$ by $\eta^{(2)}$ `as much as possible', and use it in combination with the following monotonicity property of the walk:  since $p_\bullet >p_\circ$, $X^{(2)}$ cannot be overtaken by $X^{(1)}$ if $\eta^{(2)}$ covers $\eta^{(1)}$ (cf.~\eqref{eq:rwinformal}; see also Lemma~\ref{L:monotone} for a precise formulation). Roughly speaking, we will use this strategy from time $0$ to $L$, then lose control for short time before recovering a domination and using the same argument but with a lateral shift (in space), as illustrated in Figure~\ref{f:L2L_intro}.
The key for recovering a domination in as little time as possible is a property of the following flavour. 
\begin{equation}
\label{eq:coupling-intro}
\begin{cases}
&\text{\begin{minipage}{0.8\textwidth}
Let $L,t,\ell \geq 1$ be such that $L\gg t^2\gg\ell^4$, and let $\eta_0, \eta'_0 \in\{0,1\}^\mathbb{Z}$ be such that over $[-3L,3L]$, $\eta_0$ (resp.~$\eta'_0$) has empirical density $\rho+\varepsilon$ (resp.~$\rho$) on segments of length $\ell$. Then one can couple the time evolutions of $\eta,\eta'$ as SEPs during $[0,t]$ such that 
\[
\P\big(\eta_t(x)\geq \eta_t'(x),\, \forall x \in [-L+ct,L-ct]\big) \geq 1-c'tL\exp(-c''\varepsilon^2 t^{1/4}),  
\]
for some constants $c,c',c''>0$. 
\end{minipage}}
\end{cases}
\end{equation}
Roughly, for $t$ poly-logarithmic in $L$ the right-hand side of~\eqref{eq:coupling-intro} will be as close to 1 as we need. 
We do not give precise requirements on the scales $\ell,t,L$ (nor a definition of empirical density), except that they must satisfy some minimal power ratio due to the diffusivity of the particles. We will formalize this statement in Section~\ref{subsec:C}, see in particular condition~\ref{pe:couplings}, with exponents and constants that are likely not optimal but sufficient for our purposes. Even if the environment were to mix in super-quadratic (but still polynomial) time, our methods would still apply, up to changing the exponents in our conditions~\ref{pe:densitychange}-\ref{pe:nacelle}.

The only statistic we control is the empirical density, i.e.~the number of particles over intervals of a given length. The relevant control is formalised in Condition~\ref{pe:densitychange}. In particular, our environment couplings have to hold with a quenched initial data, and previous annealed couplings in the literature (see e.g.~\cite{BaldassoTeixeira18}) are not sufficient for this purpose (even after applying the usual annealed-to-quenched tricks), essentially due to the difficulty of controlling the environment around the walker in the second part of the proof, as we explain in Remark~\ref{Rk:quenchedconditions}; cf.~also \cite{PopovTeixeira, zbMATH06932732,PRS23} for related issues in other contexts.
\begin{figure}[]
  \center
\includegraphics[scale=0.8]{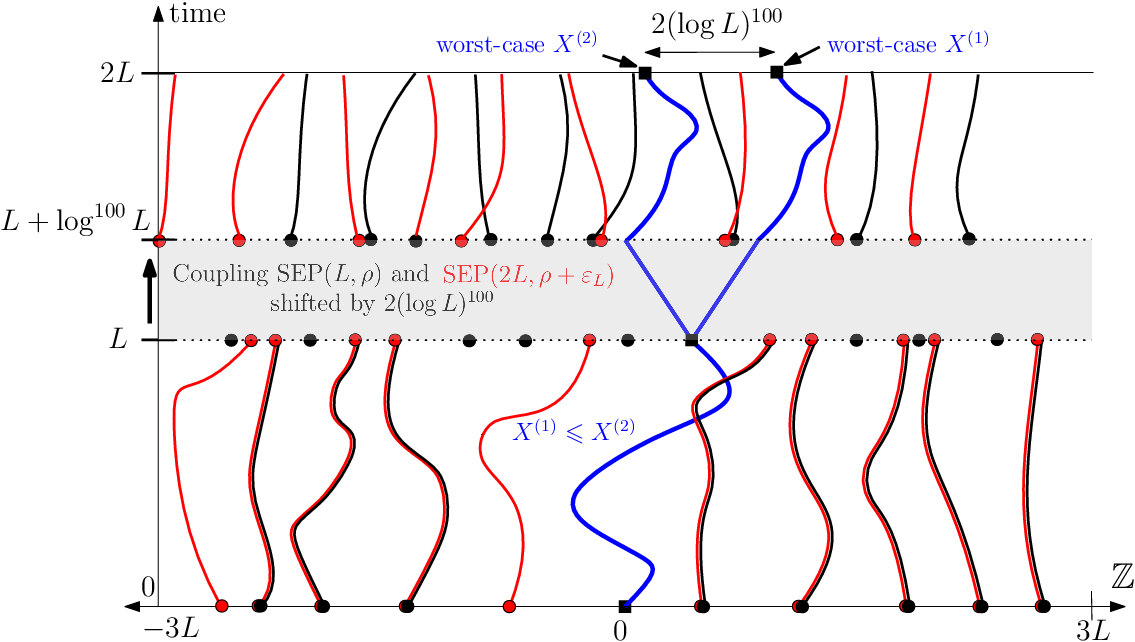}
  \caption{Coupling of $X^{(1)}\stackrel{\text{law}}{=}  \P^{\rho,L}$ and $X^{(2)} \stackrel{\text{law}}{=} \P^{\rho+\varepsilon_L,2L}$. The particles of $\eta^{(1)}$ and their trajectories are pictured in black, those of $\eta^{(2)}$ in red. In blue, from time $0$ to $L$, is the trajectory of $X^{(1)}$, which is a lower bound for that of $X^{(2)}$. From time $L$ to $2L$, we split it between the rightmost (resp.~leftmost) possible realization of $X^{(1)}$ (resp.~$X^{(2)}$). These correspond to worst-case scenarios. Owing to the efficiency of the coupling (grey band), the discrepancy incurred at time $2L$ remains controlled with high probability, but there is a barrier to how small this gap can be made, which is super-linear in $\log L$.
  }
  \label{f:L2L_intro}
\end{figure}

Let us now track the coupling a bit more precisely to witness the quantitative speed loss $\delta_L$ incurred by \eqref{eq:coupling-intro}. We start by sampling $\eta^{(1)}_0$ and $\eta^{(2)}_0$ such that a.s., $\eta^{(1)}_0(x)\leq \eta^{(2)}_0(x)$ for all $x\in \mathbb{Z}$, which is possible by stochastic domination. In plain terms, wherever there is a particle of $\eta^{(1)}$, there is one of $\eta^{(2)}$. Then, we can let $\eta^{(1)}$ and $\eta^{(2)}$ evolve during $[0,L]$ such that this domination is deterministically preserved over time (this feature is common to numerous particle systems,  including both SSEP and the PCRW). As mentioned above, conditionally on such a realization of the environments, one can then couple $X^{(1)}$ and $X^{(2)}$ such that $X^{(2)}_s\geq X^{(1)}_s$ for all $s\in [0,L]$. The intuition for this is that 'at worst', $X^{(1)}$ and $X^{(2)}$ sit on the same spot, where $X^{(2)}$ might see a particle and $X^{(1)}$ an empty site (hence giving a chance for $X^{(2)}$ to jump to the right, and for $X^{(1)}$ to the left), but not the other way around (recall to this effect that $p_\bullet >p_\circ$).

At time $L$, we have to renew the particles of $\eta^{(1)}$, hence losing track of the domination of $\eta^{(1)}$ by $\eta^{(2)}$. We intend to recover this domination within a time $t:=(\log L)^{100}$ by coupling the particles of $\eta^{(1)}_L$ with those of $\eta^{(2)}_L$ using \eqref{eq:coupling-intro} (at least over a space interval $[-3L,3L]$, that neither $X^{(1)}$ nor $X^{(2)}$ can leave during $[0,2L]$). Since during that time, $X^{(1)}$ could at worst drift of $t$ steps to the right (and $X^{(2)}$ make $t$ steps to the left), we couple de facto the shifted configurations $\eta^{(1)}(\cdot)$ with $\eta^{(2)}(\cdot -2t)$. Hence, we obtain that $\eta^{(1)}_{L+t}(x)\leq \eta^{(2)}_{L+t}(x -2t)$ for all $x\in [-3L+ct,3L+ct]$ with high probability as given by~\eqref{eq:coupling-intro}. As a result, all the particles of $\eta^{(1)}_{L+t}$ are covered by particles of $\eta^{(2)}_{L+t}$, when observed from the worst-case positions of $X^{(1)}_{L+t}$ and $X^{(2)}_{L+t},$ respectively.

Finally, during $[L+t,2L]$ we proceed similarly as we did during $[0,L]$, coupling $\eta^{(1)}$ with $\eta^{(2)}(\cdot -2t)$ so as to preserve the domination, and two random walks starting respectively from the rightmost possible position for $X^{(1)}_{L+t}$, and the leftmost possible one for $X^{(2)}_{L+t}$. The gap between $X^{(1)}$ and $X^{(2)}$ thus cannot increase, and we get that $X^{(1)}_{2L}\leq X^{(2)}_{2L}+2t$, except on a set of very small probability where~\eqref{eq:coupling-intro} fails. Dividing by $2L$ and taking expectations, we get 
\begin{equation}
v_L(\rho)\leq v_{2L}(\rho +\varepsilon)+\delta \text{, where } \delta=O\big(\tfrac{(\log L)^{100}}{L}\big),
\end{equation}
for suitable $\varepsilon > 0$. The exponent $100$ is somewhat arbitrary, but importantly, owing to the coupling time of the two processes (and the diffusivity of the SEP particles), this method could not work with a value of $\delta$ smaller than $(\log^2L)/L$. In particular, the exponent of the logarithm must be larger than one, and this prevents potentially simpler solutions for the second part. 
\\
\\
\textbf{Second part: quantitative speed increase at finite range.} We now sketch the proof of \eqref{eq:introvLquant}, which is a quantitative estimate on the monotonicity of $v_L$, equivalently stating that for $\rho\in (0,1)$ and $\varepsilon \in (0,1-\rho)$, 
\begin{equation}\label{eq:intro-expected-gain}
\mathbb{E}^{\rho+\varepsilon}[X_L]>\mathbb{E}^{\rho}[X_L]+3L\delta_L,
\end{equation}
with $\delta_L$ explicit and supplied by the first part. This is the most difficult part of the proof, as we have to show that a denser environment actually yields a \textit{positive gain} for the displacement of the random walk, and not just to limit the loss as in the first part. The main issue when adding an extra density $\varepsilon$ of particles is that whenever $X$ is on top of one such particle, and supposedly makes a step to the right instead of one to the left, it could soon after be on top of an empty site that would drive it to the left, and cancel its previous gain.

We design a strategy to get around this and preserve gaps, illustrated in Figure~\ref{f:couplingmaster_intro} (cf.~also Figure~\ref{f:One_step_gain} for the full picture, which is more involved). When coupling two walks $X^\rho\stackrel{\text{law}}{=} \P^\rho$, $X^{\rho+\varepsilon}\stackrel{\text{law}}{=}  \P^{\rho+\varepsilon}$ with respective environments $\eta^\rho, \eta^{\rho+\varepsilon}$, our strategy is to spot times when \textit{i)} $X^{\rho+\varepsilon}$ sees an extra particle (that we call \textit{sprinkler}) and jumps to the right, while $X^\rho$ sees an empty site and jumps to the left, and \textit{ii)} this gap is extended with $X^\rho$ then drifting to the left and $X^{\rho+\varepsilon}$ drifting to the right, for some time $t$ that must necessarily satisfy $t  \ll \log L$ -- so that this has a chance of happening often during the time interval $[0,L]$.

Once such a gap is created, on a time interval say $[s_1,s_2]$ with $s_2-s_1=t$, we attempt to recouple the environments $\eta^\rho, \eta^{\rho+\varepsilon}$ around the respective positions of their walker, so that the local environment seen from $X^{\rho+\varepsilon}$ again dominates the environment seen from $X^\rho$, 
which then allows us to preserve the gap previously created, up to some time $T$ which is a polynomial in $\log L$. As we will explain shortly, this gap arises with not too small a probability, so that repeating the same procedure between times $iT$ and $(i+1)T$ (for $i\leq L/T-1$) will provide us with the discrepancy  $3\delta_L$ needed.

The re-coupling of the environments mentioned in the previous paragraph must happen in time less than $O(\log L)$ (in fact less than $t$), in order to preserve the gap previously created. This leads to two difficulties:
\begin{itemize}
\item in such a short time, we cannot possibly recouple the two environments over a space interval of size comparable to $L$ ($\approx$ the space horizon the walk can explore during $[0,L]$), and
\item the coupling will have a probability $\gg L^{-1}$ to fail, hence with high probability, this will actually happen during $[0,L]$! In this case, we lose track of the domination of the environments completely, hence it could even be possible that $X^\rho$ overtakes $X^{\rho+\varepsilon}$. 
\end{itemize}
We handle the first of these difficulties with a two-step \textit{surgery coupling}: 

\begin{enumerate}
\item[1)] \textit{(Small coupling).} We perform a first coupling of $\eta^\rho$ and $\eta^{\rho+\varepsilon}$ on an interval of stretched exponential width (think $\exp(t^{1/2})$ for instance) during a time $s_3-s_2=t/2$ (cf.~the orange region in Fig.~\ref{f:couplingmaster_intro}), so as to preserve at least half of the gap created. We call this coupling ``small'' because $t$ is small (compared to $L$). If that coupling is successful, $X^{\rho+\varepsilon} $ now sees an environment that strictly covers the one seen by $X^\rho$, on a spatial interval $I$ of width $\asymp \exp(t^{1/2}) \gg t$. Due to the length of $I$, this domination extends in time, on an interval $[s_3,T]$ of duration say $t^{200}$ (it could even be extended to timescales $\asymp \exp(t^{1/2})$): it could only be broken by SEP particles of $\eta^\rho_{s_3}$ lying outside of $I$, who travel all the distance  to meet the two walkers, but there is a large deviation control on the drift of SEP particles. The space-time zone in which the environment as seen from $X^{\rho+\varepsilon} $ dominates that from $X^{\rho} $ is the green trapezoid depicted in Figure~\ref{f:couplingmaster_intro}. 

\item[2)] \textit{(Surgery coupling).} When the coupling in 1) is successful, we use the time interval $[s_3,T]$ to recouple the environments $\eta^\rho$ and $\eta^{\rho+\varepsilon}$ on the two outer sides of the trapezoid, on a width of length $10L$ say, so that $\eta^{\rho+\varepsilon}_T(\cdot +t/2)$ dominates $\eta^{\rho}_T(\cdot -t/2)$ on the entire width, all the \textit{while} preserving the fact that $\eta^{\rho+\varepsilon}(\cdot +t/2)$ dominates $\eta^{\rho}(\cdot -t/2)$ inside the trapezoid at all times (for simplicity and by monotonicity, we can suppose assume that $X^{\rho+\varepsilon}_{s_3}-X^{\rho}_{s_3}$ is exactly equal to $t$, as pictured in Figure~\ref{f:couplingmaster_intro}). This step is quite technical, and formulated precisely as Condition~\ref{pe:compatible} in Section~\ref{subsec:C}. It requires that different couplings  can be performed on disjoint contiguous intervals and glued in a `coherent' fashion, whence the name surgery coupling. Again, we will prove this specifically for the SEP in a dedicated section, see Lemma~\ref{Lem:compatible}. Since this second coupling operates on a much longer time interval, we can now ensure its success with overwhelming probability, say $1-O(L^{-100})$. 
\end{enumerate}
\begin{figure}[]
  \center
\includegraphics[scale=0.8]{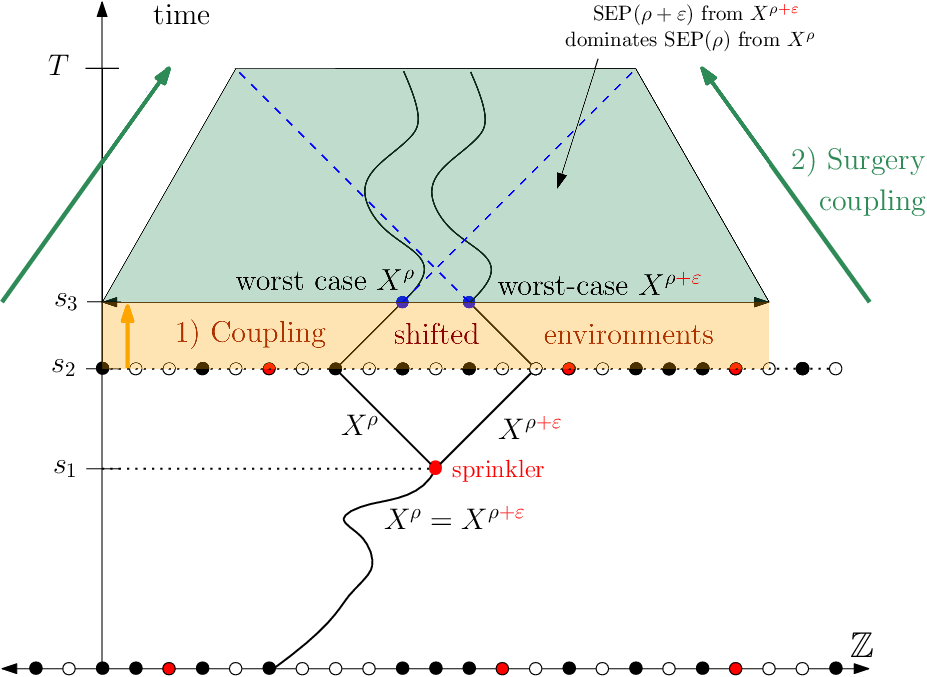}
  \caption{Coupling of $X^{\rho}\stackrel{\text{law}}{=} \P^{\rho}$ and $X^{\rho+\varepsilon}\stackrel{\text{law}}{=} \P^{\rho+\varepsilon}$ during $[0,T]$. The black curve(s) represent the minimal gap created between $X^{\rho}$ and $X^{\rho+\varepsilon}$ when all couplings succeed. The dashed blue lines represent the worst-case trajectories of the two walks when the first coupling (between $s_2$ and $s_3$) fails. In this case item 3), the parachute coupling (not depicted) comes into effect.}
  \label{f:couplingmaster_intro}
\end{figure}

We still need to address the difficulty described in the second bullet point above, which corresponds to the situation where the small coupling described at 1) fails. As argued, will happen a number of times within [0,L] because $t$ is small. If this occurs, we lose control of the domination of the environments seen by the two walkers. This leads to:

\begin{enumerate}
\item[3)] \textit{(Parachute coupling).} If the coupling in 1) is unsuccessful, instead of the surgery coupling, we perform a (parachute) coupling of $\eta^{\rho} $ and $ \eta^{\rho+\varepsilon}$  in $[s_3,T]$ so that $ \eta^{\rho+\varepsilon}(\cdot -(T-s_3))$ covers $ \eta^{\rho}(\cdot +(T-s_3))$, as we did in the first part of the proof. This again happens with extremely good probability $1-O(L^{-100})$. The price to pay is that on this event, the two walks may have drifted linearly in the wrong direction during $[s_3,T]$ (see the dashed blue trajectories in Figure~\ref{f:couplingmaster_intro}), but we ensure at least that $X^{\rho+\varepsilon}_{T}-X^{\rho}_{T}\geq -2(T-s_3)$, and we have now re-coupled the environments seen from the walkers, hence we are ready to start a new such step during the interval $[T,2T]$, etc. This last point is crucial, as we need to iterate to ensure an expected gain $\mathbb{E}^{\rho+\varepsilon}[X_{L}]-\mathbb{E}^{\rho}[X_{L}]$ that is large enough, cf.~\eqref{eq:intro-expected-gain}. 
\end{enumerate}

We now sketch a back-of-the-envelope calculation to argue that this scheme indeed generates the necessary discrepancy between $\mathbb{E}^{\rho+\varepsilon}[X_{L}]$ and $\mathbb{E}^{\rho}[X_{L}]$. During an interval of the form $[iT,(i+1)T]$ for $i=0, 1,\ldots , \lfloor L/T\rfloor -1$ (we take $i=0$ in the subsequent discussion for simplicity), with probability $\simeq 1-e^{-ct}$, we do not have the first separating event (around time $s_1$) between $X^{\rho+\varepsilon}$ and $X^{\rho}$, and we simply end up with $X^{\rho+\varepsilon}_{T} - X^{\rho}_{T}\geq 0$, which is a nonnegative expected gain. 

Else, with probability $\simeq e^{-ct}$ we have the first separation happening during $[s_1,s_2]$. On this event, the coupling in 1) (and the subsequent impermeability of the green trapezoid) succeeds with probability $\geq 1- e^{-t^{1/100}}$, and we end up with $X^{\rho+\varepsilon}_{T} - X^{\rho}_{T}\geq t$, resulting in an expected gain of order $\simeq  t$. If the first coupling fails, and we resort to the parachute coupling, we have a (negative) expected gain $\simeq -2T e^{-t^{1/100}}$. Multiplying by the probability of the first separation, we obtain a net expected gain $\simeq e^{-ct}(t-2T e^{-t^{1/100}})$. 

Finally, we evaluate the possibility that the surgery coupling (in step 2) above) or the parachute coupling 3) fails, preventing us to restore the domination of $\eta^{\rho}(\cdot+X^{\rho}) $ by $\eta^{\rho+\varepsilon}(\cdot+X^{\rho+\varepsilon}) $ at time $T$, and we do not control anymore the coupling of the walks. This has probability $O(L^{-100})$, and in the worst case we have deterministically $X^{\rho+\varepsilon}_L-X^{\rho}_L =-2L$, so the expected loss is $O(L^{-99})$.

Putting it all together, we get, for $L$ large enough by choosing e.g.~$t=\sqrt{\log L}$ and $T=(\log L)^{100}$ (it turns out that our couplings work with this choice of scales), that
\begin{equation}
\mathbb{E}^{\rho+\varepsilon}[X^{\rho+\varepsilon}_{T}]-\mathbb{E}^{\rho}[X^{\rho}_{T}]\geq e^{-ct}(t-2T e^{-t^{1/100}})+O(L^{-99})\geq L^{-o(1)}
\end{equation}
As long as the surgery coupling or the parachute succeed, we can repeat this coupling for up to $\simeq L/T$ times during $[0,L]$, thus obtaining 
\begin{equation}\label{eq:sizeablemargin}
\mathbb{E}^{\rho+\varepsilon}[X^{\rho+\varepsilon}_{L}]-\mathbb{E}^{\rho}[X^{\rho}_{L}]\geq L^{1-o(1)}/T\geq L^{1-o(1)},
\end{equation}
which establishes~\eqref{eq:intro-expected-gain}, with a sizeable margin.

\subsection{Organization of the paper}
In Section~\ref{sec:prelims}, we give rigorous definitions of general environments with a minimal set of properties~\eqref{pe:markov}-\eqref{pe:density}, and of the random walk $X$. 
In Section~\ref{sec:proofshort}, we impose the mild coupling conditions~\ref{pe:densitychange}-\ref{pe:nacelle} on the environments, state our general result, Theorem~\ref{T:generic}, and deduce Theorems~\ref{sharpnesscor} and~\ref{T:main1} from it. We then define the finite-range models and give a skeleton of the proof. 
In Section~\ref{sec:approx}, we proceed to the first part of the proof, and in Section~\ref{sec:initialspeed}, we proceed to the second part (cf.~Section~\ref{sec:proofsketch} reagrding the two parts). In Section~\ref{sec:SEP}, we define rigorously the SEP and show that it satisfies all the conditions mentioned above. The (short) Appendix~\ref{sec:appendix2} contains a few tail estimates in use throughout this article.
In Appendix~\ref{sec:PCRW}, we introduce the PCRW environment and show that it equally satisfies the above conditions, which entails that our results also apply to this environment. Throughout this article all quantities may implicitly depend on the two parameters  $p_{\bullet}, p_{\circ} \in (0,1)$ that are assumed to satisfy $p_\bullet >p_\circ$, cf.~above \eqref{eq:rwinformal} (and also Section~\ref{subsec:rw}).

\section{Setup and useful facts}\label{sec:prelims}

In this section, after introducing a small amount of notation (Section~\ref{subsec:not}), we proceed to define in Section~\ref{subsec:re} a class of (dynamic) random environments of interest, driven by a Markov process $\eta$ characterized by properties~\eqref{pe:markov}--\eqref{pe:density} below. We will primarily be interested in environments driven by the exclusion process, which is introduced in Section~\ref{sec:SEP} and shown to satisfy these properties. The framework developed in Section~\ref{subsec:re} and further in Section~\ref{sec:proofshort} will allow our results to apply directly to a second environment of interest, considered in Appendix~\ref{sec:PCRW}. We conclude by introducing in Section~\ref{subsec:rw} the relevant walk in random environment along with its associated quenched and annealed laws and collect a few basic features of this setup.

\subsection{Notation} \label{subsec:not} 

We write $\mathbb{Z}_+$, resp.~$\mathbb{R}_+$, for the set of nonnegative integers, resp.~real numbers. We use the letter $z \in \R \times \R_+$ exclusively to denote space-time points $z=(x,t)$, and typically $x,y, x', y' \dots$ for spatial coordinates and $s,t,s',t',\dots$ for time coordinates. We usually use $m,n, \dots$ for non-negative integers. 
 With a slight abuse of notation, for two integers $a\leq b$, we will denote by $[a,b]$ the set of integers $\{a,\ldots, b\}$ and declare that the length of $[a,b]$ is $\vert \{a,\ldots, b\}\vert =b-a+1$. Throughout, $c,c',\dots$ and $C,C',\dots$ denote generic constants in $(0,\infty)$ which are purely numerical and can change from place to place. Numbered constants are fixed upon first appearance.

\subsection{A class of dynamic random environments} \label{subsec:re} 
We will consider stationary Markov (jump) processes with values in $ \Sigma = (\mathbb{Z}_+)^{\mathbb{Z}}$. The state space $\Sigma$ carries a natural partial order: for two configurations $\eta, \eta' \in \Sigma$, we write $\eta\preccurlyeq \eta'$ (or $\eta'\succcurlyeq \eta$) if, for all $x\in\mathbb{Z}$, $\eta(x)\le \eta'(x)$. More generally, for $I \subset \mathbb{Z}$, $\eta\vert_I\preccurlyeq \eta'\vert_I$ (or $\eta'\vert_I\succcurlyeq \eta\vert_I$) means that $\eta(x)\le \eta'(x)$ for all $x \in I$.
For a finite subset $I\subset\mathbb{Z}$, we use the notation $\eta(I)=\sum_{x\in I}\eta(x)$. We will denote $\geq_{\text{st.}}$ and $\le_{\text{st.}}$ the usual stochastic dominations for probability measures. Let $J$ be a (fixed) non-empty open interval of $\mathbb{R}^+$. An \textit{environment} is specified in terms of two families of probability measures $(\mathbf{P}^{\eta_0}: \eta_0 \in \Sigma)$ governing the process $(\eta_t)_{t \geq 0}$ and $({\mu_\rho} : \rho \in J)$, where $\mu_\rho$ is a measure on $\Sigma$, which are required satisfy the following conditions: 
\begin{align}
&\tag{P.1}\label{pe:markov} \text{\parbox{14cm}{\textit{(Markov property and invariance).} For every $\eta_0\in \Sigma$, the process $(\eta_t)_{t \geq 0}$ defined under $\mathbf{P}^{\eta_0}$ is a time-homogeneous Markov process, such that for all $s\geq 0$ and $x\in \mathbb{Z}$, the process $(\eta_{s+t}(x+\cdot))_{t\geq 0}$ has law $\mathbf{P}^{\eta_s(x+\cdot)}$; moreover, the process $(\eta_t)_{t \geq 0}$ exhibits `axial symmetry' in the sense that $(\eta_{t}(x-\cdot))_{t\geq 0}$ has law $\mathbf{P}^{\eta_0(x-\cdot)}$. 
}}
\end{align}
\begin{align}
&\tag{P.2} \label{pe:stationary} \text{\parbox{14cm}{\textit{(Stationary measure).} The initial distribution ${\mu}_\rho$ is a stationary distribution for the Markov process $\mathbf{P}^{\eta_0}$. More precisely, letting $${\mathbf{P}}^{\rho}= \int {\mu}_{\rho}(d\eta_0) \mathbf{P}^{\eta_0}, \quad \rho\in J,$$ 
the ${\mathbf{P}}^\rho$-law of $(\eta_{t+s})_{t\ge0}$ is identical to the ${\mathbf{P}}^\rho$-law of $(\eta_{t})_{t\ge0}$ for all $s\geq 0$ and $\rho \in J$. In particular, the marginal law of $\eta_t$ under ${\mathbf{P}}^\rho$ is ${\mu}_\rho$ for all $t \geq 0$. 
}}\\[0.5em]
&\tag{P.3} \label{pe:monotonicity} \text{\parbox{14cm}{\textit{(Monotonicity).}  The following stochastic dominations hold:
\begin{itemize}
\item[i)] (quenched) For all $\eta'_0 \preccurlyeq \eta_0$, one has that $\mathbf{P}^{\eta'_0} \leq_{\text{st.}}\mathbf{P}^{\eta_0}$ , i.e.~there exists a coupling of $(\eta'_t)_{t \geq0}$ and $(\eta_t)_{t \geq 0}$ such that  $\eta'_t\preccurlyeq  \eta_t$ for all $t\geq 0$. 
 \item[ii)] (annealed) For all $\rho'\leq \rho$, one has that ${\mu}_{\rho'} \leq_{\text{st.}}{\mu}_{\rho}$. Together with i), this implies that ${\mathbf{P}}^{\rho'} \leq_{\text{st.}}{\mathbf{P}}^{\rho}$.
\end{itemize}
}}\\[0.5em]
&\tag{P.4} \label{pe:density} \text{\parbox{14cm}{\textit{(Density at stationarity).} There exists a constant $\Cl[c]{densitydev} >0$  such that for all $\rho\in J$, for all $\varepsilon \in (0,1)$ and every positive integer $\ell$, 
$$
\mathbf{P}^\rho(\{\eta_0\in \Sigma: \,  \vert\eta_0([0,\ell-1 ])-\rho\ell\vert \geq \varepsilon \ell \})\leq 2\exp(-\Cr{densitydev}\varepsilon^2\ell).
$$}}
\end{align}

A prime example satisfying the above conditions is the simple exclusion process, introduced and discussed further in Section~\ref{sec:SEP}; see in particular Lemma~\ref{L:SEP-P} regarding the validity of properties \eqref{pe:markov}--\eqref{pe:density}. We refer to 
Appendix~\ref{sec:PCRW} for another example.

\subsection{Random walk} \label{subsec:rw}

We now introduce the random walk in dynamic environment (RWdRE) that will be the main object of interest in this article.
To this effect, we fix two constants $p_{\bullet},p_{\circ}\in (0,1)$ such that $p_{\bullet}>p_{\circ}$ and an environment configuration $\eta=(\eta_t)_{t\geq 0}$ with $\eta_t \in \Sigma$ ($= \mathbb{Z}_+^{\mathbb{Z}}$, see Section~\ref{subsec:re}). 
Given this data, the random walk evolving on top of the environment $\eta$ is conveniently defined in terms of a family $(U_n)_{n\geq 0}$ of i.i.d.~uniform random variables on  $[0,1]$, as follows. 
For an initial space-time position $z=(x,m)\in \mathbb{Z}\times \mathbb{Z}_+$, let $P^{\eta}_z$ be the law of the discrete-time Markov chain $X=(X_n)_{n\geq 0}$ such that $X_0=x$ and for all integer $n\ge0$,
\begin{equation}\label{e:def-RWDRE}
X_{n+1}=X_n+2\times{\1}\big\{ U_{n} \leq (p_{\circ}-p_{\bullet})1\{ \eta_{n +m
}(X_n)=0 \} + p_{\bullet}\big\}-1.
\end{equation}
Let us call $x\in \mathbb{Z}$ an occupied site of $\eta_t \in \Sigma$ if $\eta_t(x)>0$, and empty otherwise.
With this terminology, \eqref{e:def-RWDRE} implies for instance that when $X_n$, started at a point $z=(x,t=0)$, is on an occupied (resp.~empty) site of $\eta_n$,  it jumps to its right neighbour with probability $p_{\bullet}$ (resp.~$p_{\circ}$) and to its left neighbour with probability $1-p_{\bullet}$ (resp.~$1-p_{\circ})$. We call $P^{\eta}_z$ the quenched law of the walk started at $z$ and abbreviate $P^{\eta}=P^{\eta}_{(0,0)}$; here \textit{quenched} refers to the fact that the environment $\eta$ is deterministic.

We now discuss annealed measures, i.e.~including averages over the dynamics of the environment $\eta$. We assume from here on that $\eta=(\eta_t)_{t\geq 0}$ satisfies the assumptions of Section~\ref{subsec:re}. Recall
that $\mathbf{P}^{\eta_0}$ denotes the law of $\eta$ starting from the configuration $\eta_0 \in \Sigma$ and that $\mathbf{P}^{\rho}$ is declared in \eqref{pe:stationary}. Correspondingly,  one introduces the following two annealed measures for the walk
\begin{align}
& \label{eq:RW_ann} \mathbb{P}^{\rho}_{z}[\, \cdot \, ]=\int \mathbf{P}^{\rho}(d\eta)P^{\eta}_z[\, \cdot \, ], \quad \rho \in J, \\
& \label{eq:RW_an}  \mathbb{P}^{\eta_0}_{z}[\, \cdot \, ]=\int \mathbf{P}^{\eta_0}(d\eta)P^{\eta}_z[\, \cdot \, ],\quad \eta_0 \in \Sigma,
\end{align} 
for arbitrary $z \in  \mathbb{Z}\times \mathbb{Z}_+$. Whereas the latter 
averages over the dynamics of $\eta$ for a fixed initial configuration $\eta_0$, the former includes $\eta_0$, which is sampled from the stationary distribution $\mu_{\rho}$ for $\eta$. Observe that
$\mathbb{P}^{\rho}_{z} =  \int {\mu}_{\rho}(d\eta_0) \mathbb{P}^{\eta_0}_z$.  For $\star\in\{\eta,\rho\}$, write $\P^{\star}_x$ for $\P^{\star}_{(x,0)}$, for all $x\in \mathbb{Z}$, and write simply $\P^\star$ for $\P^{\star}_0$.

We introduce a joint construction for the walk $X$ when started at different space-time points. This involves a graphical representation using arrows similar to that used in \cite[Section 3]{HKT20}, but simpler, which will be sufficient for our purposes. For a point $w=(x,n)\in\mathbb{Z} \times \mathbb{Z}_+$, we let $\pi_1(w)=x$ and $\pi_2(w)=n$ denote the projection onto the first (spatial) and second (temporal) coordinate. We consider the discrete lattice
\begin{equation}
\label{e:def_space_time}
\mathbb{L}= (2\mathbb{Z} \times 2\mathbb{Z}_+) \cup  \big( (1,1)+(2\mathbb{Z}  \times 2\mathbb{Z}_+)\big).
\end{equation}
Note that the process $(X_n,n)_{n \geq 0}$ evolves on the lattice $\mathbb{L} \subset (\mathbb{Z} \times \mathbb{Z}_+)$ defined by \eqref{e:def_space_time} when $X=(X_n)_{n \geq 0}$ is started at $z=(0,0)$ under any of $P_z^{\eta}$ and the measures in \eqref{eq:RW_ann}-\eqref{eq:RW_an}.

We proceed to define a family of processes $(X^w=(X^w_n)_{n \geq 0} : w\in\mathbb{L})$, such that $X^w_0=\pi_1(w)$ almost surely and $(X^{(0,0)}_n)_{n \geq 0}$ has the same law as $X$ under $P_{(0,0)}^{\eta}$. Furthermore, $X^{w'}$ and $X^w$ have the property that they coalesce whenever they intersect, that is if $X^{w'}_{m}=X^w_n$  for some $w,w'\in{\mathbb{L}}$ and $n,m\ge0$, then $X^{w'}_{m+k}=X^w_{n+k}$ for all $k\ge0$.

Let $U= \left(U_w\right)_{w\in\mathbb{L}}$ be a collection of i.i.d.~uniform random variables on $[0,1]$. Given the environment $\eta$, we define a field $A=(A_w)_{w\in \mathbb{L}} \in \{-1,1\}^{\mathbb{L}}$ of {\it arrows} (see Figure~\ref{f:arrows}), measurably in $(\eta,U )$ as follows:
\begin{align}\label{def:A}
A_w = A(\eta_n(x), U_w) =2\times{\1}\big\{ U_{w} \leq (p_{\circ}-p_{\bullet})1\{ \eta_n(x)= 0 \} + p_{\bullet}\big\}-1, \quad w=(x,n)\in\mathbb{L}.
\end{align}
For any  $w=(x,n)\in \mathbb{L}$, we then set $X^w_0=x$ and, for all integer $ k \ge0$, we define recursively
\begin{align} \label{eq:defX}
X^w_{k+1} = X^w_k+A_{(X^w_k, \,  n+k)}.
\end{align}

\begin{figure}[]
  \center
\includegraphics[scale=1.1]{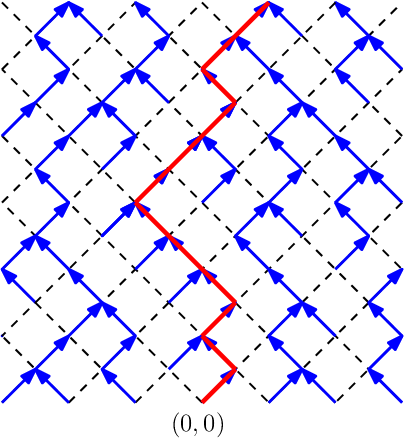}
  \caption{A representation of $\mathbb{L}$ (in dashed black) with the arrows in blue, the space $\mathbb{Z}$ being horizontal and time going upwards. The trajectory of $X^{(0,0)}$ is in red. Trajectories simply follow the arrows. Notice that two coalescing trajectories stay merged forever.}
  \label{f:arrows}
\end{figure}

This defines the coupled family $((X^w_k)_{k \geq 0}: w\in\mathbb{L})$, and we note that trajectories the process $(X_k^w , k+n)_{k \geq 0}$, where $\pi_2(w)=n$, are embedded in (i.e.~subsets of) $\mathbb{L}$. In view of \eqref{e:def-RWDRE} and \eqref{def:A}-\eqref{eq:defX}, it follows plainly that the law of $X^{(0,0)}= (X^{(0,0)}_n)_{n \geq 0}$ when averaging over $U$ while keeping $\eta$ fixed is the same as that of $X$ under $P^{\eta}_{(0,0)}= P^\eta$.

We now discuss a variation of the construction specified around \eqref{def:A}-\eqref{eq:defX}, which will be practical in Section~\ref{sec:Q}. 

\begin{Lem}\label{lem:Xlawredef}
Let $\eta_0\in \Sigma$ and suppose that $(\eta,U)$ are coupled under the probability measure $Q$, with $\eta$ having marginal law $\mathbf{P}^{\eta_0}$ and $\left(U_w\right)_{w\in\mathbb{L}}$ a collection of i.i.d.~uniform random variables on $[0,1]$, in such a way that under $Q$, for all integers $t \geq 0$,
\begin{equation}
\label{eq:arrows-variation}
\text{$(U_w: \pi_2(w)=t)$ is independent from $\mathcal{F}_t$},
\end{equation}
where
\begin{equation}\label{eq:UindepFsection2}
\mathcal{F}_t:=\sigma ((\eta_s)_{0 \leq s \leq t}, (U_w: \pi_2(w)\leq t-1)).
\end{equation}
Then, defining $X^w= X^w(\eta,U)$ as in \eqref{def:A}-\eqref{eq:defX}, it follows that $X^{(0,0)}$ has law  $\mathbb{P}^{\eta_0}$ (cf.~\eqref{eq:RW_an}) under $Q$. 
\end{Lem}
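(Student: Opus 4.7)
The plan is to verify by induction on $n$ that the finite-dimensional marginals of $X^{(0,0)}$ under $Q$ coincide with those under $\mathbb{P}^{\eta_0}$. Fix an admissible nearest-neighbour path $(0=x_0, x_1, \ldots, x_n)$, set $E_t=\{X_0=0,\ldots,X_t=x_t\}$, and write $q(x, x\pm 1; k)$ for the transition probability dictated by $(p_\bullet, p_\circ)$ and the value $k\in \mathbb{Z}_+$ as read off from \eqref{e:def-RWDRE}; unfolding \eqref{eq:RW_an} then gives $\mathbb{P}^{\eta_0}(E_n) = \mathbf{E}^{\eta_0}[\prod_{t=0}^{n-1} q(x_t, x_{t+1}; \eta_t(x_t))]$, which is the identity I need to match.

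A quick induction using \eqref{eq:defX} shows that $X_t$ is $\mathcal{F}_t$-measurable for every $t \geq 0$. On the event $E_t$ one has $X_t = x_t$, so that $\mathbf{1}_{E_{t+1}} = \mathbf{1}_{E_t}\cdot \mathbf{1}\{A(\eta_t(x_t), U_{(x_t, t)}) = x_{t+1}-x_t\}$. Since $\mathbf{1}_{E_t}$ and $\eta_t(x_t)$ are both $\mathcal{F}_t$-measurable while $U_{(x_t, t)}$ is uniform on $[0,1]$ and independent of $\mathcal{F}_t$ by \eqref{eq:arrows-variation}, taking conditional expectation yields the one-step identity
\[
Q(E_{t+1} \mid \mathcal{F}_t) = \mathbf{1}_{E_t}\cdot q(x_t, x_{t+1}; \eta_t(x_t)).
\]

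From here I would proceed by a strengthened forward induction, the hypothesis being that for every bounded measurable functional $F$ of the environment trajectory,
\[
E^Q\bigl[\mathbf{1}_{E_n}\,F(\eta)\bigr] = \mathbf{E}^{\eta_0}\bigg[F(\eta)\prod_{t=0}^{n-1} q(x_t, x_{t+1}; \eta_t(x_t))\bigg].
\]
The base case $n=0$ is immediate from the fact that $\eta$ has marginal law $\mathbf{P}^{\eta_0}$ under $Q$. For the induction step, conditioning on $\mathcal{F}_n$ and applying the one-step identity above extracts the new factor $q(x_n, x_{n+1}; \eta_n(x_n))$, which is $\mathcal{F}_n$-measurable and can therefore be absorbed into a modified functional $\widetilde F = F\cdot q(x_n, x_{n+1}; \eta_n(x_n))$; applying the induction hypothesis to $\widetilde F$ closes the step. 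Taking $F\equiv 1$ at the end delivers the lemma.

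The main technical obstacle is the handling of the joint law of $(\eta, U)$ under $Q$, which is \emph{not} a product measure in general; in particular $U_{(x_t,t)}$ may interact with parts of $\eta$ lying outside $\mathcal{F}_t$. The trick of absorbing the extracted $q$-factor into the test functional $F$ is precisely what sidesteps this difficulty, by ensuring that at each induction step one only uses the cleanly stated independence of $(U_w:\pi_2(w)=n)$ from $\mathcal{F}_n$ provided by \eqref{eq:arrows-variation}, together with the observation that the freshly extracted $q$-factor is itself a measurable function of $\eta$ through $\eta_n(x_n)$ and hence fits into the induction framework without forcing any conditional independence beyond what \eqref{eq:arrows-variation} gives.
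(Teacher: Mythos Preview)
Your induction step does not go through as written. You want to pass from
\[
E^{Q}\bigl[\mathbf 1_{E_{n+1}}\,F(\eta)\bigr]
= E^{Q}\bigl[\mathbf 1_{E_n}\,\mathbf 1\{A(\eta_n(x_n),U_{(x_n,n)})=x_{n+1}-x_n\}\,F(\eta)\bigr]
\]
to $E^{Q}[\mathbf 1_{E_n}\,q(x_n,x_{n+1};\eta_n(x_n))\,F(\eta)]$ by ``conditioning on $\mathcal F_n$ and applying the one-step identity''. But the one-step identity $Q(E_{n+1}\mid\mathcal F_n)=\mathbf 1_{E_n}\,q(\cdots)$ only lets you integrate out $U_{(x_n,n)}$ against $\mathcal F_n$-measurable quantities, and your $F(\eta)$ is \emph{not} $\mathcal F_n$-measurable: it depends on the full trajectory of $\eta$, whereas $\mathcal F_n$ sees only $(\eta_s)_{s\le n}$. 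Hypothesis \eqref{eq:arrows-variation} says nothing about the joint law of $U_{(x_n,n)}$ and $(\eta_s)_{s>n}$, so you cannot split the expectation. Absorbing the $q$-factor into $\widetilde F$ afterwards is fine, but that happens \emph{after} the extraction step, and it is the extraction itself that is unjustified. Your closing paragraph correctly identifies the danger but the proposed fix does not address it: the problem is not where the extracted factor goes, it is that with a non-$\mathcal F_n$-measurable $F(\eta)$ sitting inside, the factor cannot be extracted in the first place.

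The paper avoids this by carrying only a finite window of the environment in the induction: the hypothesis at stage $k$ is that the \emph{joint} law of $\bigl((\eta_t)_{0\le t\le k-1},(X_t)_{0\le t\le k}\bigr)$ agrees under $Q$ and $\mathbb P^{\eta_0}$. The step then proceeds in two parts: first extend $\eta$ from time $k-1$ to time $k$ via the Markov property \eqref{pe:markov} of the marginal $\mathbf P^{\eta_0}$, obtaining that $\bigl((\eta_t)_{0\le t\le k},(X_t)_{0\le t\le k}\bigr)$ has the same law under both measures; second, condition on $\mathcal F_k$ and use \eqref{eq:arrows-variation} at time $k$ to append $X_{k+1}$. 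The crucial structural difference is that when $U_{(\cdot,k)}$ is invoked, \emph{everything} already carried by the hypothesis is $\mathcal F_k$-measurable, so the independence in \eqref{eq:arrows-variation} applies cleanly. Your scheme could be repaired along these lines by restricting $F$ at level $n$ to functionals of $(\eta_s)_{s\le n}$ only (so that $F$ becomes $\mathcal F_n$-measurable), but then you must also explain how to upgrade the hypothesis from level-$n$ functionals to level-$(n+1)$ functionals between steps --- which is precisely the Markov extension the paper performs.
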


\begin{proof}
We show by induction over $k \geq 1$ integer that for all such $k$, the pair $((\eta_t)_{0\leq t\leq k-1},(X_t)_{0\leq t\leq k})$ has the same law under both $Q$ and $\mathbb{P}^{\eta_0}$ (which, for the duration of the proof, we extend here to denote the joint distribution of $(X, \eta)$, by a slight abuse of notation). The initialisation at $k=1$ is immediate, since $Q(X_1=1)=p_\bullet {\1}\big\{\eta_0(0)=1\big\}+p_\circ {\1}\big\{\eta_0(0)=0\big\}$ and $\eta_0$ is deterministic. 

As for the induction step, assume the induction hypothesis for some $k\geq 1$, and condition on the $\sigma$- algebra $\Sigma_k:=\sigma((\eta_t)_{0\leq t\leq k-1},(X_t)_{0\leq t\leq k})$.  We first let $\eta$ evolve between times $k-1$ and $k$. Under both $Q$ and $\mathbb{P}^{\eta_0}$ and conditionally on $\Sigma_k$, we have that $(\eta_t)_{k-1\leq t\leq k}$ is distributed as $(\eta_t)_{0\leq t\leq 1}$ under $ \mathbf{P}^{\eta_{k-1}}$, given the marginal distribution of $\eta$ and by the Markov property~\eqref{pe:markov}. Therefore, $((\eta_t)_{0\leq t\leq k},(X_t)_{0\leq t\leq k})$ has the same law under both $Q$ and $\mathbb{P}^{\eta_0}$. Now, conditioning on $\mathcal{F}_k$ as defined in \eqref{eq:UindepFsection2}, and noticing that $(\eta_t)_{0\leq t\leq k}$, $(X_t)_{0\leq t\leq k})$ (hence also $\eta_k(X_k)$) are all $\mathcal{F}_k$-measurable (for $(X_t)_{0\leq t\leq k}$ this follows from \eqref{def:A}-\eqref{eq:defX}), we obtain that
\begin{multline}\label{eq:increment-intermediate}
Q(X_{k+1}-X_k=1\,\vert \,\mathcal{F}_k) \\\stackrel{\eqref{def:A},\eqref{eq:defX}}{=}\Q(U_{(X_k,k)}\leq p_\bullet\,\vert \,\mathcal{F}_k) \1\big\{ \eta_k(X_k)=1\big\}+\Q(U_{(X_k,k)}\leq p_\circ\,\vert \,\mathcal{F}_k) \1\big\{ \eta_k(X_k)=0\big\}.
\end{multline}
Now, by~\eqref{eq:UindepFsection2}, and using that $X_k$ is $\mathcal{F}_k$-measurable, we can evaluate the conditional probabilities in \eqref{eq:increment-intermediate} to find that
\begin{multline} \label{eq:increment-compa}
Q(X_{k+1}-X_k=1\,\vert \, \mathcal{F}_k) = p_\bullet \1\big\{ \eta_k(X_k)=1\big\}+p_\circ\1\big\{ \eta_k(X_k)=0\big\}
\\=\mathbb{P}^{\eta_0}(X_{k+1}-X_k=1\,\vert \, \mathcal{F}_k),
\end{multline}
where the second equality comes from the independence of $\mathcal{F}_k$ and $U_k$ under $\mathbb{P}^{\eta_0}$, see~\eqref{e:def-RWDRE} and~\eqref{eq:RW_an}. Integrating both sides of \eqref{eq:increment-compa} under $Q$ and $\mathbb{P}^{\eta_0}$, respectively, against a suitable ($\mathcal{F}_k$-measurable) test function of $((\eta_t)_{0\leq t\leq k},(X_t)_{0\leq t\leq k})$, it follows that $((\eta_t)_{0\leq t\leq k},(X_t)_{0\leq t\leq k+1})$ have the same distribution under $Q$ and $\mathbb{P}^{\eta_0}$. This concludes the proof of the induction step. 
\end{proof}
\black

We conclude this section by collecting a useful monotonicity property for the collection of random walks defined above, similar to \cite[Proposition 3.1]{HKT20}. 
Its proof hinges on the fact that the trajectories considered cannot cross without first meeting at a vertex, after which they merge. In the sequel, for a given environment configuration $\widetilde{\eta} =(\widetilde{\eta}_t)_{t \geq 0}$, we refer to $\widetilde{X}^w = (\widetilde{X}_n^w)_{n \geq 0}$ the process defined as in \eqref{eq:defX} but with $\widetilde{\eta}$ in place of $\eta$ entering the definition of the arrows in \eqref{def:A}. We will be interested in the case where $\eta$ and $\widetilde{\eta}$ are such that
\begin{align}\label{domineta}
\eta_n(x)\le \widetilde{\eta}_n(x)\text{, for all } (x,n)\in\mathbb{L}\cap K,
\end{align}
for some $K\subseteq \mathbb{Z}\times\mathbb{Z}_+$.
The following result is already interesting for the choice $\eta=\widetilde{\eta}$, in which case $X^w= \widetilde{X}^w$ below.

\begin{Lem} \label{L:monotone}
If $\eta,\widetilde{\eta} \in \Sigma$ and $K\subseteq \mathbb{Z}\times\mathbb{Z}_+$ are such that \eqref{domineta} holds, then for every $w,w'\in K$ with $\pi_1(w')\le \pi_1(w)$ and $\pi_2(w)= \pi_2(w')$,and for every $n \geq 0$ such that $[\pi_1(w)-k,  \pi_1(w')+k]\times [\pi_2(w), \pi_2(w)+k] \subseteq K $ for all $0 \leq k \leq n$\color{black}, one has that
\begin{equation}\label{e:monotone1}
  X^{w'}_{n}\le \widetilde{X}^{w}_{n}.
\end{equation}  
\end{Lem}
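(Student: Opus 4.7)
My plan is to induct on $k\in\{0,1,\ldots,n\}$ using the coupling via common uniforms $U=(U_w)_{w\in\mathbb{L}}$: write $A$ for the arrow field \eqref{def:A} built from $\eta$ and $U$, and $\widetilde{A}$ for the analogous field built from $\widetilde{\eta}$ and the same $U$, so that $X^{w'}$ evolves under $A$ while $\widetilde{X}^w$ evolves under $\widetilde{A}$. The base case $k=0$ is immediate, since $X^{w'}_0=\pi_1(w')\le \pi_1(w)=\widetilde{X}^w_0$. The containment hypothesis on $K$ is exactly what ensures that, at each step $k\le n$, any lattice site possibly read by either walk lies in $K\cap\mathbb{L}$, so that \eqref{domineta} may be invoked there.

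The first observation I would record is that $X^{w'}_k$ and $\widetilde{X}^w_k$ always share the same parity: since $\pi_2(w)=\pi_2(w')$ and $w,w'\in\mathbb{L}$, one has $\pi_1(w)\equiv\pi_1(w')\pmod 2$, and each step of either walk flips the parity of the spatial coordinate. Consequently, under the inductive hypothesis $X^{w'}_k\le\widetilde{X}^w_k$, either the gap is at least $2$ or the two positions coincide. In the strict-gap case, each walk moves by exactly $\pm 1$ in one step, so the inequality is preserved automatically; no use of the environment is needed.

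The real content then reduces to the coincident case $x:=X^{w'}_k=\widetilde{X}^w_k$. Here both walks consult the same uniform $U_{(x,\pi_2(w)+k)}$, but with thresholds determined by $\eta_{\pi_2(w)+k}(x)$ and $\widetilde{\eta}_{\pi_2(w)+k}(x)$ respectively. Inspecting \eqref{def:A} and using $p_\bullet>p_\circ$, the map $m\mapsto (p_\circ-p_\bullet)\mathbf{1}\{m=0\}+p_\bullet$ is non-decreasing in $m\in\mathbb{Z}_+$; thus \eqref{domineta} at $(x,\pi_2(w)+k)\in K\cap\mathbb{L}$ forces $A_{(x,\pi_2(w)+k)}\le \widetilde{A}_{(x,\pi_2(w)+k)}$, whence $X^{w'}_{k+1}\le \widetilde{X}^w_{k+1}$ and the induction closes. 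I expect no serious obstacles in executing this plan; the only subtlety worth flagging is the parity observation, without which two trajectories separated by exactly one would be allowed to swap in a single step and the induction would collapse.
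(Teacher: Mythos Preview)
Your proof is correct and follows essentially the same approach as the paper's: induction on the time step, the parity observation reducing to the gap-$\ge 2$ case (trivial) versus the coincidence case, and in the latter the monotonicity of $A(\cdot,\xi)$ combined with \eqref{domineta} at the common site. The paper treats only $K=\mathbb{Z}\times\mathbb{Z}_+$ explicitly and leaves the general $K$ as a straightforward adaptation, whereas you spell out how the containment hypothesis is used; note that the interval $[\pi_1(w)-k,\pi_1(w')+k]$ in the hypothesis is precisely the range of possible coincidence points at step $k$, which is the only place the domination is invoked.
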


\begin{proof}  We only treat the case $K= \mathbb{Z}\times\mathbb{Z}_+$ to lighten the argument. The adaptation to a general $K$ is straightforward, as all possible trajectories considered for $X$ and $\widetilde{X}$ lie in $K$ by assumption. The proof proceeds by a straightforward induction argument. Indeed, since $X_0^{w'}= \pi_1(w')$ and $\widetilde{X}_0^w= \pi_1(w)$, one has that
$\widetilde{X}^{w}_{0}- X^{w}_{0} \geq 0$ by assumption. To carry out the induction step, one notes that $\widetilde{X}^{w}_{n}-X^{w'}_{n}$ is even for any $n \geq 0$ with increments ranging in $-2$, $0$ or $+2$, and combines this with the following observation: if $\widetilde{X}^{w}_{n}-X^{w'}_{n}=0$, then
\begin{multline*}
\widetilde{X}^{w}_{n+1}-X^{w'}_{n+1} \stackrel{\eqref{eq:defX}}{=} A_{(\widetilde{X}_n^w, \pi_2(w)+n)} -  A_{({X}_n^{w'}, \pi_2(w')+n)} \stackrel{\eqref{def:A}}{=}  A(\widetilde{\eta}_n(X^{w}_{n}), U_w)- A({\eta}_n(X^{w}_{n}), U_w)\ge0,
\end{multline*}
where the second equality follows using that $\pi_2(w)=\pi_2(w')$ (along with $\widetilde{X}^{w}_{n}=X^{w'}_{n}$) and the inequality is due to \eqref{domineta} and the fact that $A(\cdot, \xi)$ is increasing for any $\xi \in [0,1]$, which is straightforward from \eqref{def:A}.
\end{proof}

For later reference, we also record that for any $w\in\mathbb{L}$ and $n\ge m\ge 0$, 
\begin{equation}\label{e:lipschitz}
  \left|X^w_n - X^w_m\right| \le n - m,
\end{equation}
as follows clearly from \eqref{eq:defX}.

\section{Main results}\label{sec:proofshort}

In this section, we start by formulating in Section~\ref{subsec:C}  precise coupling conditions that we require from the environments, and which are of independent interest. These are given by conditions~\ref{pe:densitychange}-\ref{pe:nacelle} below (a flavor of the second of these was given in \eqref{eq:coupling-intro} in the introduction). Our main result, Theorem \ref{T:generic}, appears in Section~\ref{subsec:main}. It concerns the generic random walk in random environment defined in Section~\ref{subsec:re}-\ref{subsec:rw}, subject to the conditions of Section~\ref{subsec:C}. Our standing assumptions will thus be that all of properties~\eqref{pe:markov}-\eqref{pe:density} and the conditions~\ref{pe:densitychange}-\ref{pe:nacelle} hold. We will verify separately in Section~\ref{sec:SEP} that all of these conditions hold for SEP. From Theorem \ref{T:generic} we then readily deduce Theorems~\ref{sharpnesscor}and~\ref{T:main1} at the end of Section~\ref{subsec:main}.

Towards the proof of Theorem~\ref{T:generic}, and following the outline of Section~\ref{sec:proofsketch}, we proceed to introduce in Section~\ref{subsec:finiterange} a finite-range approximation of the model, in which the environment is renewed after $L$ time steps and gather its essential features that will be useful for us. We then state two 
key intermediate results, Propositions~\ref{Prop:vLapprox} and \ref{Prop:initialspeed}, which correspond to the first and second parts from the discussion in~Section~\ref{sec:proofsketch}; cf.~also \eqref{eq:introvLtov} and \eqref{eq:introvLquant}. From these, we deduce Theorem~\ref{T:generic} in Section~\ref{subsec:proofskeleton}. The proofs of the two propositions appear in forthcoming sections.

\subsection{Coupling conditions on the environment}\label{subsec:C}

Recall the framework of Section~\ref{subsec:re}-\ref{subsec:rw}, which we now amend with three further conditions. These involve an additional parameter $\nu>0$ which quantifies the activity of the environment (in practice it will  correspond to the rate parameter appearing in \eqref{eq:gen-SEP} in the case of SEP). We keep the dependence on $\nu$ explicit in the following conditions for possible future applications, for which one may wish to tamper with the speed of the environment (for instance, by slowing it down). 
\\
For the purposes of the present article however, one could simply set $\nu=1$ in what follows.

The first two conditions \ref{pe:densitychange} and \ref{pe:compatible} below regard the environment $(\eta_t)_{t \geq 0}$ alone, which, following the setup of Section~\ref{subsec:re}, is assumed to be specified in terms of the measures $(\mathbf{P}^{\eta_0}: \eta_0 \in \Sigma)$ and $({\mu_\rho} : \rho \in J)$ that satisfy \eqref{pe:markov}-\eqref{pe:density}. The first condition, \ref{pe:densitychange}, concerns the empirical density of the environment. Roughly speaking, it gives a quantitative control on how  \eqref{pe:density} is conserved over time. The second condition, \ref{pe:compatible}, is more technical.
In a nutshell, it states that if one environment $\eta$ covers another environment $\eta'$ on a finite interval $I$ at a given time, and if $\eta$ has a larger empirical density than $\eta'$ outside $I$ at the same time, the evolutions of $\eta$ and $\eta'$ can be coupled in a way that $\eta$ covers $\eta'$ on a larger interval after some time. 

\medskip
We proceed to formalize these two properties:
\begin{enumerate}[label=(C.\arabic*) ]
\item \label{pe:densitychange} 
\textit{(Conservation of density).} There exists constants $\Cl{densitystable},\Cl[c]{densitystableexpo} \in (0,\infty)$ such that for all $\rho \in J$ and $\varepsilon \in (0,1)$ (with $\rho+\varepsilon \in J$), and for all 
$\ell, \ell', H,t\geq 1$ satisfying $H>4{\nu} t > \Cr{densitystable}\ell^2\varepsilon^{-2}(1+{ \vert\log^3 (\nu t)}\vert)$ and $\ell'\leq \sqrt{t}$, the following two inequalities hold. Let $\eta_0$ be such that on every interval $I$ of length $\ell$ included $[-H,H]$, one has $ \eta_0(I)\leq (\rho+\varepsilon)\ell$ (resp.~$ \eta_0(I)\geq (\rho-\varepsilon)\ell$).  Then
$$
{\mathbf{P}}^{\eta_0} \left(\begin{array}{c} \text{for all intervals $I'$ of length $\ell'$ included in} \\ \text{$[-H+2{\nu} t,H-2{\nu} t]$: $\eta_t(I') \leq (\rho+3\varepsilon) \ell'$}\\ \text{(resp.~$\eta_t(I')  
 \geq (\rho-3\varepsilon) \ell'$)} \end{array} \right) \geq 1 - 4  H \exp(-\Cr{densitystableexpo}\varepsilon^2\ell').
$$
\item\label{pe:compatible} \textit{(Couplings).} There exists $\Cl{compatible}, \Cl{SEPcoupling2}\in (0,\infty)$ such that the following holds. Let $\rho \in J$, $\varepsilon \in (0,1)$ (with $\rho+\varepsilon \in J$), and $H_1, H_2,t,\ell\geq 1$ be integers { such that $\min\{H_1, H_2-H_1-1\}> 10{\nu}t>4\nu\ell^{100}>\Cr{compatible}$, $\nu^{} \ell>\Cr{compatible}\varepsilon^{-2}(1+\vert\log^3(\nu\ell^4)\vert)$ and $\ell>80\nu\varepsilon^{-1}+\nu^{-2}$}. Let $\eta_0,\eta'_0\in \Sigma$ be such that $\eta_0\vert_{[-H_1, H_1]}\succcurlyeq \eta_{\red 0 \black}'\vert_{[-H_1, H_1]}$ and such that for every interval $I\subseteq [-H_2,H_2]$ of length $\lfloor \ell/2 \rfloor \leq |I| \leq \ell$ , we have $\eta_0(I)\geq (\rho+3\varepsilon/4)\vert I \vert$ and $\eta'_0(I)\leq (\rho+\varepsilon/4)\vert I \vert$. Then there exists a coupling $\Q$ of two environments $\eta,\eta'$ with respective marginals $\mathbf{P}^{\eta_0},\mathbf{P}^{\eta'_0}$ such that 
\begin{equation}\label{eq:compatible1}
\Q\big(\forall s\in [0,t], \, \eta_s\vert_{[-H_1+4{\nu}t, H_1-4{\nu}t]}\succcurlyeq \eta'_s\vert_{[-H_1+4{\nu}t, H_1-4{\nu}t]} \big)\geq 1 - 20t\exp (-{\nu}t/4)
\end{equation}
and 
\begin{equation}\label{eq:compatible2}
\Q\big(\eta_t \vert_{[-H_2+6{\nu}t, H_2-6{\nu}t]} \succcurlyeq \eta'_t\vert_{[-H_2+6{\nu}t, H_2-6{\nu}t]} \big)\geq 1 -5\Cr{SEPcoupling2} \ell^4 H_2\exp\left(- \Cr{SEPcoupling2}^{-1}\textstyle\frac{\nu}{\nu+1}\varepsilon^2\ell\right).
\end{equation}
\end{enumerate}

For later reference, we record the following two particular instances of~\ref{pe:compatible}, which correspond to the cases where $H_2=0$ and $H_1=0$, respectively. For convenience, we state them with better constants and exponents than in~\ref{pe:compatible}. In fact, when verifying condition~\ref{pe:compatible} for the SEP in Section~\ref{subsec:C-SEP}, we will first prove that these two conditions hold, and use them to prove~\ref{pe:compatible}.

\begin{enumerate}[label=(C.2.\arabic*)]
\item\label{pe:drift} \textit{(No particle drifting in from the side).} 
Let $t,H \geq 0$ and $k\geq 1$ be integers, and let $\eta_0,\eta'_0\in \Sigma$ be such that $\eta_0 \vert_{[-H, H]}\succcurlyeq \eta_0' \vert_{[-H, H]}$.   
There exists a coupling $\Q$ of environments $\eta,\eta'$ with respective marginals $\mathbf{P}^{\eta_0}$ and $\mathbf{P}^{\eta'_0}$ such that
\begin{equation}\label{eq:SEPdriftdeviations}
\mathbb{Q}\big(\forall s\in [0,t], \, \eta_s\vert_{[-H+2{\nu}kt, H-2{\nu}kt]}\succcurlyeq\eta'_s\vert_{[-H+2{\nu}kt, H-2{\nu}kt]} \big)\geq 1- 20\exp(-k{\nu} t/4).
\end{equation}
Informally, with high probability no particle of $\eta'$ outside of $[-H,H]$ can drift into $ [-H+2{\nu}t, H-2{\nu}t]$ (when $k=1$ for instance) before time $t$ to perturb the domination of $\eta'$ by $\eta$.

\item\label{pe:couplings} \textit{(Covering $\eta'$ by $\eta$).}
There exists $\Cl{SEPcoupling} >0$ such that for all $\rho\in J$ and $\varepsilon\in (0,1)$ (with $\rho+\varepsilon \in J$), the following holds. If  $H,t\geq 1$ satisfy {$H>4{\nu} t$, $\nu^8t>1$ and $\nu t^{1/4} >\Cr{SEPcoupling}\varepsilon^{-2}(1+\vert \log^{3}(\nu t) \vert)$}, then for all $\eta_0,\eta'_0\in \Sigma$ such that on each interval $I \subset [-H,H]$ of length $\lfloor \ell/2 \rfloor \leq |I| \leq \ell$, where $\ell:=\lfloor t^{1/4}\rfloor$, $\eta_0(I)\geq (\rho+3\varepsilon/4)|I|$ and $ \eta'_0(I) \leq (\rho+\varepsilon/4)|I|$, there exists a coupling $\mathbb{Q}$ of $\eta,\eta'$  with marginals $\mathbf{P}^{\eta_0}, \mathbf{P}^{\eta'_0}$ so that \begin{equation}\label{eq:doublecoupleSEP}
\mathbb{Q}( \eta_t \vert _{[-H+4{\nu} t, H-4{\nu} t]}\succcurlyeq \eta_t' \vert _{[-H+4{\nu} t, H- 4{\nu} t]})\geq 1- \Cr{SEPcoupling2} tH\exp\big(-(\Cr{SEPcoupling2}(1+\nu^{-1}))^{-1}\varepsilon^2t^{1/4} \big).
\end{equation}
In words, the coupling $\mathbb{Q}$ achieves order between $\eta$ and $\eta'$ at time $t$ under suitable regularity assumptions on the empirical density of their initial configurations $\eta_0$ and $\eta_0'$.

\end{enumerate}

The third and last property ensures that if an environment $\eta$ covers another environment $\eta'$ and has at least one extra particle at distance $\ell$ of the origin (which typically happens if $\eta$ has higher density than $\eta'$), then with probability at least exponentially small in $\ell$, by time $\ell$, this particle can reach the origin which will be empty for $\eta'$, while preserving the domination of $\eta'$ by $\eta$. 
We will combine this property with the uniform ellipticity of the random walk on top of the environment to show that the walker has at least an exponentially small probability to reach a position where $\eta$ has a particle but not $\eta'$, which in turn yields a probability bounded away from zero that a walker on $\eta$ steps to the right while a walker on $\eta'$ steps to the left (under the coupling mentioned in Section \ref{sec:proofsketch}), hence creating the desired initial gap that we will then exploit in our constructions.

\begin{enumerate}[label=(C.\arabic*) ]
\setcounter{enumi}{2}
\item\label{pe:nacelle} \textit{(Sprinkler).} 
For all $\rho \in J$, for all integers $H, \ell,k\geq 1$ with $H\geq 2\nu\ell k$ and $k \geq 48\nu^{-1}(\nu +\log(40)-\log(p_\circ(1-p_{\bullet})\nu/2))$, the following holds. If $\eta_0,\eta'_0 \in \Sigma$ satisfy $\eta_0(x)\geq \eta'_0(x)$ for all $x\in [-H,H]$, $ \eta_0([0,\ell])\geq \eta'_0([0,\ell])+1 $ and $\eta'_0([-3\ell+1, 3\ell])\leq 6(\rho+1)\ell$,  then there is a coupling $\mathbb{Q}$ of  $\eta'$ under ${\mathbf{P}}^{\eta_0'}$ and $\eta$ under ${\mathbf{P}}^{\eta_0}$ such that, with 
$
\delta =  (
\nu/2e^{{\nu}} )^{6(\rho+1)\ell}, 
$
\begin{equation}\label{eq:penacelleNEW}
\begin{split}
&\mathbb{Q}(\eta_{\ell}(x )>0  , \,    \eta'_{\ell}( x )=0)\geq 2\delta
\end{split}
\end{equation}
 for $x \in \{0,1\},$ and with $\delta'= \delta (p_{\circ}(1-p_{\bullet}))^{6(\rho+1)\ell}$,
\begin{equation}\label{eq:penacelleNEW2}
\begin{split}
\mathbb{Q}(\{\forall s\in [0, \ell],  \, \eta_s\vert_{[-H+2{\nu}k\ell, H-2{\nu}k\ell]}\succcurlyeq\eta'_s\vert_{[-H+2{\nu}k\ell, H-2{\nu}k\ell]}\}^c)&\leq 20e^{-k\nu \ell/4}
 \leq \delta'.
\end{split}
\end{equation}
\end{enumerate}

\subsection{Main result}\label{subsec:main} Following is our main theorem.

\begin{The}[Sharpness of $v(\cdot)$] \label{T:generic} Let $\eta$ be an environment as in Section~\ref{subsec:re} satisfying~\ref{pe:densitychange}-\ref{pe:nacelle}. Assume that for all $\rho\in J$, there exists $v(\rho)$ such that
\begin{equation}\label{eq:LLNagain}
\mathbb{P}^\rho\text{-a.s., } \lim_n  n^{-1}{X_n}= v(\rho) .
\end{equation}
Then, for all $\rho, \rho'\in J$ such that $\rho>\rho'$, one has  that
\begin{align}
&\label{eq:monotonic} v(\rho)>v(\rho').
\end{align}
\end{The}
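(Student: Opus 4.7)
The plan is to prove \eqref{eq:monotonic} by comparing $v(\rho)$ with the speed of a truncated model in which, for an integer $L$, the environment is fully re-sampled from $\mu_\rho$ at every time multiple of $L$. Call the associated annealed law $\P^{\rho,L}$. Since the increments $(X_{(k+1)L}-X_{kL})_{k\ge 0}$ are then i.i.d.\ and bounded, a strong LLN yields a deterministic speed $v_L(\rho)=\Ex^{\rho,L}[X_L]/L$. I will reduce \eqref{eq:monotonic} to two quantitative statements: a two-sided approximation of $v$ by $v_L$ at a slightly sprinkled density (Proposition~\ref{Prop:vLapprox}), and a strict quantitative monotonicity of $v_L$ (Proposition~\ref{Prop:initialspeed}), with the latter beating the error of the former.

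For Proposition~\ref{Prop:vLapprox}, the strategy is a dyadic renormalisation: one compares the range-$2^k L$ and range-$2^{k+1}L$ models via successive couplings as in \eqref{eq:chainsprinkling}, each paying an increment $\eps_{k,L}-\eps_{k-1,L}$ of density sprinkling and $\delta_{k,L}-\delta_{k-1,L}$ of speed loss. A coupling at scale $2^k$ is built by using \ref{pe:couplings} within a short time window $t$ of size $(\log L)^C$ to restore the domination $\eta^{(1)} \preccurlyeq \eta^{(2)}(\cdot-2t)$ after a renewal, combined with \ref{pe:drift} to prevent particles invading the window, and condition~\ref{pe:densitychange} to propagate empirical density regularity so that \ref{pe:couplings} can be re-applied at the next scale. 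Lemma~\ref{L:monotone}, applied to the worst-case post-renewal positions of the two walks separated by $2t$, transfers this into a control on $\Ex[X_{2^{k+1}L}/2^{k+1}L]$; the per-scale cost is of order $t/L$, summable in $k$, giving $v_L(\rho-\eps_L)-\delta_L \le v(\rho) \le v_L(\rho+\eps_L)+\delta_L$ with $\eps_L,\delta_L=o_L(1)$ and $\delta_L$ morally of order $(\log L)^C/L$.

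For Proposition~\ref{Prop:initialspeed}, the aim is to exhibit a gain $\Ex^{\rho+\eps}[X_L]-\Ex^\rho[X_L] \ge L^{1-o(1)}$, which dominates $3L\delta_L$. Over each sub-interval of length $T=(\log L)^C$, two coupled walks on coupled environments are monitored: with probability $\gtrsim e^{-ct}$ for $t=\sqrt{\log L}$, the \emph{sprinkler} condition \ref{pe:nacelle} produces an extra particle of $\eta^{\rho+\eps}$ absent from $\eta^\rho$ near the walks that forces opposite jumps, opening a spatial gap of order $t$. A \emph{small coupling} from \ref{pe:couplings} then restores local domination on a stretched-exponential window with probability $\ge 1-e^{-t^{1/100}}$, after which the \emph{surgery} extension of \ref{pe:compatible} promotes it to a macroscopic window with probability $1-O(L^{-100})$; on the rare complementary event, a \emph{parachute} coupling from \ref{pe:drift} restores domination on a spatially shifted window at the cost of at most a linear backward drift, whose expected penalty is crushed by its tiny probability. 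Summing the gains over the $L/T$ sub-intervals against the negligible failure costs yields $v_L(\rho+\eps)>v_L(\rho)+3\delta_L$ for $L$ large. Combining this with Proposition~\ref{Prop:vLapprox} and choosing $L$ large enough that $\rho-\eps_L>\rho'+\eps_L$ gives
\[
v(\rho)\ge v_L(\rho-\eps_L)-\delta_L \ge v_L(\rho'+\eps_L)+2\delta_L \ge v(\rho')+\delta_L>v(\rho'),
\]
proving \eqref{eq:monotonic}. The main obstacle is the surgery step: the domination window must be extended far beyond what environment mixing alone can achieve within the short time $t$ used to protect the sprinkler-induced gap, and the outer part must be treated on the longer timescale $T$ while simultaneously preserving the delicate local domination in the trapezoidal core — this is exactly what the dual conclusion of \ref{pe:compatible} encodes.
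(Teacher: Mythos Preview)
Your proposal is correct and follows essentially the same route as the paper: it reduces Theorem~\ref{T:generic} to Propositions~\ref{Prop:vLapprox} and~\ref{Prop:initialspeed}, outlines the dyadic renormalisation for the former (via Lemma~\ref{Lem:L2L} using \ref{pe:densitychange}, \ref{pe:drift}, \ref{pe:couplings} and Lemma~\ref{L:monotone}) and the sprinkler/small/surgery/parachute scheme for the latter (via \ref{pe:nacelle}, \ref{pe:couplings}, \ref{pe:compatible}, \ref{pe:drift}), then chains the two estimates exactly as in \S\ref{subsec:proofskeleton}. The only cosmetic slack is in your final display, where the step $v_L(\rho-\eps_L)-\delta_L \ge v_L(\rho'+\eps_L)+2\delta_L$ implicitly uses Proposition~\ref{Prop:initialspeed} with an $L$-dependent gap; the paper makes this clean by inserting the intermediate densities $\rho-\varepsilon/3$ and $\rho'+\varepsilon/3$ via \eqref{eq:vLmonotonic} so that the proposition is invoked with a fixed $\varepsilon/3$.
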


With Theorem~\ref{T:generic} at hand, we first give the proofs of Theorems~\ref{sharpnesscor}~and~\ref{T:main1}. We start with the latter.

\begin{proof}[Proof of Theorem \ref{T:main1}] Theorem~\ref{T:main1} concerns the particular case where $\mathbb{P}^\rho$ refers to the walk of Section~\ref{subsec:rw} evolving on top of the exclusion process $\eta$ started from product Bernoulli$(\rho)$ distribution, for $\rho \in J \subset (0,1)$. The properties \eqref{pe:markov}-\eqref{pe:density} and~\ref{pe:densitychange}-\ref{pe:nacelle} are indeed all satisfied in
this case, as is proved separately in Section~\ref{sec:SEP} below, see Lemma~\ref{L:SEP-P} and Proposition~\ref{P:SEP-C}. 

In the notation of \eqref{eq:rho-pm}, we now separately consider the intervals $J\in \{J_-, J_0, J_+\}$, where $J_-=(0,\rho_-)$, $J_0=(\rho_-,\rho_+)$ and $J_+=(\rho_+,1)$. The fact that the law of large numbers \eqref{eq:LLNagain} holds for any choice of $J$ is the content of \cite[Theorem~1.1]{HKT20}. Thus Theorem~\ref{T:generic} is in force, and $v$ is strictly increasing on $J$ for any  $J\in \{J_-, J_0, J_+\}$. Since by direct application of~\eqref{pe:monotonicity} and Lemma~\ref{L:monotone}, $v$ is already non-decreasing on $J_-\cup J_0\cup J_+$, it must be (strictly) increasing on $J_-\cup J_0\cup J_+$ altogether, and \eqref{eq:monotonic-intro} follows. In view of \eqref{eq:rho-pm}, the first equality in \eqref{eq:equality} is an immediate consequence of \eqref{eq:monotonic} with $J=J_0$.

As to the second equality in \eqref{eq:equality}, recalling the definition of $\rho_c$ from \eqref{eq:rho_c}, we first argue that $\rho_- \leq\rho_c$. If $\rho < \rho_-$, then by \eqref{eq:intro-LLN}-\eqref{eq:rho-pm}, $X_n/n \to v(\rho)<0$ $\P^{\rho}$-a.s.~and thus in particular $\P^{\rho}(\limsup X_n <0)=1$ (in fact it equals $-\infty$ but we will not need this). On the other hand, $\{ H_n < H_{-1} \}\subset \{ H_{-1}>n\} \subset \{X_k \geq 0, \, \forall k \leq n\}$, and the latter has probability tending to $0$ as $n \to \infty$ under $\P^{\rho}$. It follows that $\theta(\rho)= 0$ in view of \eqref{eq:order-parameter}, whence $\rho \leq \rho_c$, and thus $\rho_- \leq \rho_c$ upon letting $\rho \uparrow \rho_-$. 

Since $\rho_-=\rho_+$, in order to complete the proof it is enough to show that $\rho_+ \geq \rho_c$. Let $\rho> \rho_+$. We aim to show that $\theta(\rho) > 0$. Using the fact that $v(\rho)>0$ and \eqref{eq:intro-LLN}, one first picks $n_0=n_0(\rho) \geq 1$ such that $\P^{\rho}_z(X_n >0, \, \forall n \geq n_0) \geq 1/2$ for any $z=(m,m)$ with $m \geq 0$ (the worst case is $m=0$, the other cases follow from the case $m=0$ using invariance under suitable space-time translations). Now, observe that for all $n \geq 0$, under $\P^{\rho}$,
\begin{multline}\label{eq:supercrit-1}
\{ H_n < H_{-1} \} \supset\big( \{ X_k-X_{k-1}= +1, \, \forall 1 \leq k \leq n_0\} \\
\cap \{ X_{2n_0+k'} >0, \, \forall k' \geq n_0\}\cap \{\limsup_{n\rightarrow\infty}X_n=+\infty\} \big);
\end{multline}
for, on the event on the right-hand side, one has that $X_{n_0}= n_0$ and the walk can in the worst case travel $n_0$ steps to the left during the time interval $(n_0, 2n_0]$, whence in fact $X_k \geq 0$ for all $k \geq 0$, and $H_n<\infty$ since $\limsup_{n\rightarrow\infty}X_n=+\infty$. Combining \eqref{eq:supercrit-1}, the fact that the last event on the RHS has  full probability due to \eqref{eq:intro-LLN}-\eqref{eq:rho-pm}, the fact that $P^{\eta}_{(0,0)}(X_k-X_{k-1}= +1, \, \forall 1 \leq k \leq n_0) \geq (p_{\bullet} \wedge p_{\circ})^{n_0}$ on account of \eqref{def:A}-\eqref{eq:defX} and the Markov property of the quenched law at time $n_0$, one finds that 
$$
\theta_n(\rho) \stackrel{\eqref{eq:Bn}}{\geq}  (p_{\bullet} \wedge p_{\circ})^{n_0} \cdot \P_{(n_0, n_0)}^{\rho}(X_{n_0+k'} >0, \, \forall k' \geq n_0 ) \geq 2^{-1} (p_{\bullet} \wedge p_{\circ})^{n_0} >0,
$$
where the second inequality follows by choice of $n_0$. Thus, $\theta(\rho)>0$ (see \eqref{eq:order-parameter}), i.e.~$\rho \geq \rho_c$. Letting $\rho \downarrow \rho_+$ one deduces that $\rho_+ \geq \rho_c$, and this completes the verification of \eqref{eq:equality}.
\end{proof}

\begin{proof}[Proof of Theorem~\ref{sharpnesscor}]
Only item~\textit{(i)} requires an explanation. This is an easy consequence of \cite[Proposition 3.6]{HKT20}, e.g.~with the choice $\varepsilon=(\rho_c-\rho)/4$ for a given $\rho<\rho_c$, and Theorem \ref{T:main1} (see \eqref{eq:equality}), as we now explain. Indeed one has that $\{ H_n < H_{-1} \}\subset \{X_n\ge v(\rho_c-\varepsilon)n\}$ under $\P^{\rho}$ as soon as $n$ is large enough (depending on $\rho$); to see the inclusion of events recall that $\rho_c=\rho_-$ on account of \eqref{eq:equality}, which has already been proved, and therefore $v(\rho_c-\varepsilon)<0$. The conclusion of item~\textit{(i)} now readily follows using the second estimate in \cite[(3.27)]{HKT20}.
\end{proof}

\subsection{The finite-range model $\mathbf{P}^{\rho,L}$}\label{subsec:finiterange}

Following the strategy outlined in Section~\ref{sec:proofsketch}, we will aim at comparing the random walk in dynamic random environment, which has infinite-range correlations, to a finite-range model, which enjoys regeneration properties, and which we now introduce.

For a density $\rho \in J$ (recall that $J$ is an open interval of $\mathbb{R}_+$) and an integer $L\geq1$, we define a finite-range version of the environment, that is, a probability measure $\mathbf{P}^{\rho,L}$ on $\Sigma \ni \eta=(\eta _t(x):\, x \in \mathbb{Z}, \, t\in\mathbb{R}_+) $ (see Section~\ref{subsec:re} for notation) such that the following holds. At every time $t$ multiple of $L$, $\eta_t$ is sampled under $\mathbf{P}^{\rho,L}$ according to ${\mu}_\rho$ (recall \eqref{pe:stationary}), independently of $(\eta _s)_{0\leq s<t}$ and, given $\eta_t $, the process $(\eta_{t+s})_{0\le s<L}$ has the same distribution under $\mathbf{P}^{\rho,L}$ as $(\eta_s)_{0\le s<L}$ under ${\bf P}^{\eta_t }$, cf.~Section~\ref{subsec:re} regarding the latter. We denote $\mathbf{E}^{\rho,L}$ the expectation corresponding to $\mathbf{P}^{\rho,L}$. It readily follows that $\eta$ is a homogenous Markov process under $\mathbf{P}^{\rho,L}$, and that $\mathbf{P}^{\rho,L}$ inherits all of Properties \eqref{pe:markov}-\eqref{pe:density} from $\mathbf{P}^{\rho}$. In particular ${\mu}_\rho$ is still an invariant measure for the time-evolution of this environment. Note that $\mathbf{P}^{\rho,\infty}$ is well-defined and $\mathbf{P}^{\rho,\infty}= \mathbf{P}^{\rho}$.

Recalling the quenched law $P_z^{\eta}$ of the walk $X$ in environment $\eta$ started at $z\in \mathbb{Z}$ from Section~\ref{subsec:rw}, we extend the annealed law of the walk from \eqref{eq:RW_ann} by setting $\P_{z}^{\rho,L}[\cdot] = \int \mathbf{P}^{\rho,L}(d\eta) P_z^{\eta}[\cdot]$ so that $\P^{\rho,\infty}_z=\P^\rho_z$ corresponds to the annealed law defined in~\eqref{eq:RW_ann}. We also abbreviate $\P^{\rho,L}=\P^{\rho,L}_0$.

We now collect the key properties of finite-range models that will be used in the sequel. A straightforward consequence of the above definitions is that
\begin{equation}\label{eq:regenstructure}
\begin{array}{l}
\text{under }\P^{\rho,L}, \left\{(X_{kL+s}-X_{kL})_{0\le s\le L}: k\in\mathbb{N}\right\}\text{ is an i.i.d.~family,}\\
\text{with common distribution identical to the } \mathbb{P}^{\rho}\text{-law of } (X_{s})_{0\le s\le L}.
\end{array}
\end{equation}
Moreover, by direct inspection one sees that $\mathbf{P}^{\rho,L}$ inherits the properties listed in \eqref{subsec:C} from $\mathbf{P}^{\rho}$; that is, whenever $\mathbf{P}^{\rho}$ does,
\begin{equation}
\label{eq:C-for-finiterange}
\text{$\mathbf{P}^{\rho,L}$ satisfies \ref{pe:densitychange},~\ref{pe:compatible},~\ref{pe:drift},~\ref{pe:couplings}
 (all for $L\geq t$) and~\ref{pe:nacelle} (for $L\geq \ell$).}
\end{equation}
(more precisely, all of these conditions hold with $\mathbf{P}^{\eta,L}$ in place of $\mathbf{P}^{\eta}$ everywhere, where $\mathbf{P}^{\eta,L}$ refers to the evolution under $\mathbf{P}^{\rho,L}$ with initial condition $\eta_0=\eta$).
The next result provides a well-defined monotonic speed $v_L(\cdot)$ for the finite-range model. This is an easy fact to check. A much more refined quantitative monotonicity result will follow shortly in Proposition \ref{Prop:vLapprox} below (implying in particular strict monotonicity of $v_L(\cdot)$).

\begin{Lem}[Existence of the finite-range speed $v_L$]\label{Prop:vLexists}
For $\rho \in J$ and an integer $L\geq 1$, let
\begin{equation}\label{eq:defvL}
v_L(\rho)\stackrel{\textnormal{def.}}{=}\mathbb{E}^{\rho,L}\left[{X_L}/{L}\right]=\mathbb{E}^{\rho}\left[{X_L}/{L}\right].
\end{equation}
Then
\begin{equation}\label{eq:LLNvL}
 \mathbb{P}^{\rho,L}\textnormal{-a.s.}
\lim_{n\rightarrow +\infty} n^{-1}{X_n}=v_L(\rho).
\end{equation}
Moreover, for any fixed $L$, we have that
\begin{equation}\label{eq:vLmonotonic}
\text{the map $\rho\mapsto v_L(\rho)$ is non-decreasing on $J$.}
\end{equation}
\end{Lem}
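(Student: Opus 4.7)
The three assertions can be dispatched in sequence, all resting on elementary properties of the constructions already set up.

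For the two expressions in \eqref{eq:defvL} to coincide, it suffices to observe that $X_L$ depends only on $(\eta_n)_{0\le n\le L-1}$ and on $U_0,\dots,U_{L-1}$ (via \eqref{e:def-RWDRE}). By the very construction of $\mathbf{P}^{\rho,L}$, the restriction of $\eta$ to $[0,L)$ has the same law under $\mathbf{P}^{\rho,L}$ as under $\mathbf{P}^{\rho}$ (both are $\mathbf{P}^{\eta_0}$ with $\eta_0\sim \mu_\rho$), so averaging over the $U_i$'s yields $\mathbb{E}^{\rho,L}[X_L]=\mathbb{E}^{\rho}[X_L]$, which is the content of the second equality in \eqref{eq:defvL}.

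For the law of large numbers \eqref{eq:LLNvL}, the key input is \eqref{eq:regenstructure}, which says that the block increments $Y_k:=X_{(k+1)L}-X_{kL}$ form an i.i.d.~sequence under $\mathbb{P}^{\rho,L}$ with common mean $\mathbb{E}^{\rho,L}[X_L]=Lv_L(\rho)$. Since the $Y_k$ are bounded (by $L$), Kolmogorov's SLLN gives $X_{nL}/(nL)\to v_L(\rho)$ a.s.\ as $n\to\infty$. To pass from subsequential convergence along multiples of $L$ to the full sequence, write an arbitrary integer $n$ as $n=kL+r$ with $0\le r<L$; by the deterministic Lipschitz bound \eqref{e:lipschitz} we have $|X_n-X_{kL}|\le r\le L$, so that $X_n/n=X_{kL}/n+O(L/n)\to v_L(\rho)$ almost surely.

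For the monotonicity \eqref{eq:vLmonotonic}, fix $\rho\le\rho'$ in $J$ and build a joint environment $(\eta,\eta')$ together with a single family of uniforms $(U_w)_{w\in\mathbb{L}}$ as follows. At every time multiple of $L$, sample $(\eta_{kL},\eta'_{kL})$ from a stochastic-domination coupling of $\mu_\rho$ and $\mu_{\rho'}$ provided by \eqref{pe:monotonicity}(ii), so that $\eta_{kL}\preccurlyeq \eta'_{kL}$, and then evolve the pair on $[kL,(k+1)L)$ according to the quenched coupling of \eqref{pe:monotonicity}(i), preserving $\eta_t\preccurlyeq \eta'_t$ for every $t$ in this interval; the resulting $\eta$ and $\eta'$ have respective marginals $\mathbf{P}^{\rho,L}$ and $\mathbf{P}^{\rho',L}$, and almost surely $\eta_n(x)\le \eta'_n(x)$ for every $(x,n)\in \mathbb{L}$. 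Constructing the walks $X$ on $\eta$ and $X'$ on $\eta'$ via the common arrow field as in \eqref{def:A}--\eqref{eq:defX} and applying Lemma~\ref{L:monotone} with $K=\mathbb{Z}\times\mathbb{Z}_+$ and $w=w'=(0,0)$ gives $X_L\le X'_L$ almost surely. Taking expectations and dividing by $L$ yields $v_L(\rho)\le v_L(\rho')$, which is \eqref{eq:vLmonotonic}.

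Among these three steps, none is really an obstacle: the only point requiring some care is matching the finite-range and the full measures on the relevant time window for \eqref{eq:defvL}, and checking that the quenched coupling of \eqref{pe:monotonicity}(i) can be re-instated independently on each block $[kL,(k+1)L)$ to deliver a single coupling of $\mathbf{P}^{\rho,L}$ and $\mathbf{P}^{\rho',L}$ with the pointwise domination that Lemma~\ref{L:monotone} requires.
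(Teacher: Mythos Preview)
Your proof is correct and follows exactly the route indicated in the paper: the equality in \eqref{eq:defvL} via the fact that $X_L$ only sees the environment on $[0,L)$ (equivalently, \eqref{eq:regenstructure}), the law of large numbers via \eqref{eq:regenstructure} plus the Lipschitz bound \eqref{e:lipschitz}, and monotonicity via the coupling from \eqref{pe:monotonicity} combined with Lemma~\ref{L:monotone}. The paper's own proof is a two-line pointer to precisely these ingredients; you have simply filled in the details.
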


\begin{proof}
The second equality in \eqref{eq:defvL} is justified by \eqref{eq:regenstructure}.  The limit in \eqref{eq:LLNvL} is an easy consequence of \eqref{eq:regenstructure}, \eqref{e:lipschitz}, the definition \eqref{eq:defvL} of $v_L$ and the law of large numbers.
The monotonicity \eqref{eq:vLmonotonic} is obtained  by combining \eqref{pe:monotonicity} and Lemma \ref{L:monotone}.
\end{proof}

\subsection{Key propositions and proof of Theorem~\ref{T:generic}}\label{subsec:proofskeleton}

In this section, we provide two key intermediate results, stated as Propositions~\ref{Prop:vLapprox} and~\ref{Prop:initialspeed}, which roughly correspond to the two parts of the proof outline in Section~\ref{sec:proofsketch}.
Theorem~\ref{T:generic} will readily follow from these results, and the proof appears at the end of this section. As explained in the introduction, the general strategy draws inspiration from recent interpolation techniques used in the context of sharpness results for percolation models with slow correlation decay, see in particular \cite{SharpnessGFF,RI-I}, but the proofs of the two results stated below are vastly different.

Our first result is proved in Section \ref{sec:approx} and allows us to compare the limiting speed of the full-range model to the speed of the easier finite-range model with a slightly different density. Note that, even though both $v(\cdot)$ and $v_L(\cdot)$ are monotonic, there is a priori no clear link between $v$ and $v_L$.

\begin{Prop}[Approximation of $v$ by $v_L$]\label{Prop:vLapprox}

Under the assumptions of Theorem \ref{T:generic}, there exists $\Cl{C:approx}=\Cr{C:approx}(\nu) \in (0,\infty)$ such that for all $L \geq 3$ and $\rho$ such that $[\rho -\frac1{\log L},\rho +\frac1{\log L}]\subseteq J$, 
\begin{equation}
\label{eq:vLapprox}
v_{L}\Big(\rho -\frac1{ \log L}\Big)-\frac{\Cr{C:approx}(\log L)^{100}}{L}\leq v(\rho)\leq v_L\Big(\rho +\frac1{ \log L}\Big) +\frac{\Cr{C:approx}(\log L)^{100}}{L}.
\end{equation}
\end{Prop}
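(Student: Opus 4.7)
The plan is to implement the bootstrap outlined in Section~\ref{sec:proofsketch}: compare $v_L$ to $v$ along the chain of doublings $L \to 2L \to 4L \to \cdots$ of the renewal horizon, trading at every step a small density sprinkling against a small loss of speed, and then use the elementary fact that $v_M(\rho) = \Ex^\rho[X_M/M] \to v(\rho)$ pointwise as $M\to\infty$, by bounded convergence applied to~\eqref{eq:LLNagain}. Only the first inequality in~\eqref{eq:vLapprox} needs to be treated, the second being obtained symmetrically by interchanging the roles of the two models at each doubling.

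\textbf{One-step comparison.} The core ingredient is the following lemma: for every $k \geq 0$, with $t^{(k)} := (k+\log L)^{100}$ and $\varepsilon^{(k)}$ a small positive density increment calibrated so that~\ref{pe:couplings} succeeds (on an interval of spatial radius $\asymp 2^{k+1}L$) with probability $\geq 1 - 2^{-k} L^{-50}$, one can produce on a common probability space walks $X^{(1)} \sim \P^{\rho, 2^k L}$ and $X^{(2)} \sim \P^{\rho+\varepsilon^{(k)}, 2^{k+1}L}$ with $X^{(1)}_{2^{k+1}L} \leq X^{(2)}_{2^{k+1}L} + 2 t^{(k)}$ on that good event. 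Taking expectations, dividing by $2^{k+1}L$, and absorbing the contribution $|X^{(i)}| \leq 2^{k+1}L$ from the complementary event, one deduces
\begin{equation*}
v_{2^k L}(\rho) \leq v_{2^{k+1}L}(\rho + \varepsilon^{(k)}) + \delta^{(k)}, \quad \delta^{(k)} := C t^{(k)}/(2^{k+1}L).
\end{equation*}
The coupling proceeds as in Figure~\ref{f:L2L_intro}: sample $\eta^{(1)}_0 \preccurlyeq \eta^{(2)}_0$ from $\mu_\rho, \mu_{\rho+\varepsilon^{(k)}}$ by~\eqref{pe:monotonicity}(ii); evolve both on $[0, 2^k L]$ preserving the domination via~\eqref{pe:monotonicity}(i), and drive the walks by the shared arrow field from Section~\ref{subsec:rw} so that Lemma~\ref{L:monotone} yields $X^{(1)} \leq X^{(2)}$. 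At time $2^k L$ the finite-range rule forces $\eta^{(1)}_{2^k L}$ to be resampled from $\mu_\rho$, independently of the past; I then apply condition~\ref{pe:couplings} on $[2^k L, 2^k L + t^{(k)}]$ conditionally on the empirical-density profile of $(\eta^{(1)}_{2^k L}, \eta^{(2)}_{2^k L})$ (which is as required thanks to~\eqref{pe:density} and~\ref{pe:densitychange}) to obtain, with the claimed probability, $\eta^{(1)}_{2^k L + t^{(k)}}(\cdot) \preccurlyeq \eta^{(2)}_{2^k L + t^{(k)}}(\cdot - 2t^{(k)})$. Since during $[2^k L, 2^k L + t^{(k)}]$ each walk moves by at most $t^{(k)}$ (cf.~\eqref{e:lipschitz}), it is enough, on $[2^k L + t^{(k)}, 2^{k+1}L]$, to restart the monotone coupling with the shifted environments and with the walks placed at the worst-case positions $X^{(1)}_{2^k L + t^{(k)}} + t^{(k)}$ and $X^{(2)}_{2^k L + t^{(k)}} - t^{(k)}$; Lemma~\ref{L:monotone} then propagates the order up to time $2^{k+1}L$ with the $2t^{(k)}$ offset.

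\textbf{Iteration and passage to the limit.} Chaining the one-step bound for $k = 0, 1, \ldots, K-1$ and invoking monotonicity~\eqref{eq:vLmonotonic} of $v_{2^K L}$ in its argument gives
\begin{equation*}
v_L(\rho) \leq v_{2^K L}\Bigl(\rho + \textstyle\sum_{k=0}^{K-1}\varepsilon^{(k)}\Bigr) + \textstyle\sum_{k=0}^{K-1}\delta^{(k)}, \quad K \geq 1.
\end{equation*}
The growth $t^{(k)} \sim (k+\log L)^{100}$ and the calibration $\varepsilon^{(k)} \sim (k+\log L)^{-c}$ dictated by~\ref{pe:couplings} are chosen precisely so that $\varepsilon_L := \sum_k \varepsilon^{(k)} \leq 1/\log L$ and $\delta_L := \sum_k \delta^{(k)} \leq \Cr{C:approx}(\log L)^{100}/L$, both series being dominated by their $k=0$ term thanks to the geometric factor $2^{-k}$ in $\delta^{(k)}$ and a polynomial decay in $k$ for $\varepsilon^{(k)}$. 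Sending $K\to\infty$ and using $v_{2^K L}(\rho+\varepsilon_L) \to v(\rho+\varepsilon_L)$, I obtain $v_L(\rho) \leq v(\rho+\varepsilon_L) + \delta_L$, which after the change of variable $\rho \mapsto \rho - \varepsilon_L$ and one final use of~\eqref{eq:vLmonotonic} (to replace $\varepsilon_L$ by $1/\log L$) yields the left-hand inequality of~\eqref{eq:vLapprox}.

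\textbf{Main obstacle.} The delicate technical point is the recoupling at each renewal time $2^k L$: since $\eta^{(1)}_{2^k L}$ has been erased and re-drawn, one must apply~\ref{pe:couplings} conditionally on $\eta^{(1)}_{2^k L}$ and $\eta^{(2)}_{2^k L}$, whose joint law is not independent; it is essential that~\ref{pe:couplings} is a genuinely \emph{quenched} coupling requiring only the empirical density of the initial configurations (cf.~Remark~\ref{Rk:quenchedconditions}), a regularity guaranteed uniformly in $k$ by~\eqref{pe:density} combined with~\ref{pe:densitychange} applied to the continuously-evolving environment $\eta^{(2)}$. Meeting the hypotheses $H \gg \nu t \gg \ell^4$ of~\ref{pe:couplings} at every scale while keeping $\sum \varepsilon^{(k)} \leq 1/\log L$ and $\sum \delta^{(k)} \leq (\log L)^{100}/L$ requires a careful bookkeeping of the scales and is the ultimate source of both the $1/\log L$ sprinkling and the $(\log L)^{100}/L$ error appearing in the statement.
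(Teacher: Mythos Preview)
Your argument is correct and in fact somewhat more direct than the paper's. Both routes rest on the same one-step coupling (the paper's Lemma~\ref{Lem:L2L}) chained over dyadic scales $L_k=2^kL$, with cumulative sprinkling $\sum_k\varepsilon^{(k)}\le 1/\log L$ and cumulative speed loss $\sum_k\delta^{(k)}\le C(\log L)^{100}/L$. The difference lies in how the chain is closed. You take expectations directly from the one-step coupling to get $v_{L_k}(\rho_k)\le v_{L_{k+1}}(\rho_{k+1})+\delta^{(k)}$, telescope, bound $v_{L_K}(\rho_K)\le v_{L_K}(\rho)$ via~\eqref{eq:vLmonotonic}, and pass to the limit using $v_M(\rho)=\mathbb{E}^\rho[X_M/M]\to v(\rho)$ (dominated convergence from~\eqref{eq:LLNagain}). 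The paper instead tracks the probabilities $p_{\rho_k,K,\delta_k}^{(k)}=\mathbb{P}^{\rho_k,L_k}(X_{L_K}\le \delta_k L_K)$, telescopes in $k$, and controls each increment $p^{(k+1)}-p^{(k)}$ via Chebyshev's inequality, which consumes \emph{both} the expectation bound~\eqref{eq:L2L-renormalization-expectation} and the variance bound~\eqref{eq:L2L-renormalization-variance} from Lemma~\ref{Lem:L2L}; it then concludes by confronting the resulting probability statement with the LLN. Your route is more elementary in that it dispenses with the variance control and the Chebyshev step altogether, at the cost of relying on convergence of first moments (which is immediate here). One small omission in your sketch: after the re-coupling via~\ref{pe:couplings} at time $2^kL+t^{(k)}$, the domination $\eta^{(1)}(\cdot)\preccurlyeq\eta^{(2)}(\cdot-2t^{(k)})$ is only guaranteed on a finite spatial window, and one needs~\ref{pe:drift} to propagate it over $[2^kL+t^{(k)},2^{k+1}L]$ (this is the event $G_3$ in the paper's proof of Lemma~\ref{Lem:L2L}); this is routine but should be included.
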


With Proposition \ref{Prop:vLapprox} above, we now have a chance to deduce the strict monotonicity of $v(\cdot)$ from that of $v_L(\cdot)$. Proposition~\ref{Prop:initialspeed} below, proved in Section \ref{sec:initialspeed}, provides a quantitative strict monotonicity for the finite-range speed $v_L(\cdot)$. Note however that it is not easy to obtain such a statement, even for the finite-range model: indeed, from the definition of $v_L(\cdot)$ in \eqref{eq:defvL}, one can see that trying to directly compute the expectation in \eqref{eq:defvL} for \text{any} $L$ boils down to working with the difficult full range model. One of the main difficulties is that the environment mixes slowly and creates strong space-time correlations. Nonetheless, using sprinkling methods, it turns out that one can speed-up the mixing dramatically by increasing the density of the environment. This is the main tool we use in order to obtain the result below.

\begin{Prop}[Quantitative monotonicity of $v_L$]\label{Prop:initialspeed}
 Assume \eqref{eq:C-for-finiterange}  
 holds and let $\rho\in J$. For all $\epsilon>0$ such that $\rho+\epsilon\in J$, there exists $L_1=L_1(\rho,\epsilon, \red \nu \black)\geq 1$ such that, for all $L\ge L_1$,
\begin{equation}\label{eq:vLincrease}
v_{L}(\rho+\epsilon) - v_{L}(\rho) \geq \frac{3 
\Cr{C:approx} \color{black} (\log L)^{100}}{L}.
\end{equation}
\end{Prop}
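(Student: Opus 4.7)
The plan is to construct a coupling $\Q$ of two walks $X^{(1)} \stackrel{\text{law}}{=} \mathbb{P}^{\rho,L}$ and $X^{(2)} \stackrel{\text{law}}{=} \mathbb{P}^{\rho+\epsilon,L}$, with respective environments $\eta^{(1)}, \eta^{(2)}$, such that $\mathbb{E}^{\Q}[X^{(2)}_L - X^{(1)}_L] \geq L^{1-o(1)}$. Since $v_L(\rho+\epsilon) - v_L(\rho) = L^{-1}\mathbb{E}^{\Q}[X^{(2)}_L - X^{(1)}_L]$ by \eqref{eq:defvL}, this yields \eqref{eq:vLincrease} with a wide margin for $L$ large. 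Following Section~\ref{sec:proofsketch}, I fix scales $t = \lfloor\sqrt{\log L}\rfloor$ and $T = \lfloor(\log L)^{100}\rfloor$, and partition $[0,L]$ into $N := \lfloor L/T \rfloor$ blocks of length $T$. The coupling is built block by block so that, with high probability, at every block boundary $kT$ the configurations satisfy $\eta^{(2)}_{kT}(\,\cdot\, + X^{(2)}_{kT}) \succcurlyeq \eta^{(1)}_{kT}(\,\cdot\, + X^{(1)}_{kT})$ on a large interval around the origin, which is the state required to start the next block. At $k=0$ this holds by combining \eqref{pe:monotonicity} ii) (to dominate $\mu_\rho$ by $\mu_{\rho+\epsilon}$) with the quenched monotone coupling \eqref{pe:monotonicity} i).

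\textbf{Creating the initial gap.} Within a generic block $[kT, (k+1)T]$, I invoke \ref{pe:nacelle} together with the uniform ellipticity of the walks built into \eqref{def:A} to produce, with probability bounded away from $0$ (uniformly in $L$), a short time $s_1$ at which $X^{(2)}$ sits atop a \textit{sprinkler} particle of $\eta^{(2)}$ absent from the corresponding (shifted) position in $\eta^{(1)}$ and jumps right, while $X^{(1)}$ sits on an empty site and jumps left. This opens a gap of $2$ while the second part \eqref{eq:penacelleNEW2} of \ref{pe:nacelle} preserves the dominance of environments with overwhelming probability. Iterating this mechanism (equivalently, prescribing the arrow pattern to drive each walker ballistically away from the other) over $\asymp t$ further steps produces a gap of size $t$ at some stopping time $s_2 \leq kT + O(t)$, at a compounded probability cost $\simeq e^{-ct}$.

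\textbf{Locking the gap via surgery or parachute.} Once the gap of size $t$ has opened, I apply the coupling of \ref{pe:couplings} over a time interval $[s_2, s_3]$ with $s_3 - s_2 = t/2$, on a spatial window of stretched-exponential width $\asymp \exp(\sqrt{t})$ centred around the walker positions: by \eqref{eq:doublecoupleSEP}, this \textit{small coupling} re-establishes $\eta^{(2)}_{s_3}(\,\cdot\, + X^{(2)}_{s_3}) \succcurlyeq \eta^{(1)}_{s_3}(\,\cdot\, + X^{(1)}_{s_3})$ with probability at least $1 - \exp(-ct^{1/100})$. On this success event I then apply the \textit{surgery coupling} \ref{pe:compatible} over $[s_3, (k+1)T]$: the ``green trapezoid'' of initial dominance is kept impermeable throughout the block by a combination of \ref{pe:drift} and the concentration of empirical density \ref{pe:densitychange} (which rules out particles drifting in from the sides), while \eqref{eq:compatible2} restores dominance over a spatial window of size $\asymp L$ at time $(k+1)T$ with failure probability $\leq L^{-100}$. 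This forces $X^{(2)}_{(k+1)T} - X^{(1)}_{(k+1)T} \geq t/2$ and readies the block's ending state for the next iteration. When the small coupling fails, I instead apply a \textit{parachute coupling} (another invocation of \ref{pe:couplings} on the whole of $[s_3, (k+1)T]$ and a window of width $\asymp L$), which in the worst case only gives $X^{(2)}_{(k+1)T} - X^{(1)}_{(k+1)T} \geq -2T$ but still restores environmental dominance at time $(k+1)T$ with probability $\geq 1 - L^{-100}$.

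\textbf{Expected gain, iteration, and main obstacle.} Aggregating the three regimes (no separation, separation then success, separation then failure) yields, for each block,
\[
\mathbb{E}^{\Q}\bigl[X^{(2)}_{(k+1)T} - X^{(1)}_{(k+1)T} - X^{(2)}_{kT} + X^{(1)}_{kT}\bigr] \geq e^{-ct}\Bigl(\tfrac{t}{2} - 2T e^{-ct^{1/100}}\Bigr) - O(L^{-99}),
\]
and the choices $t = \sqrt{\log L}$, $T = (\log L)^{100}$ make the right-hand side $\geq L^{-o(1)}$, since $t^{1/100} = (\log L)^{1/200} \to \infty$ forces $Te^{-ct^{1/100}} = o(1)$, while $e^{-ct} = e^{-c\sqrt{\log L}} = L^{-o(1)}$. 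Summing these lower bounds across the $N \asymp L/T$ blocks (which is legitimate because the construction restores the initial-state hypothesis at every successful boundary, whereas bad boundary events cost only $O(L^{-99})$ in total) gives $\mathbb{E}^{\Q}[X^{(2)}_L - X^{(1)}_L] \geq N \cdot L^{-o(1)} = L^{1-o(1)} \gg 3\Cr{C:approx}(\log L)^{100}$. The main technical obstacle is the surgery coupling: one must simultaneously preserve the existing dominance inside the thin trapezoid (whose shape is dictated by SEP large deviations) and rebuild dominance on the two much wider flanking regions, from purely quenched empirical density information. This is exactly what \ref{pe:compatible} is engineered to deliver, and verifying it for SEP (Section~\ref{sec:SEP}) is the most involved step supporting the scheme.
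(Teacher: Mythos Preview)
Your proposal is correct and follows essentially the same approach as the paper: it is a faithful rendering of the proof sketch in Section~\ref{sec:proofsketch}, and the paper's actual proof in Section~\ref{sec:initialspeed} implements precisely this scheme (gap creation via \ref{pe:nacelle} and ellipticity, small coupling via \ref{pe:couplings}, surgery via \ref{pe:compatible}, parachute fallback, and block-by-block iteration). The only differences are implementation details: the paper works with more conservative scales $\ell_g=\lfloor(\log L)^{1/1000}\rfloor$ and $T=5\lfloor(\log L)^{1000}\rfloor$ rather than your $t=\sqrt{\log L}$, $T=(\log L)^{100}$; it formalises the ``restore dominance'' condition at block boundaries as an explicit $(M,L)$-\emph{balanced} hypothesis (combining dominance with two-sided empirical density control, needed for the quenched couplings); and it introduces auxiliary non-Markovian trajectories $Y^{\pm}$ sandwiching $X^{\pm}$ so as to obtain \emph{deterministic} control of the gap on each of the good/neutral/bad scenarios (Lemma~\ref{L:determ-speed}), which is what makes the expected-gain computation and the iteration rigorous. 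One small slip: the sprinkler event from \ref{pe:nacelle} does \emph{not} have probability bounded away from $0$ uniformly in $L$ (it is exponentially small in the length scale $\ell$, which grows with $L$), but this is harmless since you correctly account for the total cost $e^{-ct}$ in your final estimate.
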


Propositions~\ref{Prop:vLapprox} and~\ref{Prop:initialspeed} imply Theorem \ref{T:generic}, as we now show.
\begin{proof}[Proof of Theorem \ref{T:generic}.]  
Let $\rho, \rho'\in J$ with $\rho>\rho'$, and define $\varepsilon=\rho-\rho'$. 
Consider $L\geq 3 \vee L_1(\rho+\varepsilon/3, \varepsilon/3)$, so that the conclusions of Propositions~\ref{Prop:vLapprox} and~\ref{Prop:initialspeed} both hold. By choosing $L$ sufficiently large in a manner depending on $\rho$ and $\rho'$, we can further ensure that $(\log L)^{-1}<\varepsilon/3$ and $[\rho-(\log L)^{-1}, \rho+(\log L)^{-1}]\cup [\rho'-(\log L)^{-1}, \rho'+(\log L)^{-1}]\subseteq J$. Abbreviating $\alpha_L = L^{-1}{(\log L)^{100}}$, it follows that 
\begin{multline*}
v(\rho)\stackrel{\eqref{eq:vLapprox}}{\geq} v_{L}(\rho -(\log L)^{-1})-\Cr{C:approx} \alpha_L \stackrel{\eqref{eq:vLmonotonic}}{\geq}  v_{L}(\rho -\textstyle \frac{\varepsilon}{3})-\Cr{C:approx} \alpha_L \stackrel{\eqref{eq:vLmonotonic}}{\geq}
 v_L(\rho'+\frac{2\varepsilon}{3}) - \Cr{C:approx} \alpha_L
\\
\stackrel{\eqref{eq:vLincrease}}{\geq} \textstyle v_L(\rho'+\frac{\varepsilon}{3}) +2 \Cr{C:approx} \alpha_L
\stackrel{\eqref{eq:vLmonotonic}}{\geq} v_L(\rho'+(\log L)^{-1}) +2\Cr{C:approx} \alpha_L \stackrel{\eqref{eq:vLapprox}}{\geq} v(\rho')+\Cr{C:approx} \alpha_L
>v(\rho'),
\end{multline*}
yielding~\eqref{eq:monotonic}.
\end{proof}

\section{Finite-range approximation of $\P^{\rho}$}\label{sec:approx}
In this section, we prove Proposition~\ref{Prop:vLapprox}, which allows us to compare the speed $v(\cdot)$ of the full-range model to the speed $v_L(\cdot)$ (see \eqref{eq:defvL}) of the finite-range model introduced in \S\ref{subsec:finiterange}. The proof uses a dyadic renormalisation scheme. By virtue of the law(s) of large numbers, see \eqref{eq:LLNagain} and \eqref{eq:LLNvL}, and for $L_0$ and $\rho$ fixed, the speed $v(\rho)$ ought to be close to the speed $v_{2^KL_0}(\rho)$, for some large integer $K$. Hence, if one manages to control the discrepancies between $v_{2^{k+1}L_0}(\rho)$ and $v_{2^kL_0}(\rho)$ for all $0\le k\le K-1$ and prove that their sum is small, then the desired proximity between $v(\rho)$ and $v_{L_0}(\rho)$ follows. This is roughly the strategy we follow except that at each step, we slightly increase or decrease the density $\rho$ (depending on which bound we want to prove), in order to weaken the (strong) correlations in the model. This is why we only compare $v(\cdot)$ and $v_L(\cdot)$ for slightly different densities at the end. This decorrelation method is usually referred to as {\it sprinkling}; see e.g.~\cite{MR3053773,MR2891880} for similar ideas in other contexts.

As a first step towards proving Proposition \ref{Prop:vLapprox}, we establish in the following lemma a one-step version of the renormalization, with a flexible scaling of the sprinkling ($f(L)$ below) in anticipation of possible future applications. For $x\in\mathbb{R}$, let $x_-=\max(-x,0)$ denote the negative part of $x$. Throughout the remainder of this section, we are always tacitly working under the assumptions of Theorem~\ref{T:generic}.

\begin{Lem}\label{Lem:L2L} 
There exists $L_0=L_0(\nu) \geq 1$ such that for all $L \geq L_0$, all $f(L)\in [(\log L)^{90}, L^{1/10}]$ and all $\rho$ such that $(\rho-f(L)^{-1/40},\rho+f(L)^{-1/40})\subseteq J$, the following holds: there exists a coupling $\Q_{L}$ of $(X^{(i)}_{s})_{ 0 \leq s \leq 2L}$, $i=1,2$, such that $X^{(1)}\sim  \P^{\rho,L}$, $X^{(2)}\sim \P^{\rho + \varepsilon,2L}$ with $\varepsilon= f(L)^{-1/40}$,  and
\begin{equation}\label{eq:L2L-renormalization-minvalue}
\Q_L\Big(\min_{0\le s\le 2L}\big(X^{(2)}_{s}-X^{(1)}_{s}\big)\leq -f(L)\Big)\leq  e^{-f(L)^{1/40}}.
\end{equation}
Consequently,
\begin{equation}\label{eq:L2L-renormalization-expectation}
\mathbb{E}^{\Q_L}\Big[\max_{0\le s\le 2L}\big(X^{(2)}_{s}-X^{(1)}_{s}\big)_-\Big]\leq 2f(L),
\end{equation}
and
\begin{equation}\label{eq:L2L-renormalization-variance}
\emph{Var}^{\Q_L}\Big(\max_{0\le s\le 2L}\big(X^{(2)}_{s}-X^{(1)}_{s}\big)_-\Big)\leq 2f(L)^2.  
\end{equation}
The same conclusions hold with marginals $X^{(1)}\sim  \P^{\rho,2L}$ and $X^{(2)}\sim \P^{\rho + \varepsilon,L}$ instead.
\end{Lem}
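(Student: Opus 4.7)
The coupling $\Q_L$ is constructed in two phases, following the outline of Section~\ref{sec:proofsketch} and Figure~\ref{f:L2L_intro}. Fix a short re-coupling window of length $t:=\lfloor f(L)^{1/2}\rfloor$, chosen so that $2t\leq f(L)$ (the drift incurred during the unprotected window is absorbable) while $\varepsilon^2 t^{1/4}\geq f(L)^{3/40}$ with $\varepsilon=f(L)^{-1/40}$, so that the failure probability of \ref{pe:couplings} with this sprinkling decays much faster than $e^{-f(L)^{1/40}}$. The graphical construction \eqref{def:A}-\eqref{eq:defX} is used throughout with a common family of uniforms $(U_w)_{w\in\mathbb{L}}$ for the two walks, and the environments $(\eta^{(1)},\eta^{(2)})$ are coupled in the three stages described below.

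During $[0,L]$: sample $(\eta^{(1)}_0,\eta^{(2)}_0)$ jointly with marginals $\mu_\rho$ and $\mu_{\rho+\varepsilon}$ so that $\eta^{(1)}_0\preccurlyeq\eta^{(2)}_0$, using \ref{pe:monotonicity}(ii); extend this via \ref{pe:monotonicity}(i) to a coupling of their evolutions preserving $\eta^{(1)}_s\preccurlyeq\eta^{(2)}_s$ for all $s\in[0,L]$ almost surely. Lemma~\ref{L:monotone} then yields $X^{(1)}_s\leq X^{(2)}_s$ on $[0,L]$.

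At time $L$, I independently resample $\eta^{(1)}_L\sim\mu_\rho$ (as the range-$L$ structure requires) while keeping $\eta^{(2)}_L$, whose law is still $\mu_{\rho+\varepsilon}$ by \eqref{pe:stationary}. Applying \ref{pe:density} to $\eta^{(1)}_L$ and $\eta^{(2)}_L$ on every interval of length $\ell:=\lfloor t^{1/4}\rfloor$ meeting $[-3L,3L]$ and taking a union bound, one finds that the density hypotheses of \ref{pe:couplings} with sprinkling $\varepsilon$ are met except on an event of probability at most $e^{-cf(L)^{3/40}}$. On that event, \ref{pe:couplings} applied over $[L,L+t]$ yields, with the same order of probability, an environmental coupling for which
\[
\eta^{(2)}_{L+t}(x)\geq\eta^{(1)}_{L+t}(x+2t)\quad\text{for all }x\in[-3L+4\nu t,3L-4\nu t],
\]
the $2t$ shift being inserted by hand to compensate for the possible $\pm t$ drift of either walk during the unprotected window $[L,L+t]$ (cf.~\eqref{e:lipschitz}). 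Setting $\xi_s(\cdot):=\eta^{(1)}_s(\cdot+2t)$ for $s\geq L+t$, which is itself a legitimate environment evolution by translation invariance of \ref{pe:markov}, I extend via \ref{pe:monotonicity}(i) to a coupling with $\xi_s\preccurlyeq\eta^{(2)}_s$ on $[-2L,2L]$ throughout $[L+t,2L]$. Building an auxiliary walk $\widetilde X^{(1)}$ on $\xi$ from $X^{(1)}_{L+t}-2t$ with a fresh copy of uniforms, and redefining $X^{(1)}_s:=\widetilde X^{(1)}_s+2t$ for $s\geq L+t$, produces a process with the correct marginal law on $[0,2L]$ (by translation invariance of the graphical construction and the Markov property). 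Lemma~\ref{L:monotone} applied to $(\xi,\eta^{(2)})$ then yields $\widetilde X^{(1)}_s\leq X^{(2)}_s$, equivalently $X^{(2)}_s-X^{(1)}_s\geq-2t$ on $[L+t,2L]$. The Lipschitz bound \eqref{e:lipschitz} confines both walks to $[-2L,2L]$, matching the region of domination.

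On the intersection of the two good events (probability $\geq 1-e^{-f(L)^{1/40}}$ for $L$ large), $\min_{0\leq s\leq 2L}(X^{(2)}_s-X^{(1)}_s)\geq-2t\geq-f(L)$, giving \eqref{eq:L2L-renormalization-minvalue}. For \eqref{eq:L2L-renormalization-expectation} and \eqref{eq:L2L-renormalization-variance}, split the expectation (resp.~second moment) on the good event (where the quantity is $\leq 2t\leq f(L)$) and its complement (where it is at most $2L$ by \eqref{e:lipschitz}), using that $2L\cdot e^{-f(L)^{1/40}}\leq f(L)$ and $(2L)^2\cdot e^{-f(L)^{1/40}}\leq f(L)^2$, both valid for $L$ large since $f(L)\geq(\log L)^{90}$. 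The swapped-marginals case is entirely symmetric, the only change being that $\eta^{(2)}$ (now the range-$L$ process) is the one resampled at time $L$. The main obstacle is the re-coupling phase with the $2t$ spatial shift: during $[L,L+t]$ the walks are momentarily decoupled and may drift in opposite directions, so re-establishing the environmental domination requires a horizontal offset, which must be absorbed into $\eta^{(1)}$ through translation invariance so that the walk comparison remains meaningful. A secondary subtlety is the careful calibration of $\varepsilon=f(L)^{-1/40}$ against $\ell=\lfloor t^{1/4}\rfloor$ so that both the density-regularity check via \ref{pe:density} and the coupling of \ref{pe:couplings} have failure probability decaying faster than $e^{-f(L)^{1/40}}$.
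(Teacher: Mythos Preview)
Your three-phase structure matches the paper's proof exactly: monotone coupling via \eqref{pe:monotonicity} on $[0,L]$, re-coupling via \ref{pe:couplings} on a short window $[L,L+t]$ with a $2t$ spatial shift, then preserving the shifted domination on $[L+t,2L]$. The calibration $\varepsilon=f(L)^{-1/40}$ is the paper's, and your choice $t=\lfloor f(L)^{1/2}\rfloor$ works just as well as the paper's $t=\lfloor f(L)/2\rfloor$ (your $\varepsilon^2 t^{1/4}\asymp f(L)^{3/40}$ still beats the target $f(L)^{1/40}$).

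There is one genuine gap. In the third phase you write ``extend via \eqref{pe:monotonicity}(i) to a coupling with $\xi_s\preccurlyeq\eta^{(2)}_s$ on $[-2L,2L]$ throughout $[L+t,2L]$''. Property \eqref{pe:monotonicity}(i) requires \emph{global} domination $\xi_{L+t}\preccurlyeq\eta^{(2)}_{L+t}$ at the initial time, which you do not have: the output of \ref{pe:couplings} only guarantees domination on the finite window $[-3L+4\nu t,3L-4\nu t]$. The paper handles this using \ref{pe:drift} instead (with $H\asymp L$, $k=1$, time horizon $L-t$), which preserves the domination on a slightly shrunk interval at the cost of an additional failure probability $20\exp(-\nu(L-t)/4)$, easily absorbed into $e^{-f(L)^{1/40}}$. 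This is an easy fix, but as written your appeal to \eqref{pe:monotonicity}(i) is not justified.

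A secondary point: your handling of the uniforms is muddled. You first assert ``a common family of uniforms for the two walks'' but then invoke ``a fresh copy of uniforms'' for $\widetilde X^{(1)}$. The comparison $\widetilde X^{(1)}\le X^{(2)}$ via Lemma~\ref{L:monotone} requires the two walks to use the \emph{same} uniforms on $[L+t,2L]$; the paper achieves this cleanly by defining $U^{(2)}_{(x,s)}=U^{(1)}_{(x-2t,s)}$ for $s\ge L$ (so the shift is placed in the uniform field rather than the environment). Your environment-shift formulation $\xi=\eta^{(1)}(\cdot+2t)$ is equivalent in spirit, but you should be explicit that whatever uniforms $\widetilde X^{(1)}$ uses on $[L+t,2L]$ are exactly those used by $X^{(2)}$, and that the resulting concatenation still gives $X^{(1)}$ the correct marginal law under $\mathbb P^{\rho,L}$.
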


\begin{proof} Towards showing \eqref{eq:L2L-renormalization-minvalue}, let us first assume that
\begin{equation}
\label{eq:coup11}
\begin{split}
&\text{there exists $L_0>3$ such that for all $L\ge L_0$, $(\rho+\log^{-2}L) \in J$ and there exists a}\\
&\text{coupling $\mathbb{Q}_L$ of $\eta^{(1)}\sim \mathbf{P}^{\rho,L}$ and $\eta^{(2)}\sim \mathbf{P}^{\rho+\varepsilon,2L}$ s.t. }
 \Q_L[G] \geq 1 - \exp(-f(L)^{1/40}),
 \end{split}
\end{equation}
where, setting $t=\lfloor \frac{f(L)}{2}\rfloor$, the `good' event $G$ is defined as
\begin{multline}
\label{eq:coup10}
 G=\big\{ \eta^{(1)}_s(x) \leq  \eta^{(2)}_s(x), \ \forall (x,s)\in [-3L,3L] \times [0,L) \big\}
 \\
 \cap\big\{ \eta^{(1)}_s(x) \leq  \eta^{(2)}_s(x-2t), \ \forall (x,s)\in [-3L, 3L]\times [L+t, 2L)\big\}.
\end{multline}
Given the above, we now extend the coupling $\mathbb{Q}_L$ to the random walks $X^{(1)}\sim P^{\eta^{(1)}}$ and $X^{(2)}\sim P^{\eta^{(2)}}$, defined as in Section \ref{subsec:rw}, up to time $2L$. To do that, we only need to specify how we couple the collections of independent uniform random variables $(U^{(1)}_w)_{w\in \mathbb{L}}$ and $(U^{(2)}_w)_{w\in \mathbb{L}}$ (recall that $\mathbb{L}$ denotes space-time, see \eqref{e:def_space_time}) used to determine the steps of each random walk; the walks $X^{(i)}$, $i=1,2$, up to time $2L$ are then specified in terms of $(U^{(i)}, \eta^{(i)})$ as in \eqref{def:A}-\eqref{eq:defX}. Under $\Q_L$, we let $(U^{(1)}_w)_{w\in \mathbb{L}}$ be i.i.d.~uniform random variable on $[0,1]$ and define, for $(x,s)\in\mathbb{L}$,
\begin{align*}
U^{(2)}_{(x,s)}=\begin{cases}
      U^{(1)}_{(x,s)}, & \text{if } 0\le s<L \\
      U^{(1)}_{(x-2t,s)}, & \text{if } L\le s<2L
    \end{cases}
\end{align*}
(for definiteness let $U^{(2)}_{(x,s)}=U^{(1)}_{(x,s)}$ when $s \geq 2L$). Clearly $(U^{(2)}_w)_{w\in \mathbb{L}}$ are i.i.d.~uniform variables, hence $X^{(2)}$ also has the desired marginal law.
Now, we will explain why, on this coupling, \eqref{eq:L2L-renormalization-minvalue} holds, and refer to Figure~\ref{f:L2L_detail} for illustration. We will only consider what happens on the event $G$ defined in \eqref{eq:coup10}.
\begin{figure}[]
  \center
\includegraphics[scale=0.8]{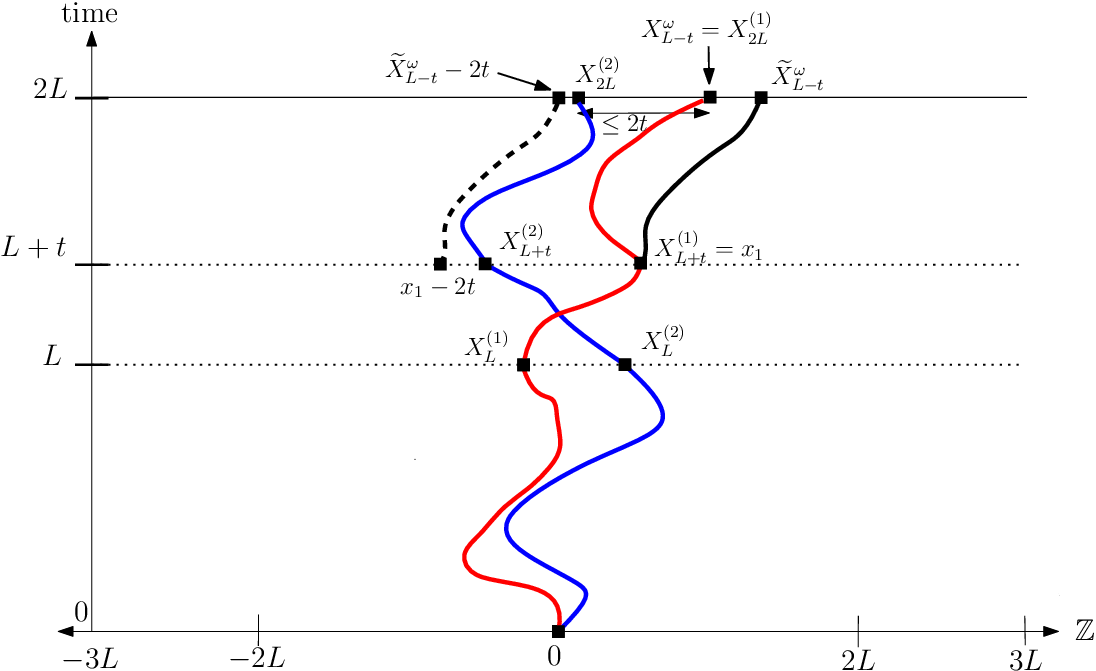}
  \caption{The trajectory of $X^{(1)}$ (resp. $X^{(2)}$) is pictured in blue (resp. red). The trajectory of $X^\omega$ coincides with that of $X^{(1)}$ during $[L+t,2L]$. The trajectory of $\widetilde{X}^\omega$ is in black (and shifted by $-2t$ in dashed black). 
On the event $G$, the points $X_{2L}^{(i)}$ for $i=1,2$ both lie in the interval $[\widetilde{X}^\omega_{L-t}-2t, \widetilde{X}^\omega_{L-t}]$, where $w=(x_1, L+t )$.}
  \label{f:L2L_detail}
\end{figure}
On $G$, by Lemma \ref{L:monotone} with $K=[-3L,3L]\times [0,L]$, we have that $X^{(2)}_s\ge X^{(1)}_s$ for all $0\le s\le L$. This implies that $X^{(2)}_{L+t}\ge X^{(1)}_{L+t}-2t$ owing to \eqref{e:lipschitz}. Let $\eta=\eta^{(1)}$, define $\widetilde{\eta}$  by $\widetilde{\eta}_n(x)=\eta^{(2)}_n(x-2t)$ and $w=(X^{(1)}_{L+t}, L+t)$. Let $X^w$ and $\widetilde{X}^w$ be random walks evolving on top of $\eta$ and  $\widetilde{\eta}$ respectively, and both using the collection of uniform random variables $U^{(1)}$. Then, conditionally on $\eta^{(1)}_{L+t}$, $\eta^{(2)}_{L+t}$ and $X^{(1)}_{L+t}$, and on the event $\{X^{(1)}_{L+t}= x_1\}$, $X^{w}_{L-t}$ has the law of $X^{(1)}_{2L}$ and  $\widetilde{X}^w_{L-t}-2t$ has the law of the position at time $2L$ of a random walk started at $(x_1-2t,L+t)$ and evolving on top of $\eta^{(2)}$, using the collection $U^{(2)}$. In particular, it evolves on the same environment as $X^{(2)}$ and starts on the left of $X^{(2)}_{L+t}$, so that $\widetilde{X}^w_{L-t}\leq X^{(2)}_{2L}$ (by Lemma \ref{L:monotone} applied with $K=\mathbb{Z}\times \mathbb{Z}^+$ and $\eta=\widetilde{\eta}$). 
On $G$, using the second event in the intersection on the right-hand side of \eqref{eq:coup10}, we can again apply Lemma \ref{L:monotone}, now at time $L+t$, with $X$ and $\widetilde{X}$ as described and $K=[-3L+2t,3L-2t]\times [L+t,2L]$. We obtain that $X^{w}_{s-t}\le \widetilde{X}^w_{s-t}$ for all $s\in[t,L]$. Thus, subtracting $2t$ on both sides and using the previous facts, we have on $G$ that $X^{(1)}_{s}-2t\le X^{(2)}_{s}$, for all $0\le s\le 2L$. All in all, we have proved that
\begin{align*}
G \subseteq \big\{\max_{0\le s\le 2L} \big(X^{(2)}_{s}-X^{(1)}_{s}\big)_-\leq 2t \big\},
\end{align*}
and therefore \eqref{eq:L2L-renormalization-minvalue} follows from our assumption \eqref{eq:coup11}. The fact that \eqref{eq:L2L-renormalization-expectation} and~\eqref{eq:L2L-renormalization-variance} hold is a simple consequence of \eqref{eq:L2L-renormalization-minvalue} and a straightforward computation using the deterministic inequality $\big(X^{(2)}_{s}-X^{(1)}_{s}\big)_-\le 4L$ valid for all $0\le s\le 2L$ and the fact that $L\ge L_0> 3$ (note that for $L$ large enough, we have $(2L)^2e^{-f(L)^{1/40}} \leq f(L)$ for all $f(L)\in [\log^{90}L,L^{1/10}]$).

It remains to show that \eqref{eq:coup11} holds, which brings into play several of the properties gathered in \S\ref{subsec:C} and which hold by assumption. Let $L_0>3$ be such that for every $L\geq L_0$ and any choice of $f(L)\geq \log^{90}L$, with $\varepsilon=f(L)^{-1/40}$ we have that $\rho+\varepsilon\in J$ and
\begin{equation}\label{eq:L2LdefL0-2}
L> t>10^{100}\nu^{-1}+\nu^{-8}+\nu^{-7}\Cr{SEPcoupling}^8\varepsilon^{-16},
\end{equation}
where $t=\lfloor f(L)/2\rfloor$ as defined above \eqref{eq:coup10}.

We will provide a step-by-step construction of $\Q_L$, in four steps. First, note that by~\eqref{pe:monotonicity}-ii), there exists a coupling $\Q_L$ of environments $(\eta^{(1)}_t(x) : x\in \mathbb{Z},t\in [0,L))$ and $(\eta^{(2)}_t(x) : x\in \mathbb{Z},t\in [0,L])$ such that under $\Q_L$, $\eta^{(1)}\sim \mathbf{P}^{\rho,L}$ on the time interval $[0,L)$, $\eta^{(2)}\sim \mathbf{P}^{\rho+\varepsilon,2L}$ on the time interval $[0,L]$ and such that $\Q_L$-a.s., $\eta^{(1)}_t(x)\leq \eta^{(2)}_t(x)$ for all $x\in \mathbb{Z}$ and $t\in [0,L)$. 
We then extend $\Q_L$ to time $L$ for $\eta^{(1)}$ by sampling $\eta^{(1)}_L\sim {\mu}_\rho$, independently of $(\eta^{(1)}_t)_{0\leq t<L}$ and $(\eta^{(2)}_t)_{0\leq t\le L}$.
In particular the above already yields that the inequalities required as part of the event $G$ in \eqref{eq:coup10} which involve $(\eta^{(i)}_t)_{0\leq t<L}$ actually hold $\mathbb{Q}_L$-a.s.

Second, letting $\ell\stackrel{\text{def.}}{=}\lfloor t^{1/4} \rfloor$, we define $G_1$ to be the event that for every interval $I\subseteq [-10L(1+{\nu}), 10L(1+{\nu})]$ of length $\lfloor \ell/2 \rfloor\le |I|\le \ell$, the inequalities $\eta^{(1)}_L(I)\leq (\rho+\varepsilon/4)\ell$ and $\eta^{(2)}_L(I)\geq (\rho+3\varepsilon/4)\ell$ hold (see the beginning of \S\ref{subsec:re} for notation). By~\eqref{pe:density} and a union bound over the choices of such intervals $I$, we get that 
\begin{equation}\label{eq:L2LE1}
\Q_L(G_1^c)\leq 40\ell L(1+{\nu}) \exp(-\Cr{densitydev} \varepsilon^2\ell/32).
\end{equation}
At time $L$ and on $G_1^c$, extend $\Q_L$ to the time interval $[L,2L]$ by letting $\eta^{(1)}$ and  $\eta^{(2)}$ follow their dynamics $P^{\eta^{(1)}_L}$ and $P^{\eta^{(2)}_L} $ independently of each other (using \eqref{pe:markov}).

Third, we continue the construction of the coupling on $G_1$ by applying \ref{pe:couplings} with $H=10L(1+\nu)$, $\eta_0=\eta^{(2)}_L(\cdot -2t)$ and $\eta'_0=\eta^{(1)}_L$, checking the assumptions using \eqref{eq:L2LdefL0-2} (note indeed that we have $1+\vert \log^3(\nu t)\vert\leq (\nu t)^{1/8}$ since $\nu t>10^{100}$, and then that $\Cr{SEPcoupling}\varepsilon^{-2}(\nu t)^{1/8}\leq \nu t^{1/4}$). This implies that, given $\eta^{(1)}_L$ and $\eta^{(2)}_L$, on $G_1$, there exists an extension of the coupling $\Q_L$ on the time interval $[L,L+t]$ such that, defining
\begin{equation}
G_2=\big\{\forall x\in [-8L-6{\nu}L , 8L+6{\nu}L ], \, \eta^{(1)}_{L+t}(x)\leq \eta^{(2)}_{L+t}(x-2t)  \big\},
\end{equation}
we have  
\begin{equation}\label{eq:L2LE2}
\Q_L(G_2^c)\leq 10\Cr{SEPcoupling2}t L(1+{\nu}) \exp\big( \textstyle-\Cr{SEPcoupling2}^{-1}\frac{{\nu}}{{\nu} +1}\varepsilon^2t^{1/4}\big).
\end{equation} 
At time $L+t$ and on $G_2^c$, we extend $\Q_L$ on the time interval $[L+t,2L]$ by letting $\eta^{(1)}$ and  $\eta^{(2)}$ follow their dynamics $P^{\eta^{(1)}_{L+t}}$ and $P^{\eta^{(2)}_{L+t}} $ independently of each other.

Fourth, we continue the construction of the coupling on $G_2$ by applying \ref{pe:drift} with $\eta_0=\eta^{(2)}_{L+t}(\cdot -2t)$ and $\eta'_0=\eta^{(1)}_{L+t}$, $H=8L+6{\nu}L$, $k=1$ and $t$ in \ref{pe:drift} equal to $L-t$. This implies that, given $\eta^{(1)}_{L+t}$ and $\eta^{(2)}_{L+t}$, on $G_2$, there exists an extension of the coupling $\Q_L$ on the time interval $[L+t,2L]$ such that, defining
\begin{equation}
G_3=\big\{\forall x\in [-8L , 8L ],\, s\in[L+t,2L], \, \eta^{(1)}_{s}(x)\leq \eta^{(2)}_{s}(x-2t)  \big\},
\end{equation}
we have
\begin{equation}\label{eq:L2LE3}
\Q_L(G_3^c)\leq 20\exp(-{\nu}({L-t})/{4}).
\end{equation} 
Finally, note that $G_1\cap G_2\cap G_3  \subseteq   G$, so that \eqref{eq:coup11} is a straightforward consequence of ~\eqref{eq:L2LE1},~\eqref{eq:L2LE2} and~\eqref{eq:L2LE3} provided that $L_0$ is chosen large enough, depending only on $\nu$ (as well as $\Cr{SEPcoupling2}$,  $\Cr{densitydev}$ and $\Cr{SEPcoupling}$). Note in particular that with our choices of $f(L)$, $\varepsilon$ above \eqref{eq:L2LdefL0-2} and $\ell (\geq c f(L)^{1/4})$, we have that $\min (L-t,\varepsilon^2t^{1/4},\varepsilon^2\ell)\geq f(L)^{1/20}$. All in all \eqref{eq:coup11} follows. The case $X^{(1)}\sim  \mathbf{P}^{\rho,2L}$ and $X^{(2)}\sim \mathbf{P}^{\rho + \varepsilon,L}$ can be treated in the same way, by means of an obvious analogue of \eqref{eq:coup11}. The remainder of the coupling (once the environments are coupled) remains the same.
\end{proof}

We are now ready to prove Proposition~\ref{Prop:vLapprox}. The proof combines the law of large numbers for the speed together
with Lemma~\ref{Lem:L2L} applied inductively over increasing scales.
The rough strategy is as follows. We aim to compare the speed of the full-range model $v(\rho)$ to the speed of the finite-range model $v_L(\rho)$ for some possibly large, but finite $L$, and prove that these two are close. For $\delta<v_L(\rho)$, we know that the probability for the finite-range model to go slower than speed $\delta$ goes to $0$ on account of Lemma~\ref{Prop:vLexists}. Thus, in a large box of size $2^K L_0$, the $L_0$-range model will most likely be faster than $\delta$ as soon as $K$ is large enough. Lemma~\ref{Lem:L2L} is used over dyadic scales to control the discrepancies between the $2^k L_0$-range model and the $2^{k+1} L_0$-range model for all $k$ from $0$ to $K-1$, and to prove that they are small. It will be seen to imply that with high probability, in a box of size $2^K L_0$, the $2^K L_0$-range model will be faster than $\delta$. Now, we only need to observe that when observed in a box of size $2^K L_0$, the $2^K L_0$-range model is equivalent to the full-range model. As Lemma~\ref{Lem:L2L} already hints at, this is but a simplified picture and the actual argument entails additional complications. This is because each increase in the range (obtained by application of Lemma~\ref{Lem:L2L}) comes not only at the cost of slightly `losing speed,' but also requires a compensation in the form of a slight increase in the density $\rho$, and so the accumulation of these various effects have to be tracked and controlled jointly.

\begin{proof}[Proof of Proposition~\ref{Prop:vLapprox}]
We only show the first inequality in Proposition~\ref{Prop:vLapprox}, i.e.~for $L \geq C(\nu)$ and $\rho$ such that $[\rho -(\log L)^{-1},\rho +(\log L)^{-1}]\subseteq J$, abbreviating $\alpha_L= \frac{(\log L)^{100}}{L}$, one has
\begin{equation}\label{eq:vLapproxoneside}
v_{L}(\rho - (\log  L)^{-1})-\alpha_L\leq v(\rho).
\end{equation}
The first inequality in \eqref{eq:vLapprox} then follows for all $L \geq 3$ by suitably choosing the constant $\Cr{C:approx}$ since $v,v_L \in [-1,1]$. %
The second inequality of \eqref{eq:vLapprox} is obtained by straightforward adaptation of the arguments below, using the last sentence of Lemma~\ref{Lem:L2L}.

For $L \geq 1$, define $L_k= 2^kL$ for $k \geq 0$. As we now explain, the conclusion \eqref{eq:vLapproxoneside} holds as soon as for $L \geq C(\nu)$ and $\rho$ as above, we show that
\begin{equation}\label{newconc}
\mathbb{P}^{\rho}\big(({X_{L_K}}/ L_K) \leq v_{L}(\rho -( \log L)^{-1})-\alpha_L\big)\to 0\text{, as }K\to\infty.
\end{equation}
Indeed under the assumptions of Proposition~\ref{Prop:vLapprox}, the law of large numbers \eqref{eq:intro-LLN} holds, and therefore in particular, for all $\delta>v(\rho)$, we have that $\mathbb{P}^{\rho}\left({X_{L_K}}\leq \delta {L_K}\right)$ tends to $1$ as $K \to \infty$. Together with \eqref{newconc} this is readily seen to imply \eqref{eq:vLapproxoneside}.

We will prove \eqref{newconc} for $L \geq C(\nu)$, where the latter is chosen such that the conclusions of Lemma \ref{Lem:L2L} hold for $L$, and moreover such that
\begin{equation}\label{elodie} 
 (\log L)^{-9/4}+\frac{100}{(\log L)^{5/4}} \le\frac{1}{\log L} \text{, }\log L\ge 5 \text{ and } \textstyle \left(\frac{3}{2}\right)^{-k}
\big(k\frac{\log2}{\log L} + 1\big)^{99}\le 1\text{, for all } k\ge0.
\end{equation}
For such $L$ we define, for all integer $K,k \geq 0$, all $\rho>0$ and all $\delta\in \R$, recalling the finite-range annealed measures $\mathbb{P}^{\rho, L}$ from  \S\ref{subsec:finiterange},
\begin{equation}\label{def:prhok}
\begin{split}
&p_{\rho,K,\delta}^{(k)}=\P^{\rho,L_k}(X_{L_K}\leq \delta {L_K}), \\
&p_{\rho,K,\delta}^{(\infty)}=\P^{\rho}(X_{L_K}\leq \delta {L_K})
\end{split}
\end{equation}
(observe that the notation is consistent with \S\ref{subsec:finiterange}, i.e.~$\mathbb{P}^{\rho,\infty}= \mathbb{P}^{\rho}$).
In this language \eqref{newconc} requires that $p_{\rho,K,\delta}^{(\infty)}$ vanishes in the limit $K \to \infty$ for a certain value of $\delta$. We start by gathering a few properties of the quantities in \eqref{def:prhok}.
For all $\rho'\in J$, the following hold:
\begin{align}\label{propofp}
& \lim_{K\rightarrow +\infty}p_{\rho',K,\delta}^{(0)}=0\text{ for all }\delta< v_L(\rho') \text{ (by \eqref{eq:LLNvL}),}\\ \label{propofp2}
& p_{\rho',K,\delta}^{(\infty)}=p_{\rho',K,\delta}^{(k)}\text{, for all }\delta\in[-1,1],\ K\ge0 \text{ and }k\ge K,\\
\label{propofp3}
& p_{\rho',K,\delta}^{(\infty)}\text{ and } p_{\rho',K,\delta}^{(k)}\text{ are non-increasing in $\rho'$ and non-decreasing in $\delta$.}
\end{align}
As explained atop the start of the proof, owing to the form of Lemma~\ref{Lem:L2L} we will need to simultaneously sprinkle the density and the speed we consider in order to be able to compare the range-$L_{k+1}$ model to the range-$L_{k}$ model. To this effect, let
\begin{equation} \label{eq:rho_k}
\rho_0=\rho-(\log L)^{-1}\text{ and } \rho_{k+1}=\rho_{k}+{(\log L_k)^{-9/4}}\text{, for all }k\ge0,
\end{equation}
as well as
\begin{equation}\label{def:delta0delta}
\delta_0=v_{L}(\rho_0)-{L^{-2}}\text{ and } \delta_{k+1}=\delta_{k}- {\log^{99}(L_k)}/L_k\text{, for all }k\ge0.
\end{equation}
A straightforward computation, bounding the sum below for $k\ge1$ by the integral $\int_0^\infty dx/(x\log 2 +\log L)^{9/4}$, yields that
\begin{equation}\label{ilpleut}
\lim_k \rho_k = \rho_0 + \sum_{k\ge0} (\log L_k)^{-9/4}\le \rho_0 + (\log L)^{-9/4}+\frac{100}{(\log L)^{5/4}} \stackrel{\eqref{elodie}}{\le} \rho_0 +\frac{1}{\log L} \stackrel{\eqref{eq:rho_k}}{\leq} \rho.
\end{equation}
Another straightforward computation yields that
\begin{multline}\label{ilpleutencore}\sum_{k\ge0}\frac{(\log L_k)^{99}}{L_k} = 
\sum_{k\ge0}\left(\frac{3}{2}\right)^{-k}
\left(k\frac{\log2}{\log L} + 1\right)^{99} \times  \frac{(\log L)^{99}}{L}\left(\frac34\right)^k\\
\stackrel{\eqref{elodie}}{ \le}  \frac{(\log L)^{99}}{L}\sum_{k\ge0} \left(\frac34\right)^k 
\stackrel{\eqref{elodie}}{ \le} \frac{(\log L)^{100}}{L}-\frac{1}{L^2}.
\end{multline}
In particular, since $(\rho_k)$ is increasing in $k$, \eqref{ilpleut} implies that for all $K\ge0$, we have $\rho\geq \rho_K$, and \eqref{ilpleutencore} yields in view of \eqref{def:delta0delta} that $\delta_K >v_{L}(\rho_0)-\alpha_L$, with $\alpha_L (=L^{-1}{(\log L)^{100}})$ as above \eqref{eq:vLapproxoneside}.
Using this, it follows that, for all $K\ge 0$,
\begin{multline}\label{eq:prhoHdelta-split}
p^{(\infty)}_{\rho,K,v_{L}(\rho_0)-\alpha_L}\stackrel{\eqref{propofp2}}{=}p_{\rho,K,v_{L}(\rho_0)-\alpha_L}^{(K)} \\\stackrel{\eqref{propofp3}}{\le} p_{\rho_K,K,\delta_K}^{(K)}     = p_{\rho_0,K,\delta_0}^{(0)}+\sum_{0 \leq k < K}\left(p_{\rho_{k+1},K,\delta_{k+1}}^{(k+1)} - p_{\rho_{k},K,\delta_{k}}^{(k)}\right).
\end{multline}
As the left-hand side in \eqref{eq:prhoHdelta-split} is precisely equal to the probability appearing in \eqref{newconc}, it is enough to argue that the right-hand side of \eqref{eq:prhoHdelta-split} tends to $0$ as $K \to \infty$ in order to conclude the proof.
By recalling that $\delta_0<v_L(\rho_0)$ from \eqref{def:delta0delta} and using \eqref{propofp}, we see that $\lim_{K\to\infty} p_{\rho_0,K,\delta_0}^{(0)}=0$, which takes care of the first term on the right of \eqref{eq:prhoHdelta-split}. 

We now aim to show that the sum over $k$ in \eqref{eq:prhoHdelta-split} vanishes in the limit $K \to \infty$, which will conclude the proof. Lemma~\ref{Lem:L2L}
now comes into play. Indeed recalling the definition \eqref{def:prhok}, the difference for fixed value of $k$ involves walks with range $L_k$ and $L_{k+1}$, and Lemma~\ref{Lem:L2L} supplies a coupling allowing good control on the negative part of this difference (when expressed under the coupling). Specifically, for a given $K \geq 1$ and
$0\le k \le K-1$, let $X^{(1)}\sim \mathbb{P}^{\rho_{k+1},L_{k+1}}$ and $X^{(2)}\sim \mathbb{P}^{\rho_k,L_k}$. Note that for $i\in\{1,2\}$, one has the rewrite
$$X^{(i)}_{L_K}=\sum_{0 \leq \ell< 2^{K-k-1}} \left( X^{(i)}_{(\ell+1)L_{k+1}}-X^{(i)}_{\ell L_{k+1}}  \right).
$$
Therefore, due to the regenerative structure of the finite-range model, explicated in \eqref{eq:regenstructure}, it follows that, for $i\in\{1,2\}$, under $\mathbb{P}^{\rho_{k+2-i},L_{k+2-i}}$,
\begin{equation}
X^{(i)}_{L_K}\stackrel{\text{law}}{=} \sum_{0 \leq \ell< 2^{K-k-1}} X^{(i,\ell)}_{L_{k+1}},
\end{equation}
 where $X^{(i,\ell)}_{L_{k+1}}$, $\ell\ge0$, is a collection of independent copies of $X^{(i)}_{L_{k+1}}$ under $\mathbb{P}^{\rho_{k+2-i},L_{{k+2-i}}}$.\\
 Now recall the coupling measure $\mathbb{Q}_L$ provided by Lemma \ref{Lem:L2L} with $f(L)=(\log L)^{90}$ and  let us denote by $\mathbb{Q}$ the product measure induced by this couplingfor the choices $L=L_k$ and $\varepsilon= \rho_{k+1}-\rho_k$ in Lemma \ref{Lem:L2L}, so that $\mathbb{Q}$ supports the i.i.d.~family of pairs $(X^{(1,\ell)}_{L_{k+1}},X^{(2,\ell)}_{L_{k+1}})$, $\ell\ge0$, each sampled under $\mathbb{Q}_{L_k}$. In particular, under $\mathbb{Q}$, for all $\ell\ge0$, $X^{(1,\ell)}_{L_{k+1}}$ and $X^{(2,\ell)}_{L_{k+1}}$ have law $\mathbb{P}^{\rho_{k+1},L_{k+1}}$ and $\mathbb{P}^{\rho_{k},L_{k}}$, respectively. Now, one can write, for any $K\ge1$ and $0\le k\le K-1$, with the sum over $\ell$ ranging over $0 \leq \ell< 2^{K-k-1}$ below, that
\begin{multline*}
  p_{\rho_{k+1},K,\delta_{k+1}}^{(k+1)} - p_{\rho_{k},K,\delta_{k}}^{(k)}
\stackrel{\eqref{def:prhok}}{=} \mathbb{Q}\Big(\sum_{\ell} X^{(1,\ell)}_{L_{k+1}}\le L_K \delta_{k+1}\Big) -  \mathbb{Q}\Big(\sum_{\ell} X^{(2,\ell)}_{L_{k+1}}\le L_K \delta_{k}\Big)\\
\le \mathbb{Q}\Big(\sum_{\ell} X^{(1,\ell)}_{L_{k+1}}\le L_K \delta_{k+1},\ \sum_{\ell} X^{(2,\ell)}_{L_{k+1}}> L_K \delta_{k}\Big)
\stackrel{\eqref{def:delta0delta}}{\le} \mathbb{Q}\Big(\sum_{\ell} (X^{(1,\ell)}_{L_{k+1}}- X^{(2,\ell)}_{L_{k+1}})\le -2^{K-k}\log^{99}(L_k)\Big).\end{multline*}
Now, using Chebyshev's inequality together with Lemma \ref{Lem:L2L} (recall that $f(L)=\log^{90}L$), it follows that for $K\ge1$ and $0\le k\le K-1$,
\begin{equation}\begin{split} \label{eq:cheby-final}
 p_{\rho_{k+1},K,\delta_{k+1}}^{(k+1)} - p_{\rho_{k},K,\delta_{k}}^{(k)}
&\le \frac{2^{K-k}(\log L_k)^{190}}{\big( 2^{K-k}(\log L_k)^{99}-2^{K-k}(\log L_k)^{90}  \big)^2}\\
&\le \frac{7}{2^{K-k}(k+5)^8}\le 2^{-\tfrac{K}{2}}+224K^{-8},
\end{split}
\end{equation}
where the second line is obtained by considering the cases when $k< K/2$ or $k\ge K/2$ separately, together with straightforward computations. The bound \eqref{eq:cheby-final} implies in turn that
$$\sum_{k=0}^{K-1}\left(p_{\rho_{k+1},K,\delta_{k+1}}^{(k+1)} - p_{\rho_{k},K,\delta_{k}}^{(k)}\right)\le K2^{-\tfrac{K}{2}}+224K^{-7} \stackrel{K\to\infty}{\longrightarrow} 0,
$$
which concludes the proof.
\end{proof}

\section{Quantitative monotonicity for the finite-range model}\label{sec:initialspeed}
The goal of this section is to prove Proposition~\ref{Prop:initialspeed}. For this purpose, recall that $J$ is an open interval, and fix $\rho\in J$ and $\epsilon>0$ such that $(\rho+\epsilon)\in J$. Morever, in view of~\eqref{eq:vLmonotonic}, we can assume that $\epsilon<1/100$. The dependence of quantities on $\rho$ and $\epsilon$ will be explicit in our notation. As explained above Proposition~\ref{Prop:initialspeed}, even if we are dealing with the finite-range model, the current question is about estimating the expectation in \eqref{eq:defvL}, which is actually equivalent to working on the full-range model. Thus, we retain much of the difficulty, including the fact that the environment mixes slowly. The upshot is that the speed gain to be achieved is quantified, and rather small, cf.~the right-hand side of \eqref{eq:vLincrease}.

The general idea of the proof is as follows. First, recall that we want to prove that at time $L$, the expected position of $X^{\rho+\epsilon}\sim \mathbb{P}^{\rho+\epsilon}$ (where $\sim$ denotes equality in law in the sequel) is larger than the expected position of $X^\rho\sim \mathbb{P}^{\rho}$ by $3(\log L)^{100}$. The main conceptual input is to couple $X^\rho$ and $X^{\rho+\epsilon}$ in such a way that, after a well-chosen time $T \ll L$, we create a positive discrepancy between them with a not-so-small probability, and that this discrepancy is negative with negligible probability, allowing us to control its expectation. We will also couple these walks in order to make sure that the environment seen from $X^\rho_T$ is dominated by the environment seen from $X^{\rho+\epsilon}_T$, even if these walks are not in the same position, and we further aim for these environments to be `typical'. The last two items will allow us to repeat the coupling argument several times in a row and obtain a sizeable gap between $X^\rho_L$ and $X^{\rho+\epsilon}_L$. In more quantitative terms, we choose below $T=5(\log L)^{1000}$ and create an expected discrepancy of $\exp(-(\log L)^{1/20})$ at time $T$. Repeating this procedure $L/T$ times provides us with an expected discrepancy at time $L$ larger than $3(\log L)^{100}$ (and in fact larger than $L^{1+o(1)}$). We refer to the second part of Section~\ref{sec:proofsketch} for a more extensive discussion of how the expected gap size comes about.

We split the proof of Proposition~\ref{Prop:initialspeed} into two parts. The main part (Section~\ref{subsec:onegap}) consists of constructing a coupling along the above lines. In Section~\ref{subsec:chaining}, we prove Proposition~\ref{Prop:initialspeed}.

\subsection{The trajectories $Y^\pm$}\label{subsec:onegap}

In this section, we define two discrete-time processes $Y_t^{-}$ and $Y^+_t$, $t \in [0,T]$, for some time horizon $T$, see \eqref{eq:ellTdef} below, which are functions of two deterministic environments $\eta^-$ and $\eta^{+}$ and an array $U=(U_w)_{w\in \bL}$  (see Section~\ref{sec:prelims} for notation) of numbers in $[0,1]$. This construction will lead to a deterministic estimate of the difference $Y^+_t-Y^-_t$, stated in Lemma~\ref{L:determ-speed}. In the next section, see Lemma~\ref{L:Q coupling-alternative}, we will prove that there exists a measure $\Q$ on $(\eta^\pm, U)$ such that under $\Q$, $Y^-$ dominates stochastically the law of a random walk $X^{\rho}\sim\mathbb{P}^{\rho}$ (see below \eqref{eq:RW_an} for notation), $Y^+$ is stochastically dominated by the law of a random walk $X^{\rho+\epsilon} \sim \mathbb{P}^{\rho+\epsilon}$, and we have a lower bound on $\mathbb{E}^\Q[Y^+_{T}-Y^-_{T}]$, thus yielding a lower bound on $\mathbb{E}^{\rho+\epsilon}[X^{\rho+\epsilon}] -\mathbb{E}^{\rho}[X^{\rho}] $.

\bigskip

The construction of the two processes $Y^{\pm}$ will depend on whether some events are realized for $\eta^\pm$ and $U$. We will denote $E_1, E_2,\ldots$ these events, which will occur (or not) successively in time. We introduce the convenient notation
\begin{equation} \label{eq:E_i-j}
E_{i-j}\stackrel{\text{def.}}{=}E_i\cap E_{i+1}\cap  \ldots\cap E_j, \text{ for all $j>i\geq 1$,}
\end{equation}
and write $E_{i-j}^c$ for the complement of $E_{i-j}$. We refer to Figure~\ref{f:One_step_gain} (which is a refined version of Figure~\ref{f:couplingmaster_intro}) for visual aid for the following construction of  $Y^{\pm}$, and to the discussion in Section~\ref{sec:proofsketch} for intuition. Let us give a brief outlook on what follows. The outcome of the construction will depend on whether a sequence of events $E_1$-$E_9$ happens or not: when all of these events occur, which we call a {\em success}, then $Y_T^+$ and $Y^-_T$  have a positive discrepancy and we retain a good control on the environments at time $T$, namely $\eta^+$ dominates $\eta^-$ (or, more precisely, $\eta^-$ shifted by  $Y_T^+-Y^-_T$). When at least one of these nine events does not happen, this can either result in a {\em neutral} event, where the trajectories end up at the same position and we still have control on the environments, or it could result in a {\em bad} event, where we can have $Y_T^+<Y^-_T$ (but we will show in Lemma~\ref{L:Q coupling-alternative} that we keep control on the environments with overwhelming probability, owing to the \textit{parachute coupling} evoked at the end of Section~\ref{sec:proofsketch}). Whenever we observe a neutral or a bad event, we will exit the construction and define the trajectories $Y^\pm$ at once from the time of observation all the way up to time $T$. Finally, we note that the walks $Y^+$ and $Y^-$ are actually not Markovian. This is however not a problem as we use them later to have bounds on the expected displacements of the actual walks $X^{\rho}$ and $X^{\rho+\varepsilon}$.

\begin{figure}[]
  \center
\includegraphics[scale=0.80]{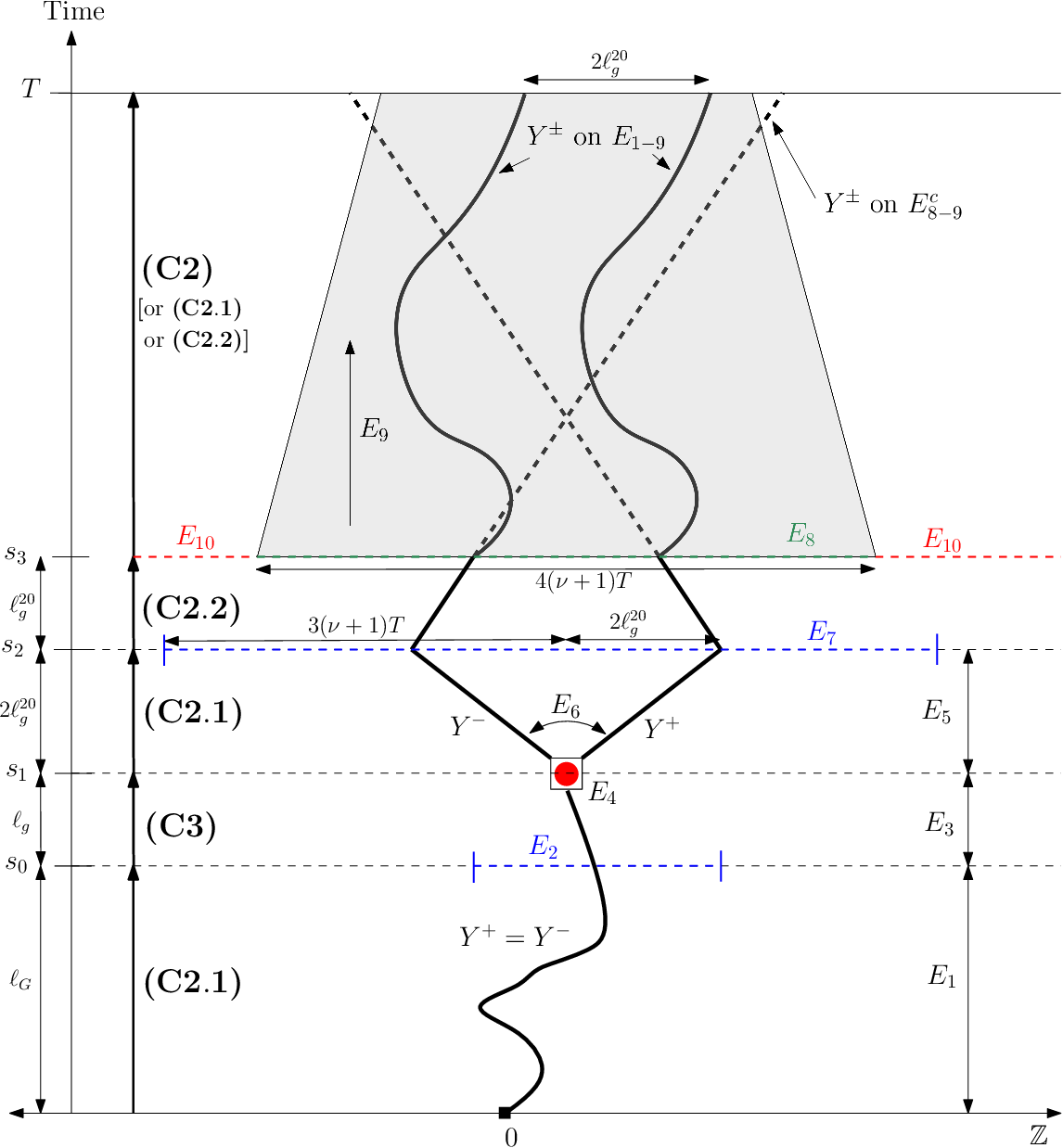}
  \caption{Trajectories of $Y^{\pm}$ during a time-interval of length $T$. For readability, we only picture the scenario when the events $E_i$ are all realized (except during $[s_3,T]$). The picture is not up to scale (all linear stretches on the trajectories of $Y^{\pm}$ have slope 1, those of the grey trapezoid have slope $2\nu$). The red disk represents a \textit{sprinkler} (extra particle of $\eta^+\setminus \eta^-$). The arrows on the left indicate which of the coupling conditions \ref{pe:densitychange}-\ref{pe:nacelle} from Section~\ref{subsec:C} is being used to perform the coupling in the given time interval. This is at the core of the proof of Lemma~\ref{L:Q coupling-alternative}. The various couplings involved in the time interval $[s_3,T]$ meet the necessity of having to restore domination of environments (possibly with a relative spatial shift) on a \emph{full} space interval of length $M$ comparable to $L$ with high probability, so as to be able to repeat proceedings in the next step.}
  \label{f:One_step_gain}
\end{figure}

We now proceed to make this precise. For $L\geq 3$, define 
\begin{equation}\label{eq:ellTdef}
\text{$\ell_G=\lfloor (\log L)^{1000} \rfloor $, $\ell_g = \lfloor (\log L)^{1/1000}\rfloor$ and $T=5\ell_G$,}
\end{equation}
as well as
\begin{equation}\label{eq:s_0}
s_0= \ell_G, \, s_1= s_0+\ell_g, \,  s_2= s_1+2\ell_g^{20}\text{ and }s_3=s_2+\ell_g^{20}.
\end{equation}
Let $M\geq 10(\nu+1)L$, which will parametrize the spatial length of a space-time box in which the entire construction takes place, and let $Y_{0}^{\pm}=0$. The definition of $Y^\pm$ depends on the values of $M$ and $L$, but we choose not to emphasize it in the notation. Given processes 
\begin{equation}\label{e:omega-coup}
 \gamma = (\eta^{+}, \eta^-,  U)
 \end{equation}
 on a state space $\Omega$ such that the first two coordinates take values in $ ( \mathbb Z_+ )^{[0,T] \times \mathbb Z} $ and the last one in $[0,1]^{\bL}$, we will define the events $E_i=E_i(\gamma)$ below measurably in $\gamma$ and similarly $Y^{\pm} =(Y^{\pm}_t(\gamma))_{ 0 \leq t \leq T} $ with values in the set of discrete-time trajectories starting at $0$. Unlike the sample paths of  $X^{\rho}, X^{\rho+\varepsilon}$, the trajectories of $Y^{\pm}$ may perform jumps that are not to nearest neighbors.   Probability will not enter the picture until Lemma~\ref{L:Q coupling-alternative} below, see in particular \eqref{e:coup-marg}, which specifies the law of $\gamma$. We now properly define the aforementioned three scenarii of success, neutral events and bad events, which will be mutually exclusive, and we specify $Y^{\pm}$ in all cases.
 
 \bigskip

Below $s$ and $t$ always denote integer times. Define the event (see Section~\ref{subsec:re} regarding the notation $\succcurlyeq$) 
\begin{align}\label{eq:E1deff}
E_{1}=&\{\forall s\in [0,s_0], \, \eta^+_s\vert_{[-M+2{\nu}s_0, M-2{\nu}s_0]}\succcurlyeq\eta^-_s\vert_{[-M+2{\nu}s_0, M-2{\nu}s_0)]}  \}.
\end{align}
The event $E_1$ guarantees the domination of the environment $\eta^-$ by $\eta^+$ and is necessary for a success, while $E_1^c$ will be part of the bad event.
On $E_{1}$, for all $1\leq t\leq s_0$, let recursively
\begin{equation}\label{eq:Ypmrecursive}
Y^-_t=Y^+_t=Y^-_{t-1}+A(\eta^-_{t-1}(Y^-_{t-1}), U_{(Y^-_{t-1},t-1)})
\end{equation}
where $A$ was defined in~\eqref{def:A}, so that we let the walks evolve together up to time $s_0$. On the bad event $E_1^c$, we exit the construction by defining
\begin{equation}\label{badexit1}
Y^-_t=t \text{ and } Y^+_t=-t\text{, for all } 1\leq t\leq T.
\end{equation}
Next, define the events
\begin{align}  \label{eq:E2deff}
E_{2}=&\{ \eta^+_{s_0}([Y^-_{s_0}, Y^-_{s_0}+\ell_g])>\eta^-_{s_0}([Y^-_{s_0}, Y^-_{s_0}+\ell_g])  \}\\ \nonumber
&\cap  \{\eta^-_{s_0}([Y^-_{s_0}-3\ell_g+1, Y^-_{s_0}+3\ell_g]) \leq (\rho+1)\cdot 6\ell_g\},\\[0.3em] \label{eq:E3deff}
E_{3}= & \{\forall s\in [s_0, s_1], \,\eta^+_{s}\vert_{[-M+(4\nu+1) s_1, M-(4\nu+1) s_1]} \succcurlyeq \eta^-_{s}\vert_{[-M+(4\nu+1) s_1, M-(4\nu+1) s_1]} \}.
\end{align}
The event $E_3$ again ensures the domination of $\eta^-$ by $\eta^+$ on a suitable spatial interval while $E_2$ creates favourable conditions at time $s_0$ to possibly see a sprinkler at time $s_1$.
On the event $E_{1-3}$ (recall \eqref{eq:E_i-j} for notation), for all $s_0+1\leq t\leq s_1$, let recursively
\begin{equation}\label{eq:Ypmrecursive1}
Y^-_t=Y^+_t=Y^-_{t-1}+A(\eta^-_{t-1}(Y^-_{t-1}), U_{(Y^-_{t-1},t-1)}),
\end{equation}
so that the walks, from time $s_0$, continue to evolve together up to time $s_1$ and, in particular, we have that
\begin{equation}\label{Ypmats1}
\text{on } E_{1-3}, \quad Y^-_{s_1}=Y^+_{s_1}.
\end{equation}
In order to deal with the case where $E_2$ or $E_3$ fail, we will distinguish two mutually exclusive cases, that will later contribute to an overall neutral and bad event, respectively; cf.~\eqref{def:softfailure} and \eqref{def:catastrophe}. To this end, we introduce
\begin{equation}\label{eq:E2bisdeff}
E_{2,\textnormal{bis}}=\{\forall s\in [s_0,T], \, \eta^+_s\vert_{[-M+2{\nu}T, M-2{\nu}T]}\succcurlyeq\eta^-_s\vert_{[-M+2{\nu}T, M-2{\nu}T]}  \}.
\end{equation}
On $E_1\cap E_2^c\cap E_{2,\textnormal{bis}}$, which will be a neutral event, we exit the construction by letting the walk evolve together up to time $T$, that is,  we define
\begin{equation}\label{eq:Ypmrecursive2}
Y^-_t=Y^+_t=Y^-_{t-1}+A(\eta^-_{t-1}(Y^-_{t-1}), U_{(Y^-_{t-1},t-1)})\text{, for all } s_0+1\le t\le T.
\end{equation}
On the bad event $\big(E_1\cap E_2^c\cap E_{2,\textnormal{bis}}^c\big)\cup\left(E_{1-2}\cap E_3^c\right)$, we exit the construction by defining
\begin{equation}\label{badexit2}
Y^-_t=t \text{ and } Y^+_t=-t\text{, for all } s_0+1\leq t\leq T.
\end{equation}
In the above, $Y^+$ and $Y^-$ may take a non-nearest neighbour jump at time $s_0$, which is fine for our purpose. Next, define
\begin{align}\label{eq:E4deff}
E_4=& \{\eta^+_{s_1}(Y_{s_1}^{ + \black})\geq 1,\eta^-_{s_1}(Y_{s_1}^{ - \black})=0\},\\ \label{eq:E5deff}
E_5=&\{ \forall s\in [s_1,s_2], \,\eta^+_{s}\vert_{[-M+(6\nu+1) s_2, M-(6\nu+1) s_2]} \succcurlyeq \eta^-_{s}\vert_{[-M+(6\nu+1) s_2, M-(6\nu+1) s_2]} \},
\end{align}
where $E_4$ states that the walkers  see a sprinkler at time $s_1$ and $E_5$ guarantees domination of the environments from time $s_1$ to $s_2$.  We will first define what happens on the neutral and the bad events. For this purpose, define
\begin{equation}\label{eq:E4bisdeff}
E_{4,\textnormal{bis}}=\{\forall s\in [s_1,T], \, \eta^+_s\vert_{[-M+(4{\nu}+1) T, M-(4{\nu}+1) T]}\succcurlyeq\eta^-_s\vert_{[-M+(4{\nu}+1) T, M-(4{\nu}+1) T]}  \}.
\end{equation}
Recall that on $E_{1-3}$, we have defined $Y^\pm$ up to time $s_1$. On the neutral event $E_{1-3}\cap E_{4-5}^c\cap E_{4,\textnormal{bis}}$, we define
\begin{equation}\label{eq:Ypmrecursive3}
Y^-_t=Y^+_t=Y^-_{t-1}+A(\eta^-_{t-1}(Y^-_{t-1}), U_{(Y^-_{t-1},t-1)})\text{, for all } s_1+1\le t\le T.
\end{equation}
On the bad event $E_{1-3}\cap E_{4-5}^c\cap E_{4,\textnormal{bis}}^c$, we exit the construction by defining
\begin{equation}\label{badexit3}
Y^-_t=t \text{ and } Y^+_t=-t\text{, for all } s_1+1\leq t\leq T.
\end{equation}
On the event $E_{1-5}$, because of the sprinkler, the walkers have a chance to split apart hence, for $t\in [s_1+1,s_2]$, we let
\begin{equation}\label{eq:YpmE1-5}
Y^-_t=Y^-_{t-1} +A(\eta^-_{t-1}(Y^-_{t-1}), U_{ ( Y^-_{t-1} ,t-1)   }   ) \text{ and } Y^+_t=Y^+_{t-1} +A(\eta^+_{t-1}(Y^+_{t-1}), U_{ ( Y^+_{t-1} ,t-1)   }   ).
\end{equation}
Above, $Y^+$ and $Y^-$ evolve on top of their respective environments $\eta^+$ and $\eta^-$ from time $s_1+1$ to time $s_2$. To be able to continue the construction from time $s_2$ to $s_3$, we define two events $E_6$ and $E_7$ concerning the environments from times $s_1$ to $s_2$. If we are on $E_{1- 6\black}$, between times $s_1+1$ and $s_2$ we will require that $Y^+$ and $Y^-$ drift away, regardless of the states of $\eta^+$ and $\eta^-$, using only the information provided by $U$. For this purpose, define
\begin{equation}
\label{eq:E6deff}
E_6= \bigcap_{s_1\leq t\leq s_2-1} E_6^t,
\end{equation}
where we set 
\begin{equation}
\label{eq:E6tdeff}
\begin{split}
&E^{s_1}_6=\big\{U_{(Y^-_{s_1},s_1)} \in (p_{\circ}, p_{\bullet})\big\} \text{ (recall that $p_\bullet >p_\circ$) and}\\
&E^t_6=\big\{ U_{ (  Y_{s_1}^- -(t-s_1) \black  ,t)} >  p_{\bullet}  \big\}\cap \big\{  U_{ (  Y^-_{s_1}+ t-s_1 \black ,t)}< p_{\circ}  \big\}\text{ for } s_1+1\leq t \leq s_2-1.
\end{split}
\end{equation}
On $E_4\cap E_6^{s_1}$, $Y^-$ steps to the left and $Y^+$ steps to the right from their common position, thus creating a gap at time $s_1+1$\black. Then for all $s_1<t< s_2$, as long as $E_4$ and $E_6^{s_1}$ to $E_6^{t-1}$ happen, $Y^-_t$ and $Y^+_t$ are at distinct positions and $E_6^t$ allows $Y^-$ to take one more step to the left and $Y^+$ one more step to the right. Hence, using what we have constructed so far on $E_{1-5}$, we have that
\begin{equation}\label{e:Ypm at s_1}
\text{on } E_{1-6}, \quad Y^+_{s_2}=Y^-_{s_2}+2(s_2-s_1),
\end{equation}
and we still have the domination of $\eta^-$ by $\eta^+$ at time $s_2$, cf.~\eqref{eq:E5deff}.

Since $Y^+$ and $Y^-$ are no longer at the same position, we are going to momentarily allow to lose this domination in order to recreate it at time $s_3$ but in a suitably shifted manner, namely, achieve that $\eta^+(Y^+_{s_3}+\cdot)\vert_I \succcurlyeq \eta^-(Y^-_{s_3}+\cdot) \vert_I$ for a suitable interval $I$, see~\eqref{eq:E8deff}. To do so, we first need favourable conditions at time $s_2$ encapsulated by the event
\begin{equation}
\label{eq:E7deff}
E_7=\left\{ \begin{array}{c} \text{for all intervals } I\subseteq [Y^-_{s_1} - 3(\nu+1)T,\, Y^-_{s_1} + 3(\nu+1)T]\text{ with }\\
\lfloor \ell_g^2 /2\rfloor \leq \vert I\vert \leq \ell_g^2,  \eta^+_{s_2}(I)\geq (\rho+3\epsilon/4) \vert I \vert\text{ and }\eta^-_{s_2}(I)\leq (\rho+\epsilon/4) \vert I \vert \end{array}
\right\}
\end{equation}
The above requires good empirical densities at time $s_2$ on an interval of length of order $T$ centred around the common position of the walkers $Y^\pm$ at time $s_1$. On $E_{1-7}$, we do not precisely control the position of the walkers from time $s_2$ to $s_3$ and define, for all $s_2< t\le s_3$,
\begin{equation}
\label{eq:Ypms2s3}
Y_{t}^- -Y_{t-1}^- = 1, \quad Y_{t}^+ - Y_{t-1}^+ = -1,
\end{equation}
which corresponds to the worst case scenario assuming nearest-neighbour jumps. In particular, using \eqref{eq:Ypms2s3}, \eqref{e:Ypm at s_1} and \eqref{eq:s_0}, we have that
\begin{equation}\label{eq:YpmE1-7}
\text{on } E_{1-7}, \quad Y^+_{s_3}-Y^-_{s_3}=2\ell_g^{20}.
\end{equation}
We now need to consider the case where $E_{6-7}$ fails, and we will again distinguish two types of failure. For this purpose, define
\begin{equation}\label{eq:E6bisdeff}
E_{6,\textnormal{bis}}=\{ \forall s\in [s_2,T], \,\eta^+_{s}\vert_{[-M+(6\nu+1) T, M-(6\nu+1) T]} \succcurlyeq \eta^-_{s}\vert_{[-M+(6\nu+1) T, M-(6\nu+1) T]} \},
\end{equation}
and, on the neutral event $E_{1-5}\cap E_{6,7}^c\cap E_{6,\textnormal{bis}}$, we merge $Y^+$ with $Y^-$ at time $s_2+1$ and then let them walk together, by defining
\begin{align}
\label{eq:Y+movedtoY-s2}
Y_{s_2+1}^+&=Y_{s_2+1}^-= Y_{s_2}^- + A\big(\eta_{s_2}^{-}(Y^-_{s_2}), U_{(Y^-_{s_2},s_2)}\big), \text{ and }\\
\label{eq:Ypmrecursive4}
 Y^-_t&=Y^+_t=Y^-_{t-1}+A(\eta^-_{t-1}(Y^-_{t-1}), U_{(Y^-_{t-1},t-1)})\text{, for all } s_2+2\le t\le T
\end{align}
(above one line would be sufficient but we single out \eqref{eq:Y+movedtoY-s2} because the merging will typically occasion a jump for $Y^+$). \black  On the remaining bad event $E_{1-5}\cap E_{6-7}^c\cap E_{6,\textnormal{bis}}^c$, we exit the construction in the now usual way by defining
\begin{equation}\label{badexit4}
Y^-_t=t \text{ and } Y^+_t=-t\text{, for all } s_2+1\leq t\leq T.
\end{equation}
It remains to define $Y^\pm$ from time $s_3$ to $T$ on the event $E_{1-7}$. The good event will require
\begin{equation}\label{eq:E8deff}
E_{8}\stackrel{\text{def.}}{=}\{\eta^+_{s_3}(\cdot+\ell_g^{20})\vert_{[Y^-_{s_1}-2(\nu+1)T,Y^-_{s_1}+2(\nu+1)T]}\succcurlyeq \eta^-_{s_3}(\cdot-\ell_g^{20})\vert_{[Y^-_{s_1}-2(\nu+1)T,Y^-_{s_1}+2(\nu+1)T]} \},
\end{equation}
that is, we want that the environment $\eta^+$ seen from $Y^+_{s_3}$ covers $\eta^-$ seen from $Y^-_{s_3}$ on an interval of length of order $T$, which will enable us to let the walkers move in parallel (i.e.~taking the same steps at the same time), even if they are at different positions. Moreover, we want this domination to persist from time $s_3$ to time $T$, hence we require
\begin{equation}\label{eq:E9deff}
E_{9}\stackrel{\text{def.}}{=}\{\forall s\in [s_3,T],\,\eta^+_{s}(\cdot+\ell_g^{20})\vert_{[Y^-_{s_1}-2 T,Y^-_{s_1}+2T]}\succcurlyeq \eta^-_{s}(\cdot-\ell_g^{20})\vert_{[Y^-_{s_1}-2 T,Y^-_{s_1}+2T]} \}.
\end{equation}
On the good event $E_{1-9}$, we let, for $t\in [s_3+1,T]$,
\begin{equation}\label{eq:YpmE1-9}
Y^-_t=Y^-_{t-1}+A(\eta^-_{t-1}(Y^-_{t-1}),U_{(Y^-_{t-1},t-1)})\text{ and }
Y^+_t=Y^+_{t-1}+A(\eta^+_{t-1}(Y^+_{t-1}),U_{(Y^+_{t-1}-2\ell_g^{20},t-1)}).
\end{equation}
Note that above, we choose to shift spatially the collection $(U_w)$ by $2\ell_g^{20}$ spatially for $Y^+$, which corresponds to the difference between $Y^-$ and $Y^+$. Therefore, both walks are using the same $U_w$ to determine their next step. Using~\eqref{eq:YpmE1-9},~\eqref{eq:YpmE1-7} and~\eqref{eq:E9deff}, one thus proves recursively that
\begin{equation}\label{def:success0}
\text{on } E_{1-9}, \, Y^+_t-Y^-_t=2\ell_g^{20} \text{, for all }s_3\le t\le T.
\end{equation}
Finally, on the bad event $E_{1-7}^c\cap E_{8-9}^c$, we finish the construction by defining
\begin{equation}\label{badexit5}
Y^-_t=Y^-_{t-1}+1,\, Y^+_t=Y^+_{t-1}-1\text{, for all } s_3+1\leq t\leq T-1,\text{ and } Y^-_T=T,Y^+_T=-T.
\end{equation}
This ends the definition of the trajectories $Y^+$ and $Y^-$. Let us emphasize once more that these trajectories are not nearest-neighbour and not Markovian w.r.t.~the canonical filtration associated to $(\eta^+,\eta^-,U)$, but that this will not prevent us from obtaining the desired bounds.

Below, we proceed to define our three key events and summarise in Lemma~\ref{L:determ-speed} some of the important (deterministic) properties we will use.

\begin{itemize}
\item  {\bf Scenario I: Good event.} We define
\begin{equation} \label{def:success}
E_{\textnormal{good}}\stackrel{\text{def.}}{=} E_{1-9}.
\end{equation} 
Notice that by~\eqref{def:success0}, on the event $E_{\textnormal{good}}$ we have that
\begin{equation}\label{success}
Y^+_T-Y^-_T=2\ell_g^{20}>0,
\end{equation}
and also that
\begin{equation}\label{success2}
\forall s\in [0,s_2],\,\eta^+_{s}\vert_{[-T,T]}\succcurlyeq \eta^-_{s}\vert_{[-T,T]}, 
\end{equation}
which follows by~\eqref{eq:E5deff},~\eqref{eq:E3deff} and~\eqref{eq:E1deff}, provided that $M-(6\nu+1)s_2\ge T$. Finally,~\eqref{eq:E9deff} and the fact that on $E_{\textnormal{good}}$, $\vert Y^-_{s_1}\vert \leq s_1$ (by~\eqref{eq:Ypmrecursive} and~\eqref{eq:Ypmrecursive1}) imply that
\begin{equation}\label{success3}
\forall s\in [s_3,T],\,\eta^+_{s}(\cdot+2\ell_g^{20})\vert_{[-T-2\ell_g^{20},T-2\ell_g^{20}]}\succcurlyeq \eta^-_{s}\vert_{[-T,T]}.
\end{equation}
\item {\bf Scenario II: Neutral event.} We define
\begin{equation}\label{def:softfailure}
E_{\textnormal{neutral}}\stackrel{\text{def.}}{=} \left(E_1\cap E_2^c\cap E_{2,\textnormal{bis}}\right) \cup   \left(E_{1-3}\cap E_{4-5}^c\cap E_{4,\textnormal{bis}}\right) 
 \cup  \left(E_{1-5}\cap E_{6-7}^c\cap E_{6,\textnormal{bis}}\right).
\end{equation} 
Note that on $E_{\textnormal{neutral}}$, \eqref{eq:Y+movedtoY-s2}-\eqref{eq:Ypmrecursive4},~\eqref{eq:Ypmrecursive3} and~\eqref{eq:Ypmrecursive2} imply that  
\begin{equation}\label{bruitacote}
\begin{split}
&Y^+_T=Y^-_T
\end{split}
\end{equation}
and~\eqref{eq:Ypmrecursive4},~\eqref{eq:Y+movedtoY-s2},~\eqref{eq:YpmE1-5},~\eqref{eq:Ypmrecursive3},~\eqref{eq:Ypmrecursive2},~\eqref{eq:Ypmrecursive1} and~\eqref{eq:Ypmrecursive}
imply that 
\begin{equation}\label{bruitacote2}
Y^-_t=Y^-_{t-1}+A(\eta^-_{t-1}(Y^-_{t-1}), U_{(Y^-_{t-1},t-1)})\text{, for all } 1\le t\le T.
\end{equation}
From~\eqref{eq:E6bisdeff},~\eqref{eq:E4bisdeff},~\eqref{eq:E5deff},~\eqref{eq:E2bisdeff},~\eqref{eq:E3deff} and~\eqref{eq:E1deff}, we also have 
\begin{equation}\label{bruitacote3}
\forall s\in [0,T], \,\eta^+_{s}\vert_{[-T, T]} \succcurlyeq \eta^-_{s}\vert_{[-T,T]},
\end{equation}
provided that $M-(6\nu+1)T\ge T$.

\item{\bf Scenario III: Bad event.} We finally define the event
\begin{equation}\label{def:catastrophe}
\begin{split}
E_{\textnormal{bad}}& \stackrel{\text{def.}}{=}  \left( E_{\textnormal{good}} \cup E_{\textnormal{neutral}}\right)^c=E_1^c \cup \left(E_1\cap E_2^c\cap E_{2,\textnormal{bis}}^c\right) \cup \left(E_{1-2}\cap E_3^c\right) \cup \\
& \left(E_{1-3}\cap E_{4-5}^c\cap E_{4,\textnormal{bis}}^c\right)  \cup  \left(E_{1-5}\cap E_{6-7}^c\cap E_{6,\textnormal{bis}}^c\right) \cup \left(E_{1-7}\cap E_{8-9}^c \right).
\end{split}
\end{equation}
In particular,~\eqref{badexit5},~\eqref{badexit4},~\eqref{badexit3},~\eqref{badexit2}, and~\eqref{badexit1} yield  
\begin{equation} \label{catastrophe}
Y^+_T = Y^-_T-2T=-T \text{ on the event } E_{\textnormal{bad}}.
\end{equation}
\end{itemize}

 The next deterministic lemma is a restatement of~\eqref{success},~\eqref{bruitacote} and~\eqref{catastrophe}.
 
\begin{Lem} \label{L:determ-speed}
The event $E_{\textnormal{good}}$, $E_{\textnormal{neutral}}$ and $E_{\textnormal{bad}}$ defined in \eqref{def:success0}, \eqref{def:softfailure}
 and \eqref{def:catastrophe}, respectively, form a partition of $\Omega$  such that
\begin{align}
{Y}_{T}^+&= Y_{T}^-+2\ell_g^{20} \text{ on }E_{\textnormal{good}}; \label{eq:Egood}\\
{Y}^{+}_{T} &=Y_{T}^- \text{ on }E_{\textnormal{neutral}}; \label{eq:Eneutral}\\  
{Y}_T^+ &= Y^{-}_{T} -2T  \text{ on }E_{\textnormal{bad}}.\label{eq:Ebad}
\end{align}
\end{Lem}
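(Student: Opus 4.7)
The plan is essentially bookkeeping: the three events have been built incrementally along an exhaustive case split performed while constructing $Y^\pm$, so the partition property should reduce to reading off the construction, and the three displayed identities have effectively already been established in the paragraphs preceding the lemma statement.

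First I would verify that $E_{\textnormal{good}}$, $E_{\textnormal{neutral}}$, $E_{\textnormal{bad}}$ is a partition of $\Omega$. Since $E_{\textnormal{bad}}$ is defined in \eqref{def:catastrophe} as the complement of $E_{\textnormal{good}} \cup E_{\textnormal{neutral}}$, it only remains to show pairwise disjointness of $E_{\textnormal{good}}$ and each of the three components of $E_{\textnormal{neutral}}$, as well as mutual disjointness of those three components. For the first, note that $E_{\textnormal{good}} = E_{1-9}$ in particular contains $E_2$, $E_{4-5}$ and $E_{6-7}$, each of which appears complemented in exactly one of the three subsets making up $E_{\textnormal{neutral}}$ in \eqref{def:softfailure}; hence $E_{\textnormal{good}} \cap E_{\textnormal{neutral}} = \varnothing$. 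For the internal disjointness of $E_{\textnormal{neutral}}$, I would observe that the second subset requires $E_2$ (as $E_{1-3} \subseteq E_2$) while the first requires $E_2^c$; similarly the third subset requires $E_{4-5}$ (as $E_{1-5} \subseteq E_{4-5}$) while the second requires $E_{4-5}^c$, and the third still requires $E_2$ while the first requires $E_2^c$. This gives the partition.

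Next I would read off the three equalities from the construction of $Y^\pm$ carried out just above the lemma. On $E_{\textnormal{good}} = E_{1-9}$, the identity \eqref{def:success0} gives $Y^+_t - Y^-_t = 2\ell_g^{20}$ for every $s_3 \le t \le T$, and specialising to $t = T$ yields \eqref{eq:Egood}. On $E_{\textnormal{neutral}}$, the relation \eqref{bruitacote} directly furnishes \eqref{eq:Eneutral}; this is precisely the content of the neutral-exit prescriptions \eqref{eq:Ypmrecursive2}, \eqref{eq:Ypmrecursive3} and \eqref{eq:Y+movedtoY-s2}--\eqref{eq:Ypmrecursive4}, whose common effect is to merge $Y^+$ with $Y^-$ at the earliest neutral time and then let the pair evolve together using $\eta^-$ until time $T$. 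Finally, on $E_{\textnormal{bad}}$, each of the six disjoint alternatives listed in \eqref{def:catastrophe} triggers exactly one of the bad-exit assignments \eqref{badexit1}, \eqref{badexit2}, \eqref{badexit3}, \eqref{badexit4}, \eqref{badexit5}, and in each of those assignments the values at the terminal time satisfy $Y^-_T = T$ and $Y^+_T = -T$, which is \eqref{eq:Ebad}.

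I do not foresee any real obstacle here: the only mildly delicate point is to be careful that the case split used in defining $Y^\pm$ really is exhaustive and non-overlapping, so that \eqref{def:catastrophe} captures \emph{all} remaining configurations and matches the bad-exit rules. The check amounts to walking through the conditional branches triggered at times $s_0$, $s_1$, $s_2$ and $s_3$, confirming that on each non-good, non-neutral branch one of the prescriptions \eqref{badexit1}--\eqref{badexit5} is in force, and that none of these prescriptions is ever invoked on a good or neutral branch. Once this is verified, the lemma follows without further computation.
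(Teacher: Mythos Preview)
Your proposal is correct and follows exactly the paper's approach: the paper simply declares the lemma to be a restatement of \eqref{success}, \eqref{bruitacote} and \eqref{catastrophe}, and you spell out precisely that bookkeeping, including the disjointness check needed for the partition claim. The only minor imprecision is that the six alternatives in \eqref{def:catastrophe} map onto the five labels \eqref{badexit1}--\eqref{badexit5} with two alternatives sharing \eqref{badexit2}, but this does not affect the argument.
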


  \subsection{The coupling $\mathbb{Q}$}\label{sec:Q}
We now aim to compare $Y^\pm$ to $X^{\rho}$ and $X^{\rho+\varepsilon}$, and to integrate over the dynamics of the environments when started from a typical initial configuration. We will derive from this a bound  on the expected discrepancy between $Y^+$ and $Y^-$, and thus on the one between $X^{\rho}$ and $X^{\rho+\varepsilon}$. The main result of this section is Lemma~\ref{L:Q coupling-alternative}, which entails a coupling $\mathbb{Q}$ with these features. Lemma~\ref{L:Q coupling-alternative} is the key ingredient in the proof of Proposition~\ref{Prop:initialspeed}, which appears in the next subsection.

We will require the environments $\eta^+$ and $\eta^-$ to have a typical initial configuration under $\mathbf{P}^{\rho+\epsilon}$ and $\mathbf{P}^{\rho}$ in the following sense. Recall that $\rho\in J$ and $\epsilon\in (0,1/100)$ have been fixed at the start of this section, and that $(\rho+\epsilon)\in J$. For $M,L\in \mathbb{N}$, we say that $(\eta^+_0,\eta^-_0)\in \Sigma^2$ (recall that $\Sigma= (\Z_+)^\Z$ from Section~\ref{subsec:re}) is \emph{$(M,L)$-balanced} if all of the following occur:
\begin{enumerate}[label=(\roman*)]
\item \label{e:balanced domination} $\eta^+_0(x)\geq \eta^-_0(x)$ for all $x\in [-M,M]$,
\item \label{e:balanced eta +} $\eta^+_0([x,x+\lfloor(\log L)^2\rfloor -1])\geq (\rho +\tfrac{99\epsilon}{100}) \lfloor(\log L)^2\rfloor $ for all $x\in [-M,M-\lfloor(\log L)^2\rfloor +1]$, 
\item\label{e:balanced eta -}  $\eta^-_0([x,x+\lfloor(\log L)^2\rfloor -1])\leq (\rho +\tfrac{\epsilon}{100}) \lfloor(\log L)^2\rfloor $ for all $x\in [-M,M-\lfloor(\log L)^2\rfloor +1]$.
\end{enumerate}

\begin{Lem}\label{L:Q coupling-alternative}
There exists $L_2=L_2(\rho,\epsilon, \nu \black) 
\geq 1$ such that for all $L\ge L_2$, the following holds.  For all $M\in [10(\nu+1)L,20(\nu+1)L]$ and for every $(M,L)$-balanced choice of $(\eta^+_0,\eta^-_0)$, there exists a coupling $\Q= \Q_{(\eta^+_0,\eta^-_0)}$ of $(\eta^+, \eta^-, U)$ with the following properties:
\begin{align} 
\label{e:coup-marg}
&\text{\begin{minipage}{0.8\textwidth} 
$\eta^-\sim \mathbf{P}^{\eta^-_0}, \  \eta^+\sim \mathbf{P}^{\eta^+_0}$, and $\left(U_w\right)_{w\in\mathbb{L}}$ are i.i.d.~uniform variables on $[0,1]$; 
\end{minipage}}\\[0.3em]
\label{e:coup-domination} 
&\text{\begin{minipage}{0.85\textwidth} 
There exist $X^{\pm}=X^{\pm}(\eta^{\pm},U)$ such that $X^{\pm} \sim \mathbb{P}^{\eta^{\pm}_0}$ and $\mathbb{Q}$-a.s., the inequalities $ X^{-}_T \leq Y^-_T$ and $Y^{+}_T \leq X^+_T$ hold, \black with $Y^{\pm}= Y^{\pm} (\eta^+,\eta^-,U)$ as in Section \ref{subsec:onegap};
\end{minipage}}\\[0.3em]
&\label{e:coup-speed}\mathbb{E}^{\mathbb{Q}}\left[{Y}^+_T - Y^-_T\right] \geq \exp\big(- \left(\log L\right)^{1/20}\big);\\[0.3em] 
\begin{split} &  \Q\left( E_{\textnormal{restart}}   \right) \geq 1- L^{-100}, \text{ where }\\
&E_{\textnormal{restart}}\stackrel{\textnormal{def.}}{=} \{ \forall x\in [-(M-(8\nu+3)T),M-(8\nu+3)T],\,       \eta^+_T(x +{Y}^+_T) \geq   \eta^-_T(x +Y^-_T)   \}.
\end{split} \label{e:coup-restart balanced}
\end{align}
\end{Lem}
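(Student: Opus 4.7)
The strategy is to build $\Q$ sequentially in time along the phases $[0,s_0]$, $[s_0,s_1]$, $[s_1,s_2]$, $[s_2,s_3]$, $[s_3,T]$ that underlie the construction of $Y^{\pm}$ in Section~\ref{subsec:onegap}. At each phase I would draw the $U$-variables for the corresponding time slab independently of all previously sampled randomness, and couple the environments via the appropriate item from \ref{pe:densitychange}--\ref{pe:nacelle}: conditions \ref{pe:compatible}/\ref{pe:drift} on $[0,s_0]$, $[s_0,s_1]$ and $[s_1,s_2]$ to propagate the domination $\eta^+\succcurlyeq\eta^-$; condition \ref{pe:nacelle} applied with $\ell=\ell_g$ at time $s_1$ to place a sprinkler at the common walker position (producing $E_4$); condition \ref{pe:couplings} on $[s_2,s_3]$ to re-establish the shifted inner domination $\eta^+_{s_3}(\cdot+\ell_g^{20})\succcurlyeq\eta^-_{s_3}(\cdot-\ell_g^{20})$ (producing $E_8$); and the surgery part of \ref{pe:compatible} on $[s_3,T]$ to extend this shifted domination out to a window of size $M$ (producing $E_9$). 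Whenever the construction exits into a bad sub-event (failures \eqref{badexit1}--\eqref{badexit5}), I would replace the remainder by a parachute coupling based on \ref{pe:compatible}/\ref{pe:drift}, reinstating a suitably shifted (or unshifted) environment domination at time $T$ with probability $\geq 1-L^{-200}$. The sequential nature of this construction automatically satisfies the independence condition \eqref{eq:arrows-variation} required by Lemma~\ref{lem:Xlawredef}.

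The probability estimates fall in two groups. The density events $E_2,E_7$, the propagation events $E_1,E_3,E_5,E_{2,\textnormal{bis}},E_{4,\textnormal{bis}},E_{6,\textnormal{bis}}$, and the long-time surgery event $E_9$ all fail with probability at most $L^{-200}$ by the stretched-exponential bounds in \ref{pe:densitychange}, \ref{pe:drift} and \ref{pe:compatible}, since the corresponding time parameters are of order $\ell_G=(\log L)^{1000}$. The quantitatively small contributions are (a) $\mathbb{P}(E_4\mid E_{1-3})\geq 2\delta\geq\exp(-c\ell_g)$ from \ref{pe:nacelle}, (b) $\mathbb{P}(E_6)\geq (p_\bullet-p_\circ)[(1-p_\bullet)p_\circ]^{2\ell_g^{20}-1}\geq\exp(-c\ell_g^{20})$ from the independent $U$-variables at predetermined positions, and (c) $\mathbb{P}(E_8\mid E_{1-7})\geq 1-\exp(-c'\ell_g^5)$ from \ref{pe:couplings} over the short window $s_3-s_2=\ell_g^{20}$. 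Combining yields $\mathbb{P}(E_{\textnormal{good}})\geq\exp(-c''\ell_g^{20})$ and, crucially, the dominant bad contribution satisfies $\mathbb{P}(E_{1-7}\cap E_8^c)\leq\exp(-c''\ell_g^{20})\exp(-c'\ell_g^5)$. Plugging into Lemma~\ref{L:determ-speed},
\[
\mathbb{E}^{\Q}[Y^+_T-Y^-_T]\geq\exp(-c''\ell_g^{20})\big[2\ell_g^{20}-2T\exp(-c'\ell_g^5)\big]-2T\cdot L^{-200}\geq\exp\big(-(\log L)^{1/20}\big)
\]
for $L$ large, since $\ell_g^{20}=(\log L)^{1/50}\ll (\log L)^{1/20}$ and $T\exp(-c'\ell_g^5)=o(1)$, which is \eqref{e:coup-speed}. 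The restart property \eqref{e:coup-restart balanced} is then the intersection of $E_9$ on $E_{\textnormal{good}}$, the ``bis'' events on $E_{\textnormal{neutral}}$, and the successful parachute couplings on $E_{\textnormal{bad}}$; a union bound gives $\mathbb{P}(E_{\textnormal{restart}})\geq 1-L^{-100}$.

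Finally, the walks $X^{\pm}=X^{\pm}(\eta^{\pm},U)$ are defined via the arrows construction of Section~\ref{subsec:rw} (with the $U$-collection suitably spatially shifted for $X^+$ past time $s_3$ on $E_{\textnormal{good}}$, to match the shift employed in the definition of $Y^+$ there; this remains an i.i.d.~uniform family, compatible with \eqref{eq:arrows-variation}). Lemma~\ref{lem:Xlawredef} then provides the correct quenched laws, and the comparisons $X^-\leq Y^-$ and $Y^+\leq X^+$ are verified scenario by scenario via Lemma~\ref{L:monotone} (on $E_{\textnormal{good}}$ and $E_{\textnormal{neutral}}$, using the established environment dominations) and via the Lipschitz bound \eqref{e:lipschitz} (on $E_{\textnormal{bad}}$, where $Y^-_T=T$ and $Y^+_T=-T$ trivially bracket $X^{\pm}_T$). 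The hard part will be the two-step surgery coupling underlying $E_8\cap E_9$: one must simultaneously \emph{preserve} the narrow inner shifted domination just created by the sprinkler (so that the gap $2\ell_g^{20}$ survives) and \emph{extend} this shifted domination out to a spatial window of size $M\asymp L$ within the short remaining time $T-s_3$, all while threading the finely tuned scales $\ell_g=(\log L)^{1/1000}$, $\ell_G=(\log L)^{1000}$ through the hypotheses of \ref{pe:compatible}. This is precisely what \ref{pe:compatible} is engineered to deliver, but correctly matching its scale parameters $(\ell,t,H_1,H_2)$ to the present setting so that both \eqref{eq:compatible1} and \eqref{eq:compatible2} give the bounds needed is the delicate quantitative crux of the argument.
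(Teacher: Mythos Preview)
Your approach is essentially the paper's approach, and your quantitative accounting (the bottleneck $\exp(-c\ell_g^{20})$ coming from $E_4\cap E_6$, the dominant bad term being $E_{1-7}\cap E_8^c$, and the arithmetic $\ell_g^{20}=(\log L)^{1/50}\ll(\log L)^{1/20}$) is correct.

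There is one factual slip worth flagging: you assert that the density events $E_2$ and $E_7$ fail with probability at most $L^{-200}$ because ``the corresponding time parameters are of order $\ell_G$''. This is wrong. The relevant parameter in the \ref{pe:densitychange} bound is the \emph{spatial} scale $\ell'$, not the time; $E_2$ concerns intervals of length $\ell_g$ and $E_7$ intervals of length $\ell_g^2$, so the bounds are only $\exp(-c\varepsilon^2\ell_g)$ and $\exp(-c\varepsilon^2\ell_g^2)$, which are far from polynomial in $L$ (the paper in fact only records $\Q(E_2^c)\le 1/4$). Fortunately your argument survives this: for the lower bound on $\Q(E_{\textnormal{good}})$ these near-one factors are absorbed into the dominant $\exp(-c\ell_g^{20})$, and for $\Q(E_{\textnormal{bad}})$ the events $E_2^c$, $E_7^c$ contribute only in conjunction with $E_{2,\textnormal{bis}}^c$, $E_{6,\textnormal{bis}}^c$, whose failure \emph{is} governed by the long time $\ell_G$.

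Two smaller points. First, condition~\ref{pe:nacelle} is itself a coupling of the \emph{evolution} on $[s_0,s_1]$ (delivering both the sprinkler \eqref{eq:penacelleNEW} and the propagation \eqref{eq:penacelleNEW2}), so it replaces rather than supplements \ref{pe:drift} on that interval. Second, on $E_{\textnormal{good}}$ the event $E_9$ only gives the \emph{narrow} shifted domination on an interval of width $O(T)$; the restart on a window of width $M\asymp L$ comes from the \emph{second} conclusion \eqref{eq:compatible2} of the surgery coupling \ref{pe:compatible} (what the paper calls $E_{12}$), which also requires a density event at time $s_3$ (the paper's $E_{10}$) that you do not mention. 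You clearly have the right idea---your final paragraph identifies exactly this as the crux---but be careful to distinguish these two outputs of \ref{pe:compatible} when you write it out.
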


\begin{proof} 
Let $L_2\geq 1$ to be chosen later, and $L\ge L_2$. Fix $M\in [10(\nu+1)L, 20(\nu +1)L]$, and let $(\eta_0^+, \eta_0^-)$ be $(M, L)$-balanced. 
We construct the coupling $\mathbb{Q}$ below. We will then define $X^-$ and $X^+$ such that 
 \eqref{e:coup-domination} is satisfied. To prove the main estimate \eqref{e:coup-speed}, we will control the probabilities of the events $E_{\textnormal{good}}$, $E_{\textnormal{neutral}}$ and $E_{\textnormal{bad}}$ emerging from the construction of $Y^{\pm}$, and use Lemma~\ref{L:determ-speed}. Finally, proving \eqref{e:coup-restart balanced} will require to bound the probability of losing the synchronisation of $\eta^+$ and $\eta^-$ by time $T$.
 
We split the proof into five parts: the construction of $\mathbb{Q}$ and the proofs of \eqref{e:coup-marg}-\eqref{e:coup-restart balanced}.\\

\noindent\textbf{Part I: Construction of $\Q$.}\\

 We will denote the natural filtration generated by the triplet $(\eta^+,\eta^-,U)$ by
\begin{equation}\label{e:Ft-def}
\mathcal{F}_t=\sigma\{(\eta^{+}_{t'})_{0\leq t'\leq t},(\eta^{-}_{t'})_{0\leq t'\leq t}, (U_{w})_{w\in \bL, \pi_2(w)\leq t-1} \}\text{, for } t\ge0.
\end{equation}
Under $\mathbb{Q}$, we first let $U=(U_w)_{w\in\bL}$ be a collection of i.i.d.~uniform random variables on $[0,1]$. We now define the coupling of $\eta^-$ and $\eta^+$ under $\mathbb{Q}$ in such a way that given $\mathcal{F}_t$, the evolution of $(\eta^\pm_s)_{t\le s\le t+1}$ has the right marginal $\mathbf{P}^{\eta^\pm_{t}}$ (up to time one), cf.~\eqref{pe:markov}. Moreover, throughout the construction of $\mathbb{Q}$, i.e.~from~\eqref{eq:etapmE1} to~\eqref{eq:etapmE17E8c10c} below, we will in fact ensure that for all integer $t \in [0,T]$, under $\mathbb{Q}$,
\begin{equation}\label{eq:couplingsindepU}
\text{$(U_w: \pi_2(w)=t)$ is independent from $\mathcal{F}_t$}.
\end{equation}
  We now proceed to specify $\eta^\pm$ under $\mathbb{Q}$, and refer again to Figure~\ref{f:One_step_gain} for visual aid. On the time-interval $[0,s_0]$, we
\begin{equation}\label{eq:etapmE1}
\text{couple $(\eta^+,\eta^-)$ as $(\eta,\eta')$ in~\ref{pe:drift} with $\eta_0=\eta^+_0$, $\eta'_0=\eta^-_0$, $H=M$, $t=s_0$ and $k=1$;}
\end{equation}
in particular, the domination $\eta_0 \vert_{[-H, H]}\succcurlyeq \eta_0' \vert_{[-H, H]}$ required by~\ref{pe:drift} is ensured by the fact that $(\eta^+_0,\eta^-_0)$ is $(M,L)$-balanced; see item (i) in the corresponding definition above Lemma~\ref{L:Q coupling-alternative}.
Since $E_1 \in \mathcal{F}_{s_0}$ by~\eqref{eq:E1deff} and \eqref{e:Ft-def}, we can observe at time $s_0$ whether $E_1$ occurred. Conditionally on $\mathcal{F}_{s_0}$ and on $E_{1}^c$, during $[s_0,T]$, we
\begin{equation}\label{eq:etapmE1c}
\text{let $\eta^-$ and $\eta^+$ evolve independently  with respective marginals $\bP^{\eta^-_{s_0}} $ and $\bP^{\eta^+_{s_0}} $.}
\end{equation}
Recalling the definition \eqref{eq:E2deff}, we have that $E_2\in\mathcal{F}_{s_0}$. 
Conditionally on $\mathcal{F}_{s_0}$ and on $E_{1}\cap E_{2}^c$,  we couple $(\eta^+,\eta^-)$ during the interval $[s_0,T]$ as 
\begin{equation}\label{eq:etapmE1E2c}
\text{$(\eta,\eta')$ in~\ref{pe:drift} with $\eta_0=\eta^+_{s_0}$, $\eta'_0=\eta^-_{s_0}$, $H=M-2\nu s_0$, $t=T-s_0$ and $k=1$.}
\end{equation}
On $E_{1-2}$, note that $\vert Y^-_{s_0}\vert \leq s_0 <s_1$. Hence, as we now explain, during the time-interval $[s_0, s_1]$, conditionally on $\mathcal{F}_{s_0}$ and on $E_{1-2}$, we can 
\begin{equation}\label{eq:etapmnacellecoupling}
\begin{split}
&\text{ apply the coupling of~\ref{pe:nacelle} to $\eta_{ t}(\cdot)=\eta^+_{s_0+ t \black}(\cdot+Y^-_{s_0})$ and $\eta_{ t }'(\cdot)=\eta^-_{ s_0+ t \black}(\cdot+Y^-_{s_0})$, $ t \geq0$,}
\\
&\text{ with $\ell=\ell_g$, $H=M-(2\nu+1) s_1$, $k=\lfloor s_1/\ell_g\rfloor$ and $x= \ell \text{ mod } 2$. }
\end{split}
\end{equation}
Note indeed that by~\eqref{eq:s_0} and~\eqref{eq:ellTdef}, $k\geq \ell_g> 48\nu^{-1}(\nu +\log (40) - p_{\circ}(1-p_{\bullet})\nu/2)$ for $L$ large enough and $H= M-(2\nu+1)  s_1\geq 2\nu s_1\geq  2\nu k\ell_g = 2 \nu \ell k$ for all $L\geq 3$, as required in~\ref{pe:nacelle}. Together with the definition of $E_2$ in~\eqref{eq:E2deff}, this allows us to apply the coupling of~\ref{pe:nacelle}.

So far we have specified $\eta^{\pm}$ for all of $[0,T]$ on $E_{1-2}^c$ and up to time $s_1$ on $E_{1-2}$. As we now briefly elaborate, it is also plain from the construction above that the independence property postulated in \eqref{eq:couplingsindepU} holds for all $t \leq s_1$. This is a trivial matter for $t<s_0$ in view of \eqref{eq:etapmE1}, which does not involve $U$ at all. For $s_0 \leq t \leq s_1$, the only dependence on $U$ arises through $Y^-_{s_0}$ via $E_2$ and \eqref{eq:etapmnacellecoupling}. However, on account of \eqref{eq:Ypmrecursive}, \eqref{badexit1}, \eqref{eq:Ypmrecursive1}, \eqref{eq:Ypmrecursive2} and \eqref{badexit2}, $Y^-_{s_0}$ only relies on variables in $U$ with time label at most $s_0-1$, whence the claim. In the sequel (more precisely, up to \eqref{eq:etapmE17E8c10c}), considerations along similar lines allow to extend \eqref{eq:couplingsindepU} to larger times $t$. These will not be made explicit.

Returning to the construction of $\mathbb{Q}$, it remains to specify $\eta^{\pm}$
after time $s_1$ on the event $E_{1-2}$. Recall that by \eqref{eq:E3deff} and \eqref{eq:E4deff}, both $E_3$ and $E_{4}$ are in $\mathcal{F}_{s_1}$. Over the time interval $[s_1, T]$, conditionally on $\mathcal{F}_{s_1}$ and on $E_{1-2}\cap E_{3}^c$, we choose to
\begin{equation}\label{eq:etapmE12E3c}
\text{let $\eta^-$ and $\eta^+$ evolve independently  with respective marginals $\bP^{\eta^-_{s_1}} $ and $\bP^{\eta^+_{s_1}} $.}
\end{equation}
On the other hand, conditionally on $\mathcal{F}_{s_1}$ and on $E_{1-3}\cap E_{4}^c$, during $[s_1,T]$, we
\begin{equation}\label{eq:etapmE13E4c}
\begin{split}
& \quad \text{couple }(\eta^+,\eta^-) \text{ as }(\eta, \eta')\text{ in~\ref{pe:drift}},\\
\text{with }& H=M-(4\nu+1) s_1,\quad t=t=T-s_1,\quad k= 1 .
\end{split}
\end{equation}
Conditionally on $\mathcal{F}_{s_1}$ and on $E_{1-4}$, during the time-interval $[s_1, s_2]$, we
\begin{equation}\label{eq:etapmE14}
\begin{split}
& \quad \text{couple }(\eta^+,\eta^-) \text{ as }(\eta, \eta')\text{ in~\ref{pe:drift}},\\
\text{with }& H=M-(4\nu+1) s_1,\quad t=s_2-s_1,\quad k= \lfloor s_1/(s_2-s_1)\rfloor. 
\end{split}
\end{equation}
Note that $k \geq 1$ in \eqref{eq:etapmE14} by \eqref{eq:s_0}  and since $L \geq L_2$ by assumption (upon possibly enlarging $L_2$). Moreover, note that the conditions on the environments at time $s_1$ needed for  \ref{pe:drift} to apply in both \eqref{eq:etapmE13E4c} and \eqref{eq:etapmE14} are met owing to the occurrence of $E_3$, see \eqref{eq:E3deff}.

We still have to specify $\eta^{\pm}$ from time $s_2$ onwards on the event $E_{1-4}$. By definition \eqref{eq:E5deff}, we have that $E_5\in\mathcal{F}_{s_2}$. Hence, conditionally on $\mathcal{F}_{s_2}$ and on $E_{1-4}\cap E_{5}^c$, during $[s_2, T]$, we 
\begin{equation}\label{eq:etapmE14E5c}
\text{let $\eta^-$ and $\eta^+$ evolve independently  with respective marginals $\bP^{\eta^-_{s_2}} $ and $\bP^{\eta^+_{s_2}} $.}
\end{equation}
Using the definitions \eqref{eq:E6deff} and \eqref{eq:E7deff}, we have that both $E_6$ and $E_7$ are in $\mathcal{F}_{s_2}$. During $[s_2,T]$, conditionally on $\mathcal{F}_{s_2}$ and on $E_{1-5}\cap E_{6-7}^c$, observing that the defining features of $E_5$ (see \eqref{eq:E5deff}) allow to apply \ref{pe:drift}, we 
\begin{equation}\label{eq:etapmE15E67c}
\text{couple $(\eta^+,\eta^-)$ as $(\eta, \eta')$ in~\ref{pe:drift} with $H=M-(6\nu+1) s_{2 \black}$, $t=T-s_2$ and $k=1$}.
\end{equation}
Conditionally on $\mathcal{F}_{s_2}$ and on $E_{1-7}$, during $[s_2,s_3]$, as we now explain, we
\begin{equation}\label{eq:etapmE17}
\begin{split}
&\text{couple $(\eta^+(\cdot+Y^-_{s_2}+\ell_g^{20}),\eta^-(\cdot+Y^-_{s_2}-\ell_g^{20} ))$ as $(\eta, \eta')$ in~\ref{pe:couplings}}
\\
&\text{with $H=3(\nu+1)T$, $t=s_3-s_2$ and $(\rho,\varepsilon)=(\rho,\epsilon)$}.
\end{split}
\end{equation}
Indeed, one checks that the conditions needed for~\ref{pe:couplings} to apply are all satisfied on the event $E_7$ whenever $L_2 (\rho,\epsilon,\nu)$ is large enough. First, by choice of $L_2$ and for all $L \geq L_2$, one readily ensures (recall \eqref{eq:ellTdef}-\eqref{eq:s_0}) that all of $3(\nu+1)T > 4\nu(s_3-s_2)$, $\nu^8(s_3-s_2)>1$ and $\nu(s_3-s_2)^{1/4}>\Cr{SEPcoupling}\epsilon^{-2}(1+\vert\log^3(\nu(s_3-s_2)) \vert)$ hold. Moreover, the conditions on the empirical densities of $\eta_0$ and $\eta_0'$ appearing above \eqref{eq:doublecoupleSEP} hold by definition of $E_{7}$ at~\eqref{eq:E7deff}, since $\lfloor (s_3-s_2)^{1/4}\rfloor \geq \ell_g^2$ by~\eqref{eq:s_0}. Note that the relevant intervals $I$ in the context of \ref{pe:couplings}, which have length $\lfloor \ell/2 \rfloor \leq |I| \leq \ell$ with $\ell= \lfloor (s_3-s_2)^{1/4} \rfloor$, may in practice be much larger than those appearing in the definition of $E_{7}$, but they can be paved by disjoint contiguous intervals as entering $E_7$. The required controls on the corresponding empirical densities $\eta_0(I)$ and $\eta_0'(I)$ are thus inherited from those defining $E_7$. Similar considerations also apply below (whenever either of \ref{pe:couplings} or \ref{pe:compatible} are used).

 It remains to specify $\eta^{\pm}$ on the event $E_{1-7}$ for the time interval $[s_3,T]$. \black We now define an additional event whose realisation will allow us to couple $\eta^\pm$ on the whole window of width order $M$ at time $T$, regardless of whether the coupling at~\eqref{eq:etapmE17} succeeds or not; here, by ``succeed'' we mean that the (high-probability) event appearing in \eqref{eq:doublecoupleSEP} is realized. The following will play a key role when establishing~\eqref{e:coup-restart balanced}. We set
\begin{equation}\label{eq:E10deff}
E_{10}\stackrel{\text{def.}}{=}\left\{\begin{array}{c} \forall I \subseteq [-M+2\nu s_3,M-2\nu s_3]\text{ with } \lfloor \lfloor (\log L)^2\rfloor/2\rfloor \leq \vert I \vert \leq \lfloor (\log L)^2\rfloor, \\
\eta^-_{s_3}(I)\leq (\rho+\epsilon/4)\vert I\vert\text{ and }\eta^+_{s_3}( I )\geq (\rho+3\epsilon/4)\vert I \vert \end{array}\right\}.
\end{equation}
The event $E_{10}$ above and $E_8$ defined in \eqref{eq:E8deff} are both $\mathcal{F}_{s_3}$-measurable. Conditionally on $\mathcal{F}_{s_3}$ and on $E_{1-8}\cap E_{10}$, during $[s_3,T]$, we couple
\begin{equation}\label{eq:etapmE1810}
\begin{split}
&(\eta^+(\cdot +Y^-_{s_1}+\ell_g^{20} ),\eta^-(\cdot +Y^-_{s_1}-\ell_g^{20} ))\text{ as in~\ref{pe:compatible} with } H_1=2(\nu+1)T,
\\
& H_2=M-2\nu s_3-s_1,\ t=T-s_3,\ \ell=\ell_G^{1/200} \text{ and }(\rho,\varepsilon)=(\rho,\epsilon).
\end{split}
\end{equation}
To this effect, we verify that for $L$ large enough, by~\eqref{eq:s_0},~\eqref{eq:ellTdef} and since $M \geq 10(\nu+1)L$, we indeed have that
$\min\{H_1, H_2-H_1-1\}> 10{\nu}t>4\nu\ell^{100}>\Cr{compatible}$, $\nu\ell>\Cr{compatible}\epsilon^{-2}(1+\vert \log^3(\nu \ell^4)\vert)$ and $\ell>80\nu\epsilon^{-1}+\nu^{-2} .$
Also, the domination on $[-H_1,H_1]$ and the empirical density condition are respectively guaranteed by definition of $E_8$ at~\eqref{eq:E8deff} and $E_{10}$ at~\eqref{eq:E10deff}.

If $E_8$ does not occur (hence we temporarily lose the domination of $\eta^-$ by $\eta^+$) but $E_{10}$ does, we proceed to what was referred to as the \textit{parachute coupling} in Section~\ref{sec:proofsketch}, in order to recover this domination by time $T$. Precisely, conditionally on $\mathcal{F}_{s_3}$ and on $E_{1-7}\cap E_{8}^c\cap E_{10}$, we
\begin{equation}\label{eq:etapmE17E8cPARACHUTE}
\begin{split}
 &\text{couple $(\eta^+(\cdot -T),\eta^-(\cdot +T))$ as $(\eta, \eta')$ in~\ref{pe:couplings}}
 \\
 &\text{with $H=M-T-2\nu s_3$, $t=T-s_3$ and $(\rho,\varepsilon)=(\rho,\epsilon)$}.
 \end{split}
\end{equation}
One checks indeed that the conditions of~\ref{pe:couplings} hold, since for $L$ large enough (recalling~\eqref{eq:s_0} and~\eqref{eq:ellTdef}) we have that
$H>4\nu t$,$\nu^8t>1$, $\nu t^{1/4} >\Cr{SEPcoupling}\epsilon^{-2}(1+\vert \log^{3}(\nu t) \vert)$ and $t^{1/4}>\lfloor (\log L)^2 \rfloor$, so that on $E_{10}$ the condition on the empirical density holds. 

The remaining cases are straightforward to specify. Conditionally on $\mathcal{F}_{s_3}$, on $E_{1-8}\cap E_{10}^c$, we
\begin{equation}\label{eq:etapmE18E10c}
\begin{split}
&\text{couple $(\eta^+(\cdot +Y^-_{s_1}+\ell_g^{20} ),\eta^-(\cdot +Y^-_{s_1}-\ell_g^{20} ))$ as in~\ref{pe:drift} }
\\
&\text{with $H=2(\nu+1)T-\ell_g^{20}$, $t=T-s_3$, and $k=1$.}
\end{split}
\end{equation}
Finally, conditionally on $\mathcal{F}_{s_3}$ and on $E_{1-7}\cap E_8^c\cap E_{10}^c$, during $[s_3,T]$, we
\begin{equation}\label{eq:etapmE17E8c10c}
\text{let $\eta^-$ and $\eta^+$ evolve independently  with respective marginals $\bP^{\eta^-_{s_3}} $ and $\bP^{\eta^+_{s_3}} $.}
\end{equation}
We have now fully defined the measure $\mathbb{Q}$ and with it the triplet $(\eta^+,\eta^-,U)$ (in the time interval $[0,T]$). The task is now to verify that with these choices, all of \eqref{e:coup-marg}-\eqref{e:coup-restart balanced} hold. For concreteness we extend all three processes $(\eta^+,\eta^-,U)$ independently at times $t >T$, using the Markov property at time $T$ for $\eta^{\pm}$. These extensions will de facto play no role because all of \eqref{e:coup-domination}-\eqref{e:coup-restart balanced} only concern matters up to time $T.$

\bigskip

\noindent
\textbf{Part II: Proof of  \eqref{e:coup-marg}.}\\ 

The fact that $U$ has the desired law is immediate, see below \eqref{e:Ft-def}. We proceed to show that $\eta^-\sim \mathbf{P}^{\eta_0^-}$. Since the construction of $\eta^-$ only consists of successive couplings at times $0,s_0,s_1,s_2$ and $s_3$ where the marginals have the desired distributions (recall~\ref{pe:compatible},~\ref{pe:drift},~\ref{pe:couplings} and~\ref{pe:nacelle}), by virtue of the Markov property~\eqref{pe:markov}, it is enough to check that the events deciding which coupling to apply at time $s_i$ are $\mathcal{F}_{s_i}$-measurable for $i\in \{0,1,2,3\}$, which we already did at~\eqref{eq:etapmE1c}-\eqref{eq:etapmE17},~\eqref{eq:etapmE1810},~\eqref{eq:etapmE17E8cPARACHUTE},~\eqref{eq:etapmE18E10c} 
and~\eqref{eq:etapmE17E8c10c}. Therefore, $\eta^-\sim \mathbf{P}^{\eta_0^-}$. In the same way we deduce that $\eta^+\sim \mathbf{P}^{\eta_0^+}$. 

\bigskip

\noindent
\textbf{Part III: Proof of \eqref{e:coup-domination}.}\\

We now use $(\eta^+,\eta^-,U)$ to construct explicit functions $X^+=X^+(\eta^+,U)$ and $X^-= X^-(\eta^-,U)$ with the correct marginal laws $X^{\pm}\sim \mathbb{P}^{\eta_0^{\pm}}$. The case of $X^-$ is easily dispensed with: we define  $X^-$ as in~\eqref{def:A} and~\eqref{eq:defX} with $(\eta^-,U)$ instead of $(\eta,U)$. Since we have already established that $\eta^-\sim \mathbf{P}^{\eta_0^-}$ as part of \eqref{e:coup-marg}, it follows using \eqref{eq:couplingsindepU} and Lemma~\ref{lem:Xlawredef} that $X^-\sim \mathbb{P}^{\eta_0^-}$.

As for $X^+$, we also define it via~\eqref{def:A} and~\eqref{eq:defX} using the construction specified around \eqref{eq:arrows-variation}, replacing $(\eta,U)$ by $(\eta^+,U^+)$ where $U^+$ is defined as follows: for all $w\in\mathbb{L}$ with $\pi_2(w)\le s_3-1$, $U^+_w=U_w$, and for all $w\in\mathbb{L}$ with $\pi_2(w)\ge s_3$, 
\begin{equation}\label{eq:Ushifted}
U^+_w=
\begin{cases} 
U_{w-(2\ell_g^{20},0)}\text{ on }E_{1-8}, \\
U_{w}\text{ on }E_{1-8}^c.
 \end{cases}
\end{equation}
Recalling that $E_{1-8}\in\mathcal{F}_{s_3}$, and hence is independent of $(U_w; w\in\mathbb{L}, \pi_2(w)\ge s_3)$, it follows that $X^+\sim \mathbb{P}^{\eta_0^+}$ again by combining the established fact that $\eta^+\sim \mathbf{P}^{\eta_0^+}$ and \eqref{eq:couplingsindepU}. The rationale behind \eqref{eq:Ushifted} will become clear momentarily.

We show that $X^{\pm}$ as defined above satisfy $Y^-_T\geq X^-_T$ and $Y^+_T\geq X^+_T$ $\mathbb{Q}$-a.s.~To this end, note first that on $E_{\textnormal{bad}}$ and by \eqref{catastrophe}, $Y^-_T=T$ and $Y^+_T=-T$, hence $\mathbb{Q}$-a.s.~on $E_{\textnormal{bad}}$ we have using the trivial bounds $X^{\pm}_T \in [-T,T]$ that $X^-_T\le Y^-_T$ and $Y^+_T\le X^+_T$.

Second, on $E_{\textnormal{neutral}}$ and by \eqref{bruitacote2}, under $\mathbb{Q}$, the process $(Y^-_t:  0\le t\le T)$ simply follows the environment $(\eta^-,U)$ as per~\eqref{def:A} and~\eqref{eq:defX}, and so does $(X_t^-: 0\le t\le T)$ by above definition of $X^-$. Hence $Y^-_T=X^-_T$. Moreover, since $E_{\textnormal{neutral}}\subset E_{1-8}^c$ by~\eqref{def:softfailure}, the process $(X^+_t: 0\le t\le T)$ follows the environment $(\eta^+,U)$ by~\eqref{eq:Ushifted}. By Lemma~\ref{L:monotone} applied with $(X,\widetilde{X})=(Y^-,X^+)$, $(\eta,\tilde\eta)=(\eta^-,\eta^+)$ and $K=[-T,T]\times [0,T]$, recalling~\eqref{bruitacote3}, we get that $X^+_T\geq Y^-_T$. Using~\eqref{bruitacote}, we finally obtain $X^+_T\geq Y^+_T=Y^-_T=X^-_T$ $\mathbb{Q}$-a.s.~on $E_{\textnormal{neutral}}$.

Third, on $E_{\textnormal{good}}\subset E_{1-6}$, the process $(Y^-_t:  0\le t\le s_2)$ follows the environment $(\eta^-,U)$ by~\eqref{eq:Ypmrecursive},~\eqref{eq:Ypmrecursive1} and~\eqref{eq:YpmE1-5}, as is the case of $X^-$, so that $X^-_{s_2}=Y^-_{s_2}$. Then by~\eqref{eq:Ypms2s3} we have that $Y^-_{s_3}=Y^-_{s_2}+(s_3-s_2)$ which guarantees that $X^-_{s_3}\le Y^-_{s_3}$. From time $s_3$ to time $T$, $X^-$ and $Y^-$ follow the same environment $(\eta^-,U)$ by~\eqref{eq:YpmE1-9} and thus, by Lemma \ref{L:monotone} with $(X,\widetilde{X})=(Y^-,X^-)$, $(\eta,\tilde\eta)=(\eta^-,\eta^-)$ and $K=\mathbb{Z}\times [s_3,T]$, we have that $X^-_T\le Y^-_T$. 

Similarly for $X^+$ and $Y^+$, on $E_{\textnormal{good}}\subset E_{1-3}$, during $[0,s_1]$ the process $X^+$ follows the environment $(\eta^+,U)$ while $Y^+$ follows $(\eta^-,U)$ by~\eqref{eq:Ypmrecursive},~\eqref{eq:Ypmrecursive1}. By~\eqref{success2} we can apply Lemma~\ref{L:monotone} with $(X,\widetilde{X})=(Y^+,X^+)$, $(\eta,\tilde\eta)=(\eta^-,\eta^+)$ and $K=[-T,T]\times [0,s_1]$ to deduce that $Y^+_{s_1}\leq X^+_{s_1}$.  Then during $[s_1,s_2]$, both $Y^+$ and $X^+$ follow $(\eta^+,U)$ (recall~\eqref{eq:YpmE1-5}, and that $E_{\textnormal{good}}\subset E_{1-5}$). Thus Lemma~\ref{L:monotone} with $(X,\widetilde{X})=(Y^+,X^+)$, $(\eta,\tilde\eta)=(\eta^+,\eta^+)$ and $K=\mathbb{Z}\times [s_1,s_2]$ ensures that $Y^+_{s_2}\leq X^+_{s_2}$ $\mathbb{Q}$-a.s.~on $E_{\textnormal{good}}$. Next,~\eqref{eq:Ypms2s3}, which holds on $E_{1-7}\supset E_{\textnormal{good}}$, implies that $Y^+_{s_3}\leq X^+_{s_3}$, given that $X^+$ only takes nearest-neighbour steps. Finally, since $E_{\textnormal{good}}\subset E_{1-9}$, from time $s_3$ to $T$, both $Y^+$ and $X^+$ follow the environment $(\eta^+,U^+)$, where $U^+$ is as in \eqref{eq:Ushifted} (recall~\eqref{eq:YpmE1-9}; this explains the choice in \eqref{eq:Ushifted}). Applying Lemma~\ref{L:monotone} with $(X,\widetilde{X})=(Y^+,X^+)$, $(\eta,\tilde\eta)=(\eta^+,\eta^+)$ and $K=\mathbb{Z}\times [s_3,T]$ therefore yields $Y^+_{T}\leq X^+_{T}$ on $E_{\textnormal{good}}$. This concludes the proof of~\eqref{e:coup-domination}.
\\

\noindent

\textbf{Part IV: Proof of~\eqref{e:coup-speed}.} \\

By Lemma~\ref{L:determ-speed}, we have that 
\begin{equation}
\mathbb{E}^\Q[Y^+_T-Y^-_T]=2\ell_g^{20} \Q(E_{\textnormal{good}})-2T\Q(E_{\textnormal{bad}}).
\end{equation}
Recalling the definitions of $E_{\textnormal{good}}$ and $E_{\textnormal{bad}}$ from \eqref{def:success} and \eqref{def:catastrophe}, as well as~\eqref{eq:ellTdef} and~\eqref{eq:s_0}, it is thus enough to show that
\begin{equation}\label{eq:QEgood}
(\Q(E_{\textnormal{good}})=) \  \Q(E_{1-9})\geq \exp(-\ell_g^{22})
\end{equation}
and
\begin{multline}\label{eq:QEbad}
S\stackrel{\text{def.}}{=}\Q(E_1^c) +\Q(E_1\cap E_2^c\cap E_{2,\textnormal{bis}}^c ) +\Q(E_{1-2}\cap E_3^c)
+\Q(E_{1-3}\cap E_{4,5}^c\cap E_{4,\textnormal{bis}}^c ) \\ +\Q(E_{1-5}\cap E_{6-7}^c\cap E_{6,\textnormal{bis}}^c) +\Q(E_{1-7}\cap E_{8-9}^c ) \leq \Q(E_{1-9})/(10T).
\end{multline}
\textbf{Proof of~\eqref{eq:QEgood}}. Recall $E_1$ from~\eqref{eq:E1deff}. By~\eqref{eq:etapmE1} and~\eqref{eq:SEPdriftdeviations}, which is in force, we get
\begin{equation}\label{eq:QE1c}
\mathbb{Q}(E_1^c)\leq 20 \black \exp (-\nu s_0/4) \leq \exp(-\ell_g^{10^5}),
\end{equation}
the last inequality being true for $L$ large enough by~\eqref{eq:s_0} and~\eqref{eq:ellTdef}. 
Next, recalling~\eqref{eq:E2deff}, that $4s_0>s_0+3\ell_g$ by~\eqref{eq:s_0} and that $\epsilon<1/100$, and using that $\vert Y^-_{s_0}\vert \leq s_0$, we note that
\begin{equation}
\begin{split}
E_2^c\subseteq &\bigcup_{\substack{ x\in [-4s_0+1,4s_0],\\
\lfloor\ell_g/2\rfloor\leq \ell'\leq \ell_g}}
\{\eta^-([x,x+\ell'-1])\leq (\rho+\epsilon/2)\ell' <\black \eta^+([x,x+\ell'-1])\leq  (\rho+2\epsilon)\ell'\}^c.
\end{split}
\end{equation}
We note in passing that we cannot locate precisely $Y^-_{s_0}$ without (possibly heavily) conditioning the evolution of $\eta^-$ on $[0,s_0]$. By~\ref{pe:densitychange} applied with either $(\eta_0,\rho,\varepsilon)=(\eta^-_0,\rho ,\epsilon/20)$ or $(\eta_0,\rho,\varepsilon)=(\eta^-_0,\rho+\epsilon ,\epsilon/20)$, both times with $(\ell, H,t)= (\lfloor\log ^2 L\rfloor,(4\nu+8)s_0,s_0)$ and $\ell'$ ranging from $\lfloor\ell_g/2\rfloor$ to $\ell_g$ we have by a union bound on $x$ and $\ell'$: 
\begin{equation}\label{eq:QE2c}
\mathbb{Q}(E_2^c)\leq (2\times 8s_0\times \ell_g) 4 (4\nu+8)s_0\exp (-\Cr{densitystableexpo}\epsilon^2 \lfloor\ell_g/2\rfloor/400)\leq 1/4,
\end{equation}
the last inequality holding for $L$ large enough by~\eqref{eq:ellTdef} and~\eqref{eq:s_0}. Note that we can indeed apply~\ref{pe:densitychange} since $(\eta^+_0,\eta^-_0)$ is $(M,L)$-balanced (see items (ii) and (iii) above Lemma~\ref{L:Q coupling-alternative}), and $M>(4\nu+8)s_0> 4\nu s_0 >\Cr{densitystable}\epsilon^{-2}\lfloor (\log L)^2\rfloor^2(1+\vert \log^3(\nu s_0)\vert)$ and $\ell_g\leq \sqrt{s_0}$ for $L$ large enough.

Next, we aim to derive a suitable deterministic lower bound on $\mathbb{Q}(E_{3-4}\,\vert \mathcal{F}_{s_0})$, on the event $E_{1-2}$, which is $\mathcal{F}_{s_0}$-measurable. We aim to apply \ref{pe:nacelle}, cf.~\eqref{eq:etapmnacellecoupling}, and dealing with $E_3$ is straightforward, see \eqref{eq:penacelleNEW2}, but $E_4$ requires a small amount of work, cf.~\eqref{eq:penacelleNEW} and \eqref{eq:E4deff}. To this effect, we first observe that under $\mathbb{Q}$, with $\eta, \eta'$ as defined in \eqref{eq:etapmnacellecoupling}, one has the inclusions
\begin{multline} \label{eq:incl-newC3}
E_{1-4} \stackrel{\eqref{Ypmats1}, \eqref{eq:E4deff}}\supset \{\eta^+_{s_1}(Y_{s_1}^{ - })\geq 1,\eta^-_{s_1}(Y_{s_1}^{ - })=0\} \cap  E_{1-3} \\ \supset \{\eta_{\ell}(x)\geq 1,\eta'_{\ell}(x)=0\} \cap \{Y_{s_1}^-- Y_{s_0}^-=x \} \cap  E_{1-3},
\end{multline}
where $\ell=\ell_g=s_1-s_0$ and $x= \ell \text{ mod } 2$ on account of~\eqref{eq:etapmnacellecoupling} and \eqref{eq:s_0}. Now recalling that the evolution of $Y^{-}$ on the time interval $[s_0,s_1]$ follows \eqref{eq:Ypmrecursive1} on $E_{1-3}$, and in view of \eqref{def:A}, one readily deduces that the event  $\{Y_{s_1}^-- Y_{s_0}^-=x \}$ is implied by the event $ E_{1-3} \cap F$, where $F$ refers to the joint occurrence of
$\{ U_{(Y^-_{s_0},s_0+ n)} < p_\circ\}$ for even integer $n$ satisfying $0 \leq n \leq \ell-1$ and $\{ U_{(Y^-_{s_0},s_0+ n)} > p_{\bullet}\}$ for odd integer $n$ satisfying $0 \leq n \leq \ell-1$.
(Observe indeed that, if $\ell$ is odd, whence $x=1$, there will be one more step to the right than to the left in the resulting trajectory for $Y^+$). Feeding this into \eqref{eq:incl-newC3}, applying a union bound, using \eqref{eq:penacelleNEW2} and \eqref{eq:penacelleNEW} (which are in force on account of \eqref{eq:etapmnacellecoupling}), it follows that on $E_{1-2}$,
\begin{equation*}
\mathbb{Q}(E_{3-4}\,\vert \mathcal{F}_{s_0})\geq
\mathbb{Q}(\{ \eta_{\ell}(x)\geq 1,\eta'_{\ell}(x)=0\} \cap F \,\vert \mathcal{F}_{s_0}) -\delta' \geq 2\delta(p_{\circ}(1-p_{\bullet}))^{\ell} -\delta' \geq \delta'
\end{equation*}
(see above \ref{pe:nacelle} regarding $\delta$ and $\delta'$);
in the penultimate step above, we have also used that, conditionally on $\mathcal{F}_0$, the events $\{\eta_{\ell}(x)\geq 1,\eta'_{\ell}(x)=0\}$ and $F$ are independent. Combining this with~\eqref{eq:QE1c} and~\eqref{eq:QE2c}, we deduce that for $L$ large enough, using \eqref{eq:ellTdef},
\begin{equation}\label{eq:QE14}
\mathbb{Q}(E_{1-4})\geq 3\delta'/4-  20 \black e^{-\ell_g^{10^5}} \geq \delta'/2.
\end{equation}
\black

Next, recalling the definition \eqref{eq:E5deff} of $E_5$ and that $E_{1-4}\in \mathcal{F}_{s_1}$, by  \eqref{eq:etapmE14} and the bound given in \ref{pe:drift},  we have that, $\mathbb{Q}$-a.s.~on $E_{1-4}$,
\begin{equation}\label{eq:QE5}
\mathbb{Q}(E_5^c\,\vert \mathcal{F}_{s_1})\leq 20\exp(- \lfloor s_1/(s_2-s_1)\rfloor (s_2-s_1)\tfrac{\nu}{4} )\leq  \exp(- \tfrac{\nu}{4} \ell_G )\le \exp\big(- \tfrac{\nu}{8} \ell_g^{10^6} \big),
\end{equation}
for all $L$ large enough by~\eqref{eq:s_0}. 

We will return to $E_6$ momentarily and first consider $E_7$. To compute the probability of $E_7^c$, we take a union bound over $\ell'$ such that $\lfloor \ell_g^2/2\rfloor \leq \ell'\leq \ell_g^2$, use the definition \eqref{eq:E7deff} and the fact that $(\eta_0^-,\eta_0^+)$ is $(M,L)$-balanced in order to apply \ref{pe:densitychange} twice, once for $\eta^-$ and once for $\eta^+$, choosing in both cases $(\ell,H,t)=(\lfloor (\log L)^2\rfloor, 38(\nu+1)\ell_G, s_2)$ and, for $\eta^-$, with $(\eta_0,\rho, \varepsilon)=(\eta^-_0,\rho ,\epsilon/100 )$ and, for $\eta^+$, with $(\eta_0,\rho, \varepsilon)=(\eta^+_0,\rho+\epsilon ,\epsilon/100 )$. Note that for $L$ large enough by~\eqref{eq:s_0} and~\eqref{eq:ellTdef}, we have indeed $M>H>4\nu t>\Cr{densitystable}\ell^2\varepsilon^{-2}(1+\vert \log^3(\nu t)\vert)$ and $\ell'\leq \ell_g^2\leq \sqrt{t}$, as required in~\ref{pe:densitychange}. We thus obtain that
\begin{equation}\label{eq:QE7c}
\mathbb{Q}(E_7^c)\leq 400(\nu+1)\ell_g^2\ell_G \exp(-\Cr{densitystableexpo}\epsilon^2\lfloor \tfrac{\ell_g^2}{2}\rfloor/10^4)\le \exp\big(  -\tfrac{\Cr{densitystableexpo}\epsilon^2}{10^5} \ell_g^2 \big),
\end{equation}
where the last inequality holds for $L$ large enough (depending on $\nu$, $\rho$ and $\epsilon$). 
Putting \eqref{eq:QE14}, \eqref{eq:QE5} and \eqref{eq:QE7c} together and taking $L$ (hence $\ell_g$) large enough, it follows that 
\begin{equation}\label{dontknow}
\Q(E_{1-5}\cap E_7) \geq \Q(E_{1-5})-\Q(E_7^c) \geq \delta'/4. 
\end{equation}

 Concerning $E_6$, we first observe that under $\mathbb{Q}$, the field $(U_{w}: s_1 \leq \pi_2(w) \leq s_2)$ is independent from the $\sigma$-algebra generated by $\mathcal{F}_{s_1}$ and $(\eta^\pm_t)_{s_1\leq t\leq s_2}$; this can be seen by direct inspection of the coupling construction until time $s_2$, paying particular attention (with regards to the evolution of $\eta^{\pm}$ during $[s_1,s_2]$) to \eqref{eq:etapmE12E3c}, \eqref{eq:etapmE13E4c}, \eqref{eq:etapmE14} (and \eqref{eq:etapmE1c}), which all involve only either i) trivial couplings or ii) couplings relying on \ref{pe:drift}. In particular these couplings do not involve $U$ at all. Using the previous observation and recalling \eqref{eq:E6deff}-\eqref{eq:E6tdeff}, it follows that $\mathbb{Q}$-a.s., 
\begin{equation}\label{eq:QE6}
\Q\left(\left. E_6\right| \mathcal{F}_{s_1}, (\eta^\pm_t)_{s_1\leq t\leq s_2} \right)=(p_\bullet -p_\circ)\left( p_\circ (1-p_\bullet)\right)^{s_2-s_1-1}\ge (p_\bullet -p_\circ)\left( p_\circ (1-p_\bullet)\right)^{2\ell_g^{20}}.
\end{equation}
Next, observe that $E_{1-5}$ and $E_7$ are measurable with respect to the $\sigma$
-algebra generated by $\mathcal{F}_{s_1}$ and $(\eta^\pm_t)_{s_1\leq t\leq s_2}$ owing to~\eqref{eq:E1deff},~\eqref{eq:E2deff},
~\eqref{eq:E3deff},~\eqref{eq:E4deff},
~\eqref{eq:E5deff} and~\eqref{eq:E7deff}. Combining this with~\eqref{dontknow}, and recalling the value of $\delta'$ from \ref{pe:nacelle}, we obtain that 
\begin{equation}\label{eq:QE17}
\begin{split}
\Q(E_{1-7})
&\ge \left( p_\circ (1-p_\bullet)\right)^{2\ell_g^{20}} \delta'/4 \geq  \left( p_\circ (1-p_\bullet)\right)^{3\ell_g^{20}}\ge \exp(-\ell_g^{21}),
\end{split}
\end{equation}
where, in the last two inequalities, we took $L$ (hence $\ell_g$) large enough. \black

Recalling $E_8$ from \eqref{eq:E8deff} along with the coupling defined in \eqref{eq:etapmE17} and using that $E_{1-7}\in\mathcal{F}_{s_2}$, we can apply \ref{pe:couplings}, and obtain that, $\Q$-a.s.~on $E_{1-7}$, for large enough $L$ (owing to~\eqref{eq:s_0} and~\eqref{eq:ellTdef}),
\begin{equation*}
\Q\left( \left. E_8^c\right| \mathcal{F}_{s_2}\right)\leq  \Cr{SEPcoupling2}\times 3(\nu+1)T(s_3-s_2)\exp\big(-(\Cr{SEPcoupling2}(1+\nu^{-1}))^{-1} \epsilon^2 (s_3-s_2)^{1/4}\big)\leq \exp\left(-\ell_g^4\right),
\end{equation*}
which implies that
\begin{equation}\label{eq:QE8better}
\Q\left( \left. E_8^c\right| E_{1-7}\right)\leq  \exp\left(-\ell_g^4\right).
\end{equation}
Next, we control the probability of $E_{10}$ defined at \eqref{eq:E10deff}. To do so, we take a union bound over $\ell'\in [\lfloor\lfloor (\log L)^2\rfloor/2\rfloor,\lfloor (\log L)^2\rfloor]$ and, using that $(\eta_0^-,\eta_0^+)$ is $(M,L)$-balanced, we apply \ref{pe:densitychange} twice for any fixed $\ell'$ in this interval: once with $(\eta_0, \rho, \varepsilon)=(\eta^-_0,\rho,\epsilon/20 )$, once with $(\eta_0, \rho, \varepsilon)=(\eta^+_0,\rho+\epsilon,\epsilon/20 )$ and both times with $(\ell,H,t)=(\lfloor (\log L)^2\rfloor, M, s_3)$. Note once again that for $L$ large enough by~\eqref{eq:s_0} and~\eqref{eq:ellTdef}, we have $M>H>4\nu t>\Cr{densitystable}\ell^2\varepsilon^{-2}(1+\vert \log^3(\nu t)\vert)$ and $\ell'\leq \ell_g^2\leq \sqrt{t}$, as required for~\ref{pe:densitychange} to apply. This yields, applying a union bound over the values of $\ell'$, that for large enough $L$ (recalling that $M\leq 20(\nu+1)L$),
\begin{equation}\label{eq:QE10}
\Q(E_{10}^c) \leq 8M\lfloor (\log L)^2\rfloor\exp\left(-\Cr{densitystableexpo} \epsilon^2 \lfloor \log^2 L\rfloor /400\right)\leq \exp(-\ell_g^{1500}).
\end{equation} 
Finally, recalling the coupling~\eqref{eq:etapmE1810} (together with~\eqref{eq:E8deff},~\eqref{eq:E9deff} and~\eqref{eq:E10deff}), we have by~\ref{pe:compatible}, more precisely by~\eqref{eq:compatible1}, for large enough $L$,
\begin{equation}\label{eq:QE9}
\Q(E_9^c\,\vert\, E_{1-8}\cap E_{10})\leq 20T\exp(-\nu(T-s_3)/4)\leq \exp(-\ell_g^{10^5}).
\end{equation}
Putting together~\eqref{eq:QE17},~\eqref{eq:QE8better},~\eqref{eq:QE10} and~\eqref{eq:QE9}, we obtain~\eqref{eq:QEgood}, as desired. 
\\

\noindent
\textbf{Proof of~\eqref{eq:QEbad}.} We bound individually the six terms comprising $S$ on the left-hand side of~\eqref{eq:QEbad}. The first of these is already controlled by~\eqref{eq:QE1c}. Recall the coupling defined in~\eqref{eq:etapmE1E2c} together with the definitions of $E_1$, $E_2$ and $E_{2,\textnormal{bis}}$ in~\eqref{eq:E1deff},~\eqref{eq:E2deff} and~\eqref{eq:E2bisdeff}. Observe that by~\eqref{eq:etapmE1E2c} and~\ref{pe:drift}, we have $\Q$-a.s.~on $E_1\cap E_2^c$ that $\Q( E_{2,\textnormal{bis}}^c| \mathcal{F}_{s_0})\leq 20\exp( -\nu (T-s_0)/4 ).$ Hence for large enough $L$, by~\eqref{eq:s_0} and~\eqref{eq:ellTdef}, this yields
\begin{equation}\label{eq:QE1E2cE2bisc}
\Q(E_1\cap E_2^c\cap E_{2,\textnormal{bis}}^c)\leq \exp(-\ell_g^{10^5}). 
\end{equation}
Recall now the coupling~\eqref{eq:etapmnacellecoupling} and the definition~\eqref{eq:E3deff} of $E_3$. By~\eqref{eq:penacelleNEW2} which is in force, we have
\begin{equation}\label{eq:QE12E3c}
\Q(E_{1-2}\cap E_3^c)\leq 20\exp\left(-\nu \ell_g \lfloor s_1/\ell_g \rfloor /4 \right)\leq \exp(-\ell_g^{10^5}),
\end{equation}
for $L$ large enough due to~\eqref{eq:s_0} and~\eqref{eq:ellTdef}. Next, remark that 
\begin{equation}\label{eq:QE13E45cE4biscdecompo}
E_{1-3}\cap E_{4-5}^c\cap E_{4,\textnormal{bis}}^c \subseteq \{E_{1-4}\cap E_5^c \} \cup \{E_{1-3}\cap E_4^c\cap E_{4,\textnormal{bis}}^c \}.
\end{equation}
On one hand, by~\eqref{eq:etapmE14} and~\ref{pe:drift} (recalling~\eqref{eq:E5deff}) we have
\begin{equation}\label{eq:QE14E5c}
\Q(E_{1-4}\cap E_5^c)\leq 20\exp\left(-\nu (s_2-s_1)\lfloor s_1/(s_2-s_1)\rfloor/4\right). 
\end{equation}
On the other hand, by~\eqref{eq:etapmE13E4c} and~\ref{pe:drift} (recalling~\eqref{eq:E4bisdeff}), we have
\begin{equation}\label{eq:QE13E4cE4bisc}
\Q(E_{1-3}\cap E_4^c\cap E_{4,\textnormal{bis}}^c )\leq 20\exp\left(-\nu (T-s_1)/4\right)
\end{equation}
Together,~\eqref{eq:QE13E45cE4biscdecompo},~\eqref{eq:QE14E5c} and~\eqref{eq:QE13E4cE4bisc} yield that
\begin{equation}\label{eq:QE13E45cE4bisc}
\Q(E_{1-3}\cap E_{4-5}^c\cap E_{4,\textnormal{bis}}^c)\leq \exp(-\ell_g^{10^5}), 
\end{equation}
for $L$ large enough, again via~\eqref{eq:s_0} and~\eqref{eq:ellTdef}. Similarly, by~\eqref{eq:etapmE15E67c},~\ref{pe:drift} and~\eqref{eq:E6bisdeff}, for $L$ large enough we have that
\begin{equation}\label{eq:QE15E67cE6bisc}
\Q(E_{1-5}\cap E_{6-7}^c\cap E_{6,\textnormal{bis}}^c)\leq \exp(-\ell_g^{10^5}). 
\end{equation}

In view of~\eqref{eq:QEbad}, putting together~\eqref{eq:QE1c},~\eqref{eq:QE1E2cE2bisc},~\eqref{eq:QE12E3c},~\eqref{eq:QE13E45cE4bisc} and~\eqref{eq:QE15E67cE6bisc} yields that
\begin{equation}
S\leq \Q(E_{1-7}\cap E_{8-9}^c)+5\exp(-\ell_g^{10^5}).
\end{equation}
By~\eqref{eq:QEgood}, which has been already established, we know that $5\exp(-\ell_g^{10^5})\leq \Q(E_{1-9})/(20T)$ for $L$ large enough by \eqref{eq:ellTdef}. Hence, in order to conclude the proof, it is enough to show that
$\Q(E_{1-7}\cap E_{8-9}^c)\leq \Q(E_{1-9})/(20T)$,  or equivalently that
\begin{equation}\label{eq:QE89cond17}
\Q(E_{8-9}\,\vert\, E_{1-7})\geq 1-({20T})^{-1}.
\end{equation}
Indeed,
\begin{multline*}
\Q(E_{8-9}\,\vert\, E_{1-7})= \Q(E_8\,\vert \, E_{1-7})\Q(E_9\,\vert \, E_{1-8})=\Q(E_8\,\vert \, E_{1-7})\Q( E_{1-9}\,\vert\, E_{1-8}\cap E_{10} )\frac{\Q( E_{1-8}\cap E_{10} )}{\Q( E_{1-8})}
\\
\geq \Q(E_8\,\vert \, E_{1-7})\Q( E_{1-9}\,\vert\, E_{1-8}\cap E_{10} )\left(1- \frac{\Q(E_{10}^c)}{\Q( E_{1-9})} \right) \\ \geq \left(1-\exp(-\ell_g^4)\right)\left(1-\exp(-\ell_g^{10^5})\right)\left(1-\exp(-\ell_g^{1500})\exp(\ell_g^{22})\right), 
\end{multline*}
by virtue of~\eqref{eq:QE8better},~\eqref{eq:QE9},~\eqref{eq:QE10} and~\eqref{eq:QEgood} in the last line. For large enough  $L$ (recalling~\eqref{eq:s_0} and~\eqref{eq:ellTdef}), this readily yields~\eqref{eq:QE89cond17} and concludes the proof of~\eqref{eq:QEbad}.
\\

\noindent
\textbf{Part V: Proof of~\eqref{e:coup-restart balanced}.} \\

Recall the event $E_{\textnormal{restart}}$ from \eqref{e:coup-restart balanced}. The proof of \eqref{e:coup-restart balanced} is relatively straightforward at this point, we just need to keep careful track of those events in the above construction that can force us out of the event $E_{\textnormal{restart}}$; this is key to get the rapid decay in \eqref{e:coup-restart balanced}. \black First note that $E_{\textnormal{neutral}}\subseteq E_{2,\textnormal{bis}}\cup E_{4,\textnormal{bis}}\cup E_{6,\textnormal{bis}}$ by~\eqref{def:softfailure}, so that by~\eqref{eq:E2bisdeff},~\eqref{eq:E4bisdeff},~\eqref{eq:E6bisdeff}  and \eqref{eq:Eneutral} (and using that $|Y_T^{\pm}| \leq T$ along with \eqref{eq:ellTdef}), \black $E_{\textnormal{restart}}$ holds on $E_{\textnormal{neutral}}$ for large enough $L$ (tacitly assumed in the sequel), \black hence $E_{\textnormal{restart}}^c$ can only happen on $E_{\textnormal{good}}\cup E_{\textnormal{bad}}$. As we now explain, looking at the decomposition of $E_{\textnormal{good}}$ and $E_{\textnormal{bad}}$ at~\eqref{def:success} and~\eqref{def:catastrophe}, and inspecting closely the construction $\eta^\pm$ (and $Y^{\pm}$), especially in Part I of the proof, starting with the paragraph of \eqref{eq:E10deff} until \eqref{eq:etapmE17E8c10c}\black, one notices that $E_{\textnormal{restart}}^c$ can in fact only happen:
\begin{itemize}
\item[(i)] on all but the last event (i.e.~all but $E_{1-7}\cap E_{8-9}^c$) defining $E_{\textnormal{bad}}$ at~\eqref{def:catastrophe}, or
\item[(ii)] on $E_{1-7}\cap E_{10}^c$ (see \eqref{eq:E10deff}), or
\item[(iii)] on $E_{1-7}\cap E_8^c\cap  E_{10}$ if the coupling~\ref{pe:couplings} applied at~\eqref{eq:etapmE17E8cPARACHUTE} fails, i.e.~if the event
\begin{equation}
E_{11}\stackrel{\text{def.}}{=} \big\{\eta^+_{T \black}(\cdot -T)\vert_{[-M+T+4\nu T,M-T-4\nu T ]}\succcurlyeq \eta^-_{ T \black}(\cdot +T)\vert_{[-M+T+4\nu T,M-T-4\nu T ]} \big\}
\end{equation}
(cf.~\eqref{eq:doublecoupleSEP}) does not occur, or
\item[(iv)] on $E_{1-8}\cap E_9^c\cap E_{10}$, or
\item[(v)] on $E_{1-10}$ if the coupling~\ref{pe:compatible} (more precisely~\eqref{eq:compatible2}) applied at~\eqref{eq:etapmE1810} fails, i.e.~if the event
\begin{equation}
E_{12}\stackrel{\text{def.}}{=} \big\{\eta^+_{ T \black}(\cdot +Y^-_{s_1}+\ell_g^{20})\vert_{I}\succcurlyeq \eta^-_{ T \black}(\cdot +Y^-_{s_1}-\ell_g^{20})\vert_I \big\}
\end{equation}
with $I= [-M+T+6\nu T,M-T-6\nu T ]$ does not occur.
\end{itemize}
We now detail how the cases~(i)-(v) arise. First note that, since $ E_{\textnormal{restart}}\subset (E_{\textnormal{bad}}\cup E_{\textnormal{good}})$ as established above, and since $E_{\textnormal{good}}= E_{1-9}$ by definition (see \eqref{def:success}), after discarding item~(i) from the above list it only remains to investigate matters on the event $E_{1-7}$, and the cases considered in items (ii)-(v) indeed form a partition of this event, save for the additional specifications (``if the coupling...'') in items (iii) and (v), which we now discuss. For item (iii), note indeed that if $E_{1-7}\cap E_8^c\cap  E_{10} \subseteq E_{\textnormal{bad}}$ holds (see \eqref{def:catastrophe}) then ${Y}^+_T=Y^-_T-2T$ by Lemma~\ref{L:determ-speed}, and that since $\vert Y^-_T\vert \leq T$, if in addition $E_{11}$ holds then so does $E_{\textnormal{restart}}$ in view of \eqref{e:coup-restart balanced}. Similarly, regarding item (v), we have by \eqref{def:success} that $E_{1-10}\subseteq E_{\textnormal{good}}$ so that if $E_{1-10}$ holds, $\widetilde{Y}^+_T=Y^-_T+2\ell_g^{20}$ by Lemma~\ref{L:determ-speed}. Since $\vert Y^-_T-(Y^-_{s_1}-\ell_g^{20})\vert \leq T-s_1+\ell_g^{20}\leq T$, if in addition $E_{12}$ occurs then $E_{\textnormal{restart}}$ occurs as well.

\black
Combining items~(i)-(v) above and recalling \eqref{def:catastrophe} in the context of item~(i), by a union bound we have that 
\begin{multline*}
\Q(E_{\textnormal{restart}}^c)\leq  
\Q(E_1^c) +\Q(E_1\cap E_2^c\cap E_{2,\textnormal{bis}}^c )  \\+\Q(E_{1-2}\cap E_3^c) +\Q(E_{1-3}\cap E_{4,5}^c\cap E_{4,\textnormal{bis}}^c )
 +\Q(E_{1-5}\cap E_{6-7}^c\cap E_{6,\textnormal{bis}}^c)   + \Q(E_{10}^c)\\
+\Q(E_{1-8}\cap E_9^c\cap E_{10})+\Q(E_{1-7}\cap E_8^c\cap E_{10}\cap E_{11}^c)+\Q(E_{1-8}\cap E_{10}\cap E_{12}^c).
\end{multline*}
By~\eqref{eq:QE1c},~\eqref{eq:QE10},~\eqref{eq:QE9},~\eqref{eq:QE1E2cE2bisc},~\eqref{eq:QE12E3c},
~\eqref{eq:QE13E45cE4bisc} and~\eqref{eq:QE15E67cE6bisc}, for large enough $L$ we obtain that
\begin{equation}\label{eq:Erestartcdecompo}
\Q(E_{\textnormal{restart}}^c)\leq 7\exp(-\ell_g^{1500})+Q(E_{1-7} \cap E_{10}\cap E_8^c\cap E_{11}^c)+\Q(E_{1-8}\cap E_{10}\cap E_{12}^c).
\end{equation}
As to the last two terms in \eqref{eq:Erestartcdecompo}, using the coupling defined in \eqref{eq:etapmE17E8cPARACHUTE} and~\ref{pe:couplings}, we get 
\begin{multline}\label{eq:Erestartccoupling1}
Q( E_{11}^c\,\vert\,E_{1-7}\cap E_8^c\cap E_{10})\leq \Cr{SEPcoupling2} TM\exp\big(-(\Cr{SEPcoupling2}(1+\nu^{-1}))^{-1}\epsilon^2(T-s_3)^{1/4} \big)\\
\le C_3 2\ell_g^{10^6} \cdot 20(\nu+1) L \cdot \exp\big(-(\Cr{SEPcoupling2}(1+\nu^{-1}))^{-1}\epsilon^2\ell_g^{2\cdot 10^5} \big) \le \exp\big(-\ell_g^{10^5} \big),
\end{multline}
where we used \eqref{eq:ellTdef}-\eqref{eq:s_0}, the fact that $M\le 20(\nu +1) L$ and took $L$ large enough.
Similarly, using the coupling defined in~\eqref{eq:etapmE1810} and~\ref{pe:compatible} (more precisely~\eqref{eq:compatible2}), we obtain that
\begin{equation}\label{eq:Erestartccoupling2}
Q( E_{12}^c\,\vert\,E_{1-8}\cap E_{10})\leq 5\Cr{SEPcoupling2} \ell_G^{1/20} M\exp\big(- (\Cr{SEPcoupling2}(1+\nu^{-1}))^{-1}\epsilon^2 \ell_G^{1/100}\big)\le \exp(-\ell_g^{2\cdot 10^3}).
\end{equation}
Finally, putting \eqref{eq:Erestartcdecompo},~\eqref{eq:Erestartccoupling1} and~\eqref{eq:Erestartccoupling2} together, and using \eqref{eq:ellTdef} with $L$ large enough, leads to $\Q(E_{\textnormal{restart}}^c)\leq L^{-100}$.
This concludes the proof of~\eqref{e:coup-restart balanced} and thus of Lemma~\ref{L:Q coupling-alternative}, taking $L_2$ large enough so that~\eqref{eq:etapmnacellecoupling}-\eqref{eq:Erestartccoupling2} (which represent a finite number of constraints) hold.
\end{proof}

\begin{Rk}\label{Rk:couplingprecise}
In the coupling $\mathbb{Q}$ constructed in Lemma~\ref{L:Q coupling-alternative}, $U$ is a priori not independent from $(\eta^+,\eta^-)$; for instance~a synchronous evolution of $\eta^+(\cdot+2\ell_g^{20})$ and $\eta^-$ after $s_3$ could indicate that $E_6$, which depends on $U$ during $[s_1,s_2]$, has happened.
\end{Rk}

\subsection{Proof of Proposition~\ref{Prop:initialspeed}}

\label{subsec:chaining}
Let $L\geq 1$ be an integer satisfying $L\ge L_2(\rho,\epsilon,\nu)$, where $L_2$ is given by Lemma~\ref{L:Q coupling-alternative}. Let $k:=\lfloor L/T\rfloor$, with $T$ as defined in~\eqref{eq:ellTdef}. We start with a brief overview of the proof. To deduce \eqref{eq:vLincrease}, we will couple two walks $\widehat{X}^+\sim \mathbb{P}^{\rho+\epsilon}$ and $\widehat{X}^-\sim \mathbb{P}^{\rho}$ on the time interval $[0,iT]$, $1\leq i \leq k$, recursively in $i$. The processes $\widehat{X}^{\pm}$ will be specified in terms of  associated environments $\hat{\eta}^+\sim \mathbf{P}^{\rho+\epsilon}$, $\hat{\eta}^-\sim \mathbf{P}^{\rho}$, and an i.i.d.~array $\widehat{U}=(\widehat{U}_{w})_{w\in \mathbb{L}}$ using Lemma~\ref{L:Q coupling-alternative} repeatedly, cf.~\eqref{e:coup-marg}-\eqref{e:coup-domination}. We will denote $\widehat{\Q}$ the associated coupling measure defined below, which will also comprise associated auxiliary walks $\widehat{Y}^\pm$ that will be defined using the construction of $Y^\pm$ in Section~\ref{subsec:onegap}, iterated over $i$ and allow to keep control on the gap between $\widehat{X}^{+}$ and $\widehat{X}^-$ via a combination of \eqref{e:coup-domination}-\eqref{e:coup-speed}. The very possibility of iteration is guaranteed by the high-probability event $E_{\textnormal{restart}}$ in \eqref{e:coup-restart balanced}.

We now proceed to make the above precise.  For later reference we set $M_i=20(\nu+1) L-(8\nu+3)i$, for $i\in\{0,\dots,k-1\}$. We further recall the filtration $\mathcal{F}_t$ defined in \eqref{e:Ft-def} and write $\widehat{\mathcal{F}}_t$ below when adding hats to all processes involved. Finally, for all $i\in\{0,\dots,k-1\}$, let
\begin{equation} \label{eq:def-Bi}
B_i\stackrel{\text{def.}}{=}\bigcap_{j=0}^i \left\{  ( \hat\eta^+_{jT}(\cdot +\widehat{Y}^{+}_{jT}),\hat\eta^-_{jT}( \cdot + \widehat{Y}^{-}_{jT})  \text{ is }(M_j,L)\text{-balanced}   \right\}
\end{equation}
(see items (i)-(iii) above Lemma~\ref{L:Q coupling-alternative} for notation).

 By successive extensions of $\widehat{\Q}$, we will construct a coupling such that the following hold for all $L \geq L_2$ (as supplied by Lemma~\ref{L:Q coupling-alternative}) and $i\in\{0,\dots, k\}$:

\begin{itemize}
\item[($\widehat{\Q}$-1)] The processes $(\hat\eta^+_t, \hat\eta^-_t)_{0\leq t\leq iT}$, $(\widehat{U}_w)_{w\in \mathbb{L}, \pi_2(w)\le iT-1}$ (absent when $i=0$) and $(\widehat{X}^{\pm}_t)_{0\le t\le iT}$ are defined under $\widehat{\Q}$ with the correct marginal laws. That is, $\hat \eta^+_0\sim \mu_{\rho+\epsilon}$, $\hat \eta^-_0\sim \mu_\rho$ and $(\hat\eta^{\pm}_t)_{0< t\leq iT}$ has the same law as (the restriction to $[0,iT]$) of $\eta^\pm$ under $\mathbf{P}^{\hat \eta_0^\pm}$. Moreover, $(\widehat{U}_w)_{w\in \mathbb{L}, \pi_2(w)\le iT-1}$ are i.i.d.~uniform variables on $[0,1]$, and given $\hat\eta_0^{\pm}$, $(\widehat{X}^{\pm}_t)_{0\le t\le iT}$ is $\widehat{\mathcal{F}}_{iT}$-measurable and has the law of (the restriction to $[0,iT]$ of) $X^{\pm}$ under $\mathbb{P}^{\hat\eta_0^{\pm}}$.
\item[($\widehat{\Q}$-2)] The processes $(\widehat{Y}^{\pm}_t)_{0\le t\le iT}$ are $\widehat{\mathcal{F}}_{iT}$-measurable and $\widehat{X}^-_{iT}\le \widehat{Y}^-_{iT}$ and $\widehat{X}^+_{iT}\ge \widehat{Y}^+_{iT}$ hold $\widehat{\Q}$-a.s.
\item[($\widehat{\Q}$-3)] $\widehat{Y}^{+}_0= \widehat{Y}^{-}_0=0$, and if $i \geq1$, with $B_i$ as in \eqref{eq:def-Bi},
\begin{equation} \label{eq:ind-speedincrease}
\mathbb{E}^{\widehat{\Q}}\left[  \left. \big(\widehat{Y}^{+}_{iT}- \widehat{Y}^{+}_{(i-1)T}\big)-\big(\widehat{Y}^{-}_{iT}- \widehat{Y}^{-}_{(i-1)T} \big)\right| \widehat{\mathcal{F}}_{(i-1)T}  \right] 1_{B_{i-1}} \geq \exp(-(\log L)^{1/20}).
\end{equation}
\item[($\widehat{\Q}$-4)] $\widehat{\Q}(B_0^c)=0$ and if $i \geq 1$, $\widehat{\Q}(B_i^c \vert B_{i-1}) \leq L^{-100}$.
\end{itemize}

For $i=0,$ we simply couple under $\widehat{\Q}$ two configurations $\hat \eta^+_0\sim \mu_{\rho+\epsilon}$ and $\hat \eta^-_0\sim \mu_\rho$ such that a.s.~$ \hat \eta^+_0(x)\geq \hat \eta^-_0(x)$ for all $x\in \mathbb{Z}$, which we can do with probability one by~\eqref{pe:monotonicity}. We set $\widehat{X}^+_0=\widehat{X}^-_0=\widehat{Y}^{+}_0=\widehat{Y}^{-}_0=0$. Thus ($\widehat{\Q}$-1) and ($\widehat{\Q}$-2) are satisfied, and ($\widehat{\Q}$-3) is trivial. Finally $\widehat{\Q}(B_0^c)=0$ since 
$ \hat \eta^{\pm}_0$ are in particular $(M_0,L)$-balanced, whence ($\widehat{\Q}$-4) holds.

Assume by induction that for some $i\in\{0,\dots, k-1\}$, we have constructed a coupling $\widehat{\Q}$ with the above properties. We now proceed to extend $\widehat{\Q}$ so as to have ($\widehat{\Q}$-1)-($\widehat{\Q}$-4) with $(i+1)$ in place of $i$.
We first specify matters on  the event $B_i^c$. Conditionally on $\widehat{\mathcal{F}}_{iT}$, if $B_i^c$ occurs, we let $\hat\eta^+_t$ and $\hat\eta^-_t$ evolve independently for $iT< t\le (i+1)T$ according to $ \mathbf{P}^{\hat\eta^+_{iT}}$ and $\mathbf{P}^{\hat\eta^-_{iT}}$ respectively, and independently of this, we choose $\widehat{U}_w$ as uniform random variables on $[0,1]$ in an i.i.d.~manner, for $w$ such that $iT\le \pi_2(w)\le (i+1)T -1$. On $B_i^c$, we further let $\widehat{Y}^{+}_{iT+t}=\widehat{Y}^{+}_{iT}-t$ and $\widehat{Y}^{-}_{iT+t}=\widehat{Y}^{-}_{iT}+t$, for all $0 < t\le T$ and $(X^\pm_t)_{iT\leq t\leq (i+1)T}$ evolve as in~\eqref{def:A} and~\eqref{eq:defX} with $(\eta^\pm,U)$ instead of $(\eta,U)$. With these choices it is clear that the inequalities in ($\widehat{\Q}$-2) hold on $B_i^c$, since for instance $\widehat{Y}^+_{(i+1)T}\leq \widehat{Y}^+_{iT}-T \leq X^+_{iT}-T\leq X^+_{(i+1)T} $ $\widehat{\Q}$-a.s., using the induction hypothesis and the fact that increments of $X^+$ are  bounded from below by $-1$. The inequality $\widehat{X}^-_{(i+1)T}\le \widehat{Y}^-_{(i+1)T}$ is derived similarly.

We now turn to the case that $B_i$ occurs, which brings into play Lemma~\ref{L:Q coupling-alternative}.
Conditionally on $\widehat{\mathcal{F}}_{iT}$ and on the event $B_i$, we couple $(x,t)\mapsto \hat\eta^+_{iT+t}(x+\widehat{Y}^+_{iT})$ and $(x,t)\mapsto \hat\eta^-_{iT+t}(x+\widehat{Y}^-_{iT})$ for $x\in\mathbb{Z}$ and $t\in[0,iT]$, as well as $( \widehat{U}_{w+(0,iT)}: w \in \mathbb{L}, \,  \pi_2(w)\le T -1)$ following the coupling of $(\eta^+,\eta^-, U)$ provided by Lemma \ref{L:Q coupling-alternative}, with the choice $M=M_j$. The requirement of $(M,L)$-balancedness of the initial condition needed for Lemma~\ref{L:Q coupling-alternative} to apply is precisely provided by $B_i$, cf.~\eqref{eq:def-Bi}.

Combining~\eqref{e:coup-marg}, the Markov property~\eqref{pe:markov} applied at time $iT$, and in view of the choices made on $B_i^c$, it readily follows that the processes
 $(\hat\eta^+_t, \hat\eta^-_t)_{0\leq t\leq (i+1)T}$, $(\widehat{U}_w)_{w\in \mathbb{L}, \pi_2(w)\le (i+1)T-1}$ thereby defined have the marginal laws prescribed in ($\widehat{\Q}$-1). Moreover, by   above application of Lemma~\ref{L:Q coupling-alternative}, the processes $X^{\pm}$ and $Y^{\pm}$ satisfying all of \eqref{e:coup-domination}-\eqref{e:coup-restart balanced} are declared. Thus, setting 
 \begin{equation}\label{eq:Xpm-induction}
 \begin{split}
  &\widehat{X}^{\pm}_{iT+t}=\widehat{X}^{\pm}_{iT}+X^{\pm}_t,\quad \widehat{Y}^{\pm}_{iT+t}=\widehat{Y}^{\pm}_{iT}+Y^{\pm}_t, \quad \text{for all $0 \leq t \leq T$},
 \end{split}
 \end{equation}
 it readily follows, combining~\eqref{eq:Xpm-induction} and the induction assumption on the law of 
 $(\hat\eta^{\pm}_t)_{0\leq t\leq iT}$, $(\widehat{X}^{\pm}_t)_{0\le t\le iT}$, combined with the Markov property \eqref{pe:markov} and that of the quenched law, that $(\widehat{X}^{\pm}_t)_{0\le t\le (i+1)T}$ declared by \eqref{eq:Xpm-induction}
 has the desired marginal law, thus completing the verification of 
 ($\widehat{\Q}$-1) with $(i+1)$ in place of $i$. Next, we show ($\widehat{\Q}$-3) and ($\widehat{\Q}$-4), before returning to ($\widehat{\Q}$-2). Since $\widehat{Y}^{\pm}_{(i+1)T}-\widehat{Y}^{\pm}_{iT} = Y^{\pm}_T$ by \eqref{eq:Xpm-induction}, the inequality~\eqref{eq:ind-speedincrease} with $(i+1)$ in place of $i$ is an immediate consequence of \eqref{e:coup-speed}. Hence  
($\widehat{\Q}$-3) holds. Finally, by construction of the coupling extension on the event $B_i$, which uses Lemma~\ref{L:Q coupling-alternative}, and in view of \eqref{eq:Xpm-induction} and
 \eqref{eq:def-Bi}, the failure of $B_{i+1}$ on the event $B_i$ amounts to the failure of $E_{\textnormal{restart}}$ in \eqref{e:coup-restart balanced}, from which ($\widehat{\Q}$-4) 
 follows with $(i+1)$ in place of $i$.

 It remains to show that ($\widehat{\Q}$-2) holds with $(i+1)$ in place of $i$. To this effect, we introduce two auxiliary processes $(\widehat{Z}^{\pm}_t)_{0 \leq t \leq (i+1)T}$, defined as
 \begin{equation}\label{eq:Z-induction}
 \widehat{Z}^{\pm}_t = \begin{cases}
 \widehat{Y}^{\pm}_{t}, & \text{ if } 0 \leq t \leq iT,\\
 \widehat{Y}^{\pm}_{iT} + X_t^{\pm} , & \text{ if } 0 < t \leq T.
 \end{cases}
 \end{equation}
 Combining the induction assumption ($\widehat{\Q}$-2), the definition of $ \widehat{Y}^{\pm}_t$ and $\widehat{Z}^{\pm}_t$ in \eqref{eq:Xpm-induction} and \eqref{eq:Z-induction} (the latter implying in particular that $\widehat{Z}^{\pm}_{iT}= \widehat{Y}^{\pm}_{iT}$), one readily deduces from property \eqref{e:coup-domination} the $\widehat{\mathbb{Q}}$-almost sure inequalities 
 \begin{equation}\label{eq:YZ}
  \widehat{Z}^{-}_{(i+1)T} \leq \widehat{Y}^{-}_{(i+1)T}  \text{ and } \widehat{Y}^{+}_{(i+1)T} \leq \widehat{Z}^{+}_{(i+1)T}.
  \end{equation} To deduce from this the analogous inequalities with $\widehat{X}$ in place of $\widehat{Z}$, we apply Lemma~\ref{L:monotone}, with $(X, \widetilde{X})= (\widehat{Z}^+, \widehat{X}^+)$, $\eta= \tilde{\eta}= \eta^+$ (whence \eqref{domineta} plainly holds), $\pi_1(w')= \widehat{Z}^{+}_{iT}= \widehat{Y}^{+}_{iT}$, $\pi_1(w)= \widehat{X}^+_{iT}$, $\pi_2(w)=\pi_2(w')$ and $K= \mathbb{Z} \times[iT, (i+1)T]$ to deduce that $\widehat{Z}^{+}_{(i+1)T} \leq \widehat{X}^{+}_{(i+1)T}$ holds $\widehat{\mathbb{Q}}$-a.s. Note that the condition
  $\pi_1(w')\le \pi_1(w)$ necessary for Lemma~\ref{L:monotone} to apply is in force by induction hypothesis in ($\widehat{\Q}$-2).  Together with \eqref{eq:YZ}, this yields one of the desired inequalities in ($\widehat{\Q}$-2) with $i+1$ in place of $i$. The other one is obtained in a similar way using Lemma~\ref{L:monotone}. This completes the proof of the induction step.
  \black

We now use the coupling $\widehat{\mathbb{Q}}$, which satisfies 
 ($\widehat{\Q}$-1)-($\widehat{\Q}$-4) for all $0 \leq i \leq k $ (and $L \geq L_2$), to complete the proof of \eqref{eq:vLincrease}.
To this end, we first extend the laws of $\widehat{X}_t^{\pm}$ to all $t >kT$ using the Markov property, by sampling $\widehat{X}^{\pm}_{kT+ \cdot}$ independently conditionally on $\widehat{\mathcal{F}}_{kT}.$ In particular, recalling that $k=\lfloor L/T\rfloor$, this implies that $\widehat{X}_L^{\pm}$ is declared under $\widehat{\mathbb{Q}}$. We thus proceed to derive a suitable lower bound on $\mathbb{E}^{\widehat{\Q}}[\widehat{X}_L^{+}-\widehat{X}_L^{-}]$, which is well-defined, and from which 
\eqref{eq:vLincrease} will follow. 

Using that $|\widehat{X}_{n+1}^{\pm} - \widehat{X}_n^{\pm}| \leq 1$ for any $n \geq 0$, we obtain (with $k=\lfloor L/T\rfloor$) that
\begin{multline}\label{haaa}
\mathbb{E}^{\widehat{\Q}}[\widehat{X}_L^{+}-\widehat{X}_L^{-}]  + 2T \geq \mathbb{E}^{\widehat{\Q}}[\widehat{X}_L^{+}-\widehat{X}_L^{-}] + 2(L-kT) \ge \mathbb{E}^{\widehat{\Q}}\big[   \widehat{X}^+_{kT}-\widehat{X}^-_{kT}   \big] \\
\stackrel{\textnormal{($\widehat{\Q}$-2)}}{\ge} \mathbb{E}^{\widehat{\Q}}\big[  \widehat{Y}^+_{kT}-\widehat{Y}^-_{kT} \big]
 {\ge} \sum_{1\leq i \leq k} \mathbb{E}^{\widehat{\Q}}\Big[ \big(\widehat{Y}^{+}_{iT}- \widehat{Y}^{+}_{(i-1)T}\big)-\big(\widehat{Y}^{-}_{iT}- \widehat{Y}^{-}_{(i-1)T} \big)   \Big]\\
 \stackrel{\textnormal{($\widehat{\Q}$-3)}}{\ge} ke^{-(\log L)^{1/20}} - 2T \sum_{1 \leq i \leq k}\widehat{\Q}(B_{i-1}^c) \geq \frac{L}{2T}e^{-(\log L)^{1/20}} - 2L\widehat{\Q}(B_{k-1}^c),
\end{multline}
 for large enough $L$, where, in the second inequality of the second line, we have used that $\widehat{Y}^{\pm}_{0}=0$, see ($\widehat{\Q}$-3), and in the last line, we have first used that the difference of increments is deterministically bounded from below
 by $-2T$ (see \eqref{eq:Xpm-induction} and Lemma~\ref{L:determ-speed}), and for the last inequality that the events $B_i^c$ are increasing in $i$, cf.~\eqref{eq:def-Bi}. We also used in various places that $L-T\le kT\le L$.
 
It remains to suitably estimate the probability $\widehat{\Q}(B_{k-1}^c)$ appearing in the last line of \eqref{haaa}, for which we use ($\widehat{\Q}$-4) and a straightforward induction argument to bound
\begin{equation*}
\widehat{\Q}(B_{k-1}^c)\leq \widehat{\Q}(B_{k-1}^c \vert B_{k-2}) + 
\widehat{\Q}(B_{k-2}^c) \stackrel{\text{($\widehat{\Q}$-4)}}{\leq} L^{-100} + \widehat{\Q}(B_{k-2}^c) \leq \dots \leq   (k-1)L^{-100}\leq L^{-99},
\end{equation*}
using also in the penultimate step that $\widehat{\Q}(B_{0}^c)=0$. Feeding this into \eqref{haaa} yields that
\begin{equation}\label{eq:speedincreasealmostlinear}
\begin{split}
\mathbb{E}^{\widehat{\Q}}[\widehat{X}_L^{+}-\widehat{X}_L^{-}]& \ge \frac{L}{2T}e^{-(\log L)^{1/20}}  - 2L^{-98} -2T \ge 3\Cr{C:approx} (\log L)^{100},
\end{split}
\end{equation}
as soon as $L$ is large enough (recall $T$ from \eqref{eq:ellTdef}). Dividing by $L$ and applying \eqref{eq:defvL} whilst observing that $\widehat{X}_L^{+}$ has the same law under $\widehat{\Q}$ as $X_L$ under $\P^{\rho+\epsilon, L}$ and  $\widehat{X}_L^{-}$ has the same law under $\widehat{\Q}$ as $X_L$ under $\P^{\rho, L}$, \eqref{eq:speedincreasealmostlinear} implies that
\begin{equation}
 v_L(\rho+\delta)-v_L(\rho)\geq {3\Cr{C:approx}L^{-1} (\log L)^{100} },
\end{equation}
which concludes the proof of Proposition~\ref{Prop:initialspeed}. \hfill $\qedsymbol$

\begin{Rk}\label{Rk:initialstepstrong}
As mentioned at~\eqref{eq:sizeablemargin}, we prove in fact a much stronger statement than Proposition~\ref{Prop:initialspeed}, owing to~\eqref{eq:speedincreasealmostlinear}, namely that 
\begin{equation}\label{eq:speedincreasealmostlinear2}
 v_L(\rho+\delta)-v_L(\rho)\geq L^{o(1)}
\end{equation}
where the $o(1)$ denotes a negative quantity that goes to 0 as $L\rightarrow\infty$. This is however insufficient to imply directly Theorem~\ref{T:generic}, and the renormalisation in Section~\ref{sec:approx} is still essential to improve~\eqref{eq:speedincreasealmostlinear2} to a right-hand side bounded away from 0 as $L\rightarrow \infty$. 
\end{Rk}

\begin{Rk}[Necessity for couplings with quenched initial condition]\label{Rk:quenchedconditions}
We explain here the main reason why we need quenched conditions for our couplings. In short, this is due to the lack of an invariant measure (or reasonable proxy thereof) from the point of view of the walk. 

More precisely, abbreviating $t=s_2-s_1$, the only a-priori lower bound we have for the probability that $X^\rho$ and $X^{\rho+\varepsilon}$ drift away linearly from each other during $[s_1,s_2]$ (corresponding to $\mathbb{Q}(E_6)$ at~\eqref{eq:QE6}) is $\exp(-Ct )$ for some large constant $C$ by uniform ellipticity -- we are in fact precisely trying to derive a better bound in this section. 

But this gap is necessary to create a difference between $X^{\rho+\epsilon}-X^\rho$ on a time interval of length $T$. Hence, to accrue a significant gain in expectation between $X^{\rho+\epsilon}-X^\rho$, we need to repeat this at least $\exp(Ct)$ times. Thus, during that time, $X^\rho$ and $X^{\rho+\epsilon}$ could straddle an interval of width at least $\exp(C t)$. The main issue is that we have no a priori information on their local environment (which would not be the case if we had access to an invariant measure and could estimate the speed of convergence to it). 
Hence if the coupling~\ref{pe:couplings}, that we use between $s_1$ and $s_2$ on a interval much narrower than $\exp(C t)$, was only valid under the annealed product Bernoulli initial condition (which is a priori \emph{not} what the walk sees), we could resort to the annealed-to-quenched trick (via Markov's inequality) and a union bound over $ \exp(Ct)$ intervals to control the probability that the coupling fails. However, the failure probability at~\ref{pe:couplings} is $\exp(-C't^{1/4}) \gg \exp(-Ct)$, hence we cannot obtain any non-trivial bound this way. Note that this does not depend on the choice of $t$. Furthermore due to the diffusivity of the environment particles and large deviation considerations, it seems unlikely that one could improve the bound of~\ref{pe:couplings} beyond $\exp(-Ct^{1/2})$.

This is why we resort to some quenched control, cf.~items (i)-(iii) above Lemma~\ref{L:Q coupling-alternative}, and also \eqref{eq:def-Bi}. The empirical density was the most accessible and relevant statistic (in particular if the environment is conservative, as is the case of SEP). For similar reasons, we had to establish~\ref{pe:densitychange} in a quenched setting, to ensure that whatever the distribution of the environment around the walker at time $iT$ for some $i\geq 1$ (as long as it is balanced), there is still a uniformly low probability not to have the required empirical density at time $s_1$ (see~\eqref{eq:QE7c}) to perform the coupling of~\ref{pe:couplings}. Of course the necessity for quenched couplings encapsulated in \ref{pe:densitychange} and in particular \ref{pe:compatible}, means that we have (more) work to do in order to verify this in specific instances, as we do for SEP in the next section.
\end{Rk}

\section{Exclusion process and couplings}\label{sec:SEP}

We start by giving in~\S\ref{subsec:SEP-basic} a formal definition of the main environment $\eta$ of interest in this article, the symmetric simple exclusion process (SEP), and first check that it fits the setup of~\S\ref{subsec:re}, in particular, that the basic properties \eqref{pe:markov}--\eqref{pe:density} listed in~\S\ref{subsec:re} hold. The main result of this section, proved in \S\ref{subsec:C-SEP}, is to show that the SEP satisfies the conditions~\ref{pe:densitychange}-\ref{pe:nacelle} stated in \S\ref{subsec:C}; see Proposition~\ref{P:SEP-C} below. This implies that our main result, Theorem~\ref{T:generic}, applies in this case; cf.~also Theorem~\ref{T:main1} and its proof in \S\ref{subsec:main}. We refer to Appendix~\ref{sec:PCRW} for another environment $\eta$ of interest which fits this framework.

\subsection{Definition of SEP and basic properties} \label{subsec:SEP-basic}

We fix a parameter $\nu>0$, which will be constant throughout this section and often implicit in our notation. The (rate $\nu$) symmetric simple exclusion process (SEP) is the Markov
process on the state space $ \{0,1\}^{\mathbb{Z}}$ (tacitly viewed as a subset of $\Sigma$, cf.~\S\ref{subsec:re}) with (pre-)generator
\begin{align}\label{eq:gen-SEP}
L f(\eta)=\sum_{x,y\in\mathbb{Z}: |x-y|=1} \1_{\{\eta(x)=1,\eta(y)=0\}}\frac{\nu}{2}\left(f(\eta_{xy})-f(\eta)\right),
\end{align}
for $\eta\in\{0,1\}^{\mathbb{Z}}$ and $f$ in the domain of $L$, where $\eta_{xy}$ is the configuration obtained from $\eta$ by exchanging the states of $x$ and $y$, i.e.~such that $\eta_{xy}(x)=\eta(y)$, $\eta_{xy}(y)=\eta(x)$ and $\eta_{xy}(z)=\eta(z)$ for all $z\in\mathbb{Z}\setminus\{x,y\}$; see \cite[Chap.~VIII]{MR0776231}. We denote $\mathbf{P}_{\textrm{SEP}}^{\eta_0}$ its canonical law with initial configuration $\eta_0$ and drop the subscript SEP whenever there is no risk of confusion. In words, \eqref{eq:gen-SEP} entails that the vertices $x$ such that $\eta(x)=1$, which can be seen as the locations of particles evolve like continuous-time symmetric simple random walks on $\mathbb{Z}$ with rate $\nu$ that obey the exclusion rule; that is, particles are only allowed to jump onto empty locations.

It will often be useful to consider the interchange process on $\mathbb Z$, with generator $\widehat{L}$ defined as in \eqref{eq:gen-SEP} but omitting the exclusion constraint $\{\eta(x)=1,\eta(y)=0\}$, which interchanges the state of neighbors $x$ and $y$ independently at rate $\nu/2$. We will use the following specific construction of this process. 
Let $E=\{ \{x,x+1\}: x \in \mathbb{Z}\}$ denote the set of edges on $\mathbb{Z}$ and $\widehat{\mathbf{P}}$ be a probability governing independent Poisson counting processes $\mathcal{P}_e$ of intensity $\nu/2$ on $\mathbb{R}_+$ attached to every edge $e \in E$. For any given ${\eta_0}\in \{0,1\}^{\mathbb Z}$, one defines $(\eta_t)_{t \geq 0}$ under $\widehat{\mathbf{P}}$ by exchanging the states of $\eta$ at $x$ and $y$ every time the `clock rings' for $\mathcal{P}_e$, where $e=\{x,y\}$. This is well-defined up to a set of measure zero. Then for every $\eta_0 \in \{0,1\}^{\mathbb{Z}}$,
\begin{equation}\label{eq:SEP-basic-coup}
\text{$(\eta_t)_{t \geq 0}$ has the same law under $\widehat{\mathbf{P}}$ and $\mathbf{P}_{\textrm{SEP}}^{\eta_0}$}.
\end{equation}
 This follows upon observing that the states of neighboring sites suffering the exclusion constraint can also be exchanged. For our purpose, these two processes are equivalent, but note that they differ when one distinguishes the particles of the system (for instance studying the motion of a tagged particle).

A useful feature of this alternative description is the following. A \textit{particle trajectory} of the interchange process is obtained by following the trajectory of a state $x \in \mathbb{Z}$ such that $\eta_0(x)=1$ (a particle) under
$\widehat{\mathbf{P}}$. We won't define this formally but roughly speaking, if $e$ and $e'$ are the two edges incident on $x$, one waits until the minimum of the first arrival times of these two processes (which is an exponential variable with parameter $\nu$) and jumps across the corresponding edge. Then one repeats this procedure. In particular, it immediately follows that
\begin{equation}\label{eq:SEPSRW}
\text{\parbox{12.5cm}{for each $x$ such that $\eta_0(x)=1$, the particle trajectory of $x$ under $\widehat{\mathbf{P}}$ follows the law of a continuous time simple random walk with jump rate $\nu$.}}
\end{equation}
Recalling the properties \eqref{pe:markov}--\eqref{pe:density} from~\S\ref{subsec:re}, we first record the following fact.

\begin{Lem}
\label{L:SEP-P}
With
\begin{equation}
\label{eq:SEP-mu-rho}
\mu_{\rho} = \big((1- \rho) \delta_0 + \rho \delta_1 \big)^{\otimes \mathbb{Z}}, \quad \rho \in J \stackrel{\textnormal{def.}}{=} (0,1),
\end{equation}
the measures $(\mathbf{P}^{\eta_0}: \eta_0 \in \{0,1\}^{\mathbb{Z}})$ with $\mathbf{P}^{\eta_0}=\mathbf{P}_{\textnormal{SEP}}^{\eta_0}$ and $(\mu_{\rho}: \rho \in J)$ satisfy all of  \eqref{pe:markov}--\eqref{pe:density}.
\end{Lem}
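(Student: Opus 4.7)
The plan is to verify the four properties one by one, leaning on the graphical construction given around \eqref{eq:SEP-basic-coup} and classical facts about the SEP. None of the verifications is especially hard; the main point is to organise them cleanly and identify where each property comes from.

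First, for \eqref{pe:markov}, time-homogeneity is automatic from the time-independent generator \eqref{eq:gen-SEP}, and spatial translation invariance together with the reflection symmetry $x\mapsto -x$ follow at once from the fact that the expression \eqref{eq:gen-SEP} depends only on the unoriented edge $\{x,y\}$ and is invariant under $(x,y) \mapsto (x+a, y+a)$ and $(x,y)\mapsto (-x,-y)$. Concretely, one can invoke \eqref{eq:SEP-basic-coup} and observe that the law of the Poisson family $(\mathcal P_e)_{e\in E}$ under $\widehat{\bf P}$ is invariant under the induced edge-translations and edge-reflections, so reading off the resulting process gives the claimed distributional identities.

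Second, \eqref{pe:stationary} is the classical statement that the product Bernoulli measures $\mu_\rho$ defined in \eqref{eq:SEP-mu-rho} are invariant for the SEP; this is \cite[Chap.~VIII, Thm.~1.44]{MR0776231} (or can be checked by hand using $L\mathbf 1_{\{\eta(x)=1\}}=\tfrac{\nu}{2}(\eta(x-1)+\eta(x+1)-2\eta(x))$, so that $\mu_\rho L\mathbf 1_{\{\eta(x)=1\}}=0$, and extending to cylinder functions). Stationarity of the process on $(\eta_t)_{t\ge 0}$ under $\mathbf P^\rho$ then follows from the Markov property.

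Third, for \eqref{pe:monotonicity} i) we use the basic coupling coming from \eqref{eq:SEP-basic-coup}: under $\widehat{\bf P}$ let both $\eta$ and $\eta'$ evolve via the same Poisson clocks $(\mathcal P_e)_{e\in E}$ starting from $\eta_0$ and $\eta_0'$, respectively. A clock ringing at $e=\{x,y\}$ interchanges the states at $x$ and $y$ in both configurations, so if $\eta_s(z)\le \eta_s'(z)$ for $z\in\{x,y\}$ then after the swap one still has the same inequality. This preserves $\preccurlyeq$ for all $t\ge 0$ almost surely, and by \eqref{eq:SEP-basic-coup} the marginals are $\mathbf P^{\eta_0}$ and $\mathbf P^{\eta_0'}$. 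Part ii) is immediate: since $\mu_\rho$ is the product Bernoulli$(\rho)$ measure, for $\rho'\le \rho$ one has the standard coupling $\eta_0'(x)=\mathbf 1\{U_x\le \rho'\}\le \mathbf 1\{U_x\le \rho\}=\eta_0(x)$ via i.i.d.~uniforms $(U_x)_{x\in\Z}$, and composing with part i) yields $\mathbf P^{\rho'}\le_{\text{st.}}\mathbf P^\rho$.

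Finally, for \eqref{pe:density} we note that under $\mu_\rho$ the variable $\eta_0([0,\ell-1])$ is a sum of $\ell$ i.i.d.~Bernoulli$(\rho)$ random variables, so Hoeffding's inequality gives $\mathbf P^\rho(|\eta_0([0,\ell-1])-\rho\ell|\ge \varepsilon\ell)\le 2\exp(-2\varepsilon^2\ell)$, which yields the bound with $\Cr{densitydev}=2$. No step here is a genuine obstacle; if there is any subtle point it is ensuring that the basic coupling used for i) is really the one supplied by \eqref{eq:SEP-basic-coup}, so that the marginals are honestly $\mathbf P^{\eta_0}$ and $\mathbf P^{\eta_0'}$, which is why we state the argument through the interchange representation.
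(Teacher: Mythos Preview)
Your proof is correct and follows essentially the same approach as the paper's: both verify \eqref{pe:markov}--\eqref{pe:density} by invoking the classical Markov/invariance/stationarity properties of SEP (with references to Liggett), constructing the monotone coupling via the shared Poisson clocks of the interchange representation \eqref{eq:SEP-basic-coup}, and handling \eqref{pe:density} by recognising $\eta_0([0,\ell-1])$ as Binomial$(\ell,\rho)$ and applying a standard concentration bound. The only cosmetic differences are that you spell out a few details more explicitly (the swap preserving order, the explicit Hoeffding constant) and cite a different theorem number in Liggett for stationarity.
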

\begin{proof} Property \eqref{pe:markov} is classical, see \cite[Chap.~I, Thm. 3.9, p.27]{MR0776231} along with Example 3.1(d), p.21 of the same reference. So is \eqref{pe:stationary}, i.e.~the stationarity of the measure $\mu_{\rho}$ in \eqref{eq:SEP-mu-rho}, see \cite[Chap.~VIII, Thm.~1.12, p.369]{MR0776231}. The required coupling (for two given initial configurations $\eta_0'  \preccurlyeq \eta_0$) needed to verify the quenched monotonicity asserted in \eqref{pe:monotonicity},i) is simply obtained by realizing the process $ \eta= (\eta_t)_{t \geq 0}$ under the auxiliary measure $\widehat{\mathbf{P}}^{\eta_0'}$ and $\widehat{\mathbf{P}}^{\eta_0}$ using the same Poisson processes ${(\mathcal{P}_e)}$. In particular this measure yields a coupling over all possible initial distributions, including $\eta_0$ and $\eta_0'$, and this coupling is seen to preserve the partial order $\eta_0'  \preccurlyeq \eta_0$ for all $t>0$. The monotonicity in \eqref{pe:monotonicity},ii) is classical. Lastly,  upon observing that $\eta_0[0,\ell-1]$ is a binomial random variable with parameters $\ell$ and $\rho$ under $\mathbf{P}_{\rho}$, property \eqref{pe:density} is obtained by combining \eqref{eq:distriBinupper} and \eqref{eq:distriBinlower}, which are well-known large deviation estimates.
\end{proof}

In anticipation of \S\ref{subsec:C-SEP}, we now collect a simple lemma to bound the linear deviations of an SEP particle, which we will routinely use in our couplings below.

\begin{Lem}\label{Lem:driftsingleparticle}
Let $Z=(Z_t)_{t\geq 0}$ denote a simple random walk on $\mathbb{Z}$ with jump rate $\nu$, starting from $0$ at time $0$, with law denoted by $P$. For all $t>0$ and $k,a\in \N$:
\begin{multline}\label{eq:driftsingleparticle}
P\Big(\max_{0\leq s\leq t}\vert Z_s\vert \geq 2k\nu t+a\Big) \\
\leq P(Z\text{ makes more than $2k\nu t+a$ jumps during }[0,t])\leq e^{-(2k\nu t+a)/8}.
\end{multline}
\end{Lem}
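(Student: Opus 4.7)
The first inequality is essentially tautological. The continuous-time simple random walk $Z$ is piecewise constant, and every jump changes $Z$ by exactly $\pm 1$. Consequently, if $N_s$ denotes the number of jumps of $Z$ up to time $s$ and $Z_0=0$, we have $|Z_s| \leq N_s \leq N_t$ for all $s \in [0,t]$, and the inclusion of events
\[
\Big\{ \max_{0\leq s\leq t}|Z_s| \geq 2k\nu t+a \Big\} \subseteq \big\{ N_t \geq 2k\nu t+a \big\}
\]
follows immediately. Integrating this inclusion under $P$ yields the first bound in \eqref{eq:driftsingleparticle}.

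For the second inequality, I would use that the aggregate clock of a rate-$\nu$ continuous-time random walk on $\mathbb{Z}$ is a Poisson process of rate $\nu$, so $N_t$ is a Poisson random variable with mean $\lambda = \nu t$. Writing $m = 2k\nu t + a$, the assumption $k\ge1$ gives $m \geq 2\lambda$, so a standard Chernoff bound applies: taking $\theta = 1$ in the exponential Markov inequality,
\[
P(N_t \geq m) \leq e^{-\theta m}\,\mathbb{E}\big[e^{\theta N_t}\big] = e^{-m + \lambda(e-1)} \leq e^{-m\big(1 - (e-1)/2\big)} \leq e^{-m/8},
\]
where the second inequality uses $\lambda \leq m/2$ and the last uses $1 - (e-1)/2 > 1/8$. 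This yields the second bound in \eqref{eq:driftsingleparticle}. Alternatively, one could directly invoke the Poisson tail estimates recorded in Appendix~\ref{sec:appendix2}.

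There is no real obstacle here: the estimate is a routine Chernoff-type bound combined with the deterministic Lipschitz control $|Z_s| \leq N_s$. The only minor point to note is the convention on $\mathbb{N}$; the bound as stated is meaningful precisely when $k \geq 1$ (or more generally when $m \geq 2\lambda$), which is the regime in which the lemma is applied throughout the article.
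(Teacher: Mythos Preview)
Your proof is correct and follows essentially the same approach as the paper. The paper also observes that the first inequality is immediate from $|Z_s|\le N_s$, and for the second it applies the Poisson tail bound \eqref{eq:distriPoissonupper} from Appendix~\ref{sec:appendix2} with $\lambda=\nu t$ and $x=(2k-1)\nu t+a\ge(2k\nu t+a)/2$, arriving at the same exponent $-(2k\nu t+a)/8$; your direct Chernoff computation with $\theta=1$ is a minor variant of this, and you even note the Appendix route as an alternative.
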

\begin{proof}
The first inequality is immediate since $Z$ only performs nearest-neighbor jumps. As for the second one, remark that the number $N$ of jumps performed by $Z$ during $[0,t]$ is a Poisson random variable with parameter $\nu t$.  Hence by~\eqref{eq:distriPoissonupper} applied with $\lambda =\nu t$ and $x=(2k-1)\nu t +a\geq (2k\nu t+a)/2> 0$, we obtain that  
\begin{equation}\label{eq:SEPdrift1particle}
P(N\geq 2k\nu t+a)\leq \exp \left( -\frac{((2k-1)\nu t+a)^2}{2(2k\nu t+a)} \right)\leq \exp \left (-(2k\nu t+a)/8\right),
\end{equation}
and the conclusion follows. 
\end{proof}

\subsection{Conditions \ref{pe:densitychange}--\ref{pe:nacelle} for SEP} \label{subsec:C-SEP}

We now proceed to verify that the conditions introduced in \S\ref{subsec:C} all hold for the exclusion process introduced in \S\ref{subsec:SEP-basic}, as summarized in the next proposition. Its proof occupies the bulk of this section. These properties (above all, \ref{pe:densitychange} and \ref{pe:compatible}) are of independent interest.

\begin{Prop}
\label{P:SEP-C} 
For $(\mathbf{P}^{\eta_0}: \eta_0 \in \{0,1\}^{\mathbb{Z}_+})$ with $\mathbf{P}^{\eta_0} =\mathbf{P}^{\eta_0}_{\textnormal{SEP}}$, $ J = (0,1)$ (cf.~\eqref{eq:SEP-mu-rho}), and with $\nu$ as appearing in~\eqref{eq:gen-SEP}, all of \ref{pe:densitychange},~\ref{pe:compatible},~\ref{pe:drift},~\ref{pe:couplings} and~\ref{pe:nacelle} hold.
\end{Prop}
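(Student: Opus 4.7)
My plan is to verify the five conditions in the order \ref{pe:drift}, \ref{pe:nacelle}, \ref{pe:densitychange}, \ref{pe:couplings}, \ref{pe:compatible}, using throughout the interchange-process representation \eqref{eq:SEP-basic-coup}, under which each particle of $\eta_0$ performs an independent continuous-time simple random walk of rate $\nu$ on account of \eqref{eq:SEPSRW}, and under which couplings of two SEP evolutions are naturally expressed at the level of Poisson edge clocks. The central quantitative input throughout will be the single-particle drift bound Lemma~\ref{Lem:driftsingleparticle}.

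For \ref{pe:drift} I extend $\eta_0'$ to $\tilde\eta_0$ with $\tilde\eta_0\preccurlyeq \eta_0$ on all of $\mathbb Z$ by setting $\tilde\eta_0(x)=\eta_0'(x)\wedge\eta_0(x)$ for $|x|>H$, and run $\eta$, $\tilde\eta$, $\eta'$ under the common $\widehat{\mathbf P}$ (basic coupling); this preserves $\eta_s\succcurlyeq\tilde\eta_s$ for all $s\ge 0$ by \eqref{pe:monotonicity}. The discrepancy $\eta'_s\setminus\tilde\eta_s$ consists of finitely many particle trajectories started at sites $|y_0|>H$, each a CTRW of rate $\nu$; for any such particle to break domination at a point of $[-H+2k\nu t,H-2k\nu t]$ at some $s\le t$, it must drift by at least $2k\nu t+(|y_0|-H)$, which by Lemma~\ref{Lem:driftsingleparticle} has probability $\le e^{-(2k\nu t+|y_0|-H)/8}$. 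Summing over $|y_0|>H$ yields \eqref{eq:SEPdriftdeviations} with the stated constant. For \ref{pe:nacelle} I prescribe under $\widehat{\mathbf P}$ an explicit pattern of Poisson edge-clock rings on $6(\rho+1)\ell$ many edges across $[-3\ell+1,3\ell]$ which simultaneously (i) routes the extra $\eta_0$-particle in $[0,\ell]$ to the target site $x\in\{0,1\}$ while keeping the intermediate sites empty for $\eta'$, and (ii) forbids any other clock ring in $[-3\ell+1,3\ell]$ during $[0,\ell]$. The probability of this explicit clock configuration factors as $(\nu/2)^{6(\rho+1)\ell}\cdot e^{-\nu\cdot 6(\rho+1)\ell}=(\nu/(2e^\nu))^{6(\rho+1)\ell}$, matching $\delta$ in \eqref{eq:penacelleNEW}, while the walker-ellipticity factor $(p_\circ(1-p_\bullet))^{6(\rho+1)\ell}$ produces $\delta'$; the domination bound \eqref{eq:penacelleNEW2} is precisely a special case of \ref{pe:drift} already proved.

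For \ref{pe:densitychange} I write $\eta_t(I')=\sum_{y}\eta_0(y)\mathbf 1\{Z_t^y\in I'\}$ with $(Z^y)$ independent CTRWs of rate $\nu$. By Lemma~\ref{Lem:driftsingleparticle}, up to an event of probability $O(He^{-\nu t/8})$ only particles starting in the $2\nu t$-neighborhood of $I'$ contribute, and the truncated sum becomes a sum of independent $\{0,1\}$ indicators whose expectation equals $\sum_{y}\eta_0(y)p_t(y,I')$; the initial empirical-density assumption on blocks of length $\ell$ together with the diffusive smoothing by $p_t(y,\cdot)$ (and the hypothesis $\nu t\gtrsim \ell^2\varepsilon^{-2}(1+\log^3(\nu t))$, which absorbs block-boundary corrections into the tolerance $3\varepsilon$) places this expectation in $[(\rho-2\varepsilon)\ell',(\rho+2\varepsilon)\ell']$. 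Chernoff-Hoeffding concentration then yields the $\exp(-c\varepsilon^2\ell')$ tail, and a union bound over the $O(H)$ intervals $I'\subset[-H+2\nu t,H-2\nu t]$ of length $\ell'$ gives the final estimate.

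The technical heart is \ref{pe:couplings}. I partition $[-H,H]$ into disjoint $\ell$-blocks $(I_j)_j$; in each block $\eta_0(I_j)-\eta_0'(I_j)\geq\varepsilon\ell/2$ by the density hypothesis. Greedily match $\eta_0'$-particles in $I_j$ to $\eta_0$-particles in $I_j$ and build the coupling at the particle-trajectory level: each matched pair shares a common sequence of jump-time rate-$\nu$ Poisson increments and a common sequence of Rademacher direction choices (so that the pair remains at a fixed spacing), while the $\ge\varepsilon\ell/2$ unmatched excess $\eta_0$-particles per block evolve using independent randomness. This is consistent with the interchange construction because the joint distribution of particle trajectories under $\widehat{\mathbf P}$ is an independent product of CTRWs. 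At time $t$ matched pairs sit at sites at fixed distance, so failure of domination at some $x\in[-H+4\nu t,H-4\nu t]$ requires an $\eta'_0$-matched-particle to land at $x$ while no excess $\eta_0$-particle does. Using Gaussian heat-kernel bounds, each excess particle in the $\sqrt{\nu t}$-neighborhood of $x$ lands at $x$ with probability $\asymp 1/\sqrt{\nu t}$; with $\asymp\varepsilon\ell\cdot(\sqrt{\nu t}/\ell)=\varepsilon\sqrt{\nu t}$ excess particles in this neighborhood, Chernoff concentration gives failure probability per site bounded by $\exp(-c\varepsilon^2 t^{1/4})=\exp(-c\varepsilon^2\ell)$, and a union bound over $O(tH)$ space-time sites yields \eqref{eq:doublecoupleSEP}. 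The main obstacle is ensuring that the matched/unmatched coupling really respects the marginal laws $\mathbf P^{\eta_0}$ and $\mathbf P^{\eta_0'}$ simultaneously, which I expect to absorb the bulk of the work and to force some care with the graphical construction (e.g.\ lifting edge clocks for matched pairs to a shifted copy while the unmatched excess particles evolve under an independent lift). Finally, \ref{pe:compatible} follows by concatenating couplings on complementary regions: thanks to $\min(H_1,H_2-H_1-1)>10\nu t$ and Lemma~\ref{Lem:driftsingleparticle}, no SEP particle crosses the boundary of the slab $[-H_1-2\nu t,H_1+2\nu t]$ during $[0,t]$ except on an event of probability $\le 20 e^{-\nu t/4}$, so I can use basic coupling (hence \ref{pe:drift}) inside this slab to obtain \eqref{eq:compatible1} and apply the coupling of \ref{pe:couplings} with independent edge clocks outside to obtain \eqref{eq:compatible2}, both holding simultaneously under the concatenated measure $\Q$.
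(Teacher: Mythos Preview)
Your proposal contains a recurring and fatal misconception: you repeatedly assume that under the interchange representation $\widehat{\mathbf P}$ the particle trajectories are \emph{jointly} independent simple random walks. Equation~\eqref{eq:SEPSRW} only asserts that the \emph{marginal} law of a single tagged particle is a CTRW; distinct particles share the same edge clocks $(\mathcal P_e)$ and are therefore strongly dependent. This invalidates two of your five arguments.

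For \ref{pe:densitychange}, your sentence ``the truncated sum becomes a sum of independent $\{0,1\}$ indicators'' is false, so Chernoff--Hoeffding does not apply. The paper circumvents exactly this obstacle by invoking a concentration estimate due to Hil\'ario--den Hollander--Sidoravicius--dos Santos--Teixeira (their Lemma~2.3, building on Liggett's negative-association inequality for SEP), which gives $\mathbf P^{\eta_0}(\eta_t(I)\ge \bar\eta_t(I)+\varepsilon\ell')\le \exp(-c\varepsilon^2\ell')$ for \emph{arbitrary} initial data, with $\bar\eta_t(I)$ the expectation. Only after that does a heat-kernel computation (essentially your smoothing argument) control $\bar\eta_t(I)$. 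You are missing the first, non-trivial, step.

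For \ref{pe:couplings}, your proposed coupling---matched pairs sharing jump times and directions so that their spacing is frozen, while excess $\eta$-particles evolve with ``independent randomness''---cannot produce the correct SEP marginal for $\eta$: once you prescribe that certain labelled particles of $\eta$ use private clocks while others use the shared $(\mathcal P_e)$, the resulting occupation process is no longer governed by the SEP generator~\eqref{eq:gen-SEP}. The paper instead implements the Baldasso--Teixeira scheme: matched $\eta'$- and $\eta$-particles evolve using \emph{different} thinnings of a common doubled clock process until they meet, after which they coalesce and use the same thinning; this is iterated $\ell$ times over intervals of length $\ell^3$. The delicate point (see \eqref{eq:eta'-coup}--\eqref{eq:eta'-law}) is precisely to check that the thinning rule, which depends on the evolving positions of \emph{both} processes, still yields i.i.d.\ intensity-$\nu/2$ clocks for $\eta'$.

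Two smaller points. Your sketch for \ref{pe:compatible} omits the second ``seam-repair'' step: after running \ref{pe:drift} inside $[-H_1,H_1]$ and \ref{pe:couplings} outside, domination fails on two strips of width $O(\nu t_1)$ around $\pm H_1$; the paper spends a further time window $[t_1,t_2]$ with $t_2-t_1=\ell^{19}$ matching the surviving uncovered $\eta'$-particles there to excess $\eta$-particles via a variant of \ref{pe:couplings} at mesh $\ell^5$. Without this, \eqref{eq:compatible2} does not follow. Finally, for \ref{pe:nacelle} your explicit clock-pattern construction is unnecessarily elaborate: under the basic coupling the extra $\eta_0$-particle automatically sits on a site where $\eta'$ is empty for all time, so one only needs the (one-particle) random-walk estimate $\widehat{\mathbf P}(Z_\ell=x)\ge (\nu/(2e^\nu))^\ell$, which is what the paper does. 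Your arguments for \ref{pe:drift} are essentially correct and match the paper.
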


\begin{proof}
This follows directly by combining Lemmas \ref{Lem:SEPdensity},~\ref{Lem:SEPdriftdeviations},~\ref{Lem:doublecoupleSEP},~\ref{Lem:compatible} and~\ref{lem:nacelle} below.
\end{proof} 

We now proceed to investigate each of the relevant conditions separately. Throughout the remainder of this section, we work implicitly under the assumptions of Proposition~\ref{P:SEP-C}. In particular, in stating that some property $P$ `holds for SEP,' we mean precisely that $P$ is verified for the choice $(\mathbf{P}^{\eta_0}: \eta_0 \in \{0,1\}^{\mathbb{Z}_+})$ with $\mathbf{P}^{\eta_0} =\mathbf{P}^{\eta_0}_{\textnormal{SEP}}$ for all $\rho \in J = (0,1)$, and with $\nu$ the rate parameter underlying the construction of SEP.

\begin{Lem}\label{Lem:SEPdensity}
The condition~\ref{pe:densitychange} holds for \textnormal{SEP}, with no restriction on the choice of $\ell'\geq 1$. 
\end{Lem}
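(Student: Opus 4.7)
The plan is to use the interchange-process representation \eqref{eq:SEP-basic-coup}--\eqref{eq:SEPSRW} to recast $\eta_t(I')$ as a sum of independent Bernoulli variables and then apply Bernstein concentration around its mean. On the probability space $\widehat{\mathbf{P}}$ of the interchange process, the particles of $\eta_0$ follow independent continuous-time simple random walks $(Z^x)_{x\in\mathbb{Z}}$ of rate $\nu$ with $Z^x_0=x$, so that for any interval $I'\subseteq\mathbb{Z}$ and any deterministic $\eta_0$,
\[
\eta_t(I')=\sum_{x\in\mathbb{Z}}\eta_0(x)\,\mathbf{1}\{Z^x_t\in I'\}
\]
is a sum of independent $\{0,1\}$ variables with success probabilities $\eta_0(x)p_t(x,I')$, where $p_t(x,I'):=P(Z^x_t\in I')$ satisfies $\sum_x p_t(x,I')=\ell'$ by translation invariance. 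Bernstein's inequality, with variance dominated by the mean $\le\ell'$, then yields $\widehat{\mathbf{P}}(\eta_t(I')-\mathbb{E}[\eta_t(I')]\ge\varepsilon\ell')\le e^{-c_0\varepsilon^2\ell'}$ for a numerical $c_0>0$, so the task reduces to the first-moment estimate $\mathbb{E}[\eta_t(I')]\le(\rho+2\varepsilon)\ell'$ for any $I'\subseteq[-H+2\nu t,H-2\nu t]$ of length $\ell'$.

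For the mean estimate, I would split $\sum_x\eta_0(x)p_t(x,I')$ into contributions from $x\in[-H,H]$ and $x\notin[-H,H]$. Any $Z^x$ starting outside $[-H,H]$ that reaches $I'$ must travel at least $2\nu t+(|x|-H)$ steps, so Lemma~\ref{Lem:driftsingleparticle} combined with a geometric-tail sum yields an outside contribution $\le C_1 e^{-\nu t/4}$, which is $o(\varepsilon\ell')$ under the hypothesis $4\nu t>\Cr{densitystable}\ell^2\varepsilon^{-2}(1+|\log^3(\nu t)|)$. For the inner contribution I would partition $[-H,H]$ into consecutive blocks $J_k$ of length $\ell$, on which the hypothesis gives $\eta_0(J_k)\le(\rho+\varepsilon)\ell$. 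Writing $p_k^{\min}$, $p_k^{\max}$ for the extrema of $p_t(\cdot,I')$ on $J_k$, the elementary inequality
\[
\sum_{x\in J_k}\eta_0(x)p_t(x,I')\le p_k^{\min}\,\eta_0(J_k)+\ell\,(p_k^{\max}-p_k^{\min}),
\]
summed over $k$ and combined with $\sum_k\ell\, p_k^{\min}\le\sum_{x\in[-H,H]}p_t(x,I')\le\ell'$, yields the bound $(\rho+\varepsilon)\ell'+\ell\sum_k(p_k^{\max}-p_k^{\min})$. The crux is that $x\mapsto p_t(x,I')$ is unimodal (the Skellam law of $Z^0_t$ is symmetric and unimodal, and convolution with $\mathbf{1}_{I'}$ preserves unimodality), so $\sum_k(p_k^{\max}-p_k^{\min})$ is controlled by the total variation of $p_t(\cdot,I')$, namely $2\max_x p_t(x,I')$. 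The local central limit theorem furnishes $\max_x p_t(x,I')\le\min(1,C\ell'/\sqrt{\nu t})$, so the error is $\le 2C\ell\min(1,\ell'/\sqrt{\nu t})$; choosing $\Cr{densitystable}$ large enough and using $\nu t\gtrsim\ell^2\varepsilon^{-2}$ bounds this by $\varepsilon\ell'$ in both regimes $\ell'\lessgtr\sqrt{\nu t}$ (for $\ell'\le\sqrt{\nu t}$ one uses $2C\ell/\sqrt{\nu t}\le\varepsilon$, and for $\ell'\ge\sqrt{\nu t}$ one uses $2\ell\le\varepsilon\sqrt{\nu t}\le\varepsilon\ell'$).

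Combining the moment bound with Bernstein's inequality gives $\widehat{\mathbf{P}}(\eta_t(I')\ge(\rho+3\varepsilon)\ell')\le 2 e^{-\Cr{densitystableexpo}\varepsilon^2\ell'}$ for each fixed $I'$, and a union bound over the at most $2H$ such intervals $I'\subseteq[-H+2\nu t,H-2\nu t]$ delivers the claimed tail $4H e^{-\Cr{densitystableexpo}\varepsilon^2\ell'}$. The companion lower-density statement, with $\rho-3\varepsilon$ in place of $\rho+3\varepsilon$ and the reversed inequality, follows by applying the same argument to the hole process $\bar\eta_t:=1-\eta_t$, which under the interchange coupling is itself a rate-$\nu$ SEP starting from $1-\eta_0$, whose block densities on $[-H,H]$ are bounded above by $(1-\rho+\varepsilon)\ell$. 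The main technical obstacle will be the two quantitative estimates on $p_t(\cdot,I')$ needed above --- the total-variation identity $\sum_x|p_t(x+1,I')-p_t(x,I')|=2\max_x p_t(x,I')$ via unimodality of the Skellam distribution, and the sup bound $\max_x p_t(x,I')\le\min(1,C\ell'/\sqrt{\nu t})$ from the local CLT --- in a form uniform in $\ell'\ge1$: the delicate regime is $\ell'>\sqrt{\nu t}$ (formally excluded from \ref{pe:densitychange}), where it is the saturation $\max p_t\le1$ rather than the Gaussian factor $\ell'/\sqrt{\nu t}$ that keeps the error $\le\varepsilon\ell'$.
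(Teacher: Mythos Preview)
Your concentration step has a genuine gap. Under the interchange representation~\eqref{eq:SEP-basic-coup}, the particle trajectories $(Z^x)_{x\in\mathbb{Z}}$ are \emph{not} independent: they are all built from the \emph{same} family of edge clocks $(\mathcal{P}_e)_{e\in E}$, and in particular $Z^x_t\neq Z^y_t$ for $x\neq y$ (the map $x\mapsto Z^x_t$ is a bijection of $\mathbb{Z}$). Property~\eqref{eq:SEPSRW} asserts only that each \emph{marginal} $Z^x$ is a rate-$\nu$ simple random walk; it says nothing about joint independence, and independence fails. Consequently $\eta_t(I')=\sum_x\eta_0(x)\mathbf{1}\{Z^x_t\in I'\}$ is not a sum of independent Bernoulli variables, and Bernstein's inequality does not apply directly.

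This is precisely the obstacle the paper's proof is designed to overcome. The paper invokes \cite[Lemma~2.3]{HS15}, which in turn rests on Liggett's negative-correlation inequality for SEP (see \cite[Chap.~VIII, Prop.~1.7]{MR0776231}): although the occupation variables $(\eta_t(x))_{x\in I'}$ are dependent, they are negatively associated, and this suffices to recover the sub-Gaussian concentration bound~\eqref{eq:densitySEPvsSRW} around the mean $\bar\eta_t(I')$. Without this ingredient (or an equivalent one), your argument does not go through. Your first-moment computation, by contrast, only uses linearity of expectation together with the marginal law of each $Z^x$, so it survives; your unimodality/total-variation approach to bounding $\sum_x\eta_0(x)p_t(x,I')$ is a legitimate alternative to the paper's pointwise heat-kernel-ratio estimate~\eqref{eq:densitySEPonesite}, and arguably cleaner. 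But you still need a replacement for Bernstein that respects the SEP dependence structure.
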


We give a brief overview of the proof. A key idea is to exploit the fact that SEP particles, although not independent, are in fact `more regularly' spread out than a bunch of independent random walks (starting from the same initial positions), which is due to the inherent negative association of the SEP. This fact is implicit in the bound \eqref{eq:densitySEPvsSRW} below, which is borrowed from \cite{HS15} (itself inspired from \cite{zbMATH06309113}), the proof of which uses in a crucial way an inequality due to Liggett, see \cite[Chap.~VIII, Prop.~1.7, p.~366]{MR0776231}, encapsulating this property. With this observation, it is enough to argue that after time $t \gg \ell^2$ (where $\ell$ is the precision mesh of the empirical density of the initial configuration, as in \ref{pe:densitychange}), random walks have diffused enough to forget their initial positions and average their density, which follows from classical heat kernel estimates.

\begin{proof}
We focus on proving the upper inequality (i.e.~when each $\eta_0(I)\leq (\rho+\varepsilon)\ell$), and comment where necessary on the minor adjustments needed to derive the other inequality in the course of the proof. 
Let $\rho,\varepsilon \in (0,1)$, $H,\ell,t\geq 1$ and $\eta_0$ be such that the conditions of~\ref{pe:densitychange} hold. We will in fact show \ref{pe:densitychange} for an arbitrary value of $\ell'\geq 1$ although the restriction to $\ell' \leq \ell$ is sufficient for later purposes (note that the statement is empty if $\ell'>2(H-2\nu t)$). Thus, let $\ell'\geq 1$ and $\mathcal{I}$ denote the set of intervals of length $\ell'$ included in $[-H+2\nu t, H-2\nu t]$, and define
\begin{equation}\label{eq:bar-eta}
\bar{\eta}_t(I)= \sum_{x\in I}\mathbf{P}^{\eta_0}(\eta_t(x)=1), \quad I  \in \mathcal{I},
\end{equation}
the average number of occupied sites of $I$ after time $t \geq 0$. Note that $\bar{\eta}_0(I)=\eta_0(I)$. 
By Lemma~2.3 of~\cite{HS15} (see also Lemma 5.4 of \cite{zbMATH06309113}) one knows that for all $ t \geq 0$ and suitable $\Cr{densitystableexpo}\in (0,\infty)$,
\begin{equation}\label{eq:densitySEPvsSRW}
\mathbf{P}^{\eta_0}\left(\eta_t(I)\geq \bar{\eta}_t(I)+\varepsilon\ell'\right)\leq \exp(-\Cr{densitystableexpo}\varepsilon^2\ell')
\end{equation}
(in fact the bound \eqref{eq:densitySEPvsSRW} holds for any initial configuration $\eta_0$). Thus, if 
\begin{equation}\label{eq:densitySEPofSRW}
 \max_{I\in \mathcal{I}}\bar{\eta}_t(I) \leq (\rho+2 \varepsilon)\ell',
\end{equation}
under our assumptions on $\eta_0$ and $t$,
then~\eqref{eq:densitySEPvsSRW}, a union bound over $\mathcal{I}$, and the upper bound on the maximum in \eqref{eq:densitySEPofSRW} yield that
\begin{equation}
\mathbf{P}^{\eta_0}\big(\max_{I\subseteq\mathcal{I}}\eta_t(I)>(\rho+3\varepsilon)\ell'\big)\leq 2H  \exp(-\Cr{densitystableexpo}\varepsilon^2\ell').
\end{equation}
A companion inequality to~\eqref{eq:densitySEPvsSRW} can be deduced in a similar way using the lower bound on the minimum in \eqref{eq:densitySEPofSRW} and exploiting symmetry, i.e.~rewriting $\{ \eta_t(I)< (\rho-3\varepsilon) \ell'\} = \{ \xi_t(I)> ((1-\rho)+3\varepsilon) \ell'\}$ where $\xi_t=1-\eta_t$, while observing that $(\xi_t)_{t \geq0}$ has law $\mathbf{P}^{\xi_0}$, cf.~\eqref{eq:gen-SEP}. The conclusion \ref{pe:densitychange} then follows. 

Therefore, we are left with showing~\eqref{eq:densitySEPofSRW}. In view of \eqref{eq:bar-eta}, it is enough to prove that for every $x\in [-H+2\nu t, H- 2\nu t]$, and under our assumptions on $\eta_0$ and $t$,
\begin{equation}\label{eq:densitySEPonesite}
\mathbf{P}^{\eta_0}(\eta_t(x)=1) \leq \rho +2\varepsilon. 
\end{equation}
Let $(Z_s)_{s\geq 0}$ denote a continuous-time symmetric simple random walk on $\mathbb{Z}$ with jump rate $\nu$, defined under an auxiliary probability $P$, and let 
$p_s(x,y)={P}(Z_s=y\,\vert Z_0=x )$, for $x,y\in \mathbb{Z}$ and $s\geq 0$ denote its transition probabilities, which are symmetric in $x$ and $y$. Writing $\{ \eta_t(x)=1\}$ as the disjoint (due to the exclusion constraint) union over $y \in \mathbb{Z}$ of the event that $\eta_0(y)=1$ and the particle starting at $y$ is located at $x$ at time $t$, it follows using \eqref{eq:SEPSRW} that for all $x\in \mathbb{Z}$,
\begin{equation}\label{eq:SEPheatkernel}
\mathbf{P}^{\eta_0}(\eta_t(x)=1)= \sum_{y\in \mathbb{Z}}p_t(y,x) \eta_0(y)= \sum_{y\in S}p_t(x,y),
\end{equation}
where $S:=\{y\in \mathbb{Z}: \eta_0(y)=1\}$ and we used reversibility in the last step. Using the rewrite \eqref{eq:SEPheatkernel}, we first show the upper bound in \eqref{eq:densitySEPonesite}. Let us define $c_t=\Cl{C:c-t}\sqrt{\nu t\log(\nu t)}$, where the constant $\Cr{C:c-t}>0$ will be chosen below. Recalling that $x\in  [-H+2\nu t, H-2\nu t]$, we cover the sites of $[x-c_t ,x+c_t] \subseteq [-H,H]$ using intervals $I_1, \ldots , I_q$ all contained in this interval and each containing $\ell$ sites, with $q=\lceil (2c_t +1)/\ell\rceil$. We may assume that all but the last interval $I_q$ are disjoint. For later reference, we note that $\vert I_r\cap S\vert \leq (\varrho +\varepsilon)\ell$ for all $1\leq r\leq q$ by assumption on $\eta_0$. By~\eqref{eq:SEPheatkernel}, we have that
\begin{equation}\label{eq:SEPdualityupperbound}
\mathbf{P}^{\eta_0}(\eta_t(x)=1)\leq \sum_{y\in\mathbb{Z}\setminus [x-c_t,x+c_t]   }p_t(x,y)+\sum_{r=1}^q\sum_{y\in S\cap I_r}p_t(x,y)
\end{equation}
We will look at the two terms on the right-hand side separately. Using \eqref{eq:distriPoissonupper}), one knows that with $N$ denoting the number of jumps of a continuous time random walk with jump rate $\nu$ until time $t>0$, one has $\mathbb{P}(A)\geq 1-Ce^{-\nu t/C}$ with $A \coloneqq \{ 2\nu t/3\leq N\leq 4\nu t/3\}$. Combining this with  the deviation estimate \eqref{eq:distriBinupper} yields that
\begin{multline}\label{violette-lechat0}
\sum_{y\in\mathbb{Z}\setminus [x-c_t,x+c_t]   }p_t(x,y)=\mathbb{P}(A^c)+2\sum_{k=\lceil 2\nu t/3\rceil }^{\lfloor4\nu t/3\rfloor} \mathbb{P}(N=k) \sum_{y'\ge c_t }\mathbb{P}\left(  \text{Bin}(k,\tfrac12)=\tfrac{y'+k}{2}  \right)\\
\le Ce^{-\nu t/C}+2\sum_{k=\lceil 2\nu t/3\rceil }^{\lfloor 4\nu t/3\rfloor} \mathbb{P}(N=k) \exp\left( -\frac{c_t^2}{ 16 \nu t}\right)\\
\le Ce^{-\nu t/C}+2 \exp\left( -\frac{\Cr{C:c-t}^2\log(\nu t)}{16}\right)
\le \frac{1}{\nu t}\le \frac{\varepsilon}{3},
\end{multline}
where we choose $\Cr{C:c-t}\ge \sqrt{11}$, use  that $\nu t\ge \Cr{densitystable}$, thus choosing  $\Cr{densitystable}$ large enough, and use the conditions from \ref{pe:densitychange} for the last inequality.

Next, we want to deal with the points in the interval $[x-c_t,x+c_t]$. Recall that  a continuous-time simple random walk with rate $\nu$ at time $t$ has the same law as a rate $1$ continuous-time simple random walk at time $\nu t$. Hence, by  \cite[Theorem 2.5.6]{zbMATH05707092},  for all $x,y,z$ such that $y,z\in [x-c_t,x+c_t]$ and $|z-y|\le \ell$,  using that $c_t<\nu t/2$ for $\Cr{densitystable}$ large enough, with $C$ changing from line to line,
\begin{multline}\label{violette-lechat}
\frac{p_t(x,y)}{p_t(x,z)}  \le \exp\left(        \frac{|x-z|^2-|x-y|^2}{2\nu t}+C\left(\frac{1}{\sqrt{\nu t}}+\frac{|x-y|^3+|x-z|^3}{(\nu t)^2} \right)        \right)  \\
\le\exp\bigg(      c  \frac{\ell \sqrt{\log(\nu t)}}{\sqrt{\nu t}}+C\frac{1+\log^{3/2}(\nu t)}{\sqrt{\nu t}}           \bigg) \le \exp\left(  C\frac{\varepsilon}{\sqrt{\Cr{densitystable}}}        \right)\le 1+\frac{\varepsilon}{3},
\end{multline}
where we used that $4\nu t>\Cr{densitystable} \ell^2\varepsilon^{-2} \log^3(\nu t)$ and $\ell\ge1$, from  \ref{pe:densitychange}, and chose $\Cr{densitystable}$ large enough. Using \cite[Theorem 2.5.6]{zbMATH05707092} again, we also have that, as soon as $\Cr{densitystable}$ is large enough, using the conditions from \ref{pe:densitychange},
\begin{equation} \label{violette-lechat2}
p_t(x,z)\le \frac{1}{\sqrt{\nu t}}\le \frac{\varepsilon}{3\ell}.
\end{equation}
Now, recalling that the intervals $I_r$ are disjoint for $1\le r\le q-1$ and that they all have cardinality $\ell$, using \eqref{violette-lechat} and \eqref{violette-lechat2}, we have that
\begin{multline}\label{violette-lechat3}
\sum_{r=1}^q\sum_{y\in S\cap I_r}p_t(x,y)
\le \sum_{r=1}^{q-1}\sum_{y\in S\cap I_r}p_t(x,y) + \sum_{y\in S\cap I_q}p_t(x,y)
\\ \le \left(1+\frac{\varepsilon}{3}\right)\sum_{r=1}^{q-1}\sum_{y\in S\cap I_r}\min_{z\in I_r}p_t(x,z) + \sum_{y\in S\cap I_q}\frac{\varepsilon}{3\ell}  \le \left(1+\frac{\varepsilon}{3}\right)(\rho+\varepsilon)\sum_{r=1}^{q-1}|I_r|\min_{z\in I_r}p_t(x,z) + \frac{\varepsilon |I_q|}{3\ell} \\
\le\left(1+\frac{\varepsilon}{3}\right)(\rho+\varepsilon)\sum_{r=1}^{q-1}\sum_{y\in S\cap I_r}p_t(x,y) + \frac{\varepsilon}{3}
\le \left(1+\frac{\varepsilon}{3}\right)(\rho+\varepsilon) + \frac{\varepsilon}{3}.
\end{multline}
Putting together \eqref{eq:SEPdualityupperbound}, \eqref{violette-lechat0} and \eqref{violette-lechat3}, and using that $\rho+\varepsilon\le 1$, we obtain
\begin{equation}
\mathbf{P}^{\eta_0}(\eta_t(x)=1)\leq \frac{\varepsilon}{3}+\left(1+\frac{\varepsilon}{3}\right)(\rho+\varepsilon) + \frac{\varepsilon}{3}\le \rho +2\varepsilon.
\end{equation}
This yields the upper bound in \eqref{eq:densitySEPonesite}, and the upper inequality in~\ref{pe:densitychange} follows.

For the lower bound, which requires to change~\eqref{eq:densitySEPonesite} to $\rho-2\varepsilon \leq  \mathbf{P}^{\eta_0}(\eta_t(x)=1)$, one uses the estimate $\mathbf{P}^{\eta_0}(\eta_t(x)=1)\geq \sum_{r=1}^{q-1}\sum_{y\in S\cap I_r}p_t(x,y)$ instead of~\eqref{eq:SEPdualityupperbound} and proceeds similarly as in \eqref{violette-lechat3}. \end{proof}

\begin{Lem}\label{Lem:SEPdriftdeviations}
The condition~\ref{pe:drift} holds for \textnormal{SEP}.
\end{Lem}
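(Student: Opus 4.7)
The plan is to construct the coupling $\mathbb{Q}$ via the stirring (interchange-process) representation introduced around \eqref{eq:SEP-basic-coup}. Specifically, I would sample under $\mathbb{Q}$ a single family of Poisson clocks $(\mathcal{P}_e)_{e \in E}$ on the edges of $\mathbb{Z}$, and use \emph{the same} clocks to drive both processes: $\eta$ starting from $\eta_0$ and $\eta'$ starting from $\eta'_0$, swapping the states at the endpoints of the corresponding edge in \emph{both} processes at every clock ring. By \eqref{eq:SEP-basic-coup}, the resulting marginals are precisely $\mathbf{P}^{\eta_0}$ and $\mathbf{P}^{\eta'_0}$. The key virtue of this construction is the natural labelling by ``tokens'': for each $y \in \mathbb{Z}$, the trajectory $Z^y = (Z^y_s)_{s\geq 0}$ of the token initially at $y$ will be, by \eqref{eq:SEPSRW}, a continuous-time simple random walk with jump rate $\nu$ under $\mathbb{Q}$.

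Next, I would translate the domination event in \eqref{eq:SEPdriftdeviations} into a statement about individual tokens. Writing $Y_s(x) \in \mathbb{Z}$ for the initial position of the token located at $x$ at time $s$ (i.e.~the inverse of the stirring bijection), one has $\mathbb{Q}$-a.s.~the identities $\eta_s(x) = \eta_0(Y_s(x))$ and $\eta'_s(x) = \eta'_0(Y_s(x))$. Consequently, any pair $(x,s)$ with $\eta_s(x) < \eta'_s(x)$ forces $Y_s(x) \notin [-H, H]$, since the assumption $\eta_0 \succcurlyeq \eta'_0$ on $[-H, H]$ would otherwise give the reverse inequality. Therefore the complement of the event in the left-hand side of \eqref{eq:SEPdriftdeviations} is contained in the event $G^c$ that some token $Z^y$ with $|y| > H$ enters $[-H+2\nu kt,\, H - 2\nu kt]$ at some $s \in [0, t]$.

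The final step is then a direct union bound using the one-particle estimate of Lemma~\ref{Lem:driftsingleparticle}. For $y = H + j$ with $j \geq 1$, the token $Z^y$ can enter $[-H + 2\nu kt,\, H - 2\nu kt]$ during $[0,t]$ only if its displacement satisfies $|Z^y_s - y| \geq 2\nu kt + j$ for some such $s$; Lemma~\ref{Lem:driftsingleparticle} applied with $a = j$ bounds the probability of this by $e^{-(2\nu kt + j)/8}$. The same estimate holds on the other side by spatial symmetry, so a union bound yields
\begin{equation*}
\mathbb{Q}(G^c) \leq 2 \sum_{j \geq 1} e^{-(2\nu kt + j)/8} = \frac{2\, e^{-\nu kt/4}}{e^{1/8}-1} \leq 20\, e^{-\nu kt/4},
\end{equation*}
which gives \eqref{eq:SEPdriftdeviations}. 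I do not anticipate any genuine obstacle here: the only conceptual ingredient is the reduction of the two-process coupling to a one-particle statement via the stirring viewpoint, after which everything is controlled by the elementary deviation bound of Lemma~\ref{Lem:driftsingleparticle}.
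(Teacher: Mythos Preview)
Your proof is correct and follows essentially the same route as the paper: both use the interchange/stirring coupling $\mathbb{Q}=\widehat{\mathbf{P}}$ with shared edge clocks, reduce the failure of domination to the event that some token initially outside $[-H,H]$ reaches the inner interval by time $t$, and conclude via Lemma~\ref{Lem:driftsingleparticle} and a geometric union bound. The paper phrases the inclusion in terms of particles of $\eta'_0$ outside $[-H,H]$ rather than all tokens, but this is cosmetic since the union bound is taken over all $|y|>H$ anyway; your formulation via the inverse stirring map $Y_s(x)$ is a clean way to state the same thing.
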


\begin{proof} 
Recall the construction in \eqref{eq:SEP-basic-coup} of the SEP using $\widehat{\mathbf{P}}$. Setting $\mathbb{Q}=\widehat{\mathbf{P}}$, this yields a natural coupling of $\eta$ and $\eta'$ with marginal laws $\mathbf{P}^{\eta_0}$ and $\mathbf{P}^{\eta'_0}$, respectively, for any choice of initial distribution $\eta_0$ and $\eta'_0$. In words, the coupling $\mathbb{Q}$ identifies $\eta$ and $\eta'$ as interchange processes and uses the same Poisson processes $(\mathcal{P}_e)$ on the edges of $\mathbb{Z}$. Now if $\eta_0 \vert_{[-H, H]}\succcurlyeq \eta_0' \vert_{[-H, H]}$, then under $\mathbb{Q}$, we claim that
\begin{equation}\label{eq:SEPdriftevent}
\big\{\exists x\in [-H+2k\nu t ,H-2k\nu t ], s\in [0,t]  , \, \eta_s(x)<\eta'_s(x) \big\}
\subseteq \bigcup_{y:\,\eta'_0(y)=1, \vert y\vert >H} {E}(y),
\end{equation}
where ${E}(y)$ is the event that the particle at $y$ enters the interval $[-H+2k\nu t ,H-2k\nu t ]$ before time $(s\leq)t$. 
By~\eqref{eq:driftsingleparticle} with $a=\vert y\vert -H$, we have
\begin{equation}\label{eq:SEPdrift1particlebis}
\mathbb{Q}({E}(y))\leq \exp \left (-(2k\nu t+\vert y\vert -H)/8\right).
\end{equation}
Applying a union bound to \eqref{eq:SEPdriftevent} and feeding \eqref{eq:SEPdrift1particlebis} thus yields that the probability of the complement of the event appearing on the left-hand side of \eqref{eq:SEPdriftdeviations} is bounded from above by
\begin{equation*}
2 \sum_{y=H}^{+\infty}\mathbb{Q}({E}(y)) \leq 2 \sum_{y'=0}^{+\infty}\exp (-y'/8-k\nu t/4) \leq 20\exp(-k\nu t/4).
\end{equation*}
\end{proof}

\begin{Rk}[Locality in~\ref{pe:drift}]
\label{R:C-2.1-local} For later reference, we record the following \textit{locality} property of the coupling $\mathbb Q$ constructed in the course of proving Lemma~\ref{Lem:SEPdriftdeviations}. Let $E_H \subset E$ denote the edges having both endpoints in $[-H,H]$. Then the above argument continues to work for \textit{any} specification of clock processes $\mathcal{P}_e$, $\mathcal{P}_e'$ for $e \notin E_H$ for $\eta$ and $\eta'$, respectively, so long as $(\mathcal{P}_e)_{e \in E}$ and $(\mathcal{P}_e')_{e \in E}$ end up having the correct law. This observation will be important when several couplings are `concatenated,' as in the proof of Lemma~\ref{Lem:compatible} below (see also Figure~\ref{f:Coupling2global_Exclusion}).
\end{Rk}

\begin{Lem}\label{Lem:doublecoupleSEP}
The condition~\ref{pe:couplings} holds for \textnormal{SEP}. 
\end{Lem}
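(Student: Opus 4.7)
The plan is to realize $\mathbb{Q}$ by exploiting the interchange-process representation of SEP from \eqref{eq:SEP-basic-coup}, in which each SEP is viewed as the occupation field of labelled particles performing continuous-time simple random walks of rate $\nu$, coupled through Poisson clocks on edges. The coupling of $\eta$ and $\eta'$ will be designed so that (i) each $\eta'_0$-particle is matched to a nearby $\eta_0$-particle, and (ii) the trajectories of matched pairs are constrained, via a coupling of their clocks, to coalesce before time $t$ with overwhelming probability. On the event of successful coalescence (together with drift control), every site $y$ occupied by $\eta'_t$ in the inner region $[-H+4\nu t, H-4\nu t]$ must also be occupied by $\eta_t$, which is exactly the conclusion of \ref{pe:couplings}.

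The matching is obtained as follows. Partition $[-H,H]$ into consecutive disjoint windows $(J_k)$ of length comparable to $\ell := \lfloor t^{1/4}\rfloor$. The empirical-density hypothesis of \ref{pe:couplings} guarantees that each window contains at least $(\varepsilon/2)|J_k|$ more $\eta_0$-particles than $\eta'_0$-particles, so a greedy assignment within each window yields an injective map $\phi$ taking each $\eta'_0$-particle at $z'\in J_k$ to a distinct $\eta_0$-particle at $\phi(z')\in J_k$ with $|\phi(z')-z'|\leq \ell$. The joint dynamics $\mathbb{Q}$ can then be built by declaring the Poisson edge-clocks of $\eta$ and $\eta'$ to agree on edges far from matched pairs (so that the interchange-process structure is preserved), while near each matched pair the clocks are coupled so that the two tagged trajectories coalesce as rapidly as possible and then remain identical; unmatched $\eta_0$-particles (the surplus in each window) and particles outside $[-H,H]$ can use independent clocks. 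Verifying that this construction respects the marginal laws $\mathbf{P}^{\eta_0}$ and $\mathbf{P}^{\eta'_0}$ requires a graphical (Harris-type) argument.

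Controlling the failure probability involves three types of events: (a) some matched pair has failed to coalesce by time $t$; (b) a coalesced pair has drifted out of $[-H+4\nu t, H-4\nu t]$ after coalescence; (c) a particle from outside $[-H,H]$ has drifted into the inner region before time $t$. Events (b) and (c) are controlled by Lemma~\ref{Lem:driftsingleparticle}, which provides the required exponential tails. For (a), the target exponent $\varepsilon^2 t^{1/4}=\varepsilon^2\ell$ is of Chernoff/Hoeffding rather than hitting-time type; this decay rate is exactly the one produced by Lemma~\ref{Lem:SEPdensity} for the empirical density of $\eta$ and $\eta'$ at time $t$ on intervals of length $\ell'\leq\sqrt{t}$, and the matching/coalescence device is the mechanism that turns this density-level concentration into site-by-site domination. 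A union bound over the $O(H)$ matched pairs then yields the claimed $\Cr{SEPcoupling2} tH\exp(-(\Cr{SEPcoupling2}(1+\nu^{-1}))^{-1}\varepsilon^2 t^{1/4})$ estimate.

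The main obstacle, and the most delicate step, is reconciling two competing requirements on $\mathbb{Q}$: each of $\eta,\eta'$ must individually be realized as a valid interchange process (so the edge clocks, which jointly govern many particles at once, cannot be prescribed pair-by-pair in a naive way), yet the clocks must be sufficiently correlated between the two processes to force matched trajectories to coalesce with the claimed exponential decay. The likely route is to construct $\mathbb{Q}$ on an enlarged probability space carrying the auxiliary randomness of the pair-coalescence coupling, and to verify via the graphical construction---together with the negative-association input from \eqref{eq:densitySEPvsSRW} already used for Lemma~\ref{Lem:SEPdensity}---that the induced marginal laws are correct while the density surplus of $\eta$ over $\eta'$ on scale $\ell$ is harnessed into pointwise domination on the inner interval.
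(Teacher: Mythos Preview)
Your overall shape (matching at scale $\ell=\lfloor t^{1/4}\rfloor$, coalescence of matched pairs, drift control) is correct and coincides with the paper's, and your description of the clock coupling as a thinning on an enlarged Poisson space is on the right track. But the analysis of event~(a) contains a genuine gap. A single matching at time~$0$ cannot deliver the bound. If a matched pair starts at distance $\leq \ell$, the probability that the two trajectories fail to meet by time $t$ is of order $\ell/\sqrt{\nu t}\asymp t^{-1/4}$; after a union bound over $O(H)$ pairs this is only polynomially small in $t$, not $\exp(-c\varepsilon^{2}t^{1/4})$. Your appeal to Lemma~\ref{Lem:SEPdensity} does not close this gap: that lemma controls empirical densities on intervals of length $\ell'$, which is a much weaker statement than the pointwise domination $\eta_t(x)\geq\eta'_t(x)$ that \ref{pe:couplings} demands, and density control at the terminal time $t$ does not by itself force any particular matched pair to have coalesced.

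What the paper actually does, and what is missing from your outline, is an \emph{iteration}. One sets $\tau=\ell^{3}$, splits $[0,t]$ into $\ell$ sub-intervals of length $\tau$, and at the start of each sub-interval re-matches every still-unpaired $\eta'$-particle to a nearby $\eta$-particle. The per-stage non-meeting probability is $\sim\ell/\sqrt{\nu\tau}\asymp (\nu t)^{-1/8}$, and because an $\eta'$-particle that survives unpaired to the end must fail at \emph{every} one of the $\ell$ stages, the total failure probability is $(Ct^{-1/16})^{\ell}$, which is stretched-exponential in $t^{1/4}$. The role of Lemma~\ref{Lem:SEPdensity} (i.e.\ \ref{pe:densitychange}) is different from what you suggest: it is invoked at the \emph{intermediate} times $i\tau$, $1\leq i<\ell$, to guarantee that the density surplus of $\eta$ over $\eta'$ persists on scale $\ell$, so that the re-matching can be carried out at each stage. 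The $\varepsilon^{2}t^{1/4}$ exponent in the final estimate is in fact the contribution of this repeated density check (a union of $\ell-1$ applications of \ref{pe:densitychange}), not of the coalescence probability itself.
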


We first give a brief overview of the argument. Lemma~\ref{Lem:doublecoupleSEP} corresponds to a quenched version of~\cite[Lemma 3.2]{BaldassoTeixeira18} by Baldasso and Teixeira, in which $\eta_0$ and $\eta'_0$ are sampled under $\mathbf{P}^{\rho+\varepsilon}$ and $\mathbf{P}^\rho$ respectively. We mostly follow their argument. Since fixing the initial environments induces some changes, and because we will use a variation of this proof to show~\ref{pe:compatible} for the SEP (Lemma~\ref{Lem:compatible}), we detail below the coupling of $\eta$ and $\eta'$, seen as interchange processes. In doing so we also clarify an essential aspect of this coupling; see in particular \eqref{eq:eta-coup}-\eqref{eq:eta'-coup} below.

 In a nutshell, the idea is to pair injectively each particle of $\eta'$ with one of $\eta$ at a relatively small distance (of order $t^{1/4}$), during a relatively long time (of order $t^{3/4}$), so that they perform independent random walks until they meet (which has a large probability to happen since $t^{3/4}\gg(t^{1/4})^2$), after which they coalesce, i.e.~follow the same evolution. To make such a matching possible, we ensure that with large probability, $\eta$ has more particles than $\eta'$ on each interval of length roughly $ t^{1/4}$. Within the present quenched framework, this property is now obtained by means of~\ref{pe:densitychange}, which has already been proved; see Lemma~\ref{Lem:SEPdensity}. Since the probability that at least one particle of $\eta'$ does not get paired, i.e.~does not meet its match, is relatively high (polynomially small in $t$), we repeat this coupling; see also Figure~\ref{f:Coupling22_Exclusion}.

\begin{figure}[]
  \center
\includegraphics[scale=0.8]{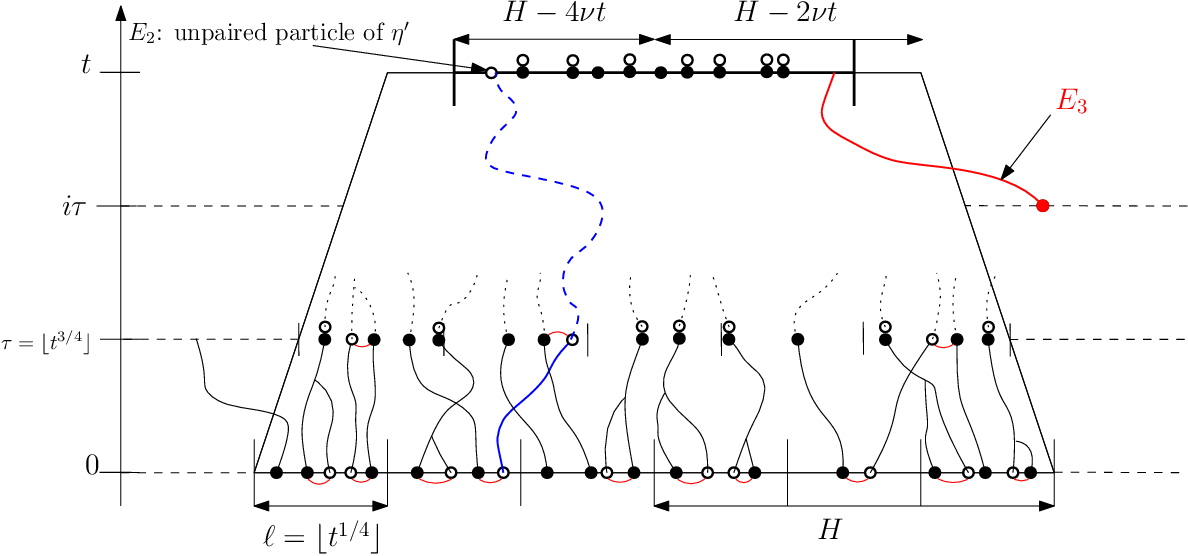}
  \caption{Coupling performed in Lemma~\ref{Lem:doublecoupleSEP}. The $\bullet$ are particles of $\eta$, the $\circ$ are particles of $\eta'$. The red links between pairs of particles represent matchings. On the (bad) event $B_2$, the particle with blue trajectory, in spite of having found a (nearby) match at all stages $i$, remains unpaired at time $t$.}
  \label{f:Coupling22_Exclusion}
\end{figure}

\begin{proof}[Proof of Lemma~\ref{Lem:doublecoupleSEP}] 
Let $\rho \in J$, $\varepsilon \in (0,1)$, and  $H,t\geq 1$ and $\eta_0,\eta'_0$ satisfy the conditions appearing in~\ref{pe:couplings}. Recall that $$\text{$\ell=\lfloor t^{1/4}\rfloor$ and let $\tau=\ell^3$}.$$ 
We abbreviate $I_s= [-H+2\nu s, H-2\nu s]$ for $s \geq 0$ in the sequel.

Let $\mathbb{Q}= \widehat{\mathbf{P}} \otimes \widehat{\mathbf{P}}'$, where $\widehat{\mathbf{P}}'$ is a copy of $\widehat{\mathbf{P}}$, with $ \widehat{\mathbf{P}}$, $ \widehat{\mathbf{P}}'$ governing the independent processes $(\mathcal{P}_e)$, $(\mathcal{P}_e')$, respectively, cf.~above \eqref{eq:SEP-basic-coup}. We will define a coupling of $(\eta,\eta')$ under $\mathbb{Q}$ (to be precise, a suitable extension of $\mathbb{Q}$ carrying additional independent randomness), inductively in $i$ over the time interval $(i\tau, (i+1)\tau]$, for $0 \leq i < \ell$. 
Suppose $(\eta_{[0, i\tau]}, \eta_{[0, i\tau]}' )$ have been declared under $\mathbb{Q}$ for some $0 \leq i < \ell$, with the correct marginal laws for $\eta_{[0, i\tau]}$ and $\eta_{[0, i\tau]}'$ (the case $i=0$ of this induction assumption holds trivially). We start by controlling the empirical densities of $\eta$ and $\eta'$ at time $i\tau$, which are already defined under $\mathbb{Q}$. The original proof of~\cite{BaldassoTeixeira18} uses the stationarity of $\mathbf{P}^\rho$ and $\mathbf{P}^{\rho+\varepsilon}$ and~\eqref{pe:density}, while we resort to~\ref{pe:densitychange} (cf.~\eqref{eq:doublecoupledensitychange} below). Let 
\begin{equation}\label{eq:G_1,i}
G_{1,i}\stackrel{\text{def.}}{=}\left\{
\begin{aligned}
&\text{for all $I$ with $I \subset I_{i\tau} $ of length $\lfloor \ell/2 \rfloor \leq |I| \leq \ell$,}
\\
&\text{one has that $\eta_{i\tau}(I)\geq (\rho+\varepsilon/2)|I| \geq\eta'_{i\tau}(I)$ } 
\end{aligned}
\right\}.
\end{equation}
Observe that $G_{1,0}$ is automatically satisfied by assumption on $\eta_0$ and $\eta_0'$.

We proceed to define $( \eta_{i \tau+t})_{0 < t \leq \tau}$ and $( \eta'_{i \tau+t})_{0 < t \leq \tau}$ under $\mathbb{Q}$.
If $G_{1,i}$ does not occur, we sample $(\eta'_{t+ i \tau})_{0 < t  \leq \tau}$ (and in fact for all $t>0$) in a manner as in \eqref{eq:SEP-basic-coup} using the processes $(\mathcal{P}_e')$ after time $\tau_i$, and similarly for $(\eta_{t+ i \tau})_{0 < t  \leq \tau}$ using $(\mathcal{P}_e)$ instead. Thus in this case $\eta'$ evolves independently from $\eta$ from time $\tau_i$ on.

If on the other hand $G_{1,i}$ occurs, we proceed as follows. 
Let $\text{Pai}_s \subset [-H,H]$ denote the set 
\begin{equation}\label{eq:paired-i}
\text{Pai}_s= \{ x \in I_{s}: \, \eta_{s}(x)= \eta_{s}'(x)=1  \},
\end{equation}
so that $\text{Pai}_{i \tau}$ is measurable relative to $(\eta_{i \tau}, \eta_{i \tau}')$. We refer to $\text{Pai}_s$ as the set of \textit{paired} particles (at time $s$). Let $\Pi_s = \{ x \in I_{s}: \eta_{s }(x)=1\}$ and $\Pi_s' = \{ x \in I_{s}: \eta_{s }'(x)=1\}$. Observe that $\text{Pai}_{s}  \subset \Pi_{s}'$. Our goal is to reduce the size of their difference as $s= i \tau$ for $i=1,2,\dots$ and eventually achieve equality when $i=\ell$.

To this effect, we first define a matching, i.e.~an injective map $\psi_i: \Pi_{i\tau}' \to \Pi_{i\tau} $, still measurable relative to $(\eta_{i \tau}, \eta_{i \tau}')$, as follows. The map $\psi_i$ acts as identity map on $\text{Pai}_{i \tau}$, a subset of both $\Pi_{i\tau}'$ and $\Pi_{i\tau}$, cf.~\eqref{eq:paired-i}. For each $x\in \Pi_{i\tau}' \setminus \text{Pai}_{i \tau} $, $\psi_i(x)$ is a point in $\Pi_{i\tau} \setminus \text{Pai}_{i \tau} $ at distance at most $\ell$ from $x$. As we now briefly explain, owing to the occurrence of  $G_{1,i}$, this can be achieved in such a way that $\psi_i$ is injective. To see this, first note that one can write $I_{i\tau}$ as disjoint union of intervals of length $|I|$ ranging  in $\lfloor \ell/2 \rfloor \leq |I| \leq \ell$, as follows. One covers $I_{i\tau}$ with contiguous intervals of length $\ell$ starting at one boundary, leaving a remaining interval $I_r$ at the other boundary of length less than $\ell$. If $I_r$ has length at least $\lfloor \ell/2 \rfloor$, one simply adds it, else unless $I_r$ is empty one cuts the penultimate interval into two halves of length at least $\lfloor \ell/2 \rfloor$ each and merges $I_r$ with the last of them. By construction any of the disjoint intervals $I$ thereby obtained has length $\lfloor \ell/2 \rfloor \leq |I| \leq \ell$ as required, and thus on the event $G_{1,i}$, see \eqref{eq:G_1,i}, one knows that $\eta_{i\tau}(I) \geq \eta'_{i\tau}(I)$. Since the $I$'s are disjoint and their union is $I_{i\tau}$, it follows that we can pair injectively each particle of $\Pi_{i\tau}' \setminus \text{Pai}_{i \tau} $ with a particle of $\Pi_{i\tau} \setminus \text{Pai}_{i \tau} $ within the same interval. We now fix any such matching $\psi_i$ and call any two particles $ (x,\psi_i(x)) \in \Pi_{i\tau}' \times \Pi_{i\tau}  $ \textit{matched}. We note that $|x-\psi_i(x)| \leq \ell$ by construction.
 
 The evolution for $( \eta_{i \tau+s})_{0 < s \leq \tau}$ and $( \eta'_{i \tau+s})_{0 < s \leq \tau}$ under $\mathbb{Q}$ (and on $G_{1,i}$) is now prescribed as follows. Both $( \eta_{i \tau+s})_{0 < s \leq \tau}$ and $( \eta'_{i \tau+s})_{0 < s \leq \tau}$ will be realized as interchange processes as in \eqref{eq:SEP-basic-coup}, thus it is sufficient to specify the relevant (Poisson) clock processes attached to each edge of $\mathbb{Z}$.
Let $E_H$ denote the set of edges of $\mathbb{Z}$ having both endpoints in $[-H,H]$. For $e \notin E_H$, $( \eta_{i \tau+s})_{0 < s \leq \tau}$ and $( \eta'_{i \tau+s})_{0 < s \leq \tau}$ simply use the clocks of $\mathcal{P}_e\circ \theta_{i\tau}$ and $\mathcal{P}_e'\circ \theta_{i\tau}$, respectively, where $\theta_s$ denotes the canonical time-shift of the process by $s$. It remains to specify the clock processes for $e \in E_H$.
Let $\widehat{\mathcal{P}}_e= \mathcal{P}_e+ \mathcal{P}_e' $. All clock processes attached to edges $e \in E_H$ will be defined via suitable thinning of $\widehat{\mathcal{P}}_e$.

First, one orders chronologically all arrivals for the processes $ \widehat{\mathcal{P}}_e\circ \theta_{i\tau} = ( \widehat{\mathcal{P}}_e(s+ i \tau))_{s \geq 0}$ as $e \in E_H$ varies  (there are countably many such times and they are a.s.~different so this is well-defined on a set of full measure). Let $\sigma_0=0$ and $\sigma_1,\sigma_2$ etc.~denote the chronologically ordered times thereby obtained. By suitable extension, $\mathbb{Q}$ is assumed to carry a family $\{ X_n: \, n \geq 0 \}$ of i.i.d.~Bernoulli variables with  $\Q(X_{n}=1)= 1- \Q(X_{n}=0)= \frac12$. We regard $X_n$ as the label attached to $\sigma_n$. Let $\mathcal{R}_e$ be the thinned process obtained from $\widehat{\mathcal{P}}_e$ by only retaining arrivals with label $X_n=1$. Then,
 \begin{equation}
 \label{eq:eta-coup}
 \text{$\eta_{\cdot + i \tau}$ uses the clock process $\mathcal{R}_e$ for each $e \in E_H$ (and $\mathcal{P}_e$ for each $e \notin E_H$).}
 \end{equation}
By elementary properties of Poisson processes and applying \eqref{eq:SEP-basic-coup}, it follows that $(\eta_{s+ i \tau})_{0< s \leq \tau}$ has the correct conditional law given $\eta_{i \tau}$; indeed by construction to each edge $e $ of $\Z$ one has associated independent Poisson processes having the correct intensity. 

The definition of $( \eta'_{i \tau+s})_{0 < s \leq \tau}$ is analogous to \eqref{eq:eta-coup}, and the clock process $\mathcal{R}_e'$ for $e \in E_H$ underlying the definition of $( \eta'_{i \tau+s})_{0 < s \leq \tau}$ is specified as follows. For a particle $x \in \Pi_{i\tau}'$ (resp.~$\Pi_{i\tau}$), let $\gamma_{i; \cdot}'(x)$ (resp.~$\gamma_{i;\cdot}(x)$) denote its evolution
under $ \eta'_{i \tau+\cdot}$ (resp.~$ \eta_{i \tau+\cdot}$). Proceeding chronologically starting at $n=1$, one chooses whether the clock $\sigma_n$ is retained or not according to the following rule. With $e=\{x,y\} \in E_H$ denoting the edge of $\sigma_n$,
\begin{equation}
 \label{eq:eta'-coup}
 \begin{split}
& \text{if for some $z \in \Pi_{i\tau}'$, $\gamma'_{i;\sigma_{n-1}}(z)= \gamma_{i;\sigma_{n-1}}(\psi_i(z))  \in \{x,y\}$,}\\
 &\text{then $\sigma_n$ is retained iff $X_n=1$, otherwise iff $X_n=0$;}
\end{split}
 \end{equation}
 here we think of right-continuous trajectories so $\gamma'_{i;\sigma_{n-1}}(z)$ is the position of the particle $z$ after the $(n-1)$-th jump; in fact one could replace each occurrence of $\sigma_{n-1}$ in \eqref{eq:eta'-coup} by an arbitrary time $s$ with $\sigma_{n-1} \leq  s< \sigma_n$, since there is no jump between those times. In words, at time $\sigma_{n-1}$, one inspects if at least one endpoint of $e$ contains (the evolution to time $s$ of) two matched particles, in which case the clock $\sigma_n$ is retained if it has label $1$ only. If no endpoint of $e$ contains matched  particles the clock is retained if it has label $0$. The process $( \eta'_{i \tau+s})_{0 < s \leq \tau}$ then simply uses the clocks $\mathcal{R}_e'$ on edges $e \in E_H$ that are retained according to \eqref{eq:eta'-coup} and such that $\sigma_n < \tau$. We will now argue that
\begin{equation}
\label{eq:eta'-law}
\text{given $\eta'_{i \tau}$, the process $( \eta'_{i \tau+s})_{0<s\leq \tau}$ has law $\mathbf{P}^{\eta'_{i \tau}}$ under $\mathbb{Q}$.}
\end{equation}
To see this, one simply notes using a straightforward induction argument that the conditional law of $\xi_n = 1\{\text{the $n$-th arrival in $(\widehat{\mathcal{P}})_{e \in E_H}$ is retained}\}$  given $\xi_1,\dots \xi_{n-1}$, $\sigma_1,\dots,\sigma_{n}$, $X_1,\dots, X_{n-1}$ and $( \eta_{i \tau+t},\eta'_{i \tau+s} )_{0 < s \leq \sigma_{n-1}}$ is that of a Bernoulli-$\frac12$ random variable. From this and the thinning property for Poisson processes it readily follows that $\mathcal{R}'$ has the right law.

Overall we have now defined a coupling of $(\eta,\eta')$ until time $\ell \tau \leq t$. In case $\ell \tau < t$ we simply use the same process $(\mathcal{P}_e)$ to define the evolution of both $\eta$ and $\eta'$ in the remaining time interval $(\ell\tau, t]$. The Markov property~\eqref{pe:markov},~\eqref{eq:eta-coup} and~\eqref{eq:eta'-law} ensure that $\eta,\eta'$ indeed have the desired marginals during $[0,t]$.
In view of \eqref{eq:paired-i}, this immediately yields that
\begin{equation}\label{eq:doublecoupleinclusion-first}
 \big\{ \eta_t \vert _{[-H+4{\nu} t, H-4{\nu} t]}\succcurlyeq \eta_t' \vert _{[-H+4{\nu} t, H- 4{\nu} t]}\big\}^c \subset \big\{ (\Pi_{t}'\setminus \text{Pai}_{t}) \cap [-H+4{\nu} t, H- 4{\nu} t] \neq \emptyset \big\}.
\end{equation}
The key of the above construction is that the latter event forces one of three possible unlikely scenarios. Namely, as we explain below, one has that
\begin{equation}\label{eq:doublecoupleinclusion}
\big\{ (\Pi_{t}'\setminus \text{Pai}_{t}) \cap [-H+4{\nu} t, H- 4{\nu} t] \neq \emptyset \big\} \subseteq  B_1 \cup B_2 \cup B_3, 
\end{equation}
where $B_1=\bigcup_{i=1}^{\ell-1} G_{1,i}^c$,
\begin{equation}\label{eq:B_2,3}
\begin{split}
B_2&=\bigcup_{s=0}^{ t-1 }\left\{
\begin{array}{c}\text{one particle of $\eta'_{s}(\mathbb{Z}\setminus I_s)$ ends up}\\\text{in $[-H+4 \nu t,  H-4\nu t]$ at time $t$ } \end{array}
\right\},\\
B_3&= B_1^c \cap  \left\{
\begin{array}{c}\text{$\exists x \in \Pi_0'$ s.t.~$\gamma_{i\tau}'(x) \in I_{i\tau}$ and }\\\text{$\inf_{ s \in [0, \tau]} Z_s^i(x)>0$ for all $0 \leq i < \ell$} \end{array}
\right\};
\end{split}
\end{equation} 
here $\gamma_{\cdot}'(x)$ refers to the evolution of particle $x$ under $\eta'$ and with $x_i= \gamma_{i\tau}'(x)$, one sets $Z_s^i(x)= |\gamma_{i;s}'(x_i)- \gamma_{i;s}(\psi_i(x_i))|$. In words $Z_s^i$ follows the evolution of the difference between $x_i$, which is not paired at time $i\tau$ since $Z_0^i(x) \neq 0$, and its match $\psi_i(x_i)$, which is well-defined on the event $B_1^c$. Thus $B_3$ refers to the event that some particle $ x \in \Pi_0'$ is found in $I_{i\tau}$ at time $i\tau$ for all $i$ and never meets its match during the time interval $(i\tau, (i+1)\tau]$.

We now explain \eqref{eq:doublecoupleinclusion}. To this effect we first observe that \eqref{eq:eta-coup} and \eqref{eq:eta'-coup} ensure that two matched particles at some stage $i$ follow the same evolution once they meet (and thus belong to $\text{Pai}_s$ for all later times $s$) as long as they stay in $I_s$. Therefore, on the event $B_1^c \cap B_2^c$, on which (due to occurrence of $B_2^c$) no unpaired $\eta'$-particle at time $t$ can arise by drifting in from the side, meaning that such a particle cannot be seen in $\eta'_{s}(\mathbb{Z}\setminus I_s)$ at any time $0 \leq s <t$,
the set $ (\Pi_{t}'\setminus \text{Pai}_{t}) \cap [-H+4{\nu} t, H- 4{\nu} t]$ being non-empty requires at least one particle from $\Pi_0'$ to never meet its match at any of the stages $ 1 \leq i \leq \ell$ (matching happens at all stages due to occurrence of $B_1^c$). That is,~$B_3$ occurs, and  \eqref{eq:doublecoupleinclusion} follows.

To finish the proof, we now bound the (bad) events appearing in \eqref{eq:doublecoupleinclusion} separately. In view of \eqref{eq:G_1,i}, we apply~\ref{pe:densitychange} (which now holds on account of Lemma~\ref{Lem:SEPdensity}) to $\eta'$ (resp. $\eta$) at time $ i\tau$, with $(\rho+\varepsilon/8, \varepsilon/8)$ (resp. $(\rho+7\varepsilon/8, \varepsilon/8)$) instead of $(\rho,\varepsilon)$,  with $(H,\ell, i\tau )$ instead of $(H,\ell,t)$ and with $\ell'$ ranging from $\lfloor \ell/2\rfloor$ to $\ell$, which fulfils the conditions of \ref{pe:densitychange} if $\Cr{SEPcoupling}$ is large enough so that $H> 4\nu t $ and
\[
\min_{1\leq i \leq \ell^4-1} 4\nu i\tau \geq 4\nu \tau >  \Cr{densitystable}\ell^2\varepsilon^{-2}(1+\vert \log^3(\nu t)\vert)\geq \max_{1\leq i \leq \ell^4-1} \Cr{densitystable}\ell^2\varepsilon^{-2}(1+\vert \log^3(\nu i\tau)\vert) .
\]
Recalling that $\ell=\lfloor t^{1/4}\rfloor$ and summing over the possible values of $\ell'$, this gives that
\begin{equation}\label{eq:doublecoupledensitychange}
\mathbb{Q}\big(B_1\big)\leq \sum_{i=1}^{\ell-1} \mathbb{Q}\left(G_{1,i}^c\right)\leq 4t^{1/2}H\exp\left( -\Cr{densitystableexpo}\varepsilon^{2} 2^{-7} (\ell-1)\right).
\end{equation}
We deal with $\mathbb{Q}(B_2)$ by applying~\ref{pe:drift}, which is in force on account of Lemma~\ref{Lem:SEPdriftdeviations}.
Noticing that the event indexed by $s$ entering the definition of $B_2$ in \eqref{eq:B_2,3} implies that at least one particle of $\eta'_{s}(\mathbb{Z}\setminus I_s)$ ends up in $[-H+2\nu s+2\nu t, H- 2\nu s-2\nu t ]$ before time $s+ t $, we get using \eqref{eq:SEPdriftdeviations} with $k=1$, $\eta_0=\eta_s \equiv 0$, and $I_s$ playing the role of $[-H,H]$ that
\begin{equation}\label{eq:doublecoupleE3}
\mathbb{Q}(B_2)
\leq  \sum_{0 \leq s < t}\mathbb{Q}\left(
\begin{array}{c}
\text{a particle of $\eta'_{s}(\mathbb{Z}\setminus I_s)$ ends up in}\\ 
\text{$[-H+2\nu(s+ t), H- 2\nu(s+t) ]$}\\
\text{before time $s +t$}
\end{array}
\right)
\leq 20t\exp(-\nu t/4).
\end{equation}
Finally, owing to our coupling in \eqref{eq:eta-coup}, \eqref{eq:eta'-coup}, conditionally on $(\eta_{u}, \eta'_{u})$, $u \leq i\tau$, the process $Z_s^i(x)$ is a one-dimensional  continuous-time random walk with rate $2\nu$ started at a point in $[0,\ell]$ (owing to the separation of $x_i$ and its match $\psi_i(x_i)$) with absorption at $0$. If $(Z_s)_{s\geq 0}$ is such a random walk and $\mathbb{P}_k$ denotes the probability for this walk starting at $k\in [0,\ell]$, we have by invariance by translation and the reflection principle:
\begin{equation}
\begin{split}
\max_{0\leq k\leq \ell}\mathbb{P}_k(\min_{0\leq s\leq \tau}Z_s>0)&= \mathbb{P}_\ell(\min_{0\leq s\leq \tau}Z_s>0)= \mathbb{P}_0(\min_{0\leq s\leq \tau}Z_s>-\ell)
\\
&\leq 2\mathbb{P}_0(\max_{0\leq s\leq \tau}\vert Z_s\vert<\ell)\leq 2\mathbb{P}(Z_\tau \in [-\ell,\ell]).
\end{split}
\end{equation}
Using again~\cite[Theorem 2.5.6]{zbMATH05707092} and taking  $\Cr{SEPcoupling}$ large enough (so that in particular $2\nu\tau=t \geq 2\vert x \vert$ for any $x\in [-\ell,\ell]$),  we have thus for some universal constant $C>0$ (changing from one expression to the next): 
\begin{equation}
\max_{0\leq k\leq \ell}\mathbb{P}_k(\min_{0\leq s\leq \tau}Z_s>0)\leq \frac{2\ell +1}{\sqrt{4\pi \nu\tau}}\exp(C(\nu^{-1}\tau^{-1}+\ell^3\nu^{-2}\tau^{-2}))\leq \frac{C\ell}{\sqrt{\nu\tau}}\leq C\nu^{-1/2}t^{-1/8}. 
\end{equation}
Recalling that $t>\nu^{-8}$ and taking a union bound over $x \in \Pi_0'$, the previous estimate applied with the Markov property for $(\eta,\eta')$ yields that, as long as $\Cr{SEPcoupling}$ is large enough:
\begin{equation}\label{eq:doublecoupleE2}
\mathbb{Q}(B_3)\leq 2H(Ct^{-1/16})^{\ell}.
\end{equation}
Putting together~\eqref{eq:doublecoupleinclusion-first},~\eqref{eq:doublecoupleinclusion},~\eqref{eq:doublecoupledensitychange},~\eqref{eq:doublecoupleE3} and~\eqref{eq:doublecoupleE2}, we obtain that 
\begin{multline*}
\mathbb{Q}( \eta'_t \vert _{[-H+4{\nu} t, H-4{\nu} t]}\preccurlyeq \eta_t \vert _{[-H+4{\nu} t, H- 4{\nu} t]})
\\
\geq 1 -8t^{1/2}H\exp\left( -\Cr{densitystableexpo}\varepsilon^{2}2^{-7}(\ell-1) \right)  -20t\exp(- \nu t/4)-2H(Ct^{-1/16})^{\ell},
\end{multline*}
which is larger than $1- \Cr{SEPcoupling2} tH\exp(-\Cr{SEPcoupling2}^{-1}\frac{\nu}{\nu +1}\varepsilon^2t^{1/4} )$ as required by \eqref{eq:doublecoupleSEP}
provided $\Cr{SEPcoupling}$ and $\Cr{SEPcoupling2}$ are chosen large enough. 
\end{proof}
\begin{Rk}[Locality in \ref{pe:couplings}]
\label{R:C-2.2-local} Similarly as in Remark~\ref{R:C-2.1-local}, which exhibits an analogous property for the coupling inherent to \ref{pe:drift}, the coupling $\mathbb{Q}$ yielding property~\ref{pe:couplings} constructed in the proof of Lemma~\ref{Lem:doublecoupleSEP} can be performed for \textit{any} specification of clock processes $(\mathcal{P}_e, \mathcal{P}_e')_{e \notin E_H}$ used to define $\eta,\eta'$, so long as the marginal laws of $(\mathcal{P}_e)_{e \in E}$ and $(\mathcal{P}_e')_{e \in E}$, are that of independent Poisson processes of intensity $\nu/2$. This can be seen by inspection of the proof: the only `non-trivial' joint distribution concerns $(\mathcal{P}_e, \mathcal{P}_e')_{e \in E_H}$, which are obtained by suitable thinning from $(\widehat{\mathcal{P}}_e)_{e\in E_H}$, see in particular the discussion around \eqref{eq:eta-coup} and \eqref{eq:eta'-coup}. 
\end{Rk}

Combining the couplings supplied by Lemmas~\ref{Lem:SEPdriftdeviations} and~\ref{Lem:doublecoupleSEP} multiple times, which will be permitted owing to Remarks~\ref{R:C-2.1-local} and~\ref{R:C-2.2-local}, yields the following result.

\begin{Lem}\label{Lem:compatible}
The condition~\ref{pe:compatible} holds for \textnormal{SEP}. 
\end{Lem}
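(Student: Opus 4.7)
The plan is to splice together the couplings built in Lemmas~\ref{Lem:SEPdriftdeviations} and~\ref{Lem:doublecoupleSEP}, exploiting the locality properties in Remarks~\ref{R:C-2.1-local} and~\ref{R:C-2.2-local}. Writing $E_H$ for the set of edges of $\mathbb{Z}$ with both endpoints in $[-H,H]$, we construct $\mathbb{Q}$ via the graphical representation of SEP using identical Poisson clocks $\mathcal{P}_e=\mathcal{P}_e'$ on $E_{H_1}$ (as in Lemma~\ref{Lem:SEPdriftdeviations}), the thinning/matching construction of Lemma~\ref{Lem:doublecoupleSEP} with $H=H_2$ on the annulus $E_{H_2}\setminus E_{H_1}$, and independent clocks on $E\setminus E_{H_2}$. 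The two locality remarks together guarantee that each piece is free to specify clocks on its complementary edges, so the composite coupling has the correct marginals $\eta\sim\mathbf{P}^{\eta_0}$ and $\eta'\sim\mathbf{P}^{\eta_0'}$.

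For the first bound~\eqref{eq:compatible1}, the argument of Lemma~\ref{Lem:SEPdriftdeviations} applies on $E_{H_1}$: given $\eta_0\succcurlyeq \eta_0'$ on $[-H_1,H_1]$ and identical clocks there, inner domination can only be broken on $[-H_1+4\nu t,H_1-4\nu t]$ during $[0,t]$ if some $\eta'$-particle initially outside $[-H_1,H_1]$ drifts inward by more than $4\nu t$. Applying Lemma~\ref{Lem:driftsingleparticle} with $k=2$ to each such particle and summing as in Lemma~\ref{Lem:SEPdriftdeviations} bounds this probability by $20\exp(-\nu t/2)\le 20t\exp(-\nu t/4)$.

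For the second bound~\eqref{eq:compatible2}, we follow the matching/thinning construction from Lemma~\ref{Lem:doublecoupleSEP}, adapted to the composite coupling. The density hypothesis of~\ref{pe:couplings} at scale $t^{1/4}$ is inferred from the density hypothesis of~\ref{pe:compatible} at scale $\ell$ via a partitioning/averaging argument, using that $t^{1/4}\ge c\ell^{25}$ (a consequence of $10\nu t>4\nu\ell^{100}$). At each stage $i$ of the induction, we pick the matching $\psi_i:\Pi_{i\tau}'\to\Pi_{i\tau}$ to act as the identity on those paired particles lying in the (suitably shrunk) inner region; these automatically belong to $\mathrm{Pai}_{i\tau}$ on the high-probability event that the first bound's conclusion has persisted up to time $i\tau$, so the constraint is consistent with the identical-clocks construction on $E_{H_1}$. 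The analysis of the bad events $B_1,B_2,B_3$ from the proof of Lemma~\ref{Lem:doublecoupleSEP} then goes through, yielding a failure probability at most $\Cr{SEPcoupling2}tH_2\exp(-\Cr{SEPcoupling2}^{-1}\tfrac{\nu}{\nu+1}\varepsilon^2 t^{1/4})$, which, since $\varepsilon^2 t^{1/4}\ge c\varepsilon^2\ell^{25}$ polynomially dominates $\varepsilon^2\ell$, is at most $5\Cr{SEPcoupling2}\ell^4 H_2\exp(-\Cr{SEPcoupling2}^{-1}\tfrac{\nu}{\nu+1}\varepsilon^2\ell)$ once $\Cr{compatible}$ is chosen large enough.

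The main obstacle lies in the careful bookkeeping required to verify that the composite coupling simultaneously delivers both bounds. Specifically one must check: (i) the density event $G_{1,i}$ on $[-H_2,H_2]$ at every intermediate time $i\tau$, which follows from~\ref{pe:densitychange} via Lemma~\ref{Lem:SEPdensity}; (ii) the constructibility of an injective matching $\psi_i$ with the identity constraint on inner-region paired particles, which reduces to sufficient outer-annulus density to absorb the unmatched $\eta'$-particles there; and (iii) the meeting estimate underlying event $B_3$ for matched pairs whose trajectories may straddle the boundary between $E_{H_1}$ and $E_{H_2}\setminus E_{H_1}$, where the clock structure changes. For (iii), the difference of positions of a matched pair still performs a continuous-time rate-$2\nu$ random walk as long as both particles lie in the outer annulus, and large-deviation control on single-particle drift ensures boundary crossings occur with probability small enough not to spoil the bound.
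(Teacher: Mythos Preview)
Your single-step composite coupling has a real obstruction in the $B_3$ analysis that the hand-wave about large deviations does not resolve. Under identical interchange clocks on $E_{H_1}$, two tagged particles (one from $\eta$, one from $\eta'$) occupying distinct sites inside $[-H_1,H_1]$ can \emph{never} coincide while both remain there: when a clock rings on the edge between them, both systems swap states on that edge, so the particles exchange positions rather than merge, and the distance process is a walk on $\{1,2,\dots\}$ reflected at $1$, not absorbed at $0$. Hence for any matched-but-not-yet-coincident pair from the annulus that drifts into $[-H_1,H_1]$, the meeting estimate underlying $B_3$ simply fails for the duration that both lie inside. Crossing the boundary is not a large-deviation event for pairs starting at distance $O(1)$ from $H_1$; during a single stage of length $t^{3/4}$ such a pair enters the inner region with probability bounded away from zero, and there are order $t^{1/2}$ such $\eta'$-particles over the full run. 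You would need a separate argument controlling the probability that a boundary-zone particle fails to couple at \emph{every} stage, and this is not furnished by the $B_3$ bound of Lemma~\ref{Lem:doublecoupleSEP}.

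This is exactly why the paper's proof is two-step. In the first step, of short duration $t_1=\ell^4$, the coupling of Lemma~\ref{Lem:doublecoupleSEP} is applied \emph{separately} on each half-annulus $[H_1+1,H_2]$ and $[-H_2,-H_1-1]$, with the coupling of Lemma~\ref{Lem:SEPdriftdeviations} on $[-H_1,H_1]$; since the three edge sets are disjoint, no matched pair from the outer coupling ever sees the identical-clock region, and the $B_3$ analysis goes through cleanly. The price is two uncontrolled bands of width $O(\nu t_1)$ around $\pm H_1$, containing at most $8\nu t_1$ unpaired $\eta'$-particles. Step~2 then absorbs these few leftovers by running a second matching/thinning coupling on the full window over the much longer interval $[t_1,t_2]$ with $t_2=t_1+\ell^{19}$ and a coarser matching scale $\ell^5$, exploiting the density surplus of $\eta$ over $\eta'$; a final identical-clock phase on $[t_2,t]$ propagates the domination. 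The two-step architecture is not incidental: it is precisely what sidesteps the obstruction your one-step scheme encounters.
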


\begin{figure}[]
  \center
\includegraphics[scale=0.8]{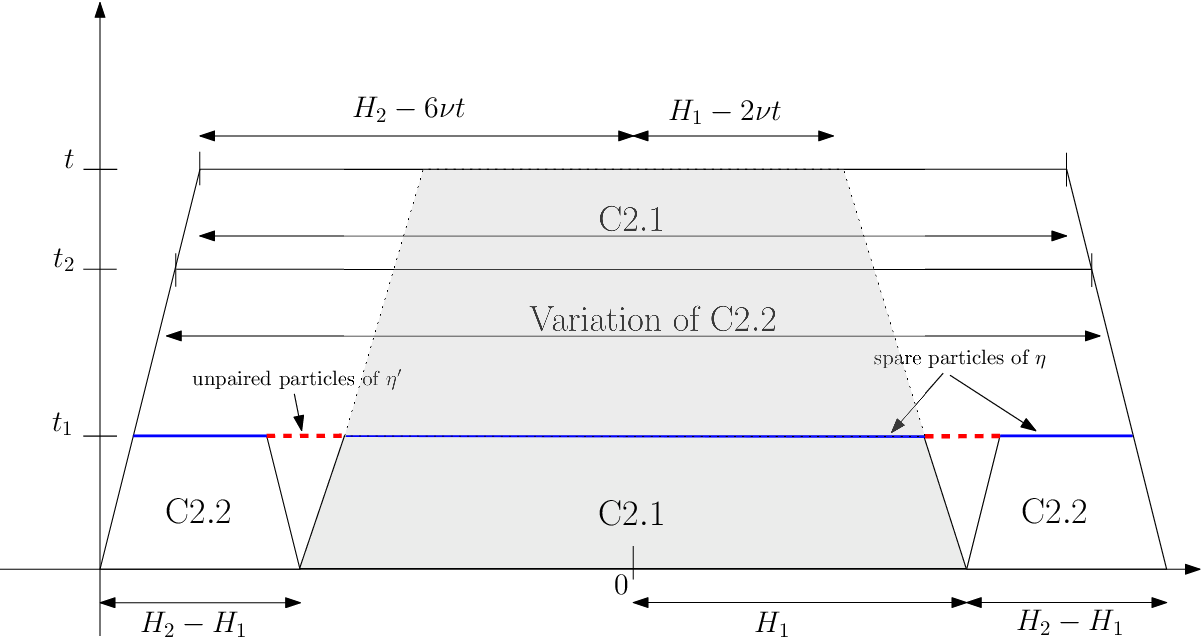}
  \caption{Couplings used in the proof of Lemma~\ref{Lem:compatible}. If the couplings of Step 1 (operating with time horizon $t_1$) are successful, then the only unpaired particles of $\eta'$ at time $t_1$ are in the two red dashed segments, and there are enough spare particles of $\eta$ in the blue segments to cover these particles of $\eta'$ before time $t_2$ (Step 2, using coupling `Variation of~\ref{pe:couplings}').
}
  \label{f:Coupling2global_Exclusion}
\end{figure}

\begin{proof}
Let $\rho,\varepsilon\in (0,1)$, $H_1,H_2,t,\ell\geq 1$ and $\eta_0,\eta'_0 \in \{0,1\}^{\mathbb{Z}}$ be such that the assumptions of~\ref{pe:compatible} hold. In particular, note that these entail that $H_2 > H_1$. Define 
\begin{equation}\label{eq:compatibledeft1}
t_1= \ell^{4}, \quad t_2= t_1+\ell^{19}. 
\end{equation}
We proceed in two steps and refer to Figure~\ref{f:Coupling2global_Exclusion} for visual aid. In the first step, we apply simultaneously the couplings of Lemma~\ref{Lem:SEPdriftdeviations} on $[-H_1,H_1]$ and of Lemma~\ref{Lem:doublecoupleSEP} on $[-H_2,H_2]\setminus [-H_1,H_1]$, in the time-interval $[0,t_1]$ (in doing so we shall explain how this preserves the marginals of $\eta$ and $\eta'$). As a result, we get that $\eta_{t_1}(x)\geq \eta'_{t_1}(x)$ for all $x\in [-H_2+2 \nu t_1 , H_2-2 \nu t_1]$, except possibly around two intervals around $-H_1$ and $H_1$, of width $O( \nu t_1)$.

In the second step, we couple the particles of $\eta'$ on these intervals with "additional" particles of $\eta_{t_1}\setminus \eta'_{t_1}$ on $[-H_2,H_2]$ (using that the empirical density of $\eta'$ is slightly larger than that of $\eta$ on $[-H_2,H_2]$ by~\ref{pe:densitychange}), in a manner similar to that used in the proof of Lemma~\ref{Lem:doublecoupleSEP}. This ensures that with large enough probability, all these particles of $\eta'$ get covered by particles of $\eta$ within time $t_2-t_1$, without affecting the coupling of the previous step on account of the Markov property. Finally, during the time interval $[t_2,t]$ we use again the natural coupling of Lemma~\ref{Lem:SEPdriftdeviations}, and conclude by showing~\eqref{eq:compatible1} and~\eqref{eq:compatible2}. We now proceed to make this precise.
\\
\\
\textbf{Step 1:} we construct a coupling $\mathbb{Q}_1$ of the evolutions of $\eta$ and $\eta'$ during the time-interval $[0,t_1]$ such that if we define the (good) events
\begin{equation*}
\begin{split}
&G_1=\{\forall x\in [-H_1+2  \nu t_1, H_1-2  \nu t_1],\,\forall s\in [0,t_1]:  \eta_s(x)\geq \eta'_s(x) \}, \\
&G_2=\{\forall x \in [-H_2+2 \nu t_1, -H_1-2 \nu t_1-1]\cup [H_1+2 \nu t_1+1,  H_2-2 \nu t_1] :  \eta_{t_1}(x)\geq \eta'_{t_1}(x) \},
\end{split}
\end{equation*}
 then 
\begin{equation}\label{eq:compatiblecoupling1}
\mathbb{Q}_1(G_1\cap G_2)\geq 1 - 3\Cr{SEPcoupling2} t_1H_2\exp\left(- \Cr{SEPcoupling2}^{-1}\textstyle\frac{\nu}{\nu+1}\varepsilon^2\ell\right). 
\end{equation}

\noindent
The coupling $\mathbb{Q}_1$ is defined as follows. Let $(\mathcal{P}_e)_{e \in E}$ be a family of i.i.d.~Poisson processes on $\mathbb{R}_+$ of intensity $\nu/2$. These processes are used to describe the exchange times for $\eta$ (seen as an interchange process as in \eqref{eq:SEP-basic-coup}), during the time-interval $[0,t_1]$. We now define the exchange times  $(\mathcal{P}_e')_{e \in E}$ to be used for $\eta'$ during $[0,t_1]$ as follows. For an edge $e$ having at least one endpoint outside $[-H_2,H_2]$ or at least one endpoint inside $[-H_1, H_1]$, set $\mathcal{P}_e'= \mathcal{P}_e$. It remains to specify $\mathcal{P}_e'$ for $e$ with both endpoints in $[-H_2,H_2]$ but outside $[-H_1,H_1]$. This set splits into two disjoint intervals $E_{\pm}$, which are both dealt with separately and in exactly the same manner. Thus restricting our attention to $E_+$, one couples $(\mathcal{P}_e')_{e \in E_+}$ and $(\mathcal{P}_e)_{e \in E_+}$ in exactly the same manner as in the proof of Lemma~\ref{Lem:doublecoupleSEP}, with the interval $I_+$, defined as the set of all endpoints of edges in $E_+$, playing the role of $[-H,H]$. The fact that Lemma~\ref{Lem:doublecoupleSEP} applies even though the processes  $\mathcal{P}_e$ and $\mathcal{P}_e'$ have been specified for certain edges $e \notin E_+$ is owed to Remark~\ref{R:C-2.2-local}. Define $I_-$ similarly and perform the same coupling of $(\mathcal{P}_e')_{e \in E_-}$ and $(\mathcal{P}_e)_{e \in E_-}$.

Since the sets of edges $E_-$, $E_+$ and $E \setminus (E_- \cup E_+)$  are disjoint, it readily follows that $(\mathcal{P}_e)_{e \in E}$ is an i.i.d.~family of Poisson processes on $\mathbb{R}_+$ of intensity $\nu/2$. This ensures that under $\mathbb{Q}_1$, $\eta\sim \mathbf{P}^{\eta_0}$ and  $\eta'\sim \mathbf{P}^{\eta'_0}$. 

Now, by our assumptions on $\rho, \varepsilon, H,\ell,t_1,\eta_0$ and $\eta'_0$, the above construction of $\mathbb{Q}_1$ together with Remark~\ref{R:C-2.1-local} ensure that Lemma~\ref{Lem:SEPdriftdeviations} applies on the interval $[-H_1, H_1]$ with $t=t_1$ and $k=1$ (recall to this effect that the relevant coupling for which \ref{pe:drift} is shown to hold is simply $\mathbb{Q}=\widehat{\mathbf{P}}$, see above \eqref{eq:SEPdriftevent}, and that this coupling is also local in the sense of Remark~\ref{R:C-2.1-local}). This yields that 
\begin{equation}\label{eq:compatiblecoupling1.1}
\mathbb{Q}_1(G_1) \geq 1 -20 \exp (-  {\nu}t_1/4).
\end{equation}
Second, our assumptions (taking $\Cr{compatible}>\Cr{SEPcoupling}$) and the above construction of $\mathbb{Q}$ also allow us to apply Lemma~\ref{Lem:doublecoupleSEP} on $E_-$ and on $E_+$ instead of $[-H,H]$, with the same values of $\rho, \varepsilon$ and $\ell$, and with $t=t_1$ (in particular, $\nu^8t_1=(\nu^2\ell)^4>1$). We obtain that
\begin{equation}\label{eq:compatiblecoupling1.2}
\mathbb{Q}_1(G_2) \geq 1 - 2\Cr{SEPcoupling2} t_1H_2\exp\left(- \Cr{SEPcoupling2}^{-1}\textstyle\frac{\nu}{\nu+1}\varepsilon^2\ell\right).
\end{equation}
Combining~\eqref{eq:compatiblecoupling1.1} and~\eqref{eq:compatiblecoupling1.2} yields~\eqref{eq:compatiblecoupling1}, since $20\exp (-  {\nu}t_1/4)\leq \Cr{SEPcoupling2} t_1H_2\exp(- \Cr{SEPcoupling2}^{-1}\frac{\nu}{\nu+1}\varepsilon^2\ell) $ if $\Cr{compatible}$ is chosen large enough. This concludes Step 1.
\\

\noindent
\textbf{Step 2:}  We now extend $\mathbb{Q}_1$ to a coupling $\mathbb{Q}_2$ up to time $t$, using a slight variation of the coupling in the proof of Lemma~\ref{Lem:doublecoupleSEP} in the time-interval $[t_1, t_2]$. 
For definiteness of $\mathbb{Q}_2$, on the complement of  $G_1\cap G_2$, use $(\mathcal{P}_e)_{e \in E}$ for the exchange times of both $\eta$ and $\eta'$ during the time $[t_1,t]$. Focusing now on the case where $G_1 \cap G_2$ occurs, the aim of the step is to show that we can have
\begin{equation}\label{eq:compatiblecoupling2}
\mathbb{Q}_2(\eta'_{t_2}\vert_{[-H+4\nu t_2, H-4\nu t_2]}\preccurlyeq \eta_{t_2}\vert_{[-H+4\nu t_2, H-4\nu t_2]}\,\vert\, G_1\cap G_2)\geq 1 -  H_2\exp\left( - \textstyle \frac{\nu}{\nu+1}\varepsilon^2\ell^4\right). 
\end{equation}
Fix any realization of $\eta_{t_1}, \eta'_{t_1}$ such that $G_1\cap G_2$ holds.
Define $\text{Pai}_0$ (cf.~\eqref{eq:paired-i}) as the set of vertices in $I_0 \stackrel{\text{def.}}{=}[-H_2+2 \nu t_1, H_2-2 \nu t_1]$ containing both a particle of $\eta_{t_1}$ and of $\eta'_{t_1}$, and think of these particles as being paired. Let $U_0$ (resp.~$U'_0$) be the set of unpaired particles of $\eta_{t_1}$ (resp.~$\eta'_{t_1}$) in $I_0$. By definition of $G_1$ and $G_2$, the particles of $U'_0$ must be in $[-H_1-2 \nu t_1, -H_1+2 \nu t_1]\cup [H_1-2 \nu t_1, H_1+2 \nu t_1]$ when the event $G_1\cap G_2$ occurs; cf.~Figure~\ref{f:Coupling2global_Exclusion}. Hence, on $G_1\cap G_2$ we have that 
\begin{equation}\label{eq:bound-U-0'}
\vert U'_0\vert \leq 8 \nu t_1.
\end{equation}
Denote $B_{1,0}^c$ the event that on every interval of length between $\frac{\ell^5}2$ and $\ell^5$ included in $I_0$, $\eta_{t_1}$ has at least $\varepsilon\ell^5/10\geq 8 \nu t_1$ unmatched particles (recall that $\ell>80\nu/\varepsilon$ by assumption). On $B_{1,0}$, use $(\mathcal{P}_e)_{e \in E}$ as exchange time process to define both $\eta$ and $\eta'$ during $[t_1,t]$ (cf.~the discussion leading to \eqref{eq:SEP-basic-coup}). 

Henceforth, assume that $B_{1,0}^c \cap G_1 \cap G_2$ occurs. Following the line of argument in the paragraph after \eqref{eq:paired-i}, one matches injectively each particle of $U_0'$ with a particle of $U_0$ at distance at most $\ell^5$. In the present context this is possible owing to \eqref{eq:bound-U-0'} and occurrence of $B_{1,0}^c$. Then one couples the evolutions of $\eta$ and $\eta'$, first during $[t_1, t_1+\ell^{15}]$ as done in the proof of Lemma~\ref{Lem:doublecoupleSEP} during the time interval $[0, \tau]$. Then iteratively at times $t_1+i\ell^{15}$ for $1\leq i \leq \ell^4-1$, one performs on $ I_i= [-H_2+2 \nu(t_1+i\ell^{15}), H_2-2 \nu(t_1+i\ell^{15})]$ the same coupling at times $i \lfloor t^{3/4}\rfloor$ on the event $B_{1,i}^c$ that for any interval $I\subseteq I_i$ of length ranging between $\frac{\ell^5}2$ and $\ell^5$, one has $\eta_{t_1+i\ell^{15}}(I)\geq \eta'_{t_1+i\ell^{15}}(I)$.  On $B_{1,i}$, use $(\mathcal{P}_e)_{e \in E}$ for the exchange times of both $\eta$ and $\eta'$ during the time interval $[t_1+i\ell^{15},t]$.

Overall, this yields a coupling of $(\eta_{\cdot \wedge t_2}, \eta_{\cdot \wedge t_2}')$ with the correct marginal law (as in the proof of Lemma~\ref{Lem:doublecoupleSEP}). Finally
one extends this coupling during $[t_2, t]$ on the event $G_1 \cap G_2 \cap \bigcap_{i=0}^{\ell^4-1}B_{1,i}^c$ by using the same exchange times for $\eta$ and $\eta'$. It follows that $\eta\sim \mathbf{P}^{\eta_0}$ and $\eta'\sim \mathbf{P}^{\eta'_0}$ under $\mathbb{Q}_2$.

In much the same way as in \eqref{eq:doublecoupleinclusion-first}, it follows from the above construction that the complement of the event on the left-hand side of \eqref{eq:compatiblecoupling2} implies  that at least one particle of $\eta'$ in the interval $[-H_2+2 \nu t_2,  H_2-2 \nu t_2]$ is unpaired at time $ t_2$, which in turn (cf.~\eqref{eq:doublecoupleinclusion}-\eqref{eq:B_2,3}) implies the occurrence of 
$$
 \left( \bigcup_{i=0}^{\ell^4-1}B_{1,i}\right) \cup B_2 \cup B_3
$$
where
\begin{align*}
&B_2= \bigcup_{s=0}^{\ell^{19}-1}\left\{
\begin{array}{c}
\text{a particle of $\eta'_{t_1+ s}(\mathbb{Z}\setminus [-H_2+2 \nu(t_1+s),  H_2-2 \nu(t_1+s)])$} 
\\
 \text{ ends up in $\eta'_{t_2}([-H_2+4 \nu t_2,  H_2-4 \nu t_2])$}
\end{array} 
\right\}\\[0.3em]
&B_3=\left\{ 
\begin{aligned}
&\text{at time $ t_2$, one particle from $\eta'_{t_1}([-H_2+2 \nu t_1,  H_2-2 \nu t_1])$}
\\
&\text{ has been in $I_i$ for all $0\leq i < \ell^4$ and remains unpaired}
\end{aligned}
\right\}.
\end{align*}
We now mimic~\eqref{eq:doublecoupledensitychange},~\eqref{eq:doublecoupleE3} and~\eqref{eq:doublecoupleE2} to handle $\sum_{i=0}^{\ell^4-1}\mathbb{Q}_2(B_{1,i})$, $\mathbb{Q}_2(B_2)$ and $\mathbb{Q}_2(B_3) $  respectively. In detail, for the first term, we apply~\ref{pe:densitychange} with $(H,t)=(H_2,i\ell^{15})$ for $1\leq i \leq \ell^4-1$, and $\ell'$ ranging from $\frac{\ell^5}{2}$ to $\ell^5$, noting that 
\begin{equation}
H_2>4\nu i\ell^{15}\geq 4\nu\ell^{15}\geq \Cr{densitystable}\ell^{19/2}\varepsilon^{-2}(1+\vert \log^3(\nu\ell^{19})\vert)\geq \Cr{densitystable}(i\ell^{15})^{1/2}\varepsilon^{-2}(1+\vert \log^3(\nu i\ell^{15})\vert).
\end{equation}
For the third inequality, remark that $\nu \ell> \Cr{densitystable}\varepsilon^{-2}(1+\vert\log^3(\nu \ell^4) \vert)$ (taking $\Cr{compatible}>\Cr{densitystable}$), hence it is enough to show that $\ell^{9/2}(1+\vert \log^3(\nu \ell^4)\vert)\geq 1+\vert \log^3(\nu \ell^{19})\vert$. But $1+\vert \log^3(\nu \ell^{19})\vert \leq 1+4\vert \log^3(\nu \ell^4)\vert+60\log \ell$ (since $(a+b)^3\leq 4a^3+4b^3$ for $a,b\geq 0$). Taking $\Cr{compatible}$ large enough so that $\nu\ell^{100}$ and thus $\ell$ is large enough (recall that $\ell>80\nu/\varepsilon>\nu$), we have $ 1+4\vert \log^3(\nu \ell^4)\vert+60\log \ell\leq 1+4\vert \log^3(\nu \ell^4)\vert+ \ell\leq \ell^{9/2}(1+\vert \log^3(\nu \ell^4)\vert)$ as desired. 
\\
For the second term $\mathbb{Q}(B_2)$, we use~\ref{pe:drift}, and for the third term $\mathbb{Q}(B_3)$, we note that a continuous-time random walk with rate $2\nu$ started in $[0,\ell^5]$ will hit $0$ before time $\ell^{15}$ with probability at least $1-C\ell^5/(\nu\ell^{15})^{1/2}\geq 1-C\ell^{-2}$ (note that $\nu \ell\geq \Cr{compatible}>1$ if we take  $\Cr{compatible}>1$). 
We obtain that
\begin{align*}
&\mathbb{Q}_2(\forall x\in [-H_2+4 \nu t_2,  H_2-4 \nu t_2],\, \eta_{ t_2}(x) \geq\eta'_{ t_2}(x)  )\\
&\qquad\qquad\ge  1-\sum_{i=0}^{\ell^4-1}\mathbb{Q}_2(B_{1,i}) - \mathbb{Q}_2(B_2)-\mathbb{Q}_2(B_3)    \\
&\qquad\qquad\geq 1- 4\ell^9 H_2 \exp\left(- \Cr{densitystableexpo}\varepsilon^{2}2^{-7}(\ell^5-1) \right) - 20 \ell^{19}\exp (- \nu \ell^{19} /4) - 2H_2 (C\ell^{-2} )^{\ell^4} \\
&\qquad\qquad\geq 1 - H_2\exp\left( \textstyle-\frac{\nu}{\nu+1}\varepsilon^2\ell^4\right),
\end{align*}
if $\Cr{compatible}$ in \ref{pe:compatible} is chosen large enough. This yields~\eqref{eq:compatiblecoupling2}.
\\

Let $\mathbb{Q} \stackrel{\text{def.}}{=}\mathbb{Q}_2$. It remains to establish~\eqref{eq:compatible1} and~\eqref{eq:compatible2}.
Owing to the way the coupling $\mathbb{Q}$ is defined during the intervals $[0,t_1],[t_1,t_2]$ and $[t_2,t]$, $\mathbb{Q}$ has the following property: in $[-H_1,H_1]$, any particle of $\eta'$ that is covered at some time $s<t$ by a particle of $\eta$ will be covered by this particle until time $t$, or until it leaves $[-H_1,H_1]$. Moreover, by assumption every particle of $\eta'_0([-H_1,H_1])$ is covered by a particle of $\eta_0$.  Therefore,
\begin{multline}\label{eq:Q-2-main}
\{ \forall s\in [0,t], \, \eta_s\vert_{[-H_1+4{\nu}t, H_1-4{\nu}t]}\succcurlyeq \eta'_s\vert_{[-H_1+4{\nu}t, H_1-4{\nu}t]} \}^{ c}
\\
\subseteq \bigcup_{s=1}^t \{\text{a particle of $\eta'_s(\Z\setminus [-H_1,H_1])$ 
enters $[-H_1+4{\nu}t,H_1-4{\nu}t ]$ before or at time $t$} \}. 
\end{multline}
By Lemma~\ref{Lem:driftsingleparticle} applied with $(t,k,a)=(t-s,2,4\nu t+x)$ for every $s\leq t$ and $x\geq 0$ if $\eta'_s(\pm (H_1+x))=1$, a union bound over all particles appearing in the event on the right-hand side of \eqref{eq:Q-2-main} leads to the bound on the $\Q$-probability of the left-hand side by $ 2t\sum_{x\geq 0}\exp(-(4\nu t+x) /8)\leq 20t\exp (-\nu t/4),$ and \eqref{eq:compatible1} follows.

As for~\eqref{eq:compatible2}, first note that by~\eqref{eq:compatiblecoupling1} and~\eqref{eq:compatiblecoupling2}, and for $\Cr{compatible}$ large enough, we have that 
\begin{equation}
\mathbb{Q}(\eta'_{ t_2}\vert_{[-H_2+4\nu t_2, H_2-4\nu t_2]}\preccurlyeq \eta_{ t_2}\vert_{[-H_2+4\nu t_2, H_2-4\nu t_2]})\geq 1 - 4\Cr{SEPcoupling2} t_1 H_2\exp\left(-\textstyle \Cr{SEPcoupling2}^{-1}\frac{\nu}{\nu+1}\varepsilon^2\ell\right). 
\end{equation}
Since we use, during $[t_2,t]$, the same natural coupling as in the proof of Lemma~\ref{Lem:SEPdriftdeviations}, letting $B_4$ denote the event that a particle of $\eta'_{t_2}(\mathbb{Z}\setminus [-H_2+4\nu t_2, H_2-4\nu t_2])$ ends up in $[-H_2+6\nu t, H_2-6\nu t]\subseteq [ -H_2+4\nu t_2+2\nu t, H_2-4\nu t_2-2\nu t]$ in time at most $t$, we obtain, provided $\Cr{compatible}$ is large enough and abbreviating $\xi= \Cr{SEPcoupling2}^{-1}\frac{\nu}{\nu+1}\varepsilon^2\ell$, that {\color{black}
\begin{multline*}
\mathbb{Q}(\eta'_{ t}\vert_{[-H_2+6\nu t, H_2-6\nu t]}\preccurlyeq \eta_{ t}\vert_{[-H_2+6\nu t, H_2-6\nu t]})\\
\geq 1 - 4\Cr{SEPcoupling2} t_1 H_2e^{-\xi}-\mathbb{Q}(B_4)
\geq  1 - 4\Cr{SEPcoupling2} t_1 H_2e^{-\xi}-20e^{-\nu t/4}
\geq  1 - 5\Cr{SEPcoupling2} t_1 H_2e^{-\xi}.
\end{multline*}
This shows~\eqref{eq:compatible2} (recalling that $t_1=\ell^4$ by~\eqref{eq:compatibledeft1}), and concludes the proof.
}
\end{proof}

The final result which feeds into the proof of Proposition~\ref{P:SEP-C} is the following.

\begin{Lem}\label{lem:nacelle}
The condition~\ref{pe:nacelle} holds for \textnormal{SEP}. 
\end{Lem}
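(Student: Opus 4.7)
The plan is to take $\mathbb{Q}$ to be the basic coupling $\widehat{\mathbf{P}}$ of \eqref{eq:SEP-basic-coup}, in which both $\eta$ and $\eta'$ are constructed as interchange processes driven by the same i.i.d.~family of rate-$\nu/2$ Poisson processes $(\mathcal{P}_e)_{e \in E}$. By \eqref{eq:SEP-basic-coup} the marginal laws $\mathbf{P}^{\eta_0}$ and $\mathbf{P}^{\eta_0'}$ are then automatic, and both bounds can be read off essentially for free from facts already established in this section.

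For \eqref{eq:penacelleNEW2}, under the basic coupling the partial order $\eta_s(x) \geq \eta_s'(x)$ is preserved for every $x \in [-H,H]$ and $s \geq 0$ unless a particle of $\eta_0'$ originating in $\mathbb{Z} \setminus [-H,H]$ drifts in, since any swap across an edge with both endpoints in $[-H,H]$ keeps the order intact. The bound $20 e^{-k\nu\ell/4}$ is therefore precisely the statement of \eqref{eq:SEPdriftdeviations} in Lemma~\ref{Lem:SEPdriftdeviations} applied with $t=\ell$. For the chained inequality $20 e^{-k\nu\ell/4} \leq \delta'$ I would substitute $\delta' = (p_\circ(1-p_\bullet)\nu/(2e^\nu))^{6(\rho+1)\ell}$, take logarithms, and use $\rho+1 \leq 2$ (which is the only place this step uses $J=(0,1)$) together with the lower bound on $k$; this is an elementary computation whose constants were precisely designed to make it hold.

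For \eqref{eq:penacelleNEW}, the combined assumptions $\eta_0 \geq \eta_0'$ pointwise on $[-H,H] \supset [0,\ell]$ and $\eta_0([0,\ell]) \geq \eta_0'([0,\ell]) + 1$ supply some $x^* \in [0,\ell]$ with $\eta_0(x^*) = 1$ and $\eta_0'(x^*) = 0$. Let $\varphi_t : \mathbb{Z} \to \mathbb{Z}$ denote the random bijection induced by the interchange swaps up to time $t$; since each label retains its initial value, $\eta_t(y) = \eta_0(\varphi_t^{-1}(y))$ and $\eta_t'(y) = \eta_0'(\varphi_t^{-1}(y))$, hence
\[
\{\varphi_\ell(x^*) = x\} \subseteq \{\eta_\ell(x) > 0,\,\eta_\ell'(x) = 0\}, \quad x \in \{0,1\}.
\]
By \eqref{eq:SEPSRW}, $(\varphi_t(x^*))_{t\geq 0}$ is a continuous-time simple random walk of rate $\nu$ starting at $x^*$. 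Considering the event that this walk makes exactly $|x^*-x|$ jumps towards $x$ and none in the opposite direction during $[0,\ell]$ (two independent Poisson events each of intensity $\nu\ell/2$), I would then obtain
\[
\mathbb{Q}(\eta_\ell(x)>0,\,\eta_\ell'(x)=0) \;\geq\; \frac{(\nu\ell/2)^{|x^*-x|}}{|x^*-x|!}\,e^{-\nu\ell}.
\]

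The only nontrivial step is to check that the right-hand side exceeds $2\delta = 2(\nu/(2e^\nu))^{6(\rho+1)\ell}$ uniformly in $\ell \geq 1$, $\rho \in (0,1)$, $\nu > 0$ and $|x^*-x| \leq \ell$. I expect this routine algebraic verification to be the main obstacle; it reduces, after using $\ell^{|x^*-x|}/|x^*-x|! \geq 1$ and the coarse lower bound $6(\rho+1)\ell - |x^*-x| \geq 5\ell$, to verifying that $(6\rho+5)\ell\,[\,\nu + \log(2/\nu)\,] \geq \log 2$, which follows from the elementary fact $\nu + \log(2/\nu) \geq 1 + \log 2$ (minimum at $\nu=1$). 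This completes the verification of \ref{pe:nacelle} for SEP, and with it (in conjunction with the earlier lemmas) the proof of Proposition~\ref{P:SEP-C}.
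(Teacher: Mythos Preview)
Your approach is identical to the paper's: both take $\mathbb{Q}=\widehat{\mathbf{P}}$, track the distinguished site $x^*$ (the paper's $x_0$) through the interchange bijection to reduce \eqref{eq:penacelleNEW} to a random-walk hitting estimate via the same Poisson computation, and derive \eqref{eq:penacelleNEW2} directly from \ref{pe:drift} with $t=\ell$. One small caveat in your final algebraic step: replacing the coefficient $6(\rho+1)\ell-|x^*-x|$ of $\log(2/\nu)$ by $(6\rho+5)\ell$ yields a lower bound only when $\log(2/\nu)\geq 0$, i.e.\ $\nu\leq 2$; for $\nu>2$ you must instead bound that coefficient from above by $6(\rho+1)\ell$, after which the resulting inequality $(6\rho+5)\ell\nu-6(\rho+1)\ell\log(\nu/2)\geq \log 2$ still holds (its minimum over $\nu\geq 2$ is at $\nu=2$ and equals $2(6\rho+5)\ell\geq 10$).
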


\begin{proof}
Let $\ell\geq 1$, $\rho\in (0,1)$ and $\eta_0, \eta'_0$ be such that the conditions of~\ref{pe:nacelle} hold. In particular, these imply the existence of $x_0\in [0, \ell]$ such that $\eta_0(x_0)=1$ and $\eta'_0(x_0)=0$. Let $\mathbb{Q} = \widehat{\mathbf{P}}$ as above \eqref{eq:SEP-basic-coup}, by which $(\eta,\eta')$ are coupled as interchange processes using the same exchange times $(\mathcal{P}_e)_{e\in E}$.  As observed in \eqref{eq:SEPSRW}, under $\mathbb{Q}$ the particle of $\eta_0$ starting at $x_0$ moves like a simple random walk $Z=(Z_t)_{t\geq 0}$ with jump rate $\nu$, and we have $\eta'_t(Z_t)=0$ for all $t\geq 0$ by construction of $\widehat{\mathbf{P}}$.  Hence, $\mathbb{Q}(\eta_{\ell}( x )>0  , \,    \eta'_{\ell}( x )=0) \geq \mathbb{Q}(Z_\ell= x) $ and therefore, in order to deduce \eqref{eq:penacelleNEW} it is enough to argue that
\begin{equation}\label{eq:nacelleE1}
\mathbb{Q}(Z_\ell= x)\geq 
 \big(\tfrac{\nu}{2e^\nu}  \big)^{6(\rho+1)\ell}, \quad x=0,1.
\end{equation}
Indeed, let $Z^L_\ell$ (resp. $Z^R_\ell$) denote the number of jumps of $Z$ to the left (resp.~right) during the time interval $[0,\ell]$. These two variables are independent and distributed as $\text{Poi}(\nu \ell/2)$, which entails that, for $x=0,1$, and $x_0 \geq x$,
\begin{multline*}
\mathbb{Q}(Z_\ell= x)\geq \mathbb{Q}(Z^L_\ell=x_0-x)\mathbb{Q}(Z^R_\ell=0)\geq \left(\frac{\ell \nu}{2}  \right)^{x_0-x}  \frac{e^{-\ell \nu}}{(x_0 -x)!} 
\geq \left(\frac{ \nu }{2} \right)^{x_0-x} e^{-\ell \nu}
\\
\geq \left(\frac{ \nu }{2e^\nu} \right)^{x_0-x}e^{-(\ell-(x_0-x))\nu},
\end{multline*}
using that $(x_0-x)! \leq x_0^{x_0-x}\leq \ell^{x_0-x}$  in the third step.
Since $0\leq x_0-x\leq \ell$ and $\nu\leq 2e^\nu$,~\eqref{eq:nacelleE1} follows for all $x_0 \geq x$ (and $x=0,1$), and it is easy to see that the bound remains true in the remaining case, i.e.~when $x_0=0 = 1-x$ (now forcing $Z^L_\ell=0$ and $Z^R_\ell=1$ instead, which by symmetry of $Z^L_\ell$ and $Z^R_\ell$ yields the same bound as when $x_0=x+1$). Overall this yields~\eqref{eq:penacelleNEW}. 
Finally, since the marginal of the coupling $\mathbb{Q}$ for $(\eta,\eta')$ is that of Lemma~\ref{Lem:SEPdriftdeviations}, we get the first inequality of~\eqref{eq:penacelleNEW2} from~\ref{pe:drift} with $t=\ell$. The second inequality follows from our condition on $k$ (using that $\rho \leq 1$).
\end{proof}

\appendix

\section{Concentration estimates}\label{sec:appendix2}

We collect here a few classical facts on concentration of Poisson and binomial distributions, that are repeatedly use to control the probability that the environment could have an abnormal empirical density. The PCRW case corresponds to Poisson distributions, and the SSEP to binomial distributions).
\begin{Lem}\label{Lem:PoissonBinomDomination}
Let $\lambda,x >0$, and $X\sim \text{Poisson}(\lambda)$. Then 
\begin{equation}\label{eq:distriPoissonupper}
\mathbb{P}(X\geq \lambda+x)\leq \exp \left(-\frac{x^2}{2(\lambda+x)}\right).
\end{equation}
If $x\in [0,\lambda]$, then 
\begin{equation}\label{eq:distriPoissonlower}
\mathbb{P}(X\leq \lambda-x)\leq \exp \left(-\frac{x^2}{2(\lambda+x)}\right).
\end{equation}
Let $m\in\mathbb{N}, p\in(0,1)$ and $X\sim \text{Bin}(m,p)$. Then for all $q \in (0,1-p)$, we have
\begin{equation}\label{eq:distriBinupper}
\mathbb{P}(X\geq (p+q)m)\leq \exp (-mq^2/3)
\end{equation}
and for all $q\in (0,p)$:
\begin{equation}\label{eq:distriBinlower}
\mathbb{P}(X\leq (p-q)m)\leq \exp (-mq^2/2).
\end{equation}
\end{Lem}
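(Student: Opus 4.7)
The plan is to derive all four inequalities by the standard Cram\'er--Chernoff method, i.e.~for a nonnegative random variable $X$ and $a > 0$, use $\mathbb{P}(X \geq a) \leq \inf_{t > 0} e^{-ta}\mathbb{E}[e^{tX}]$, and similarly for lower tails with $t < 0$. In fact it is cleanest to prove the Poisson estimates first and then reduce the binomial estimates to Poisson bounds via the moment generating function inequality $(1-p+pe^t)^m \leq \exp(mp(e^t-1))$, valid for all $t \in \mathbb{R}$ since $\log(1-p+pe^t) \leq p(e^t-1)$.

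For \eqref{eq:distriPoissonupper}, I would use $\mathbb{E}[e^{tX}] = \exp(\lambda(e^t - 1))$ and optimize in $t > 0$ the quantity $\lambda(e^t - 1) - t(\lambda + x)$; the optimizer is $t^\ast = \log(1 + x/\lambda)$, which yields the sharp Chernoff bound $\mathbb{P}(X \geq \lambda + x) \leq \exp(-\lambda h(x/\lambda))$ where $h(u) = (1+u)\log(1+u) - u$. The task then reduces to the elementary inequality $h(u) \geq u^2/(2(1+u))$ for $u \geq 0$, which follows for instance from $h(0) = h'(0) = 0$ combined with $h''(u) = 1/(1+u) \geq (d^2/du^2)[u^2/(2(1+u))]$ for $u \geq 0$, or by a direct series comparison. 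A symmetric treatment with $t < 0$ (optimizer $t^\ast = \log(1 - x/\lambda)$, requiring $x \in [0,\lambda]$) gives \eqref{eq:distriPoissonlower} upon using $h(-u) \geq u^2/(2(1-u)) \geq u^2/(2(1+u))$ for $u \in [0,1)$.

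For the binomial inequalities \eqref{eq:distriBinupper}--\eqref{eq:distriBinlower}, using $\mathbb{E}[e^{tX}] \leq \exp(mp(e^t - 1))$ reduces matters to the same optimization as above with $\lambda = mp$, giving $\mathbb{P}(X \geq (p+q)m) \leq \exp(-mp\, h(q/p))$ and $\mathbb{P}(X \leq (p-q)m) \leq \exp(-mp\, h(-q/p))$. For the upper tail one then uses $p\, h(q/p) = (p+q)\log(1+q/p) - q \geq q^2/3$ for $q \in (0, 1-p)$ (which holds because $h(u) \geq u^2/(2+u) \cdot u/u = \ldots$, more concretely because the relevant bound $(1+u)\log(1+u) - u \geq u^2/(2+2u/3)$ for $u \geq 0$ gives $p\,h(q/p) \geq q^2/(2p + 2q/3) \geq q^2/3$ since $2p + 2q/3 \leq 3$). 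For the lower tail, $p\, h(-q/p) \geq q^2/(2p) \geq q^2/2$ by the stronger inequality $(1-u)\log(1-u) + u \geq u^2/2$ for $u \in [0,1)$, which is standard.

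The whole argument is routine; the only minor obstacle is to verify the elementary scalar inequalities for $h$ with the precise constants $2$ and $3$ in the denominators. These can be checked by comparing Taylor series or by a direct convexity argument, and the slightly worse constant $3$ in \eqref{eq:distriBinupper} (versus $2$ in \eqref{eq:distriBinlower}) is precisely what accommodates the regime $q$ close to $1-p$.
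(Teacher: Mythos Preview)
Your argument is correct and, for the Poisson bounds, essentially identical to the paper's: both optimize the Chernoff exponent and reduce to the elementary inequality $h(u)=(1+u)\log(1+u)-u\ge u^2/(2(1+u))$, which the paper checks by differentiating an equivalent expression after the substitution $u=1+x/\lambda$.

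The only difference is in the binomial case. The paper does not argue there at all but simply cites the textbook bounds (Theorems~4.4 and~4.5 of~\cite{Mitzen}) with $\mu=mp$, $\delta=q/p$. You instead dominate the binomial moment generating function by the Poisson one via $1-p+pe^t\le e^{p(e^t-1)}$ and recycle the Poisson computation, then close with the standard scalar bounds $h(u)\ge u^2/(2+2u/3)$ and $h(-u)\ge u^2/2$. This is a perfectly good self-contained route; the sentence beginning ``which holds because $h(u) \geq u^2/(2+u) \cdot u/u = \ldots$'' is garbled and should be deleted, but the subsequent clean statement $h(u)\ge u^2/(2+2u/3)$ together with $2p+2q/3\le 3$ is exactly what is needed.
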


\begin{proof}
We get (\ref{eq:distriBinupper}) (resp. (\ref{eq:distriBinlower})) from Theorem 4.4 (resp. Theorem~4.5) of~\cite{Mitzen} with $\mu=mp$ and $\delta=q/p\geq q$ in both cases.  We now turn to (\ref{eq:distriPoissonupper}).
By a classical application (and optimization) of Chernoff's bound (see Theorem 5.4 of~\cite{Mitzen}), one shows that 
\[
\mathbb{P}(X\geq \lambda+x)\leq \frac{e^{-\lambda}(e\lambda)^{\lambda+x}}{(\lambda+x)^{\lambda+x}}=\exp\left(-\lambda - (\lambda+x)(\log(\lambda +x)-\log\lambda\,-1)\right).
\]
Hence for (\ref{eq:distriPoissonupper}), it remains to show that 
\[
\lambda + (\lambda+x)(\log(\lambda +x)-\log\lambda\,-1)\geq \frac{x^2}{2(\lambda+x)}.
\]
Setting $u=1+x/\lambda$ and multiplying both sides by $u/\lambda$ this amounts to show that the map $f:[1,\infty) \to \mathbb{R}$ defined by 
\[
f(u)= u+u^2(\log u\, -1) - \frac{(u-1)^2}{2}
\]
remains non-negative. Indeed, $f(1)=0$ and $f'(u)=1+2u(\log u\, -1) +u-(u-1)=2(1+u(\log u\, -1))$. One checks that $\log u\, -1\geq -1/u$ for all $u\geq 1$ (with equality iff $u=1$) so that $f'$ is nonnegative on $[1,\infty)$, and this concludes the proof of (\ref{eq:distriPoissonupper}). One proves (\ref{eq:distriPoissonlower}) via the same method.
\end{proof}

\section{The PCRW environment} \label{sec:PCRW}
In this Appendix, we consider another environment, the Poisson Cloud of Random Walks (PCRW) previously considered in \cite{HHSST15, HKT20} among others; we refer to the introduction for a more complete list of references. For this environment, the parameter $\rho \in (0,\infty)$ governs the intensity of walks entering the picture. We show below, see Lemma~\ref{L:PCRWP} and Proposition~\ref{P:PCRW-C}, that the PCRW also fits the setup of \S\ref{subsec:re} and  
 satisfies~\ref{pe:densitychange}-\ref{pe:nacelle}. Hence, this environment yields another example to which the conclusions of our main result, Theorem~\ref{T:generic}, apply. The proof is virtually the same as that of Theorem~\ref{T:main1} given in \S\ref{subsec:main}, upon noting that the relevant law of large numbers \eqref{eq:LLNagain} in this context was also shown in \cite{HKT20}, yielding the existence of the speed $v(\rho)$ at all but at most two values $\rho=\rho_{\pm}$ (as for SEP). 
The PCRW is defined below using random walks evolving in discrete time, following the practice of \cite{HKT20} and other previous works, but the following results could easily be adapted to random walks in continuous time (with exponential holding times of mean one).

\subsection{Definition of the PCRW}\label{subsec:PCRWdef}

The PCRW is a stochastic process $\eta= (\eta_t(x);\, x \in \mathbb{Z}, t \in \mathbb{R_+})$ with state space $\Sigma=\mathbb{Z}^{\mathbb{Z}_+}$ defined as follows: for any given initial configuration $\eta_0\in \Sigma$ and every $x\in \mathbb{Z}$, place $\eta_0(x)$ particles at $x$. Then, let all the particles follow independent discrete-time lazy simple random walks, i.e.~at each integer time, any given particle stays put with probability $1/2$, or jumps to its left or right neighbour with the same probability $1/4$. 
For $t\geq 0$ and $x\in \mathbb{Z}$, let $\eta_t(x)$ be the number of particles located at $x$. We denote by $\bP^{\eta_0}_{\text{PCRW}}$ the canonical law of this environment with initial state $\eta_0$, and frequently abbreviate $\bP^{\eta_0}_{}=\bP^{\eta_0}_{\text{PCRW}}$ below.

\subsection{Properties of the PCRW}
We proceed to show that the PCRW environment has the desired features, i.e.~that the properties \eqref{pe:markov}-\eqref{pe:density} listed in \S\ref{subsec:re} as well as the conditions~\ref{pe:densitychange}-\ref{pe:compatible} appearing in \S\ref{subsec:C} all hold. The parameter $\rho$ indexing the stationary measures will naturally vary in $(0,\infty)$. 
We will in practice always consider a bounded open interval $J= {\color{black}(K^{-1}},K)$ for arbitrary $K > 1$ below. The constants $\Cr{densitydev}, \ldots, \Cr{SEPcoupling}$ appearing as part of the conditions we aim to verify will henceforth be allowed to tacitly depend on $K$. Note that this is inconsequential for the purposes of deriving monotonicity of $v(\cdot)$ on $(0,\infty)$ (cf.~\eqref{eq:monotonic}) since this is a local property: to check monotonicity at $\rho$ one simply picks $K$ large enough such that $\rho \in {\color{black}(K^{-1}},K) =J$.

For the remainder of this appendix, let $K>1$ be arbitrary and $J\stackrel{\text{def.}}{=} (K^{-1},K)$. We start by verifying properties~\eqref{pe:markov}-\eqref{pe:density}. 
\begin{Lem}
\label{L:PCRWP}
With
\begin{equation}
\label{eq:PCRW-mu-rho}
\mu_{\rho} = \emph{Poi}(\rho)^{\otimes \mathbb{Z}}, \quad \rho \in (0,+\infty),
\end{equation}
the measures $(\mathbf{P}^{\eta_0}: \eta_0 \in \{0,1\}^{\mathbb{Z}})$ with $\mathbf{P}^{\eta_0}=\mathbf{P}_{\textnormal{PCRW}}^{\eta_0}$ and $(\mu_{\rho}: \rho \in J)$ satisfy all of  \eqref{pe:markov}-\eqref{pe:density}.
\end{Lem}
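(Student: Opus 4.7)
The plan is to verify each of \eqref{pe:markov}-\eqref{pe:density} separately, using only standard properties of independent simple random walks and of the Poisson distribution. Since the PCRW is by construction a system of independent lazy nearest-neighbour random walks starting from $\eta_0$, it is a time-homogeneous Markov process in the configuration variable. Spatial translation invariance of the dynamics is inherited from the translation invariance of the single-particle transition kernel, and axial symmetry follows from the symmetry of the single-step distribution (probability $\tfrac14$ to each neighbour and $\tfrac12$ to stay put), since reflecting each individual trajectory yields a process with the same law. This settles \eqref{pe:markov}.

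For \eqref{pe:stationary}, the stationarity of $\mu_\rho = \mathrm{Poi}(\rho)^{\otimes \mathbb{Z}}$ is the classical preservation of Poisson statistics under independent random walks (a fact going back to Doob). Concretely, if at time $t$ one has $\eta_t \sim \mu_\rho$, then conditionally on $\eta_t$, the positions of the $\eta_t(x)$ particles at $x$ at time $t+1$ form, by thinning of independent Poisson numbers, independent Poisson point processes whose superposition over $x$ has, at each target site $y$, Poisson intensity $\sum_{x} \rho \cdot Q(x,y) = \rho$ by double-stochasticity of the lazy simple random walk kernel $Q$. Hence $\eta_{t+1}$ is again i.i.d.\ $\mathrm{Poi}(\rho)$ at each site, and stationarity of $\mathbf{P}^\rho$ follows by iteration and the Markov property.

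For \eqref{pe:monotonicity} i), given $\eta_0' \preccurlyeq \eta_0$, one pairs each particle of $\eta_0'$ injectively with a particle of $\eta_0$ located at the same site, and lets each pair follow a common random walk trajectory while the unpaired particles of $\eta_0$ move independently; since there is no exclusion constraint in the PCRW, this quenched coupling preserves $\eta_t' \preccurlyeq \eta_t$ for all $t \geq 0$. For ii), the decomposition in law $\mathrm{Poi}(\rho) = \mathrm{Poi}(\rho') + \mathrm{Poi}(\rho-\rho')$ with independent summands yields, sitewise and independently, the stochastic domination $\mu_{\rho'} \leq_{\mathrm{st.}} \mu_\rho$ for $\rho' \leq \rho$. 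Combined with i), this gives ${\mathbf{P}}^{\rho'}\leq_{\mathrm{st.}}{\mathbf{P}}^\rho$.

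Finally, for \eqref{pe:density}, under $\mu_\rho$ the variable $\eta_0([0,\ell-1])$ is a sum of $\ell$ independent $\mathrm{Poi}(\rho)$ variables, hence itself $\mathrm{Poi}(\rho\ell)$-distributed. Applying the Poisson deviation bounds \eqref{eq:distriPoissonupper}-\eqref{eq:distriPoissonlower} from Lemma~\ref{Lem:PoissonBinomDomination} with $\lambda = \rho\ell$ and $x = \varepsilon \ell$ gives
\[
\mathbf{P}^\rho\bigl(|\eta_0([0,\ell-1])-\rho\ell|\geq \varepsilon\ell\bigr) \leq 2\exp\!\Bigl(-\tfrac{\varepsilon^2\ell}{2(\rho+\varepsilon)}\Bigr),
\]
where the lower-tail estimate is either furnished by \eqref{eq:distriPoissonlower} when $\varepsilon\leq \rho$ or is vacuous otherwise (since $\eta_0([0,\ell-1])\geq 0$). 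Recalling that $\rho \in J = (K^{-1},K)$ and $\varepsilon \in (0,1)$, so that $\rho+\varepsilon \leq K+1$, one may take $\Cr{densitydev} = (2(K+1))^{-1}$, which is the desired uniform bound. There is no genuine obstacle in the proof: all four properties reduce to classical facts, the only (minor) point of care being to ensure that the constant $\Cr{densitydev}$ in \eqref{pe:density} is uniform in $\rho$, which is guaranteed by the boundedness of the interval $J$.
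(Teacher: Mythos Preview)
Your proof is correct and follows essentially the same approach as the paper's own proof: each of \eqref{pe:markov}--\eqref{pe:density} is verified by the same standard arguments (independence and symmetry of the lazy walk for \eqref{pe:markov}, Poisson thinning for \eqref{pe:stationary}, the natural same-site matching coupling for \eqref{pe:monotonicity}, and the Poisson tail bounds \eqref{eq:distriPoissonupper}--\eqref{eq:distriPoissonlower} applied to $\eta_0([0,\ell-1])\sim\mathrm{Poi}(\rho\ell)$ for \eqref{pe:density}). You are in fact slightly more explicit than the paper in noting that the constant $\Cr{densitydev}$ depends on $K$ via the bound $\rho+\varepsilon\le K+1$, and in observing that the lower-tail estimate is vacuous when $\varepsilon>\rho$.
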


\begin{proof}
Fix $\rho >0$. 
Property~\eqref{pe:markov} is classical, and follows readily from the time-homogeneity, translation invariance and axial symmetry of the lazy simple random walk. 
Property~\eqref{pe:stationary} is also standard: if $\eta_0\sim\mu_{\rho}$, by suitably thinning the Poisson process one can realize $\eta_0$ by decomposing $\eta_0(x)=l(x)+c(x)+r(x)$ for all $x\in \mathbb{Z}$, where $l(x)$ (resp.~$c(x)$, $r(x)$) is the number of particles starting from $x$ that make their first move to the left (resp.~stay put, and make their first move to the right), with $l(x),r(x)\sim \text{Poi}(\rho/4)$, $c(x)\sim \text{Poi}(\rho/2)$ and the family of variables $(l(x),c(x),r(x))_{x\in \mathbb{Z}}$ is independent. From this one infers that $\eta_1\sim\mu_{\rho}$, as $\eta_1(x)=l(x+1)+c(x)+r(x-1) \sim \text{Poi}(\rho)$ for all $x\in \mathbb{Z}$, and the variables $\eta_1(x)$ are independent as $x$ varies. 

As for property~\eqref{pe:monotonicity}, it holds with the following natural coupling (which straightforwardly yields the correct marginal laws for $\eta$ and $\eta'$): if $\eta'_0(x)\leq \eta_0(x)$ for all $x\in\mathbb{Z}$, then one matches injectively each particle of $\eta'_0$ to a particle of $\eta_0$ located at the same position. The coupling imposes that matched particles follow the same trajectory, and  the remaining particles of $\eta_0$ (if any) follow independent lazy simple random walks, independently of the matched particles. 
Finally, Property~\eqref{pe:density} is a consequence of the fact that under $\mathbf{P}^\rho$, for every finite interval $I$ and every time $t \geq0$, $\eta_t(I)\sim \text{Poi}(\rho\vert I\vert)$, and combining with the tail estimates~\eqref{eq:distriPoissonupper}-\eqref{eq:distriPoissonlower}.
\end{proof}

We now establish the conditions~\ref{pe:densitychange}-\ref{pe:nacelle}.
\begin{Prop}
\label{P:PCRW-C} 
For $(\mathbf{P}^{\eta_0}: \eta_0 \in \{0,1\}^{\mathbb{Z}_+})$ with $\mathbf{P}^{\eta_0} =\mathbf{P}^{\eta_0}_{\textnormal{PCRW}}$, $\rho \in J$ and with $\nu=1$, all of~\ref{pe:densitychange},~\ref{pe:compatible},~\ref{pe:drift},~\ref{pe:couplings} and~\ref{pe:nacelle} hold.
\end{Prop}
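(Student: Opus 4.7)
The plan is to establish each of \ref{pe:densitychange}--\ref{pe:nacelle} via a sequence of lemmas that parallels the structure of Section~\ref{sec:SEP}, but exploiting the fact that, unlike for SEP, the PCRW particles evolve as genuinely \emph{independent} lazy simple random walks. This independence makes many of the constructions considerably lighter. The basic coupling tool throughout will be a graphical construction: attach to each particle (labelled say by its initial position and an auxiliary index to distinguish particles sharing a site) its own i.i.d.~sequence of $\{-1,0,+1\}$ jump increments. Two environments $\eta,\eta'$ can then be coupled by injectively matching particles pairwise and letting matched particles use the same jump sequence (so they follow identical trajectories), while unmatched particles use independent sequences. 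When $\eta_0'\preccurlyeq \eta_0$ pointwise, matching each particle of $\eta'_0$ to one of $\eta_0$ at the same site yields $\eta_t'\preccurlyeq \eta_t$ for all $t\ge 0$; this is the PCRW analog of Lemma~\ref{Lem:SEPdriftdeviations}, and combined with the standard random-walk deviation bound in Lemma~\ref{Lem:driftsingleparticle} (which applies to each single particle's trajectory) directly produces \ref{pe:drift}.

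For \ref{pe:densitychange}, I would follow the same scheme as in the proof of Lemma~\ref{Lem:SEPdensity}: write $\eta_t(I')=\sum_{y}\sum_{i\le \eta_0(y)} \mathbf{1}\{Z^{y,i}_t\in I'\}$ where $Z^{y,i}$ are independent lazy walks started at $y$, control the mean by the local CLT / heat kernel estimates from \cite[Theorem~2.5.6]{zbMATH05707092} exactly as in the SEP case (the hypotheses $4\nu t > \Cr{densitystable}\ell^2\varepsilon^{-2}(1+|\log^3(\nu t)|)$ and $\ell'\le\sqrt{t}$ are precisely tuned to make this step go through), and then apply concentration. The concentration step is actually \emph{easier} than for SEP: instead of invoking Liggett's inequality via \eqref{eq:densitySEPvsSRW}, one uses directly that $\eta_t(I')$ is a sum of independent Bernoulli-type contributions from each initial particle plus independent Poisson contributions from particles coming in from outside $[-H,H]$, and \eqref{eq:distriPoissonupper}--\eqref{eq:distriPoissonlower} suffice.

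For \ref{pe:couplings} (and by the same token its two-sided strengthening \ref{pe:compatible}), I would mirror the matching construction of Lemma~\ref{Lem:doublecoupleSEP} but in a simpler form. Split time into blocks of length $\tau=\ell^3$ with $\ell=\lfloor t^{1/4}\rfloor$. At each block boundary $i\tau$, use \ref{pe:densitychange} (already proved) to certify that with high probability $\eta_{i\tau}(I)\ge\eta'_{i\tau}(I)$ on every short subinterval $I\subset I_{i\tau}$ of length in $[\lfloor\ell/2\rfloor,\ell]$; on that event, injectively pair each particle of $\eta'_{i\tau}$ with a particle of $\eta_{i\tau}$ within distance $\le\ell$, and let the two particles in each new pair evolve as independent lazy walks until they meet, at which point they coalesce (share jump increments thereafter). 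Because two independent lazy SRWs at distance $\le\ell$ on $\mathbb{Z}$ meet within time $\tau=\ell^3$ with probability at least $1-C\ell^{-1/2}$ by the reflection principle, an unpaired particle fails to couple through all $\ell$ stages with probability at most $(C\ell^{-1/2})^\ell$, and a union bound together with the boundary-drift estimate for incoming particles finishes the proof exactly as in~\eqref{eq:doublecoupleinclusion}--\eqref{eq:doublecoupleE2}. The proof of \ref{pe:compatible} is then obtained by the same two-step ``surgery'' construction as in Lemma~\ref{Lem:compatible}, gluing independent applications of \ref{pe:drift} on $[-H_1,H_1]$ and \ref{pe:couplings} on $[-H_2,H_2]\setminus[-H_1,H_1]$ during a short window, then restoring global domination during a second window using \ref{pe:couplings} on $[-H_2,H_2]$; the locality properties analogous to Remarks~\ref{R:C-2.1-local}--\ref{R:C-2.2-local} are automatic here since distinct particles carry independent randomness.

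Finally, \ref{pe:nacelle} is the simplest of all: the hypothesis provides an ``extra'' particle of $\eta_0$ at some $x_0\in[0,\ell]$ that is unmatched with $\eta'_0$. Let all matched particles share their graphical data (so $\eta'_s\preccurlyeq\eta_s$ is preserved for as long as no outside particle enters the relevant window, which yields \eqref{eq:penacelleNEW2} via Lemma~\ref{Lem:driftsingleparticle}), and let the extra particle perform an independent lazy SRW. The probability that it is located at $x\in\{0,1\}$ at time $\ell$ is bounded below by a product of one-step jump probabilities along a deterministic path from $x_0$ to $x$, which gives exactly the $\delta=(\nu/(2e^{\nu}))^{6(\rho+1)\ell}$ bound of \eqref{eq:penacelleNEW} (here $\nu=1$). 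I expect the main technical obstacle to be the careful bookkeeping in \ref{pe:compatible}, specifically verifying that the various scale conditions on $H_1,H_2,t,\ell$ are met by the intermediate couplings one assembles; but all estimates are of the same flavour as the ones in Lemma~\ref{Lem:compatible}, only with the simplifications brought by independence, so no genuinely new ingredient is required.
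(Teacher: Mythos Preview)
Your overall strategy—proving each condition by a parallel sequence of lemmas and exploiting that PCRW particles are independent—matches the paper's structure. The paper does, however, take a different technical route for \ref{pe:densitychange} and \ref{pe:couplings}: instead of adapting the SEP arguments, it first proves a soft-local-time coupling (after \cite{HHSST15,PopovTeixeira}) that compares $\eta_t$ on a window directly to a product Poisson configuration $\eta^{\rho\pm\varepsilon}\sim\mu_{\rho\pm\varepsilon}$. Both \ref{pe:densitychange} and \ref{pe:couplings} then follow from this single comparison combined with~\eqref{pe:density}, the latter by chaining $\eta'_t\preccurlyeq\eta^{\rho+\varepsilon/2}\preccurlyeq\eta_t$. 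Your proposed direct route (heat-kernel control of the mean plus Bernstein concentration for~\ref{pe:densitychange}; iterated pair-and-coalesce for~\ref{pe:couplings}) should also work and is arguably more elementary. For~\ref{pe:drift} the paper obtains something cleaner than you propose: since the PCRW is in \emph{discrete} time with nearest-neighbour steps, a particle travels distance at most~$t$ in time~$t$ deterministically, so the event in~\eqref{eq:SEPdriftdeviations} holds with probability~$1$ on $[-H+t,H-t]$, and no large-deviation bound (in particular not the continuous-time Lemma~\ref{Lem:driftsingleparticle}) is needed.

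There is a genuine gap in your treatment of \ref{pe:nacelle}. You claim it suffices to steer the extra unmatched $\eta$-particle from $x_0$ to $x\in\{0,1\}$ by time~$\ell$, and that this alone yields~\eqref{eq:penacelleNEW}. But having the extra particle at~$x$ gives only $\eta_\ell(x)>0$; it does \emph{not} force $\eta'_\ell(x)=0$. This is a real difference from the SEP interchange coupling in Lemma~\ref{lem:nacelle}, where the ``hole'' of $\eta'$ at $x_0$ is permanently attached to the extra $\eta$-particle because the dynamics acts as a permutation of site states. In the PCRW, the matched $\eta'$-particles move independently of the extra particle, so one must separately bound $\Q(\eta'_\ell(x)=0)$. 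This is precisely where the hypothesis $\eta'_0([-3\ell+1,3\ell])\le 6(\rho+1)\ell$ is used: at most $6(\rho+1)\ell$ particles of $\eta'$ can be within reach of~$x$, and each avoids~$x$ at time~$\ell$ with probability at least~$1/2$, giving $\Q(\eta'_\ell(x)=0)\ge 2^{-6(\rho+1)\ell}$. Multiplying by the (independent and much larger) lower bound $\Q(Z_\ell=x)\ge 4^{-\ell}$ for the extra particle produces~\eqref{eq:penacelleNEW}. Your proposal misses this second factor entirely and misattributes the exponent $6(\rho+1)\ell$ to the displacement of a single walk.
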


The proof of Proposition~\ref{P:PCRW-C} is given in \S\ref{sec:PCRW-C} below. We start with a coupling result (which for instance readily implies~\ref{pe:densitychange} as shall be seen), similar in spirit to Lemma~B.3 of~\cite{HHSST15}, stating that the evolution of the PCRW with a sufficiently regular deterministic initial condition can be approximated by a product of independent Poisson  variables. This is of independent interest (and lurks in the background of various more elaborate coupling constructions employed in \S\ref{sec:PCRW-C}).

\begin{Prop}\label{Lem:PoissonapproxPCRW}
There exist positive and finite constants $\Cl{c:diffusive}$, $\Cl[c]{EGrestesmallerthanepsilon}$ and $\Cl[c]{coupling-1}$ such that the following holds. Let  $\rho\in (0,K)$, $\varepsilon\in (0,(K-\rho)  \wedge  \rho \wedge 1 )$ and $H,\ell,t\in \mathbb{N}$ be such that $\Cr{c:diffusive}\ell^2< t < H/2$ and $(\rho +\varepsilon) {(t^{-1}{\log t})^{1/2}} \ell< \Cr{EGrestesmallerthanepsilon}\varepsilon$. There exists a coupling $\Q$ of $(\eta^{\rho-\varepsilon} , \eta_t, \eta^{\rho+\varepsilon}) $ with $\eta^{\rho\pm\varepsilon}\sim \mu_{\rho \pm\varepsilon}$ (and $\eta_t$ sampled under $\mathbf{P}^{\eta_0}$) such that, if  $\eta_0\in \Sigma$ is such that for any interval $I\subseteq [0,H]$ with $\vert I\vert=\ell$, 
\begin{equation}\label{eq:cond-dense}
\textstyle
(\rho - \frac{\varepsilon}{2}) \ell \leq
\eta_0(I) \quad (\text{resp.~} \eta_0(I) \leq (\rho + \frac{\varepsilon}{2}) \ell),
\end{equation}
then with $G=\{\eta^{\rho-\varepsilon}\vert_{[t, H-t]} \preccurlyeq  \eta_{t}\vert_{[t, H-t]}  \}$ (resp.~$G=\{ \eta_{t}\vert_{[t, H-t]} \preccurlyeq  \eta^{\rho+\varepsilon}\vert_{[t, H-t]}\}$),
 \begin{equation}
 \label{eq:coupling-1}
 \begin{split}
 &\Q(G) \geq 1 - H \exp\big(-\Cr{coupling-1} (\rho+\varepsilon)^{-1} \varepsilon^2\sqrt{t}\big).
 \end{split}
 \end{equation}
  Moreover, the coupling $\mathbb{Q}$ is local in the sense that $(\eta^{\rho-\varepsilon} , \eta_t, \eta^{\rho+\varepsilon}) \vert_{[t,H-t]}$ depends on the initial condition $\eta_0$ through $\eta_0(x)$, $x \in [0,H]$, alone. 
\end{Prop}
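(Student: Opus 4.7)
The plan is to construct the upper coupling with $\mu_{\rho+\varepsilon}$ (the lower one with $\mu_{\rho-\varepsilon}$ being symmetric) by combining a quenched heat-kernel mean estimate with a Poissonization of the initial data that delivers independent-across-sites Poisson marginals after time $t$. Throughout, write $p_t(\cdot,\cdot)$ for the transition kernel of the lazy simple random walk on $\mathbb{Z}$.

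\textbf{Step 1 (Mean control).} For $x\in[t,H-t]$, one has $\mathbf{E}^{\eta_0}[\eta_t(x)] = \sum_y \eta_0(y)\,p_t(y,x)$. I would split the sum into $y\in[0,H]$ and $y\notin[0,H]$. The latter is negligible by the Gaussian tail $p_t(y,x)\le C\exp(-c(y-x)^2/t)$ and the fact that $\mathrm{dist}(x,\mathbb{Z}\setminus[0,H])\ge t\gg\sqrt{t\log t}$. For the former, partition $[0,H]$ into blocks of length $\ell$ and combine the density hypothesis on $\eta_0$ with the Lipschitz bound $|p_t(y,x)-p_t(y',x)|\le C\ell\,|y-x|\,t^{-3/2}$ in the bulk of the Gaussian, to obtain
\[
\big|\mathbf{E}^{\eta_0}[\eta_t(x)] - \rho\big| \le C(\rho+\varepsilon)\ell\sqrt{\log t/t} \le \varepsilon/4,
\]
the second inequality being the standing hypothesis with $\Cr{EGrestesmallerthanepsilon}$ chosen small enough. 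The $\sqrt{\log t}$ factor arises from truncating the heat kernel at its diffusive scale.

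\textbf{Step 2 (Poissonization of the initial data).} The crux is to couple $\eta_0\vert_{[0,H]}$ with a product Poisson field $\zeta_0\sim\mu_\lambda$ on $[0,H]$ (with $\lambda$ slightly above $\rho+\varepsilon$) so that $\zeta_0\succcurlyeq\eta_0$ with overwhelming probability on the relevant scales. Pointwise domination is not achievable ($\zeta_0(y)$ may vanish while $\eta_0(y)>0$), but one works at the block scale: on each $\ell$-block $I$, the density hypothesis gives $\eta_0(I)\le(\rho+\varepsilon/2)\ell$, whereas $\zeta_0(I)\sim\mathrm{Poi}(\lambda\ell)$ has mean above $(\rho+\varepsilon)\ell$, so by Poisson concentration (Lemma~\ref{Lem:PoissonBinomDomination}) one obtains $\mathbb{Q}(\zeta_0(I)<\eta_0(I))\le\exp(-c\varepsilon^2\ell)$ per block. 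Evolve every particle (both the $\eta_0$-particles and the excess $\zeta_0-\eta_0$) as independent lazy SRWs, using the natural monotone coupling from the proof of Lemma~\ref{L:PCRWP}. By Poisson thinning and splitting, the time-$t$ field $\zeta_t$ of the $\zeta_0$-particles is exactly $\mu_\lambda$-distributed with independence across sites; a final thinning from $\mu_\lambda$ down to $\mu_{\rho+\varepsilon}$ produces the target $\eta^{\rho+\varepsilon}$, which dominates $\eta_t$ on the good event.

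\textbf{Step 3 (Sharp failure exponent, locality, and main obstacle).} The main technical challenge is to upgrade the block-level failure rate $\exp(-c\varepsilon^2\ell)$ into the site-level rate $\exp(-c(\rho+\varepsilon)^{-1}\varepsilon^2\sqrt t)$ at each $x\in[t,H-t]$. The key observation is that $\eta_t(x)\le\zeta_t(x)$ need not follow from the block-wise domination holding everywhere; what actually controls the discrepancy is the heat-kernel-weighted sum $\sum_y(\eta_0(y)-\zeta_0(y))_+\,p_t(y,x)$, which by the smoothness of $p_t$ behaves as an average over $\sim\sqrt{t}/\ell$ approximately independent $\ell$-blocks in the effective diffusive window around $x$. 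A Bernstein-type inequality on this weighted sum, together with the rare-event structure of the block-wise defects, yields the claimed sharp exponent. A union bound over $x\in[t,H-t]$ then produces the prefactor $H$. Finally, the locality of the coupling is immediate: only $\eta_0\vert_{[0,H]}$ enters the construction of $\zeta_0$, and particles starting outside $[0,H]$ reach $[t,H-t]$ by time $t$ only on an event of probability at most $He^{-ct}$, which is absorbed into the tolerated failure probability.
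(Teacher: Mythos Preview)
Your Step 1 is essentially correct and matches the paper's mean estimate (Lemma~\ref{L:SLT-exp}, stated there for the soft local time but the heat-kernel computation is identical).

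The genuine gap is in Steps 2--3. Block-wise domination $\zeta_0(I)\ge\eta_0(I)$ does \emph{not} license the ``natural monotone coupling'' of Lemma~\ref{L:PCRWP}: that coupling needs $\zeta_0(y)\ge\eta_0(y)$ at every site, which is impossible here since $\eta_0$ is deterministic and may place many particles at a single $y$ while $\zeta_0(y)\sim\text{Poi}(\lambda)$ can vanish. You acknowledge this, but your proposed repair in Step~3 is a concentration estimate, not a coupling. Showing that the heat-kernel-weighted defect $\sum_y(\eta_0(y)-\zeta_0(y))_+\,p_t(y,x)$ is small with high probability would at best give marginal control $\mathbb{P}(\eta_t(x)>\rho+\varepsilon)\le e^{-c\varepsilon^2\sqrt t}$ for each fixed $x$; it does not produce a joint realization $(\eta_t,\eta^{\rho+\varepsilon})$ with $\eta_t(x)\le\eta^{\rho+\varepsilon}(x)$ simultaneously across $[t,H-t]$. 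The variables $(\eta_t(x))_x$ are correlated (they share the same pool of walkers), and $\eta^{\rho+\varepsilon}$ must be a genuine i.i.d.\ Poisson field, so passing from per-site tail bounds to a dominating coupling is exactly the non-trivial step your outline skips.

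The paper sidesteps this via the soft local times technique of \cite{PopovTeixeira,HHSST15}. One samples a single Poisson process $\Lambda$ on $\mathbb{Z}\times\mathbb{R}_+$ and reads off $\eta^{\rho+\varepsilon}(z)=\Lambda(\{z\}\times(0,\rho+\varepsilon])$. The landing sites of the $\eta_0$-particles at time $t$ are \emph{also} extracted from $\Lambda$, by sequentially rescaling the residual intensity by $q_t(x_i,\cdot)$; this yields $\eta_t(z)=\Lambda(\{z\}\times(0,G_{\eta_0}(z)])$ with $G_{\eta_0}(z)=\sum_i\xi_i\,q_t(x_i,z)$ and $\xi_i$ i.i.d.\ $\text{Exp}(1)$. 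The domination $\eta_t\preccurlyeq\eta^{\rho+\varepsilon}$ on $I_t$ is then \emph{deterministic} on the event $\{G_{\eta_0}(z)\le\rho+\varepsilon\ \forall z\in I_t\}$, reducing the problem to a one-dimensional concentration bound on $G_{\eta_0}(z)$. Its MGF is explicit, $\prod_i(1-\theta q_t(x_i,z))^{-1}$, and since $\sum_i q_t(x_i,z)^2\le Ct^{-1/2}\sum_i q_t(x_i,z)\le C(\rho+\varepsilon)t^{-1/2}$, the exponential Markov inequality delivers the exponent $c(\rho+\varepsilon)^{-1}\varepsilon^2\sqrt t$ directly. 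This is the mechanism your ``Bernstein inequality over $\sqrt t/\ell$ blocks'' is reaching for, but soft local times supply the coupling that your argument is missing.
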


We now prepare the ground for the proof of Proposition~\ref{Lem:PoissonapproxPCRW}. Let us abbreviate $I_t=[t,H-t]$. We use the framework of soft local times from Appendix~A of~\cite{HHSST15} (the latter following Section~4 of~\cite{PopovTeixeira}), which we extend to fit our needs. We define a coupling $\mathbb{Q}$ as follows.
Let $\Lambda$ (defined under $\mathbb{Q}$) be a Poisson point process on $\Z\otimes \mathbb{R}_+$ with intensity $1\otimes \lambda$ where $1$ stands for the counting measure and $\lambda$ is the Lebesgue measure on $\mathbb{R}$. For each $z\in I_t$, set 
\begin{equation}\label{eq:PCRW-coup101}
\eta^{\rho\pm\varepsilon}\stackrel{\text{def.}}{=}\Lambda(\{z\}\times (0,\rho\pm\varepsilon]).
\end{equation}
 For $z\in \Z\setminus I_t$, let independently $\eta^{\rho\pm\varepsilon}(z)\sim \text{Poi}({\rho\pm\varepsilon}) $. This indeed yields the correct marginal distributions $\mu_{\rho\pm\varepsilon}$ in view of \eqref{eq:PCRW-mu-rho}.

As for $\eta_t$, given any initial configuration $\eta_0 \in \Sigma$ let $(x_i)_{i\geq 1}$ denote an arbitrary ordering of the positions of the (finitely many) particles of $\eta_0([0,H])$ (counted with multiplicity, hence the sequence $(x_i)_{i\geq 1}$ is not necessarily injective). Define for $i\geq 1$ and $z\in [-t,H+t]$ $g_i(z):=q_t(x_i,z)$ with $(q_n)_{n \in \mathbb{N}}$ denoting the discrete-time heat kernel for the lazy simple random walk. Let $\xi_1:=\sup\{t\geq 0 :  \bigcup_{z\in \Z}\Lambda(\{z\}\times (0,tg_1(z)])=0\}$ and for $i\geq 2$, define recursively 
\begin{equation}\label{eq:softlocaltimesxii}
\xi_i\stackrel{\text{def.}}{=}\sup\big\{t\geq 0: \textstyle \bigcup_{z\in \Z}\Lambda\big(\{z\}\times (0,\xi_1g_1(z)+\ldots + \xi_{i-1}g_{i-1}(z)+tg_i(z)]\big)=i-1\big\}, 
\end{equation}
see Figure~5 of~\cite{HHSST15}. 
Note that since each $g_i$ has a finite support (included in $[y_i-t,y_i+t]$), the $\xi_i$'s are well-defined. In fact by Propositions~A.1-A.2 of~\cite{HHSST15}, the variables are i.i.d.~Exp(1). For all $z \in [-t,H+t]$, we define the soft local time
\begin{equation}\label{eq:couple-pf1}
G_{\eta_0} (z)\stackrel{\text{def.}}{=} \sum_{i\geq 1} \xi_i q_t(x_i,z), 
\end{equation}
with $\xi_i$ as in \eqref{eq:softlocaltimesxii}. With these definitions, it follows, denoting 
\begin{equation}\label{eq:PCRW-coup102}
h(z)\stackrel{\text{def.}}{=}\Lambda(\{z\}\times (0,G_{\eta_0}(z))),
\end{equation}
 that the family $(h(z))_{z\in \Z}$ is distributed as $\tilde{\eta}_t$, defined as the restriction of $\eta_t$ under $\mathbf{P}^{\eta_0}$ restricted to the particles of $\eta_{0}([0,H])$. To see this, note that $\mathbb{Q}$ is such that, if $u$ is the $\Z$-coordinate of the particle of $\Lambda$ seen when determining $\xi_i$ at~\eqref{eq:softlocaltimesxii}, then the particle $x_i$ of $\eta_0$ moves to $u$ by time $t$. Remark indeed that by~\eqref{eq:softlocaltimesxii} and the spatial Markov property for Poisson point processes, the choice of $u$ is proportional to $g_i(\cdot)=q_t(x_i,\cdot)$ and independent of what happened in the first $i-1$ steps. In particular, $(h(z))_{z\in I_t}$ is distributed as $\eta_t\vert_{I_t}$, since all the particles of $\eta_t\vert_{I_t}$ perform at most one step per unit of time, and must have been in $[0,H]$ at time $0$. Finally, independently of all this, let all particles of $\eta_0(\Z\setminus [0,H])$ follow independent lazy random walks. 
 
 Overall $\mathbb{Q}$ indeed defines a coupling of 
$(\eta^{\rho-\varepsilon} , \eta_t, \eta^{\rho+\varepsilon}) $ with the required marginal law, and the desired locality (see below \eqref{eq:coupling-1}) follows immediately from the previous construction. Moreover, \eqref{eq:PCRW-coup101} and \eqref{eq:PCRW-coup102} imply that under $\mathbb{Q}$, for all $t \in \mathbb{N}$  and $\eta_0\in \Sigma$, 
\begin{equation}\label{eq:softlocaltimescoupling}
\begin{split}
&\left\{ \eta^{\rho-\varepsilon}\vert_{I_t} \preccurlyeq  \eta_{t}\vert_{I_t}\right\} \subseteq\{\forall z\in I_t:\, \rho-\varepsilon \leq G_{ \eta_0 }(z) \},\\[0.3em]
&\left\{  \eta_{t}\vert_{I_t} \preccurlyeq  \eta^{\rho+\varepsilon}\vert_{I_t}\right\} \subseteq\{\forall z\in I_t:\, G_{ \eta_0 }(z)\leq \rho+\varepsilon \}.
\end{split}
\end{equation}
It is now clear from \eqref{eq:softlocaltimescoupling} that the desired high-probability domination in \eqref{eq:coupling-1} hinges on a suitable control of the soft local time $G_{\eta_0}$ defined by \eqref{eq:couple-pf1}. To this effect we first isolate the following first moment estimate.

\begin{Lem}
\label{L:SLT-exp}
Under the assumptions of Proposition~\ref{Lem:PoissonapproxPCRW}, there exists $\Cl{EGheatkernel}$ such that for all $z \in I_t$, 
\begin{equation}
\label{eq:G-exp}
\textstyle
  (\rho - \frac{\varepsilon}{2}) \Big(1- \Cr{EGheatkernel}{\color{black} {\ell} \sqrt{\frac{\log t}{t}}}\Big) \leq\mathbb E^{\Q}[G_{\eta_0} (z)],  \quad \left(\text{resp.~}  \mathbb E^{\Q}[G_{\eta_0} (z)] \leq   (\rho + \frac{\varepsilon}{2}) \Big(1+ \Cr{EGheatkernel} {\color{black} {\ell} \sqrt{\frac{\log t}{t}}}\Big)\right).
\end{equation}
\end{Lem}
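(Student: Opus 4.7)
Since $\xi_i \sim \text{Exp}(1)$ are i.i.d.~with unit mean, \eqref{eq:couple-pf1} gives
$$\mathbb E^{\mathbb Q}[G_{\eta_0}(z)] = \sum_{i \geq 1} q_t(x_i, z) = \sum_{y \in [0,H]} \eta_0(y)\, q_t(y, z).$$
Moreover, the lazy simple random walk moves by at most one site per unit time, so $q_t(y, z) > 0$ requires $|y-z| \leq t$; for $z \in I_t = [t, H-t]$ this forces $y \in [z-t, z+t] \subseteq [0,H]$, whence $\sum_{y \in [0,H]} q_t(y,z) = 1$. The task thus reduces to comparing $\sum_y \eta_0(y) q_t(y,z)$ with the constant target $\rho$.

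The natural strategy is to partition $[z-t, z+t]$ into at most $\lceil 2t/\ell \rceil$ consecutive intervals $I_1, \dots, I_m$ each of length $\ell$ (with a possibly shorter leftover), and to use the hypothesis \eqref{eq:cond-dense} on each $I_j$. Writing
$$\delta_j \stackrel{\text{def.}}{=} \max_{y \in I_j} q_t(y,z) - \min_{y \in I_j} q_t(y,z),$$
the elementary bounds $\ell \max_{I_j} q_t(\cdot,z) \leq \sum_{y\in I_j} q_t(y,z) + \ell\delta_j$ and $\ell \min_{I_j} q_t(\cdot,z) \geq \sum_{y\in I_j} q_t(y,z) - \ell \delta_j$ combined with \eqref{eq:cond-dense} yield, in each of the two regimes,
\begin{align*}
\sum_{y \in I_j} \eta_0(y) q_t(y,z) &\leq \eta_0(I_j) \max_{I_j} q_t(\cdot,z) \leq (\rho+\tfrac{\varepsilon}{2}) \Big[\textstyle\sum_{y \in I_j} q_t(y,z) + \ell \delta_j \Big], \\
\sum_{y \in I_j} \eta_0(y) q_t(y,z) &\geq \eta_0(I_j) \min_{I_j} q_t(\cdot,z) \geq (\rho-\tfrac{\varepsilon}{2}) \Big[\textstyle\sum_{y \in I_j} q_t(y,z) - \ell \delta_j \Big].
\end{align*}
Summing over $j$ and using $\sum_j \sum_{y \in I_j} q_t(y,z) = 1$ reduces \eqref{eq:G-exp} to the deterministic heat-kernel estimate
\begin{equation}\label{eq:hk-claim}
\textstyle \ell \sum_j \delta_j \,\leq\, \Cr{EGheatkernel}\, \ell \sqrt{\log t / t}.
\end{equation}

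The remaining (and main) work is thus to establish \eqref{eq:hk-claim}, which amounts to a quantitative one-sided regularity statement for $q_t(\cdot,z)$. The plan is to invoke the local CLT for the lazy simple random walk (e.g.~\cite[Theorem 2.5.6]{zbMATH05707092}). By Gaussian comparison this produces the \emph{uniform} discrete-gradient bound $|q_t(y+1,z) - q_t(y,z)| \leq C/t$ for all $y,z \in \mathbb Z$ (the function $s \mapsto s\, e^{-s^2/t}/t^{3/2}$ being maximized near $s \sim \sqrt{t}$), together with the Gaussian tail bound $q_t(y,z) + |\nabla q_t(y,z)| = O(t^{-10})$ as soon as $|y-z| \geq C_0 \sqrt{t\log t}$ for $C_0$ large enough. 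Writing $\delta_j \leq \ell \sup_{y \in I_j} |\nabla q_t(y,z)|$, the tail part contributes at most $(2t/\ell)\cdot O(t^{-10})$, which is negligible, while only $O(\sqrt{t \log t}/\ell + 1)$ intervals meet the bulk region $|y-z| \leq C_0\sqrt{t\log t}$, each contributing at most $C/t$ to $\sum_j \sup_{I_j}|\nabla q_t|$. Altogether $\sum_j \sup_{I_j} |\nabla q_t(\cdot,z)| \leq C\sqrt{\log t}/(\ell\sqrt{t})$, whence $\sum_j \delta_j \leq C\sqrt{\log t/t}$ and \eqref{eq:hk-claim} follows; the assumption $\Cr{c:diffusive}\ell^2 < t$ is used here to ensure we are in the regime where the local CLT estimates hold with usable error terms.

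The main subtlety I anticipate is precisely this last step: the cruder unimodality bound $\sum_y |\nabla q_t(y,z)| = O(1/\sqrt t)$ (which sums to the total variation $2 q_t(z,z)$) only yields $\ell \sum_j \delta_j = O(\ell^2/\sqrt t)$, losing a factor of $\ell/\sqrt{\log t}$. Obtaining the correct $\sqrt{\log t}$ factor requires combining the \emph{pointwise} uniform bound $|\nabla q_t| \leq C/t$ with the Gaussian localization of the bulk of $q_t(\cdot,z)$ on a window of width $\sqrt{t\log t}$, i.e.~using \emph{both} local CLT outputs simultaneously.
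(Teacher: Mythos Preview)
Your argument is correct and close in spirit to the paper's, but the execution differs. The paper first truncates the sum $\sum_i q_t(x_i,z)$ to the window $[z-c_t,z+c_t]$ with $c_t=\sqrt{t\log t}$, bounds the tail $\sum_{|y-z|>c_t}q_t(z,y)$ directly by Azuma, and on the bulk uses a \emph{multiplicative} heat-kernel comparison $q_t(z,x)/q_t(z,y)\le 1+C\ell c_t/t$ for $|x-y|\le\ell$ (obtained from the local CLT, parallel to the SEP case in Lemma~\ref{Lem:SEPdensity}). You instead partition the full support $[z-t,z+t]$, invoke the \emph{uniform additive} gradient bound $|\nabla q_t|\le C/t$, and recover the factor $\sqrt{\log t}$ by counting that only $O(\sqrt{t\log t}/\ell)$ of the $\ell$-blocks meet the Gaussian bulk. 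Both routes are standard; yours is perhaps a shade more elementary (the bound $|\nabla q_t|\le C/t$ follows in one line from the Fourier representation of the lazy walk, avoiding the parity bookkeeping the paper does around \eqref{eq:heatkernelPCRW}), while the paper's ratio estimate is the same device used for SEP and so unifies the two environments. Note that your appeal to $\Cr{c:diffusive}\ell^2<t$ is really used to ensure $\ell\lesssim\sqrt{t\log t}$, so that the ``$+1$'' in your count of bulk intervals does not dominate; also, the leftover block of length $<\ell$ needs a one-line separate treatment for the lower bound (extend it to a full $\ell$-interval and absorb the $O(\ell/\sqrt t)$ deficit), but this is routine.
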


\begin{proof}
Let $c_t=\sqrt{t \log t}$ and fix $z\in [-H+t,H-t]$.
Cover $[z-\lfloor c_t\rfloor,z+\lfloor c_t\rfloor]$ by a family $(I_i)_{1\leq i\leq \lceil (2\lfloor c_t\rfloor+1)/\ell\rceil}$ of intervals of length $\ell$, all disjoint except possibly $I_1$ and $I_2$. We focus on the upper bound on $\mathbb E^{\Q}[G_{\eta_0} (z)]$ in \eqref{eq:G-exp} (the lower bound is derived in a similar fashion). 
By assumption in \eqref{eq:cond-dense}, we have that $\eta_0(I_i) \leq (\rho + \frac{\varepsilon}{2}) \ell$ for all $i$, hence, recalling that $\xi_i$ in \eqref{eq:softlocaltimesxii} has unit mean, 
\begin{multline}
\label{eq:G-exp-2}
 E^{\Q}[G_{\eta_0} (z)] \stackrel{\eqref{eq:couple-pf1}}{=} \sum_{i\geq 1} q_t(x_i,z) \leq  \sum_{i=1}^{\lceil (2\lfloor c_t\rfloor+1)/\ell\rceil} (\rho + \frac{\varepsilon}{2}) |I_i| \max_{x \in I_i} q_t(z,x) +2\sum_{x\ge z+c_t-1} q_t(z,x).
\end{multline}
We start by dealing with the last term above. By Azuma's inequality, we have that
\begin{equation}\label{malodo}
\sum_{x\ge z+c_t-1} q_t(z,x)\le \exp\left( -\frac{(c_t-1)^2}{2t} \right)\le \exp\bigg( -\frac{\log t}{2}+\sqrt{\frac{\log t}{t}} \bigg)\le \frac{C(\rho+\tfrac{\varepsilon}{2})}{\sqrt{t}},
\end{equation}
for some  constant $C>0$, depending on $K$. 
Let us now handle the first term in the right-hand side of \eqref{eq:G-exp-2}. We start by noting that
\begin{multline} \label{malalagorge}
\sum_{i=1}^{\lceil (2\lfloor c_t\rfloor+1)/\ell\rceil} (\rho + \tfrac{\varepsilon}{2}) |I_i| \max_{x \in I_i} q_t(z,x) \\\le (\rho + \tfrac{\varepsilon}{2}) \sum_{i=1}^{\lceil (2\lfloor c_t\rfloor+1)/\ell\rceil} \sum_{y \in I_i} q_t(z,y) +(\rho + \tfrac{\varepsilon}{2})\sum_{i=1}^{\lceil (2\lfloor c_t\rfloor+1)/\ell\rceil} \sum_{y \in I_i} \max_{x \in I_i} (q_t(z,x)-q_t(z,y))
\end{multline}
By \cite[Proposition 2.4.4]{zbMATH05707092}, we have that
\begin{equation}\label{eq:heatkernelPCRW2}
\max_{y\in  \Z}q_t(z,y) \leq {C}t^{-1/2}.
\end{equation}
Recalling that at most $I_1$ and $I_2$ may overlap, the above implies that
\begin{equation}\label{eq:heatkernelPCRWdiagonal}
\sum_{i=1}^{\lceil (2\lfloor c_t\rfloor+1)/\ell\rceil} \sum_{y \in I_i} q_t(z,y) \leq 1+\sum_{y\in I_1}q_t(z,y)\leq 1+C\frac{\ell}{\sqrt{t}}.
\end{equation}
It remains to deal with the last term in \eqref{malalagorge}. By standard heat kernel estimates (see for instance \cite[Proposition 2.5.3]{zbMATH05707092} and  \cite[Corollary 2.5.4]{zbMATH05707092}) and a computation similar to \eqref{violette-lechat}, combined with a large deviation estimate on the number $N_t$ of non-zero steps performed by the lazy random walk up to time $t$ (using Azuma's inequality for instance),  for all $x,y\in [z-c_t,z+c_t]$  with $|x-y|\le \ell$ and  first assuming that both $|x-z|$ and $|y-z|$ are even, leaving $C$ be a universal constant changing from line to line, we have that
\begin{equation}\label{eq:heatkernelPCRW}
\begin{split}
q_t(z,x) &\leq \sum_{n=\lfloor t/4-c_t\rfloor}^{\lceil t/4+c_t \rceil}\mathbb{P}(N_t=2n) q_{2n}(z,y)\times \exp\left(  C\frac{\ell c_t}{t} \right) + \mathbb{P}(|N_t-\tfrac{t}{2}|>2c_t)\\
&\leq q_{t}(z,y)\times \bigg(1+C\ell\sqrt{\frac{ \log t}{t}} \bigg) + 2\exp(-2\log t),
\end{split}
\end{equation}
where we have to choose  $\Cr{c:diffusive}$ (and hence $t$) large enough in the assumptions of Proposition~\ref{Lem:PoissonapproxPCRW}
The case where both $|x-z|$ and $|y-z|$ are odd is treated similarly, considering $2n+1$ instead of $2n$. If $|x-z|$ is even and $|y-z|$ is odd, note that for all $n$ such that $\lfloor t/4-c_t\rfloor\le n\le \lceil t/4+c_t \rceil$,\[
\mathbb{P}(N_t=2n)=\frac{2n+1}{t-2n}\cdot\mathbb{P}(N_t=2n+1)\le \bigg(1+C\sqrt{\frac{\log t}{t}}\bigg)\cdot\mathbb{P}(N_t=2n+1),
\]
where a corresponding lower bound holds in order to treat the case where $|x-z|$ is odd and $|y-z|$ is even.
Hence the result in \eqref{eq:heatkernelPCRW} holds regardless of the parity of $|x-z|$ and $|y-z|$. 
Using \eqref{eq:heatkernelPCRW}, we obtain that
\begin{multline}\label{eq:S1-S2}
\sum_{i=1}^{\lceil (2\lfloor c_t\rfloor+1)/\ell\rceil} \sum_{y \in I_i} \max_{x \in I_i} (q_t(z,x)-q_t(z,y))
\leq  \sum_{i=1}^{\lceil (2\lfloor c_t\rfloor+1)/\ell\rceil} \sum_{y \in I_i} C\ell \sqrt{\frac{\log t}{t}} q_t(z,y) + \frac{1}{t}\\
\le C\ell \sqrt{\frac{\log t}{t}}+\sum_{y\in I_1}q_t(z,y) \le C\ell \sqrt{\frac{\log t}{t}},
\end{multline}
where we used~\eqref{eq:heatkernelPCRW2} and the fact that $1\le\ell^2<t/\Cr{c:diffusive}$, chose  $\Cr{c:diffusive}$ (and hence $t$) large enough, and let the value of $C$ change from one line to the next.
Substituting~\eqref{eq:heatkernelPCRWdiagonal} and \eqref{eq:S1-S2} into \eqref{malalagorge} and feeding the resulting estimate together with \eqref{malodo}, into \eqref{eq:G-exp-2} yields that
$$
E^{\Q}[G_{\eta_0} (z)]  \leq\Big(\rho + \frac{\varepsilon}{2}\Big) \bigg( 1+ C\frac{\ell}{\sqrt{t}}  +C\ell \sqrt{\frac{\log t}{t}} +\frac{C}{\sqrt{t}} \bigg)\le \Big(\rho + \frac{\varepsilon}{2}\Big) \bigg( 1  +C\ell \sqrt{\frac{\log t}{t}}  \bigg),
$$
and the conclusion follows.
\end{proof}

We are now ready to give the short proof of Proposition~\ref{Lem:PoissonapproxPCRW}, which  combines the above ingredients.

\begin{proof}[Proof of Proposition~\ref{Lem:PoissonapproxPCRW}]
We use the coupling $\mathbb{Q}$ defined atop Lemma~\ref{L:SLT-exp} and show that $G_{\eta_0}$ concentrates in order to exploit~\eqref{eq:softlocaltimescoupling}. To this end, first note that for all $\theta < \frac12 \min_x q_t(x,z)^{-1}$,
\begin{equation}\label{eq:couple-pf2}
\mathbb E^{\Q}[ e^{\theta G_{\eta_0} (z)}]= \prod_{i \geq 1} \frac1{1-\theta q_t(x_i,z)}.
\end{equation}
Observe that only a finite number of factors may differ from $1$ (which requires $q_t(x_i,z)>0$, hence $\vert x_i-z\vert \leq t$). 
 By~\eqref{eq:softlocaltimescoupling}, the probability $\Q(G^c)$ with $G=\{ \eta_{t}\vert_{[t, H-t]} \preccurlyeq  \eta^{\rho+\varepsilon}\vert_{[t, H-t]}\}$ is thus bounded from above by 
\begin{multline}\label{eq:couple-pf3}
\Q[\exists z\in I_t : \, G_{\eta_0} (z) > \rho + \varepsilon ] \leq H \sup_{z \in I_t }\Q[ G_{\eta_0} (z) > \rho + \varepsilon ] \\ \leq  H  \sup_z \exp\big\{- \textstyle \theta \big(\rho + \varepsilon - \sum_{i\geq 1 } q_t(x_i,z) \big) + \sum_{i\geq 1 } \theta^2q_t(x_i,z)^2\big\}
\leq H \exp\big\{\textstyle -\frac{(\rho+ \varepsilon - \sum q)^2}{4\sum q^2} \big\},
\end{multline}
using \eqref{eq:couple-pf2}, the exponential Markov inequality and the inequality $\log(1-x)\ge-x - x^2$ for $|x|< \frac12$ in the second step and optimizing over $\theta$ in the third, and abbreviating $\sum q^{\alpha}= \sum_{i\geq 1}q_t(x_i,z)^{\alpha}$. Using~\eqref{eq:heatkernelPCRW2} and then \eqref{eq:G-exp}, we have
\begin{multline}
 \sum_{i\geq 1}q_t(x_i,z)^{2}\leq  \frac{C}{\sqrt t}   \sum_{i\geq 1}q_t(x_i,z) =  \frac{C}{\sqrt t} E^{\Q}[G_{\eta_0} (z)] \\
 \le \frac{C}{\sqrt t} \bigg(K+1+ \Cr{EGheatkernel} (\rho+\varepsilon){ {\ell} \sqrt{\frac{\log t}{t}}}\bigg)\le    \frac{C}{\sqrt t} \left((K+1) +    {\Cr{EGheatkernel}} \Cr{EGrestesmallerthanepsilon}\varepsilon\right)\le  \frac{C}{K\sqrt t},
 \end{multline}
 where we used the  assumptions of Proposition~\ref{Lem:PoissonapproxPCRW}, chose $\Cr{EGrestesmallerthanepsilon}$  small enough (depending on $\Cr{EGheatkernel}$), and let the value of $C$ change in the last inequality (depending on $K\ge 1$).
 Moreover, 
 \begin{equation}
 K+1\ge \rho+ \varepsilon - \sum_{i\geq 1}q_t(x_i,z)\ge \frac{\varepsilon}{2}-{\Cr{EGheatkernel}}(\rho+\varepsilon){ {\ell} \sqrt{\frac{\log t}{t}}}\ge      \frac{\varepsilon}{2}-{\Cr{EGheatkernel}} \Cr{EGrestesmallerthanepsilon}\varepsilon\ge \frac{\varepsilon}{4},
 \end{equation}
 provided that $\Cr{EGrestesmallerthanepsilon}$ is small enough (depending on $\Cr{EGheatkernel}$).
Substituting the two displays above into \eqref{eq:couple-pf3} provides the asserted upper bound on $\mathbb{Q}(G^c)$ in \eqref{eq:coupling-1}. For the other choice of $G$ in \eqref{eq:coupling-1}, we bound, using that $\log (1+x)\geq x-x^2/2$ for all $x\ge 0$,
\begin{multline}\label{eq:couple-pf4}
\Q[\exists z\in I_t : \, G_{\eta_0} (z) < \rho - \varepsilon ] \leq H\sup_{z \in I_t }\Q[ G_{\eta_0} (z) < \rho - \varepsilon ] \\ \leq  H  \sup_z \exp\big\{- \textstyle \theta \big(\varepsilon -\rho + \sum_{i\geq 1}q_t(x_i,z) \big) + \frac{1}{2}\sum_{i\geq 1}\theta^2q_t(x_i,z)\big\}
\leq H \exp\big\{\textstyle -\frac{(\varepsilon -\rho+ \sum q)^2}{2\sum q^2} \big\},
\end{multline}
optimizing again over $\theta$ in the last step. We conclude in the same way as for the upper bound.
\end{proof}

\subsection{Proof of Proposition~\ref{P:PCRW-C}} \label{sec:PCRW-C} The proof of Proposition~\ref{P:PCRW-C} follows immediately by combining Lemmas~\ref{Lem:C1PCRW}-\ref{L:nacelle-PCRW} below, each of which focuses on one specific property among~\ref{pe:densitychange},~\ref{pe:compatible}, \ref{pe:drift}, \ref{pe:couplings} and~\ref{pe:nacelle}, which are proved in this order. Recall that $J=(K^{-1},K)$ for some $K >1$ and that constants may implicitly depend on $K$.
\begin{Lem}\label{Lem:C1PCRW}
Condition~\ref{pe:densitychange} (with $\nu=1$) holds for PCRW.
\end{Lem}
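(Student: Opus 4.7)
The plan is to deduce~\ref{pe:densitychange} for PCRW from the Poisson-approximation coupling of Proposition~\ref{Lem:PoissonapproxPCRW}, combined with the standard Poisson tail bounds of Lemma~\ref{Lem:PoissonBinomDomination}. The intuition is that Proposition~\ref{Lem:PoissonapproxPCRW} essentially reduces the empirical density of the PCRW at time $t$ to that of an i.i.d.~product of Poisson variables, modulo a small sprinkling of the density parameter, and the latter can be controlled by a direct Chernoff bound.

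Focus first on the upper-bound half of~\ref{pe:densitychange}. Fix $\rho, \varepsilon, \ell, \ell', H, t$ as in its statement and apply Proposition~\ref{Lem:PoissonapproxPCRW} (after the obvious spatial translation, with $H$ replaced by $2H$) with parameters $(\rho^\ast, \varepsilon^\ast)=(\rho+\varepsilon, \varepsilon)$ in place of $(\rho, \varepsilon)$. The condition $(\rho^\ast+\varepsilon^\ast)(\log t/t)^{1/2}\ell<\Cr{EGrestesmallerthanepsilon}\varepsilon$ of the proposition is implied by $4t>\Cr{densitystable}\ell^2\varepsilon^{-2}(1+|\log^3 t|)$ provided $\Cr{densitystable}$ is chosen large enough depending on $K$, and the hypothesis $\eta_0(I)\leq (\rho+\varepsilon)\ell$ matches the requirement $\eta_0(I)\leq (\rho^\ast+\varepsilon^\ast/2)\ell=(\rho+3\varepsilon/2)\ell$ of the proposition. (We may tacitly restrict to $\varepsilon<(K-\rho)/2$, which ensures $\varepsilon^\ast\in(0,(K-\rho^\ast)\wedge\rho^\ast\wedge 1)$; the remaining regime where $\varepsilon$ is of order $K-\rho$ can be absorbed into the constant $\Cr{densitystableexpo}$ which is allowed to depend on $K$.) The proposition thus produces a coupling $\Q$ of $\eta_t$ with a configuration $\eta^{\rho+2\varepsilon}\sim \mu_{\rho+2\varepsilon}$ such that
\begin{equation*}
\Q\big(\eta_t\vert_{[-H+t, H-t]}\preccurlyeq \eta^{\rho+2\varepsilon}\vert_{[-H+t, H-t]}\big)\geq 1-2H\exp(-c\varepsilon^2\sqrt{t}),
\end{equation*}
where $c=c(K)>0$.

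For any fixed interval $I'\subseteq [-H+2t,H-2t]$ of length $\ell'$, the variable $\eta^{\rho+2\varepsilon}(I')$ has law $\textnormal{Poi}((\rho+2\varepsilon)\ell')$, so the upper tail estimate \eqref{eq:distriPoissonupper} applied with $x=\varepsilon\ell'$ gives
\begin{equation*}
\Q\big(\eta^{\rho+2\varepsilon}(I')>(\rho+3\varepsilon)\ell'\big)\leq \exp\!\Big(-\frac{\varepsilon^2\ell'}{2(\rho+3\varepsilon)}\Big)\leq \exp(-c'\varepsilon^2\ell'),
\end{equation*}
with $c'=c'(K)>0$. A union bound over the at most $2H$ intervals $I'$ of length $\ell'$ contained in $[-H+2t,H-2t]$, combined with the coupling-failure term (which is smaller than the union bound, since $\ell'\leq \sqrt{t}$ and $\rho+2\varepsilon$ is bounded by a constant depending on $K$), yields the desired inequality for a suitable $\Cr{densitystableexpo}$ depending on $K$.

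The lower-bound half of~\ref{pe:densitychange} is handled by the same scheme, this time applying Proposition~\ref{Lem:PoissonapproxPCRW} with $(\rho^\ast,\varepsilon^\ast)=(\rho-\varepsilon,\varepsilon)$ to produce a lower-domination $\eta^{\rho-2\varepsilon}\vert_{[-H+t,H-t]}\preccurlyeq \eta_t\vert_{[-H+t,H-t]}$, followed by the lower Poisson tail \eqref{eq:distriPoissonlower} applied to $\eta^{\rho-2\varepsilon}(I')\sim \textnormal{Poi}((\rho-2\varepsilon)\ell')$. One may restrict WLOG to $\varepsilon<\rho/3$, as otherwise the claimed lower bound $\eta_t(I')\geq(\rho-3\varepsilon)\ell'$ is automatic from the non-negativity of $\eta_t(I')$. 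No serious obstacle arises; the main point is simply the bookkeeping required to match the Prop's hypothesis $\eta_0(I)\leq (\rho^\ast+\varepsilon^\ast/2)\ell$ (resp.\ $\geq$) with that of~\ref{pe:densitychange}, and to compare the error terms $\exp(-c\varepsilon^2\sqrt{t})$ from the coupling with $\exp(-c'\varepsilon^2\ell')$ from Poisson concentration, the former being smaller owing to $\ell'\leq \sqrt{t}$.
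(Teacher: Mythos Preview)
The proposal is correct and follows essentially the same approach as the paper: both reduce the question to Proposition~\ref{Lem:PoissonapproxPCRW} (the Poisson-approximation coupling) and then apply Poisson concentration plus a union bound. The only cosmetic difference is the parametrization when invoking the proposition --- you shift $\rho$ to $\rho\pm\varepsilon$ and keep $\varepsilon$, whereas the paper keeps $\rho$ and replaces $\varepsilon$ by $2\varepsilon$; in either case the comparison field is $\eta^{\rho\pm 2\varepsilon}$, and the endgame (Poisson tail plus union bound over the at most $2H$ intervals $I'$, then absorbing the coupling error using $\ell'\le\sqrt{t}$) is identical.
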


\begin{proof}
 Let $\rho,\varepsilon, H,\ell,t$ and $\eta_0$ be such that the conditions of~\ref{pe:densitychange} hold.  It is straightforward to check that these imply the conditions for applying Proposition~\ref{Lem:PoissonapproxPCRW} with $(2H,2\varepsilon)$ instead of $(H,\varepsilon)$, provided that $\Cr{densitystable}$ is large enough w.r.t.~$\Cr{c:diffusive}$,~$\Cr{EGrestesmallerthanepsilon}$  and $K$. Hence, we can now use Proposition~\ref{Lem:PoissonapproxPCRW} with $(2H,2\varepsilon)$ to show~\ref{pe:densitychange}.

Let $1 \leq \ell' \leq \ell $. By~\eqref{eq:coupling-1} (with an appropriate coupling $\mathbb{Q}$ under which $\eta\sim \mathbf{P}^{\eta_0}$ and $\eta^{\rho\pm \varepsilon}\sim \mu_{\rho\pm \varepsilon}$, and translating $[0,2H]$ to $[-H,H]$ by means of~\eqref{pe:markov}), we have that
\begin{equation}\label{eq:poissonapproxPCRWdensity}
\mathbf{P}^{\eta_0}\left(\begin{array}{c} \text{for all $I' \subset [-H+2 t,H-2 t]$} \\ \text{of length $\ell'$: $ \pm( \eta_t(I') -\rho \ell' ) \leq 3\varepsilon\ell'$} \end{array} \right)\geq 1 -  2 \exp\left(-\Cr{coupling-1} (\rho+\varepsilon)^{-1} \varepsilon^2\sqrt{t}\right)-p_{\pm }
\end{equation}
where
\begin{equation*}
p_\pm\stackrel{\text{def.}}{=} \mu_{\rho\pm \varepsilon}\left(\begin{array}{c} \text{there exists an interval $I'$ of length $\ell'$ included in} \\ \text{$[-H+2 t,H-2 t]$ so that: $ | \eta^{\rho\pm\varepsilon}(I') -(\rho\pm\varepsilon) \ell' | \geq 2\varepsilon\ell'$} \end{array} \right).
\end{equation*}
By~\eqref{pe:density} (which holds on account of Lemma~\ref{L:PCRWP}) and a union bound over all intervals $I'\subseteq [-H+2 t,H-2 t]$ of length $\ell'$, we have that
$p_\pm\leq 2H \exp(-\Cr{densitydev}\varepsilon^2\ell').$  Combining this and~\eqref{eq:poissonapproxPCRWdensity}, noting that $\Cr{coupling-1}(\rho+\varepsilon)^{-1}\sqrt{t}\geq\Cr{densitystableexpo} \ell \geq \Cr{densitystableexpo}\ell'$ if we choose $\Cr{densitystableexpo}$ small enough w.r.t.~$K$ and $\Cr{coupling-1}$, yields~\ref{pe:densitychange}. 
\end{proof}

\begin{Lem}\label{Lem:driftPCRW}
For every $H,t\geq 0$, and all $\eta_0,\eta'_0 \in \Sigma$ such that $\eta_0 \vert_{[0, H]}\succcurlyeq \eta_0' \vert_{[0, H]}$, there exists a coupling $\Q$ of $\eta,\eta'$ with respective marginals $\bP^{\eta_0}$ and $\bP^{\eta'_0}$ such that
\begin{equation}\label{eq:driftPCRW}
\Q\big(\forall s\in [0,t], \, \eta_s\vert_{[t, H-t]}\succcurlyeq\eta'_s\vert_{[t, H-t]} \big)=1.
\end{equation}
Therefore, condition~\ref{pe:drift} holds for PCRW with $\nu=1$. 
\end{Lem}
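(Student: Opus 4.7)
My plan is to construct an explicit particle-tagging coupling that makes the conclusion of \eqref{eq:driftPCRW} hold deterministically, and then to deduce~\ref{pe:drift} by observing that the required buffer $2kt$ in~\ref{pe:drift} comfortably exceeds the buffer $t$ furnished by the Lemma once $k\geq 1$.

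For the coupling, I would enumerate the particles of $\eta_0$ and $\eta_0'$ lying in $[0,H]$. By the hypothesis $\eta_0\vert_{[0,H]}\succcurlyeq \eta_0'\vert_{[0,H]}$, one can construct an injective map $\psi$ from the particles of $\eta_0'\vert_{[0,H]}$ into those of $\eta_0\vert_{[0,H]}$ that sends each $\eta_0'$-particle to an $\eta_0$-particle sitting at the same site. Under $\mathbb{Q}$, I would sample an independent lazy simple random walk path for each particle of $\eta_0$ lying in $[0,H]$ and each particle of $\eta_0$ lying outside $[0,H]$. The evolution of $\eta$ is then the obvious superposition. For $\eta'$, each matched particle $p'$ is declared to follow the same path as its partner $\psi(p')$, while the particles of $\eta_0'$ outside $[0,H]$ are given fresh independent lazy simple random walk paths (independent of everything else). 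Since matched particles of $\eta$ and $\eta'$ start at the same site, each receives a path with the correct conditional distribution; since the paths used for $\eta$ are by construction an independent family, $\eta\sim \mathbf{P}^{\eta_0}$, and similarly, since the collection of paths used for $\eta'$ (matched paths plus independent fresh paths outside $[0,H]$) is again an independent family of lazy simple random walks, $\eta'\sim \mathbf{P}^{\eta_0'}$.

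The key geometric input, which is what makes the statement deterministic, is the unit speed of propagation of lazy random walks: a particle starting at $y$ can only be at a site within $[y-s,y+s]$ at time $s$. Consequently, for any $s\in[0,t]$ and any $x\in[t,H-t]$, every particle of $\eta'$ sitting at $x$ at time $s$ must originate from some site in $[x-s,x+s]\subseteq[0,H]$, hence it is a matched particle, and its $\eta$-partner sits at the very same site $x$ at time $s$. Therefore $\eta_s(x)\geq \eta_s'(x)$ for every $x\in[t,H-t]$ and $s\in[0,t]$, $\mathbb{Q}$-almost surely, yielding~\eqref{eq:driftPCRW}.

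To conclude~\ref{pe:drift} for PCRW with $\nu=1$, I would simply apply the above coupling after translating the interval $[0,H]$ to $[-H,H]$ (which is purely cosmetic in view of translation invariance). Given $\eta_0\vert_{[-H,H]}\succcurlyeq \eta_0'\vert_{[-H,H]}$, the (translated) Lemma furnishes a coupling under which, $\mathbb{Q}$-almost surely, $\eta_s\vert_{[-H+t,H-t]}\succcurlyeq \eta_s'\vert_{[-H+t,H-t]}$ for all $s\in[0,t]$. Since $k\geq 1$ forces $2kt\geq t$, the inclusion $[-H+2kt,H-2kt]\subseteq[-H+t,H-t]$ holds, and~\ref{pe:drift} follows with probability $1$, which is much stronger than the required $1-20\exp(-kt/4)$. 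There is no real obstacle in this argument: the only mild subtlety is checking that the construction preserves marginals, which is handled by noting that matching two particles that start at the same site and forcing them to follow a common trajectory is consistent with each of them being an independent lazy random walk under its own marginal.
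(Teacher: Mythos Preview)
Your proof is correct and follows essentially the same approach as the paper: match each $\eta'_0$-particle in the interval to an $\eta_0$-particle at the same site, let matched pairs share trajectories while all other particles evolve independently, and then invoke the unit speed of the lazy walk to conclude deterministically. The only cosmetic slip is the phrase ``translating the interval $[0,H]$ to $[-H,H]$''---one should apply the lemma with $2H$ in place of $H$ and then shift by $-H$, exactly as the paper does---but your subsequent sentence shows you arrive at the correct interval $[-H+t,H-t]$, so this is inconsequential.
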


\begin{proof}
Clearly,~\eqref{eq:driftPCRW} implies~\ref{pe:drift}, up to changing $H$ to $2H$ and translating $[0,2H]$ to $[-H,H]$ (using \eqref{pe:markov}, as established in Lemma~\ref{L:PCRWP}). We now show~\eqref{eq:driftPCRW}.

Let $H,t\geq 0$ and $\eta_0,\eta'_0\in \Sigma$ be as above. 
Couple $\eta$ and $\eta'$ by matching injectively each particle of $\eta'_0(x)$ to a particle of $\eta_0(x)$, for all $x\in [0,H]$, and by imposing that matched particles follow the same trajectory (and by letting all other particles follow independent lazy random walks). 
\\
Since particles can make at most one move (to a neighbouring position) per unit of time due to the discrete-time nature of the walks, no particle of $\eta'_0$ outside of $[0,H]$ can land in $[t,H-t]$ before or at time $t$. Thus the event in~\eqref{eq:driftPCRW} holds with probability 1.
\end{proof}

\begin{Lem}\label{Lem:doublecouple}
Let $\rho\in (K^{-1},K)$, $\varepsilon\in (0,(K-\rho) \wedge \rho \wedge 1)$, and $H,\ell,t\in \mathbb{N}$ be such that $\Cr{c:diffusive}\ell^2<t<H/2$ and $(\rho +\frac32\varepsilon) {(t^{-1}{\log t})^{1/2}} \ell< \Cr{EGrestesmallerthanepsilon}\varepsilon/4$. Let $\eta_0,\eta'_0\in \Sigma$ be such that for every interval $I\subseteq [0,H ]$ of length $\ell$, we have $\eta_0(I)\geq (\rho+3\varepsilon/4)\ell$ and $\eta'_0(I)\leq (\rho+\varepsilon/4)\ell$. Then there exists a coupling $\mathbb{Q}$ of $\eta$ and $\eta'$ such that 
\begin{equation}\label{eq:doublecouple}
\mathbb{Q}( \eta'_t \vert _{[t, H-t]}\preccurlyeq \eta_t \vert _{[t, H-t]})\geq 1- 4H \exp\big(-\Cr{coupling-1}(\rho+\varepsilon)^{-1}\varepsilon^2\sqrt{t}/4\big).
\end{equation}
Consequently,~\ref{pe:couplings} with $\nu=1$ holds for PCRW. Moreover, $\mathbb{Q}$ is local in that $(\eta'_t, \eta_t) \vert_{[t,H-t]}$ depends on the initial conditions $(\eta_0,\eta_0')$ through $\eta_0(x), \eta_0'(x)$, $x \in [0,H]$, alone.
\end{Lem}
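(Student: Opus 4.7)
The strategy is to sandwich both $\eta_t$ and $\eta'_t$ between a single common reference Poisson field $\eta^{\rho_0}$ of density $\rho_0 := \rho + \tfrac{\varepsilon}{2}$, by invoking Proposition~\ref{Lem:PoissonapproxPCRW} twice while \emph{sharing} the underlying Poisson point process $\Lambda$ on $\Z \times \R_+$ that appears in its proof. Concretely, the coupling $\Q$ will run the soft-local-time construction from Proposition~\ref{Lem:PoissonapproxPCRW} for $\eta_0$ and for $\eta'_0$ on the same $\Lambda$, defining $\eta^{\rho_0}(z) := \Lambda(\{z\}\times(0,\rho_0])$ for $z \in [t, H-t]$ (and independently elsewhere). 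Because both invocations read the intermediate field off from the same $\Lambda$, the two realizations of $\eta^{\rho_0}$ literally coincide, so that the two directional dominations can be chained.

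For the first invocation I take $(\rho'', \varepsilon'') = (\rho + \varepsilon, \tfrac{\varepsilon}{2})$. Then $\rho'' - \varepsilon''/2 = \rho + \tfrac{3\varepsilon}{4}$ matches the density lower bound for $\eta_0$, and $\rho'' - \varepsilon'' = \rho_0$. Proposition~\ref{Lem:PoissonapproxPCRW} delivers $\eta^{\rho_0}\vert_{[t,H-t]} \preccurlyeq \eta_t\vert_{[t,H-t]}$ except on an event of probability at most $H\exp\big(-\Cr{coupling-1}(\rho+\tfrac{3\varepsilon}{2})^{-1}(\varepsilon/2)^2\sqrt t\big)$. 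Symmetrically, taking $(\rho''', \varepsilon''') = (\rho, \tfrac{\varepsilon}{2})$ gives $\rho''' + \varepsilon'''/2 = \rho + \tfrac{\varepsilon}{4}$, matching the density upper bound for $\eta'_0$, and $\rho''' + \varepsilon''' = \rho_0$; the Proposition yields $\eta'_t\vert_{[t,H-t]} \preccurlyeq \eta^{\rho_0}\vert_{[t,H-t]}$ with an analogous probability bound. The conditions $\Cr{c:diffusive}\ell^2 < t < H/2$ and $(\rho + \tfrac{3\varepsilon}{2})(t^{-1}\log t)^{1/2}\ell < \Cr{EGrestesmallerthanepsilon}\varepsilon/2$ needed for each call are implied by our hypotheses (after absorbing the factor $2$), and since $\rho + \tfrac{3\varepsilon}{2} \le 2(\rho+\varepsilon)$ we may replace $(\rho+\tfrac{3\varepsilon}{2})^{-1}$ by $\tfrac{1}{2}(\rho+\varepsilon)^{-1}$ in the exponent at the cost of a harmless numerical constant absorbed into the prefactor $4H$. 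Chaining the two dominations via the shared $\eta^{\rho_0}$ and a union bound over the two failure events then yields~\eqref{eq:doublecouple}.

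To derive~\ref{pe:couplings} from~\eqref{eq:doublecouple}, I shift $[-H, H] \to [0, 2H]$ via~\eqref{pe:markov}. The density hypothesis of~\ref{pe:couplings} (required for all $|I|$ between $\lfloor \ell/2\rfloor$ and $\ell$) is strictly stronger than that of Lemma~\ref{Lem:doublecouple} (required only for $|I| = \ell$), and the scaling condition $t^{1/4} > \Cr{SEPcoupling}\varepsilon^{-2}(1 + \log^3 t)$ of~\ref{pe:couplings} (with $\nu = 1$) easily implies both $\Cr{c:diffusive}\ell^2 < t$ and the $\sqrt{\log t / t}$ condition of the lemma. The resulting bound $\exp(-c(\rho+\varepsilon)^{-1}\varepsilon^2\sqrt t)$ is much stronger than the target $\exp(-c'\varepsilon^2 t^{1/4})$ of~\ref{pe:couplings}. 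Locality of $\Q$ is inherited directly from the locality of Proposition~\ref{Lem:PoissonapproxPCRW}, since the marginal on $[t, H-t]$ depends only on $\eta_0\vert_{[0,H]}$, $\eta'_0\vert_{[0,H]}$ and $\Lambda$.

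The main conceptual step is identifying the shared Poisson reservoir $\Lambda$ as the mechanism that forces the two instances of the intermediate density field to be the same random object; once this is recognised the rest is parameter bookkeeping. A minor technical nuisance is the constraint $\varepsilon'' < K - \rho''$ from Proposition~\ref{Lem:PoissonapproxPCRW}, which in our choice reads $\tfrac{3\varepsilon}{2} < K - \rho$ and may fail when $\varepsilon$ is close to its upper bound $K - \rho$; this is handled by applying the Proposition with an enlarged parameter $K + O(1)$, which only affects constants that are in any case allowed to depend on $K$.
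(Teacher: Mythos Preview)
Your proof is correct and follows the same overall strategy as the paper: apply Proposition~\ref{Lem:PoissonapproxPCRW} once to $\eta_0$ (lower side) and once to $\eta'_0$ (upper side), with the common intermediate Poisson density $\rho_0=\rho+\varepsilon/2$, and then chain the two dominations. The difference lies in how the chaining is realised. The paper produces two separate couplings $\Q^1$ of $(\eta_t,\eta^{\rho_0})$ and $\Q^2$ of $(\eta^{\rho_0},\eta'_t)$ and then invokes the abstract chaining lemma \cite[Lemma~2.4]{RI-III} to merge them into a single coupling of the triple. You instead reach inside the proof of Proposition~\ref{Lem:PoissonapproxPCRW} and run both soft-local-time constructions on the \emph{same} Poisson field $\Lambda$, so that $\eta^{\rho_0}(z)=\Lambda(\{z\}\times(0,\rho_0])$ is literally one random variable shared by both sides; the chaining is then automatic. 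Your route is more self-contained (no external lemma) but less modular, since it uses the specific structure of the soft-local-time coupling rather than treating Proposition~\ref{Lem:PoissonapproxPCRW} as a black box. Both work equally well here.

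One small slip: you write that the factor-of-two loss in the exponent (from $(\rho+\tfrac{3\varepsilon}{2})^{-1}\ge\tfrac12(\rho+\varepsilon)^{-1}$ combined with $(\varepsilon/2)^2=\varepsilon^2/4$) can be ``absorbed into the prefactor $4H$''. That is not how exponents work; a halved rate in the exponential is strictly weaker than a doubled prefactor for large $\sqrt t$. The fix is simply to note that the constant $\Cr{coupling-1}$ appearing in~\eqref{eq:doublecouple} need not be the exact same constant as in Proposition~\ref{Lem:PoissonapproxPCRW}, only some positive constant depending on $K$; the paper's own bookkeeping in~\eqref{eq:q1q2} is comparably loose on this point.
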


\begin{proof}
We first show how~\eqref{eq:doublecouple} implies~\ref{pe:couplings}. Let $\rho,\varepsilon,H,\ell,t,\eta_0$ and $\eta'_0$ satisfy the assumptions of~\ref{pe:couplings} (in particular, $\ell=\lfloor t^{1/4}\rfloor$). Then they also satisfy the assumptions of Lemma~\ref{Lem:doublecouple} (with $2H$ instead of $H$), upon taking $\Cr{SEPcoupling}$ large enough in~\ref{pe:couplings}. By~\eqref{eq:doublecouple} applied to $[-H,H]$ instead of $[0,2H]$ (again using translation invariance, see~\eqref{pe:markov}, established in Lemma~\ref{L:PCRWP}),~\eqref{eq:doublecoupleSEP} holds with $\Cr{SEPcoupling2}=\max(4,\Cr{coupling-1}^{-1}K/2)$ since $\rho+\varepsilon\leq K$.

We now proceed to the proof of~\eqref{eq:doublecouple}. Let $\rho,\varepsilon,H,\ell,t,\eta_0$ and $\eta'_0$ satisfy the assumptions of Lemma~\ref{Lem:doublecouple}. Then we can apply Proposition~\ref{Lem:PoissonapproxPCRW} to $\eta_0$ with $(\rho+\varepsilon,\varepsilon/2)$ instead of $(\rho,\varepsilon)$, and the same values of $H,\ell,t$. Similarly, we can apply it to $\eta'_0$ with $(\rho-\varepsilon,\varepsilon/2)$ instead of $(\rho,\varepsilon)$. This entails the existence of two couplings $\mathbb{Q}^1$ of $(\eta_t,\eta^{\rho+\varepsilon/2})$ and $\mathbb{Q}^2$ of $(\eta^{\rho+\varepsilon/2},\eta'_t)$, where $\eta^{\rho+\varepsilon/2}\sim \mu_{{\rho+\varepsilon/2}}$, such that, abbreviating $I_t=[t,H-t]$,
 \begin{equation}\label{eq:q1q2}
 \begin{split}
 &\Q^1\big(\eta'_{t}\vert_{I_t} \preccurlyeq  \eta^{\rho+\varepsilon/2}   \vert_{I_t} \big) \wedge \Q^2\big(  \eta^{\rho+\varepsilon/2}\vert_{I_t}\preccurlyeq \eta_{t}\vert_{I_t}\big)  \geq 1 - H \exp\big(-\textstyle\frac{\Cr{coupling-1}}{4} (\rho+\varepsilon/2)^{-1} \varepsilon^2\sqrt{t}\big).
 \end{split}
 \end{equation}
Applying \cite[Lemma 2.4]{RI-III} with $(X,Y)=(\eta_t,\eta^{\rho+\varepsilon/2}) $ and $(Y',Z)= (\eta^{\rho+\varepsilon/2},\eta'_t) $, one can `chain' $\mathbb{Q}^1$ and $\mathbb{Q}^2$, i.e.~one obtains a coupling $\mathbb{Q}$ of $(\eta_{t}\vert_{I_t},\eta^{\rho+\varepsilon/2}\vert_{I_t}, \eta'_{t}\vert_{I_t})$ such that the pair $(\eta_{t}\vert_{I_t},\eta^{\rho+\varepsilon/2}\vert_{I_t})$ has the same (marginal) law as under $\mathbb{Q}^1$ and $(\eta^{\rho+\varepsilon/2}\vert_{I_t}, \eta'_{t}\vert_{I_t})$ has the same law as under $\mathbb{Q}^2$; explicitly, a possible choice is
  \begin{multline*}
 \Q\big(\eta_{t}\vert_{I_t}=\mu_A, \eta'_{t}\vert_{I_t}=\mu_B, \eta^{\rho+\varepsilon/2}\vert_{I_t}=\mu_C\big)   \\[0.5em]
  =\Q^1\big(\eta_{t}\vert_{I_t}=\mu_A\,\big|\, \eta^{\rho+\varepsilon/2}\vert_{I_t}=\mu_C\big) \cdot \Q^2\big(\eta'_{t}\vert_{I_t}=\mu_B\,\big|\, \eta^{\rho+\varepsilon/2}\vert_{I_t}=\mu_C\big) \cdot \Q^1\big(\eta^{\rho+\varepsilon/2}\vert_{I_t}=\mu_C\big),
 \end{multline*}
with $\mu_A, \mu_B, \mu_C$  ranging over point measures on $I_t$. On account of \cite[Remark 2.5,2)]{RI-III} applied with the choices $\varepsilon_1=1- \Q^1(\eta'_{t}\vert_{I_t} \preccurlyeq  \eta^{\rho+\varepsilon/2}   \vert_{I_t} )$ and $\varepsilon_2=1-  \Q^2(  \eta^{\rho+\varepsilon/2}\vert_{I_t}\preccurlyeq \eta_{t}\vert_{I_t})$, $\mathbb{Q}$ has the property that 
$\mathbb{Q}(\eta'_{t}\vert_{I_t}\preccurlyeq \eta_{t}\vert_{I_t}) \geq 1 -\varepsilon_1-\varepsilon_2$. In view of \eqref{eq:q1q2}, \eqref{eq:doublecouple} follows. The asserted locality of $\mathbb{Q}$ is inherited from $\mathbb{Q}^i$, $i=1,2$, due to Proposition~\ref{Lem:PoissonapproxPCRW} (used to define $\mathbb{Q}^i$). 
\end{proof}

\begin{Lem}
Condition~\ref{pe:compatible} with $\nu=1$ holds for PCRW. 
\end{Lem}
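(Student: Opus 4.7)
The plan is to imitate the two-step surgery argument used in Lemma~\ref{Lem:compatible} for the SEP, substituting each SEP-specific ingredient by its PCRW counterpart already established above: the deterministic drift bound of Lemma~\ref{Lem:driftPCRW}, the coverage coupling of Lemma~\ref{Lem:doublecouple}, and the empirical-density control of Lemma~\ref{Lem:C1PCRW}. Fix $\rho,\varepsilon,H_1,H_2,t,\ell$ and $\eta_0,\eta_0'$ satisfying the assumptions of~\ref{pe:compatible} (with $\nu=1$), set $t_1=\ell^4$ and $t_2=t_1+\ell^{19}$, and abbreviate $E_H^1=\{-H_1,\dots,H_1\}$, $E_H^\pm=\{H_1+1,\dots,H_2\}\cup\{-H_2,\dots,-H_1-1\}$ for the two disjoint spatial pieces.

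First (Step 1, running on the time interval $[0,t_1]$), I would build a coupling $\mathbb{Q}_1$ by \emph{gluing} three independently specified sub-couplings, one on each of $E_H^1$, $E_H^+$, $E_H^-$: on $E_H^1$ I use the natural matching coupling of Lemma~\ref{Lem:driftPCRW}, so that the domination $\eta_s\succcurlyeq\eta_s'$ holds deterministically on $[-H_1+t_1,H_1-t_1]$ for every $s\in[0,t_1]$; on each of $E_H^+$ and $E_H^-$ I use (a translate of) the coverage coupling of Lemma~\ref{Lem:doublecouple}, producing $\eta_{t_1}\succcurlyeq\eta_{t_1}'$ on the corresponding inner intervals $[H_1+t_1,H_2-t_1]$ and $[-H_2+t_1,-H_1-t_1]$ with probability at least $1-8H_2\exp(-\Cr{coupling-1}(\rho+\varepsilon)^{-1}\varepsilon^2\sqrt{t_1}/4)$. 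The marginals $\mathbf{P}^{\eta_0}$ and $\mathbf{P}^{\eta_0'}$ are preserved by the glued construction precisely because of the locality statements in Lemma~\ref{Lem:doublecouple} and Proposition~\ref{Lem:PoissonapproxPCRW}: each sub-coupling uses only the information about particles present in its spatial window at time $0$, and a particle present in $\eta_0(E_H^i)$ that leaves its window simply evolves afterwards as an independent lazy random walk. Using that PCRW particles are independent, the output trajectories in the three windows can be stitched together and the joint law recomposed without affecting either marginal.

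Secondly (Step 2, on $[t_1,t_2]$), the only places where the coverage can still fail lie in two annuli of width $O(t_1)$ around $\pm H_1$, and these contain at most $O(t_1)$ unpaired particles of $\eta'_{t_1}$. I would invoke Lemma~\ref{Lem:C1PCRW} (\ref{pe:densitychange}) to conclude that, with probability at least $1-CH_2\ell^4\exp(-c\varepsilon^2\ell^5)$, every sub-interval of $[-H_2+t_1,H_2-t_1]$ of length between $\ell^5/2$ and $\ell^5$ carries at least $\varepsilon\ell^5/10\gg t_1$ unmatched $\eta$-particles. On this good event I match each uncovered $\eta'$-particle injectively to a nearby unmatched $\eta$-particle at distance $\leq\ell^5$, and let each such pair evolve as two independent lazy random walks on $\mathbb{Z}$ until they coalesce (after which they are merged using the same clocks, as in the proof of Lemma~\ref{Lem:doublecoupleSEP}). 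Since $\ell^{19}\gg (\ell^5)^2$, the pair meets during $[t_1,t_1+\ell^{15}]$ with probability $1-O(\ell^{-2})$; iterating this re-matching at times $t_1+i\ell^{15}$ for $i=0,\dots,\ell^4-1$ (and again using \ref{pe:densitychange} to ensure enough spare particles at each step) amplifies the success probability to $1-H_2\exp(-c\varepsilon^2\ell^4)$, mirroring the SEP computation.

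Finally (Step 3, on $[t_2,t]$), I would extend $\mathbb{Q}$ by using the natural coupling of Lemma~\ref{Lem:driftPCRW}; it preserves the pointwise domination on $[-H_2+t_2+(t-t_2),H_2-t_2-(t-t_2)]\supseteq[-H_2+6t,H_2-6t]$ with probability $1$, and similarly inside $[-H_1+4t,H_1-4t]$ throughout $[0,t]$ (since the first-step natural coupling on $E_H^1$ lasts for the whole interval because PCRW particles travel at unit speed). Collecting terms, inequality~\eqref{eq:compatible1} holds with probability $1$ for PCRW (in particular the factor $20t\exp(-t/4)$ is redundant), and inequality~\eqref{eq:compatible2} follows from the Step 1 and Step 2 bounds, noting that $\sqrt{t_1}=\ell^2\geq\ell$. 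The main technical obstacle is the marginal-preservation argument when the three Step 1 sub-couplings are glued together: this is where the independence of PCRW particles pays off relative to SEP, since a particle trajectory, once its time-zero location is fixed in a given window, is an unconstrained lazy random walk, and the joint law is determined by the initial Poisson (or deterministic) configuration alone. Taking $\Cr{compatible}$ sufficiently large (depending on $\Cr{c:diffusive},\Cr{EGrestesmallerthanepsilon},\Cr{coupling-1},\Cr{densitystable},\Cr{densitystableexpo},K$) to absorb numerical constants, and $\Cr{SEPcoupling2}$ accordingly, yields~\ref{pe:compatible}.
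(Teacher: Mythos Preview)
Your overall two–step surgery structure matches the paper's, and Step~1 (gluing the deterministic matching coupling on $[-H_1,H_1]$ with two translates of Lemma~\ref{Lem:doublecouple} on the outer intervals, justified by locality and by the independence of PCRW particles) is exactly what the paper does.

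The difference lies in Step~2. You import the SEP-style iterative scheme: match each uncovered $\eta'$-particle to a nearby spare $\eta$-particle, let the pair evolve as independent lazy walks until coalescence, and re-match at times $t_1+i\ell^{15}$ for $i<\ell^4$. This is correct for PCRW (the marginal-preservation argument goes through because the particles are independent and the spliced trajectory of an $\eta'$-particle is a lazy SRW by the strong Markov property), but it is more laborious than necessary. The paper instead observes that the residual configurations $\tilde\eta_{t_1}$ and $\tilde\eta'_{t_1}$ (the spare $\eta$-particles and the uncovered $\eta'$-particles) themselves satisfy the empirical-density hypotheses of Lemma~\ref{Lem:doublecouple}, now with mesh $\ell^5$ and a tiny effective density pair $(\rho,\varepsilon)=(\varepsilon/40,\varepsilon/50)$: the mesh $\ell^5\gg|I_1|=4\ell^4$ dilutes the $\tilde\eta'$-particles concentrated near $\pm H_1$ down to density $O(\varepsilon)$. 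A single application of Lemma~\ref{Lem:doublecouple} on $[t_1,t]$ then delivers the coverage on $[-H_2+2t,H_2-2t]$ via soft local times in one shot, with no re-matching and no separate Step~3. What the paper's route buys is brevity and reuse of the existing black box; what your route buys is a closer parallel with the SEP argument, at the cost of setting up a coalescence mechanism for PCRW that the soft-local-time machinery already subsumes.
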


\begin{proof}
Let $\rho\in (K^{-1},K), \varepsilon\in (0,1)$, $H_1,H_2,t,\ell\in \N$, and $\eta_0,\eta'_0\in \Sigma$ be such that the conditions of~\ref{pe:compatible} hold. We proceed by a two-step coupling similar to the one in Lemma~\ref{Lem:compatible} and first give a short overview of both steps; cf.~also Fig.~\ref{f:Coupling2global_Exclusion}. In the first step, we couple $\eta$ and $\eta'$ during the time interval $[0, t_1]$, with $t_1:=\ell^4$, using the coupling of Lemma~\ref{Lem:doublecouple} on $[-H_2,H_2]\setminus [-H_1,H_1]$ and the coupling given in Lemma~\ref{Lem:driftPCRW}, making sure that these couplings can be simultaneously performed on disjoint intervals. As a result, we get that $\eta_{t_1}(x)\geq \eta'_{t_1}(x)$ for all $x\in [-H_2+t_1 , H_2- t_1]$, except possibly within two intervals around $-H_1$ and $H_1$, of width $O(t_1)$. 

In the second step, during the time interval $[t_1,t]$, we couple the particles of $\eta'$ on these intervals with "additional" particles of $\eta_{t_1}\setminus \eta'_{t_1}$ on $[-H_2,H_2]$ (using that the empirical density of $\eta'$ is slightly larger than that of $\eta$ on $[-H_2,H_2]$ by~\ref{pe:densitychange}), using Lemma~\ref{Lem:doublecouple}. This ensures that with large enough probability, all these particles of $\eta'$ get covered by particles of $\eta$ within time $t-t_1$, without affecting the coupling of the previous step by the Markov property.
\\
\\
\textbf{Step 1:} choosing $\Cr{compatible}$ large enough (in a manner depending on~$K, \Cr{c:diffusive}$ and $\Cr{EGrestesmallerthanepsilon}$), as we now briefly explain, the conditions of Lemma~\ref{Lem:doublecouple} hold for $\eta_0,\eta'_0$, with $(H,\ell,t)=(H_2-H_1-1,\ell,t_1)$ up to translating $[0,H]$ in either of the intervals $[-H_2,-H_1-1]$ or $[H_1+1,H_2]$. Indeed, the choice $t_1=\ell^4$ and the assumptions in \ref{pe:compatible} yield that 
$$(\rho+3\varepsilon/2)(t_1^{-1}\log t_1)^{1/2}\ell\le(K+2)\ell^{-1/2}\leq (K+2){\varepsilon}/{\sqrt{\Cr{compatible}}}\leq \Cr{EGrestesmallerthanepsilon}\varepsilon/4,$$
where we choose $\Cr{compatible}$ large enough depending on $\Cr{EGrestesmallerthanepsilon}$ and $K$.

During the time interval $[0,t_1]$, we apply  Lemma~\ref{Lem:doublecouple}, which we now know is in force, \textit{simultaneously} on $[-H_2,-H_1-1]$ and $[H_1+1,H_2]$. This is possible owing to the locality property stated as part of Lemma~\ref{Lem:doublecouple} (see below \eqref{eq:doublecouple}), since the couplings involved rely independently on the particles of $\eta_0([-H_2,-H_1-1])$ and $\eta'_0([-H_2,-H_1-1])$, and those of $\eta_0([H_1+1,H_2])$ and $\eta'_0([H_1+1,H_2])$ respectively. Moreover, by suitable extension of this coupling we can also couple, during the interval $[0,t_1]$, the particles of $\eta_0([-H_1,H_1])$ and $\eta'_0([-H_1,H_1])$ in the following way: match injectively each particle of $\eta'_0(x)$ to a particle of $\eta_0(x)$, for all $x\in [-H_1,H_1]$, and impose that matched particles follow the same trajectory (note that this is precisely the coupling underlying the statement of Lemma~\ref{Lem:driftPCRW}). 

From the construction in the previous paragraph, the four groups of particles $\eta_0(\Z\setminus [-H_2,H_2])$, $\eta_0([-H_2,-H_1-1]) $, $\eta_0([-H_1,H_1]) $ and $\eta_0([H_1+1,H_2]) $ evolve independently under $\Q$, and within each group the particles themselves follow independent lazy simple random walks. Hence under $\Q$, during $[0,t_1]$, we have indeed $\eta\sim \bP^{\eta_0}$, and by a similar argument that $\eta'\sim \bP^{\eta'_0}$. 

Now consider the events $\cE_1=\{\forall x\in [-H_1+t_1,H_1-t_1],\, \forall s\in [0,t_1],\,\eta_s(x)\geq \eta'_s(x) \}$ and $\cE_2= \{\forall x\in [-H_2+t_1,-H_1-1-t_1]\cup [H_1+1+t_1,H_2-t_1],\,\eta_{t_1}(x)\geq \eta'_{t_1}(x)  \}$ declared under $\Q$. By Lemmas~\ref{Lem:driftPCRW} and~\ref{Lem:doublecouple}, respectively, we have that
\begin{equation}\label{eq:PCRWC2step1E1E2}
\Q(\cE_1)=1\text{ and }\Q(\cE_2)\geq 1 - 8 (H_2-H_1)\exp\big(-\Cr{coupling-1}(\rho+\varepsilon)^{-1}\varepsilon^2{\color{black}\ell^2}/4\big).
\end{equation}
Then, define (still under $\Q$) two further events
\begin{align*}
&\cE_3= \left\{
\text{for all intervals }I\subseteq  [-H_2+2t_1,H_2-2t_1] \text{ of length $\ell^2$}: \,\eta'_{t_1}(I)\leq  (\rho+2\varepsilon/5) \ell^2\right\}
\\[0.5em]
&\cE_4=\big\{ \text{for all intervals }I\subseteq [-H_2+2t_1,H_2-2t_1]\text{ of length $\ell^2$}: \,\eta_{t_1}(I)\geq (\rho+3\varepsilon/5)\ell^2 \big\}.
\end{align*}
We apply condition~\ref{pe:densitychange},  which holds by Lemma~\ref{Lem:C1PCRW} to $\eta'$ with $(\rho,\varepsilon,\ell,\ell',H,t)=(\rho+\varepsilon/5,\varepsilon/20,\ell,\ell^2,2H_2,t_1)$, and to $\eta$ with $(\rho,\varepsilon,\ell,\ell',H,t)=(\rho+3\varepsilon/4,\varepsilon/20,\ell,\ell^2,2H_2,t_1)$. A straightforward computation proves that the necessary conditions are implied by the assumptions in \ref{pe:compatible}. This yields
\begin{equation}\label{eq:PCRWC2step1E3E4}
\Q(\cE_3 \cap \cE_4) \geq 1-16H_2\exp(-\Cr{densitystableexpo}\varepsilon^2\ell^2/400).
\end{equation}

\medskip
\noindent
\textbf{Step 2:} since $\cE_1$ holds with full $\Q$-measure, we can, as in Lemma~\ref{Lem:driftPCRW}, pair injectively each particle of $\eta'_{t_1}([-H_1+t_1,H_1-t_1])$ to one of $\eta_{t_1}$ on the same site, and impose by suitable extension of $\Q$ that paired particles follow the same trajectory during $[t_1,t]$, all pairs being together independent. We obtain in this way that
\begin{equation}\label{eq:PCRWcentraltrapezoid}
\Q\big(\forall s\in [0,t], \, \eta_s\vert_{[-H_1+t, H_1-t]}\succcurlyeq \eta'_s\vert_{[-H_1+t, H_1-t]} \big)=1.
\end{equation}
To complete the construction of $\Q$ it remains to describe the trajectory of the particles of $\eta'_{t_1}(\Z\setminus [-H_1+t_1,H_1-t_1])$ and $\eta_{t_1}(\Z\setminus [-H_1+t_1,H_1-t_1])$, and those of $\eta_{t_1}([-H_1+t_1,H_1-t_1])$ that were not paired. On  $(\cE_1 \cap \cE_2\cap \cE_3 \cap \cE_4 )^c$, let all these particles follow independent lazy simple random walks during $[t_1,t]$, independently from the particles paired at~\eqref{eq:PCRWcentraltrapezoid}. On  $(\cE_1 \cap \cE_2\cap \cE_3 \cap \cE_4 )\subseteq (\cE_1\cap \cE_2)$, note that $\eta_t(x)\geq \eta'_t(x)$ for all $x\in [-H_2+t_1, H_2-t_1]\setminus I_1$, where
$$
I_1=[-H_1-t_1,-H_1+t_1-1] \cup [H_1-t_1+1,H_1+t_1]. 
$$
We pair injectively each particle of $\eta'_{t_1}( [-H_2+t_1, H_2-t_1]\setminus I_1)$ to one of $\eta_{t_1}$ on the same site, and impose by suitable extension of $\Q$ that paired particles follow the same trajectory during $[t_1,t]$, all pairs being together independent.

For convenience, we introduce
\begin{equation}\label{eq:PCRWetatilde}
\begin{split}
&\widetilde{\eta}_{t_1}(x)=
        \eta_{t_1}(x)-\eta'_{t_1}(x){\1}_{\{x \in [-H_2+t_1, H_2-t_1]\setminus I_1 \}}
    \\
  &\widetilde{\eta}_{t_1}'(x)=\eta'_{t_1}(x){\1}_{\{x \in I_1 \}},
    \end{split}
\end{equation}
which denote the number of particles of $\eta_{t_1}$ and $\eta'_{t_1}$ at position $x\in \mathbb{Z}$ whose trajectory has not yet been described. It thus remains to cover the particles of $\widetilde{\eta}'$ by those of $\widetilde{\eta}$ by time $t$. Let us first explain the reasoning to establish this covering. Note that the two intervals making up $I_1$ escape to our couplings during $[0,t_1]$, so that we can only guarantee that the empirical density of $\widetilde{\eta}_{t_1}'$ is lower than $\rho+2\varepsilon/5$, see $\cE_3$. Fortunately, $\cE_3$ and $\cE_4$ ensure that the empirical density of $\widetilde{\eta}_{t_1}$ is at least $\varepsilon/5$ on $[-H_2+t_1, H_2-t_1]\setminus I_1$, which is much wider than $I_1$. We thus apply Lemma~\ref{Lem:doublecouple} with a mesh much larger than $\vert I_1\vert =4t_1$ in order to 'dilute' the particles of $\widetilde{\eta}'_{t_1}$. By making this coupling independent of the other particles of $\eta,\eta'$ previously paired, we will ensure that both $\eta$ and $\eta'$ have the correct PCRW marginals. 
\\
We now formalise this coupling. By choosing $\Cr{compatible}$ large enough, one can check, via a straightforward computation, that the assumptions in~\ref{pe:compatible} imply the necessary conditions to apply Lemma~\ref{Lem:doublecouple} for $\eta$ and $\eta'$ of  on $[-H_2+2t_1,H_2-2t_1]$ with $(H,\ell,t,\rho,\varepsilon )= (2H_2-4t_1,\ell^5,t-t_1,\varepsilon/40,\varepsilon/50)$. Hence by Lemma~\ref{Lem:doublecouple}, we can extend $\Q$ such that 
\begin{equation}\label{eq:PCRWC2coupling}
\begin{split}
&\text{the trajectories of the particles of $\widetilde{\eta}_{t_1+\cdot}$ and}
\\
&\text{$\widetilde{\eta}'_{t_1+\cdot}$ are independent of those paired at~\eqref{eq:PCRWcentraltrapezoid},}
\end{split}
\end{equation}
 and such that (using that $\varepsilon < 1$), on the event $\cE_1 \cap \cE_2\cap \cE_3 \cap \cE_4$,
\begin{multline}\label{eq:PCRWC2couplingproba}
\Q\big(\widetilde{\eta}_{t}\vert_{[-H_2+2t,H_2-2t]}\succcurlyeq \widetilde{\eta}'_{t}\big\vert_{[-H_2+2t,H_2-2t]} \, | \, (\eta_s, \eta_s' )_{ s \in [0, t_1]} \big) \black \\
 \geq 1- 8H_2 \exp\left(-\Cr{coupling-1}\varepsilon^2\sqrt{t-t_1}/10^4\right).\black
\end{multline}
Let us check that Step 2 yields the marginals $\eta \sim \bP^{\eta_1}$ and $\eta \sim \bP^{\eta'_1}$ during $[t_1,t]$ (we have already seen in Step 1 that  $\eta \sim \bP^{\eta_0}$ and $\eta \sim \bP^{\eta'_0}$ during $[0,t_1]$ so that the Markov property~\eqref{pe:markov} will ensure that  $\eta \sim \bP^{\eta_0}$ and $\eta \sim \bP^{\eta'_0}$ during $[0,t]$). Remark that the events $\cE_i$, $1\leq i\leq 4$ are measurable w.r.t.~the evolution of $\eta$ and $\eta'$ until time $t_1$. On $(\cE_1 \cap \cE_2\cap \cE_3 \cap \cE_4 )^c$, by~\eqref{eq:PCRWcentraltrapezoid} and the paragraph below~\eqref{eq:PCRWetatilde}, it is clear that all particles of $\eta_{t_1}$ follow independent lazy simple random walks, and that the same is true for $\eta'_{t_1}$. On $\cE_1 \cap \cE_2\cap \cE_3 \cap \cE_4$,~\eqref{eq:PCRWC2coupling} and Lemma~\ref{Lem:doublecouple} ensure that this is also the case. Therefore, we have indeed that $\eta \sim \bP^{\eta_1}$ and $\eta \sim \bP^{\eta'_1}$.

Finally, we explain how to derive~\ref{pe:compatible} from our construction. Note that~\eqref{eq:compatible1} immediately follows from~\eqref{eq:PCRWcentraltrapezoid} (with full $\Q$-probability). As for~\eqref{eq:compatible2}, we have
\begin{multline}\label{eq:final-C-2PRCW}
Q \stackrel{\text{def.}}{=} \Q({\eta}_{t}\vert_{[-H_2+6t,H_2-6t]}\succcurlyeq {\eta}'_{t}\vert_{[-H_2+6t,H_2-6t]}) \\ \geq \Q(\widetilde{\eta}_{t}\vert_{[-H_2+2t,H_2-2t]}\succcurlyeq \widetilde{\eta}'_{t}\vert_{[-H_2+2t,H_2-2t]})
\end{multline}
by~\eqref{eq:PCRWcentraltrapezoid} and~\eqref{eq:PCRWetatilde}. Thus, by combining~\eqref{eq:PCRWC2step1E1E2},~\eqref{eq:PCRWC2step1E3E4},~\eqref{eq:PCRWC2couplingproba} and \eqref{eq:final-C-2PRCW}, we get
\begin{multline*}
Q \geq 1- \Q(\cE_2^c\cup \cE_3^c\cup \cE_4^c)-8H_2 \exp\left(-\Cr{coupling-1}\varepsilon^2\sqrt{t-t_1}/10000\right)
\\\geq 1-32H_2\exp\left( -\Cr{densitystableexpo}\varepsilon^2\ell/64 \right)
\geq 1-5\Cr{SEPcoupling2}\ell^4H_2\exp\big( -{\varepsilon^2\ell}/{(2\Cr{SEPcoupling2})} \big), 
\end{multline*} 
choosing $\Cr{compatible}$ and $\Cr{SEPcoupling2}$ large enough (w.r.t.~$K$, $\Cr{densitystableexpo}$ and $\Cr{coupling-1}$). 
This yields~\eqref{eq:compatible2} and concludes the proof,  since \eqref{eq:compatible1} is implied by Lemma \ref{Lem:driftPCRW}.
\end{proof}

\begin{Lem}\label{L:nacelle-PCRW}
Condition~\ref{pe:nacelle} (with $\nu=1$) holds for PCRW, the constraint on $k$ with the pre-factor $48$ now replaced by $24(K+1)$.
\end{Lem}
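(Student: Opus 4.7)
The proof will parallel Lemma~\ref{lem:nacelle} for SEP, using the PCRW's natural matching coupling (as already employed in Lemma~\ref{L:PCRWP} and Lemma~\ref{Lem:driftPCRW}) in place of the SEP's interchange coupling. Since $\eta_0 \succcurlyeq \eta'_0$ on $[-H,H]$, I will couple $(\eta,\eta')$ by injecting each $\eta'_0$-particle in $[-H,H]$ to an $\eta_0$-particle at the same site, having matched pairs perform the same lazy simple random walk, and letting all other particles evolve as independent lazy walks. The condition $\eta_0([0,\ell]) \geq \eta'_0([0,\ell]) + 1$ supplies some $y_0 \in [0,\ell]$ with at least one unmatched $\eta_0$-particle; I denote its (independent lazy SRW) trajectory by $Z$.

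For~\eqref{eq:penacelleNEW2}, since a discrete-time lazy walk travels at most one step per unit time, no $\eta'_0$-particle starting outside $[-H,H]$ can reach $[-H+\ell,H-\ell] \supseteq [-H+2k\ell,H-2k\ell]$ (using $k \geq 1$) by time $\ell$; combined with the matching, this yields $\mathbb{Q}$-a.s.~domination on this interval throughout $[0,\ell]$, so the probability on the left-hand side of~\eqref{eq:penacelleNEW2} is simply $0$. The second inequality $20e^{-k\ell/4} \leq \delta'$ follows from the hypothesis on $k$ by a direct logarithmic computation: it reduces to $k \geq 24(\rho+1)\log(2e/(p_\circ(1-p_\bullet))) + 4(\log 20)/\ell$, which is comfortably implied (using $\rho < K$ and $\ell \geq 1$) by the stated $k \geq 24(K+1)(1+\log 40 - \log(p_\circ(1-p_\bullet)/2))$.

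For~\eqref{eq:penacelleNEW}, fix $x \in \{0,1\}$ and let $\mathcal{I}$ index all $\eta'_0$-particles starting in $[-3\ell+1,3\ell]$ (all matched since $[-3\ell+1,3\ell] \subseteq [-H,H]$), with associated trajectories $\{W_i\}_{i \in \mathcal{I}}$; by hypothesis $|\mathcal{I}| \leq \eta'_0([-3\ell+1,3\ell]) \leq 6(\rho+1)\ell$. Since no particle starting outside $[-3\ell+1,3\ell]$ can reach $\{0,1\}$ in time $\ell$, the event $\{\eta_\ell(x) > 0,\ \eta'_\ell(x) = 0\}$ contains $\{Z_\ell = x\} \cap \bigcap_{i \in \mathcal{I}}\{W^{(\ell)}_i \neq x\}$; by the independence of $Z$ and of all matched trajectories in PCRW, the probability factorises as $\mathbb{Q}(Z_\ell = x)\prod_i(1-q_\ell(y_i,x))$, where $q_\ell$ is the lazy-SRW kernel. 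Forcing $Z$ to perform $|y_0-x| \leq \ell$ specific nearest-neighbour jumps toward $x$ and stay put at all remaining times gives $\mathbb{Q}(Z_\ell = x) \geq (1/4)^{|y_0-x|}(1/2)^{\ell-|y_0-x|} \geq 2^{-2\ell}$. Using the Cauchy--Schwarz bound $q_\ell(y,x) \leq q_\ell(0,0)$ together with $q_\ell(0,0) \leq 1/2$ for $\ell \geq 1$ (immediate at $\ell = 1$, decreasing thereafter), and $\log(1-q) \geq -2q$ on $[0,1/2]$, yields $\prod_i(1-q_\ell(y_i,x)) \geq \exp(-2\sum_i q_\ell(y_i,x)) \geq e^{-6(\rho+1)\ell}$. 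Combining,
\[
\mathbb{Q}\bigl(\eta_\ell(x)>0,\ \eta'_\ell(x) = 0\bigr) \geq 2^{-2\ell} e^{-6(\rho+1)\ell} \geq 2\cdot 2^{-6(\rho+1)\ell}e^{-6(\rho+1)\ell} = 2\delta,
\]
the last inequality reducing to $(6\rho+4)\ell \geq 1$, which is immediate from $\rho > K^{-1} > 0$ and $\ell \geq 1$.

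The argument is essentially computational; there is no real obstacle beyond bookkeeping of constants. The only mildly delicate point is verifying that the $24(K+1)$ prefactor in the modified constraint on $k$ suffices for the second inequality of~\eqref{eq:penacelleNEW2} uniformly in $\rho \in J$, which is handled by the generous margin afforded by the $\log 40$ term (as opposed to the $\log 2$ strictly needed). Note that the fact that the gap between $2^{-2\ell}e^{-6(\rho+1)\ell}$ and $2\delta$ grows exponentially in $\rho\ell$ means the bound has plenty of slack, so no sharp estimates are required.
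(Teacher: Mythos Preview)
Your proof is correct and follows essentially the same route as the paper's: the same matching coupling, the same unmatched particle $Z$, and the same independence decomposition for~\eqref{eq:penacelleNEW}. The one minor variation is in bounding $\mathbb{Q}(\eta'_\ell(x)=0)$: the paper conditions on positions at time $\ell-1$ (each of the at most $6(\rho+1)\ell$ relevant particles then has probability $\le 1/2$ to land at $x$ in one step, giving $2^{-6(\rho+1)\ell}$), whereas you use the heat-kernel product bound via $q_\ell(y,x)\le q_\ell(0,0)\le 1/2$ and $\log(1-q)\ge -2q$ (giving $e^{-6(\rho+1)\ell}$); both are sufficient for~\eqref{eq:penacelleNEW}.
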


\begin{Rk}
The modification of the pre-factor appearing in the constraint on $k$ is inconsequential for our arguments (and consistent with SEP where one can afford to choose $K=1$ since $J=(0,1)$). Indeed~\ref{pe:nacelle} is only used at~\eqref{eq:etapmnacellecoupling}, where this modified condition on $k$ clearly holds for $L$ large enough ($K$ being fixed). 
\end{Rk}

\begin{proof} We adapt the proof of Lemma~\ref{lem:nacelle}. 
Let $H,\ell,k\geq 1$, $\rho\in (0,K), \varepsilon\in (0,K-\rho)$ and $\eta_0, \eta'_0$ be such that the conditions of~\ref{pe:nacelle} hold. We can thus pair injectively each particle of $\eta'_0([-H,H])$ to one particle of $\eta_0([-H,H])$ located at the same position. Moreover, y assumption there is at least one particle of $\eta_0([0,\ell])$ that is not paired. For $s\geq 0$, denote $Z_s$ the position of this particle at time $s$. 

Let $\Q$ be a coupling of $\eta$ and $\eta'$ during $[0,\ell]$ such that paired particles perform the same lazy simple random walk (independently from all other pairs), and all other particles of $\eta_0$ and $\eta'_0$ follow independent lazy simple random walks (which yields the marginals  $\eta\sim \bP^{\eta_0}$ and $\eta'\sim \bP^{\eta'_0}$). As in Lemma~\ref{Lem:driftPCRW}, we get that 
\begin{equation}
\Q\big(\forall s\in [0,\ell], \, \eta_s\vert_{[-H+\ell, H-\ell]}\succcurlyeq\eta'_s\vert_{[-H+\ell, H-\ell]} \big)=1
\end{equation}
and~\eqref{eq:penacelleNEW2} follows. 

It remains to show~\eqref{eq:penacelleNEW}. Assume for convenience that $\ell$ is even (the case $\ell$ odd being treated in essentially the same way). Similarly as in the proof of Lemma~\ref{lem:nacelle}, we get that
\begin{equation}\label{eq:PCRWnacelleproba}
\mathbb{Q}(\eta_{\ell}( x )>0  , \,    \eta'_{\ell}( x )=0)\geq \mathbb{Q}( Z_\ell=x , \, \eta'_\ell(x)= 0),
\end{equation}
for $x=0,1$. Note that by our construction, the two events $\{Z_\ell=x \}$ and $ \{ \eta'_\ell(x)= 0\}$ are independent. Clearly, we have that for both $x=0,1$,
\begin{equation}\label{eq:PCRWnacelleproba1}
\Q(Z_\ell=x)\geq 4^{-\ell}.
\end{equation}
Note that there is no parity issue in \eqref{eq:PCRWnacelleproba1} because $Z$ performs a {lazy} random walk. 
Moreover, since $\eta'_0([-3\ell+1, 3\ell])\leq 6(\rho+1)\ell$ by assumption and since no particle outside $[-3\ell+1, 3\ell]$ can reach $0$ by time $\ell$, it follows that at time $\ell-1$, there are at most $6(\rho+1)\ell$ particles of $\eta'_0$ in $[-1,1]$, and each of them has probability at least $1/2$ not to be at $x \in \{0,1\}$ at time $\ell$. Hence
\begin{equation}\label{eq:PCRWnacelleproba2}
\Q( \eta'_\ell(x)= 0)\geq 2^{-6(\rho +1)\ell}.
\end{equation}
Putting together~\eqref{eq:PCRWnacelleproba},~\eqref{eq:PCRWnacelleproba1} and~\eqref{eq:PCRWnacelleproba2}, we get that the probability on the left of \eqref{eq:PCRWnacelleproba} is bonded from below by $(2e)^{-6(\rho+1)\ell}$, whence \eqref{eq:penacelleNEW}.
\end{proof}

\bibliography{bibliographie}
\bibliographystyle{abbrv}

\end{document}